\numberwithin{equation}{section}
\theoremstyle{plain} 
\newtheorem{thm}{Theorem}[section]
\newtheorem{lem}[thm]{Lemma}
\newtheorem{cor}[thm]{Corollary}
\newtheorem{pro}[thm]{Proposition}
\newtheorem{assumption}[thm]{Assumption}
\newtheorem{defn}[thm]{Definition}
\theoremstyle{remark}
\newtheorem{rem}[thm]{Remark}
\renewcommand{\Re}{\mathrm{Re}\,}
\renewcommand{\Im}{\mathrm{Im}\,}
\newcommand{\re}{\mathrm{Re}\,}
\newcommand{\im}{\mathrm{Im}\,}
\newcommand{\E}{{\mathbb E }}
\newcommand{\R}{{\mathbb R }}
\newcommand{\N}{{\mathbb N}}
\newcommand{\Z}{{\mathbb Z}}
\renewcommand{\P}{{\mathbb P}}
\newcommand{\C}{{\mathbb C}}
\newcommand{\ii}{\mathrm{i}}
\newcommand{\deq}{\mathrel{\mathop:}=}
\newcommand{\e}[1]{\mathrm{e}^{#1}}
\newcommand{\ntr}{\mathrm{tr}\,}
\newcommand{\dd}{\mathrm{d}}
\newcommand{\ie}{\emph{i.e., }}
\newcommand{\eg}{\emph{e.g., }}
\newcommand{\cf}{\emph{c.f., }}
\newcommand{\wt}{\widetilde}
\newcommand{\bs}{\boldsymbol}
\newcommand{\la}{\langle}
\newcommand{\ra}{\rangle}
\renewcommand{\mathbf}[1]{\bs{#1}}
\begin{document}

 \begin{minipage}{0.85\textwidth}
 \vspace{3.5cm}
 \end{minipage}
\begin{center}
\large\bf
Spectral rigidity  for addition of random matrices  at the regular edge
\end{center}

\renewcommand{\thefootnote}{\fnsymbol{footnote}}	
\vspace{1cm}
\begin{center}
 \begin{minipage}{0.32\textwidth}
\begin{center}
Zhigang Bao\footnotemark[1]  \\
\footnotesize {HKUST}\\
{\it mazgbao@ust.hk}
\end{center}
\end{minipage}
\begin{minipage}{0.32\textwidth}
\begin{center}
L\'aszl\'o Erd{\H o}s\footnotemark[2]  \\
\footnotesize {IST Austria}\\
{\it lerdos@ist.ac.at}
\end{center}
\end{minipage}
\begin{minipage}{0.33\textwidth}
 \begin{center}
Kevin Schnelli\footnotemark[3]\\
\footnotesize 
{KTH Royal Institute of Technology}\\
{\it schnelli@kth.se}
\end{center}
\end{minipage}
\footnotetext[1]{Partially supported by  Hong Kong RGC grants ECS 26301517 and  GRF  16301519.}
\footnotetext[2]{Partially supported by ERC Advanced Grant RANMAT No.\ 338804.}
\footnotetext[3]{Partially supported by  the G\"oran Gustafsson Foundation and the Swedish Research Council grant VR-2017-05195.}

\renewcommand{\thefootnote}{\fnsymbol{footnote}}	

\end{center}

\vspace{1cm}

\begin{center}
 \begin{minipage}{0.83\textwidth}\footnotesize{
 {\bf Abstract.}  We consider the sum of two large  Hermitian matrices  $A$ and $B$ 
 with a Haar unitary conjugation bringing them into a general relative position. We prove that  
  the eigenvalue density on the  scale slightly above 
   the local eigenvalue spacing is asymptotically given by the free additive convolution  of the laws of $A$ and $B$
  as the dimension of the matrix increases.
  This implies  optimal rigidity of the eigenvalues and optimal rate of convergence
  in Voiculescu's theorem.  Our previous works \cite{BES15b, BES16} established these results
  in the bulk spectrum, the current paper completely settles  the problem at the spectral edges provided they have  the typical 
  square-root behavior. The key element of our proof  is to compensate the deterioration  
 of the  stability of the subordination 
 equations  by sharp  error estimates that properly   
 account for the local density near the edge.  Our results also hold if the Haar unitary matrix is replaced by the Haar orthogonal matrix.
 
}
\end{minipage}
\end{center}

 \vspace{2mm}
 
 {\small
\footnotesize{\noindent\textit{Date}: \today}\\
 \footnotesize{\noindent\textit{Keywords}: Random matrices, local eigenvalue density, free convolution, spectral edge}
 
 \footnotesize{\noindent\textit{AMS Subject Classification (2010)}: 46L54, 60B20}
 \vspace{2mm}

 }

\thispagestyle{headings}

\section{Introduction}

The pioneering work of Voiculescu \cite{Voi91} identified the eigenvalue density of
the sum of two  Hermitian $N\times N$ matrices $A$ and $B$ in a general relative position as
the free additive convolution of the eigenvalue densities $\mu_A$ and $\mu_B$ of $A$ and $B$. The primary example
for general relative positions is asymptotic freeness that can be generated by conjugation via a Haar distributed 
unitary matrix.  In fact, under some mild regularity condition on $\mu_A$ and $\mu_B$, {\it local laws} also hold, asserting that
the empirical eigenvalue  density of the sum  converges on small scales as well. The optimal 
 precision in such  local law pins down the location of individual eigenvalues with 
an error bar that is just slightly above the local eigenvalue spacing. With an optimal error term, it 
identifies the  speed of convergence of order $N^{-1+\epsilon}$ in Voiculescu's limit theorem. 

After several gradual improvements on the  precision in \cite{Kargin2012, Kargin, BES15}, the local law  
on the optimal $N^{-1+\epsilon}$ scale was established in \cite{BES15b} and the optimal convergence speed was
obtained in \cite{BES16}. All these results were, however, restricted to the {\it regular bulk} spectrum, \ie
to the spectral  regime where the density of the free convolution is non-vanishing and bounded from above.
In particular, the regime of the spectral edges were not covered. Under mild conditions on the
limiting eigenvalue densities of $A$ and $B$, the free convolution density always vanishes as the square-root
function near the edges of its support. We call such type of edges {\it regular}. We remark that the regular edge is typical in many random matrix models, for instance, the semicircle law; \ie 
the limiting density for Wigner matrices.

Near the edges the eigenvalues are  sparser hence they fluctuate more;  naively,  the extreme
eigenvalues might   be  prone to very large fluctuations due to the room available to them
on the opposite side of the support. Nevertheless, for Wigner matrices and many related ensembles
with independent or weakly dependent entries  it has been shown that the eigenvalue fluctuation
does not exceed its natural threshold, the local spacing, even at the edge; see \eg~\cite{EKYY, LS13, AEK15} and references therein.
In general, it implies a very strong concentration of the empirical measure.
 For the smallest and largest eigenvalues
it means a fluctuation of order $N^{-2/3}$.
 In fact, the precise fluctuation is universal and it
 follows the Tracy--Widom distribution; see \eg~\cite{TW, BEY14, LS16} for  proofs in  various models.

In this paper we present a comprehensive edge local law    on optimal scale and with optimal precision for the ensemble $A+UBU^*$
where $U$ is Haar unitary. We assume that the laws of $A$ and $B$ are close to  continuous limiting profiles $\mu_\alpha$ and $\mu_\beta$
with a single interval support and power law behavior at the edge with exponent less than one. We prove that 
the free convolution $\mu_\alpha\boxplus\mu_\beta$ has a square root singularity at its edge and $\mu_A\boxplus \mu_B$ 
 closely trails this behavior. Furthermore, we establish that the eigenvalues of $A+UBU^*$ follow $\mu_A\boxplus \mu_B$ 
down to the scale of the local spacing,  uniformly throughout the spectrum. In particular, we show that the extreme eigenvalues 
are in the optimal $N^{-\frac23+\varepsilon}$ vicinity of the deterministic spectral edges. Previously, similar results were only known with $o(1)$ precision, see \cite{CM}  for instance. We expect that Tracy--Widom law holds at the regular edge of our additive model. Very recently, bulk universality has been demonstrated in \cite{CL}.

Our analysis also implies optimal rate of
convergence for Voiculescu's global law for  free convolution densities with the typical square root edges.

The result demonstrates that  the Haar randomness in the additive model has a similarly strong  concentration 
of the empirical density as already proved for the Wigner ensemble earlier. In fact, the additive model is only the simplest prototype
of a large family of models involving polynomials of Haar unitaries and deterministic matrices; other examples include
the ensemble in  the single ring theorem \cite{GKZ11, BES16b}.
The technique developed in the current paper can potentially handle square root edges in more complicated ensembles
where the main source of randomness is the Haar unitaries.

After the  statement of  the main result and the introduction of a few basic quantities,
we show in Section~\ref{s. subordination at the edge} that $\mu_\alpha\boxplus\mu_\beta$ has under suitable conditions a square root singularity at the lowest edge and we establish stability properties of the subordination equations around that edge. In Section~\ref{sec:general} an informal outline of the proof that explains the main difficulties stemming from the edge 
in contrast to the related analysis in the bulk. Here we highlight only the key point.
A typical proof of the local laws has two parts: $(i)$ stability analysis  of a deterministic (Dyson) equation for the limiting eigenvalue
distribution, and $(ii)$ proof that the empirical density approximately satisfies the Dyson equation 
and estimate the error.  Given these two inputs, the local law follows by  simply inverting the Dyson equation.
For our model the Dyson equation is actually  the pair of the  {\it subordination equations},
that define the free convolution.
Near the spectral edge, the subordination  equations become unstable. A similar phenomenon is well known
for the Dyson equation of Wigner type models, but  it has not  yet been analyzed for the 
subordination equations. This instability can only be compensated by a very  accurate estimate
on the approximation error; a formidable task given the complexity of the analogous  error estimates
in the bulk \cite{BES16}. Already the bulk analysis required carefully selected counter terms and weights 
in the fluctuation averaging mechanisms before recursive moment estimates could be started. 
All these ideas are used  at the edge, even up to higher order,  but they still  fall short of the necessary precision.
The key novelty is to identify a very specific linear combination of two basic fluctuating quantities with a fluctuation smaller
than those of its constituencies,  indicating a very special strong correlation between~them.

{\it Notation:}
The symbols $O(\,\cdot\,)$ and $o(\,\cdot\,)$ stand for the standard big-O and little-o notation. We use~$c$ and~$C$ to denote positive finite constants that do not depend on the matrix size~$N$. Their values may change from line to line.

We denote by $M_N(\C)$ the set of $N\times N$ matrices over $\C$. For a vector $\mathbf{v}\in \mathbb{C}^N$, we use $\|\mathbf{v}\|$ to denote its Euclidean norm. For $A\in M_N(\C)$, we denote by $\|A\|$ its operator norm and by $\|A\|_2$ its Hilbert-Schmidt norm.  We use $\ntr A=\frac{1}{N}\sum_{i} A_{ii}$ to denote the normalized trace of an $N\times N$ matrix $A=(A_{ij})_{N,N}$.

 Let $\mathbf{g}=(g_1,\ldots, g_N)$ be a real or complex Gaussian vector. We write $\mathbf{g}\sim \mathcal{N}_{\mathbb{R}}(0,\sigma^2I_N)$ if $g_1,\ldots, g_N$ are independent and identically distributed (i.i.d.) $N(0,\sigma^2)$ normal variables; and we write $\mathbf{g}\sim \mathcal{N}_{\mathbb{C}}(0,\sigma^2I_N)$ if $g_1,\ldots, g_N$ are i.i.d.\ $N_{\mathbb{C}}(0,\sigma^2)$ variables, where $g_i\sim N_{\mathbb{C}}(0,\sigma^2)$ means that $\Re g_i$ and $\Im g_i$ are independent $N(0,\frac{\sigma^2}{2})$ normal variables. 
 
 For two possibly $N$-dependent numbers $a,b\in \mathbb{C}$, we write $a\sim b$ if there is a (large) positive constant $C>1$ such that $C^{-1}|a|\leq |b|\leq C|a|$. 
We use double brackets to denote index sets, \ie for $n_1, n_2\in\R$, $\llbracket n_1,n_2\rrbracket\deq [n_1, n_2] \cap\Z$, and denote by $\C^+$ the upper complex half-plane, \ie $\C^+\deq\{z\in\C\,:\Im z>0\}$.

{\it Acknowledgement.} The authors are grateful to the anonymous referee for pointing out a gap
in the first version of the paper as well as for numerous useful remarks and comments.

\section{Definition of the Model and main results}

\subsection{Model and assumptions} Let $A\equiv A_N=\text{diag}(a_1, \ldots, a_N)$ and $B\equiv B_N=\text{diag}(b_1, \ldots, b_N)$ be two deterministic real diagonal matrices in $ M_N(\mathbb{C})$.  Let $U\equiv U_N$ be a random unitary matrix which is Haar distributed on $\mathcal{U}(N)$, where $\mathcal{U}(N)$ is the $N$-dimensional unitary group. We study the following random Hermitian matrix
\begin{align}
H\equiv H_N\deq A+UBU^*.  \label{17080150}
\end{align}
More specifically, we study the eigenvalues of $H$, denoted by $\lambda_1\leq\ldots\leq \lambda_N$.  
Throughout the paper, we are mainly working in the vicinity of the bottom of the spectrum. The discussion for the top of the spectrum is analogous.  Let $\mu_A$, $\mu_B$ and $\mu_H$ be the empirical eigenvalue distributions of $A$,  $B$, and $H$,~\ie 
\begin{align*}
\mu_A\deq\frac{1}{N}\sum_{i=1}^N \delta_{a_i}\,,\quad \qquad \mu_B\deq\frac{1}{N} \sum_{i=1}^N \delta_{b_i}\,,\quad \qquad \mu_H\deq\frac{1}{N}\sum_{i=1}^N \delta_{\lambda_i} \,.
\end{align*}

For any probability measure $\mu$ on the real line, its Stieltjes transform is defined as 
\begin{align*}
m_\mu(z)\deq\int_{\mathbb{R}} \frac{1}{x-z}\,\dd\mu(x)\,,\qquad \qquad z\in \mathbb{C}^+\,,
\end{align*}
where $z$ is called the {\it spectral parameter}. Throughout the paper, we write $z=E+\mathrm{i}\eta$,~\ie $E=\Re z$,  $\Im z=\eta$.

In this paper, we assume  that there are two  $N$-independent absolutely continuous probability measures~$\mu_\alpha$ and~$\mu_\beta$ with continuous density functions $\rho_\alpha$ and $\rho_\beta$, respectively,  such that the following assumptions, Assumptions~\ref{a.regularity of the measures} and~\ref{a. levy distance}, are satisfied. The first one discusses some qualitative properties of $\mu_\alpha$ and $\mu_\beta$, while the second one demands that $\mu_A$ and $\mu_B$ are close to $\mu_\alpha$ and $\mu_\beta$, respectively. 

\begin{assumption} \label{a.regularity of the measures} We assume the following:
\begin{itemize}
\item[$(i)$] Both density functions $\rho_\alpha$ and $\rho_\beta$ have single non-empty interval supports, $[E_-^\alpha,E_+^\alpha]$ and $[E_-^\beta,E_+^\beta]$, respectively,
and $\rho_\alpha$ and $\rho_\beta$ are strictly positive in the interior of their supports. 
\item[$(ii)$] In a small $\delta$-neighborhood of the lower edges of the supports, 
these measures have a power law behavior, namely, there is a (small) constant $\delta>0$ and exponents $ -1<t_-^\alpha, t_-^\beta<1$ such that 
\begin{align*}
C^{-1}\leq\frac{\rho_\alpha(x)}{(x-E_-^\alpha)^{t_-^\alpha}}\leq C\,,\qquad\qquad \forall x\in [E_-^\alpha, E_-^\alpha+\delta]\,,  \nonumber\\
C^{-1}\leq\frac{\rho_\beta(x)}{(x-E_-^\beta)^{t_-^\beta}}\leq C\,,\qquad\qquad \forall x\in [E_-^\beta, E_-^\beta+\delta]\,,
\end{align*}
hold for some  positive constant $C>1$. 
\item[$(iii)$]   We assume  that at least one of the  following two bounds holds
\begin{equation}\label{mbound}
  \sup_{z\in \C^+}|m_{\mu_\alpha}(z)|\le C\,, \qquad\qquad   \sup_{z\in \C^+}|m_{\mu_\beta}(z)|\le C\,,
\end{equation}
for some positive constant $C$. 
\end{itemize} 
\end{assumption}

\begin{assumption}\label{a. levy distance} We assume the following:
\begin{itemize}
\item[$(iv)$] For the L\'evy-distances $d_L$, we have that
\begin{align}\label{levy}
   \mathbf{d}\deq d_L(\mu_A, \mu_\alpha) + d_L(\mu_B, \mu_\beta)\le N^{-1+\epsilon}\,,
\end{align}
for any  constant $\epsilon>0$ when $N$ is sufficiently large. 
\item[$(v)$]  For the lower edges, we have 
\begin{align}\label{supab}
     \inf\, \mathrm{supp} \,\mu_A\ge E_-^\alpha -\delta\,, \qquad\qquad  \inf\, \mathrm{supp}\, \mu_B\ge E_-^\beta -\delta\,,
\end{align}
for any constant $\delta>0$ when $N$ is sufficiently large.
\item[$(vi)$] For the upper edges, we assume that there is a constant $C$ such that
\begin{align}
     \sup\, \mathrm{supp} \,\mu_A\le C\,, \qquad\qquad  \sup\, \mathrm{supp}\, \mu_B\le C\,.
\end{align}
\end{itemize}
\end{assumption}
A direct consequence of $(v)$ and $(vi)$ above is that there is a  constant $C'$ such that $\|A\|, \|B\|\leq C'\,.$

Since \cite{Voi91}, it is well known now that $\mu_H$ can be weakly approximated by  a deterministic probability measure, called the free additive convolution of $\mu_A$ and $\mu_B$. Here we briefly introduce some notations concerning the free additive convolution, which will be necessary to state our main results. 

For a probability measure $\mu$ on $\R$, we denote by $F_\mu$ its negative reciprocal Stieltjes transform,~\ie
\begin{align}\label{le reciprocal m}
F_\mu(z)\deq -\frac{1}{m_\mu(z)}\,,\qquad\qquad z\in\C^+\,.
\end{align}
Note that $F_{\mu}\,:\C^+\rightarrow \C^+$ is analytic and satisfies
\begin{align}
 \lim_{\eta\nearrow\infty}\frac{F_{\mu}(\ii\eta)}{\ii\eta}=1\,.
\end{align}
Conversely, if $F\,:\,\C^+\rightarrow\C^+$ is an analytic function with
$\lim_{\eta\nearrow\infty} F(\ii\eta)/\ii\eta=1$, then $F$ is the negative reciprocal Stieltjes transform of a probability
measure $\mu$, \ie $F(z) = F_{\mu}(z)$, for all $z\in\C^+$; see \eg~\cite{Aki}.

The {\it free additive convolution} is the symmetric binary operation on Borel probability measures on~$\R$ characterized by the following result.
\begin{pro}[Theorem 4.1 in~\cite{BB}, Theorem~2.1 in~\cite{CG}]\label{le prop 1}
Given two Borel probability measures, $\mu_1$ and $\mu_2$, on $\R$, there exist unique analytic functions, $\omega_1,\omega_2\,:\,\C^+\rightarrow \C^+$, such that,
 \begin{itemize}[noitemsep,topsep=0pt,partopsep=0pt,parsep=0pt]
  \item[$(i)$] for all $z\in \C^+$, $\im \omega_1(z),\,\im \omega_2(z)\ge \im z$, and
  \begin{align}\label{le limit of omega}
  \lim_{\eta\nearrow\infty}\frac{\omega_1(\ii\eta)}{\ii\eta}=\lim_{\eta\nearrow\infty}\frac{\omega_2(\ii\eta)}{\ii\eta}=1\,;
  \end{align}
  \item[$(ii)$] for all $z\in\C^+$, 
  \begin{align}\label{le definiting equations}
   F_{\mu_1}(\omega_{2}(z))=F_{\mu_2}(\omega_{1}(z))\,,\qquad\qquad \omega_1(z)+\omega_2(z)-z=F_{\mu_1}(\omega_{2}(z))\,.
  \end{align}
 \end{itemize}
\end{pro}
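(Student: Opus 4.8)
The plan is to recast the subordination system \eqref{le definiting equations} as a single fixed-point equation on $\C^+$ and solve it, for each fixed $z\in\C^+$, via the Denjoy--Wolff theorem. For a probability measure $\mu$ on $\R$ set $g_\mu(w)\deq F_\mu(w)-w$; applying the Cauchy--Schwarz inequality to $m_\mu$ one sees $\Im F_\mu(w)\ge\Im w$, so $g_\mu\colon\C^+\to\overline{\C^+}$ is analytic with $\Im g_\mu\ge0$, and for compactly supported $\mu$ (the case relevant here) $g_\mu$ has the Nevanlinna representation $g_\mu(w)=b_\mu+\int(x-w)^{-1}\dd\sigma_\mu(x)$ with $b_\mu\in\R$ and $\sigma_\mu$ a finite positive measure supported in $\mathrm{supp}\,\mu$, whence $|g_\mu(w)|\le|b_\mu|+\sigma_\mu(\R)/\Im w$. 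Rearranging the two identities in \eqref{le definiting equations} gives $\omega_1=z+g_{\mu_1}(\omega_2)$ and $\omega_2=z+g_{\mu_2}(\omega_1)$, so any solution $\omega_2$ must be a fixed point of
\begin{align*}
 T_z(w)\deq z+g_{\mu_2}\bigl(z+g_{\mu_1}(w)\bigr)\,,
\end{align*}
and conversely, if $\omega_2\in\C^+$ is a fixed point of $T_z$ and $\omega_1\deq z+g_{\mu_1}(\omega_2)$, a short computation shows that $(\omega_1,\omega_2)$ solves \eqref{le definiting equations}. (If either measure is a point mass, $T_z$ is constant and the solution is explicit.) Since $g_{\mu_j}$ maps $\C^+$ into $\overline{\C^+}$, the map $T_z$ is an analytic self-map of $\C^+$ with $\Im T_z(w)\ge\Im z$ for all $w$; in particular $T_z(\C^+)\subsetneq\C^+$, so $T_z$ is not an automorphism of $\C^+$.

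The core step is to show that $T_z$ has a fixed point \emph{inside} $\C^+$. By the Denjoy--Wolff theorem, since $T_z$ is a holomorphic self-map of $\C^+$ other than an automorphism, there is a unique $\tau(z)\in\overline{\C^+}\cup\{\infty\}$ such that $T_z^{\circ n}\to\tau(z)$ locally uniformly, and if $\tau(z)\in\C^+$ then $\tau(z)$ is the unique fixed point of $T_z$ in $\C^+$. To rule out $\tau(z)\in\R\cup\{\infty\}$, fix $w_0\in\C^+$ and set $w_n\deq T_z^{\circ n}(w_0)$. For $n\ge1$ one has $\Im w_n\ge\Im z>0$, so the orbit cannot accumulate on $\R$; and, using the bound on $g_{\mu_1}$, for $n\ge1$ one gets $|g_{\mu_1}(w_n)|\le|b_{\mu_1}|+\sigma_{\mu_1}(\R)/\Im z$, hence $\zeta_n\deq z+g_{\mu_1}(w_n)$ has $\Im\zeta_n\ge\Im z$ and $|\zeta_n|$ bounded, and a second application of the bound gives $|w_{n+1}|=|z+g_{\mu_2}(\zeta_n)|\le M(z)$ for an explicit finite $M(z)$. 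Thus the orbit eventually lies in the compact set $\{w\colon\Im z\le\Im w,\ |w|\le M(z)\}\subset\C^+$, forcing $\tau(z)\in\C^+$. Setting $\omega_2(z)\deq\tau(z)$ and $\omega_1(z)\deq z+g_{\mu_1}(\omega_2(z))$ then yields a solution of \eqref{le definiting equations} with $\Im\omega_j(z)\ge\Im z$ (immediate from $\Im g_{\mu_j}\ge0$ and $\omega_2=T_z(\omega_2)$), which is the inequality in $(i)$.

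Analyticity in $z$ follows from the holomorphic implicit function theorem applied to $(z,w)\mapsto w-T_z(w)$: this map is jointly analytic, and at the fixed point the Schwarz--Pick lemma gives $|\partial_w T_z(\omega_2(z))|<1$ (strict since $T_z$ is not an automorphism), so $\partial_w(w-T_z(w))\neq0$ there; hence $\omega_2$, and then $\omega_1$, is analytic on $\C^+$. For the normalization, let $\eta\to\infty$ along $z=\ii\eta$: then $\Im\omega_1(\ii\eta),\Im\omega_2(\ii\eta)\ge\eta\to\infty$, so $|g_{\mu_j}(\omega_k(\ii\eta))|\le|b_{\mu_j}|+\sigma_{\mu_j}(\R)/\eta=O(1)$, giving $\omega_j(\ii\eta)=\ii\eta+O(1)$ and thus $\omega_j(\ii\eta)/\ii\eta\to1$; the same estimate shows $z\mapsto\omega_1(z)+\omega_2(z)-z=F_{\mu_1}(\omega_2(z))$ is a Pick function with the right growth at $\ii\infty$, hence it equals $F_\nu$ for a unique probability measure $\nu$, which one declares to be $\mu_1\boxplus\mu_2$. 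Uniqueness of the pair is then immediate: any analytic $\tilde\omega_1,\tilde\omega_2\colon\C^+\to\C^+$ solving \eqref{le definiting equations} satisfy, by the same rearrangement, $\tilde\omega_2=T_z(\tilde\omega_2)$ with $\tilde\omega_2(z)\in\C^+$, so $\tilde\omega_2=\omega_2$ by uniqueness of the interior fixed point, and then $\tilde\omega_1=z+g_{\mu_1}(\tilde\omega_2)=\omega_1$.

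The only genuinely delicate point is excluding a Denjoy--Wolff point on $\partial\C^+\cup\{\infty\}$ for \emph{general} $\mu_1,\mu_2$: the orbit-boundedness argument above used the finite Nevanlinna representation of $g_\mu$, which is immediate only when $\mu$ has finite variance, and in the general case one must instead invoke Julia--Wolff--Carath\'eodory theory or a truncation-and-limiting argument, as in \cite{BB,CG}. For the compactly supported measures arising in the present paper this complication does not occur, and the remaining verifications are routine.
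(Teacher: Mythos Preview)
The paper does not give its own proof of this proposition; it simply cites it from \cite{BB} and \cite{CG}. Your sketch is correct and is precisely the argument of Belinschi--Bercovici \cite{BB}: rewrite the system as a single fixed-point equation $\omega_2=T_z(\omega_2)$ for the self-map $T_z(w)=z+g_{\mu_2}(z+g_{\mu_1}(w))$ of $\C^+$, invoke Denjoy--Wolff, and exclude a boundary Denjoy--Wolff point by an orbit-confinement estimate. Your caveat about general measures is also accurate---\cite{BB} handles the unbounded case via angular-derivative considerations rather than the simple Nevanlinna bound you used, while \cite{CG} proceeds differently through analytic continuation arguments. Since the measures in this paper are compactly supported, your streamlined version suffices for all applications here.
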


The analytic function $F\,:\,\C^+\rightarrow \C^+$ defined by
\begin{align}\label{le kkv}
 F(z)\deq F_{\mu_1}(\omega_{2}(z))=F_{\mu_2}(\omega_{1}(z))\,,
\end{align}
 is, in virtue of~\eqref{le limit of omega}, the negative reciprocal Stieltjes transform of a probability measure $\mu$, called the free additive convolution of $\mu_1$ and $\mu_2$, denoted by $\mu\equiv\mu_1\boxplus\mu_2$. The functions $\omega_1$ and $\omega_2$ are referred to as the {\it subordination functions}. The subordination phenomenon for the addition of freely independent non-commutative random variables was first noted by Voiculescu~\cite{Voi93} in a generic situation
and extended to full generality by Biane~\cite{Bia98}.

Choosing $(\mu_1, \mu_2)=(\mu_\alpha, \mu_\beta)$ in Proposition \ref{le prop 1}, we denote the associated subordination functions $\omega_1$ and $\omega_2$ by $\omega_\alpha$ and $\omega_\beta$, respectively. Analogously, for the choice $(\mu_1, \mu_2)=(\mu_A, \mu_B)$, we denote by $\omega_A$ and $\omega_B$ the associated subordination functions.   With the above notations, we obtain from (\ref{le definiting equations}) and (\ref{le kkv}) the following subordination equations
\begin{align}
m_{\mu_A}(\omega_B(z))&=m_{\mu_B}(\omega_A(z))=m_{\mu_A\boxplus\mu_B}(z),\nonumber\\
\omega_A(z)+\omega_B(z)-z&=-\frac{1}{m_{\mu_A\boxplus\mu_B}(z)}.  \label{170730100}
\end{align}
The same system of equations hold if we replace the subscripts $(A, B)$ by $(\alpha, \beta)$.

We denote the lower and upper edges of the support of $\mu_\alpha\boxplus\mu_\beta$ by 
\begin{align}
E_-\deq\inf\,\mathrm{supp}\, \mu_\alpha\boxplus\mu_\beta\,, \qquad\qquad E_+\deq \sup\,\textrm{supp} \,\mu_\alpha\boxplus\mu_\beta\,.  \label{17080330}
\end{align} 
In Section~\ref{s. subordination at the edge}, we establish 
various qualitative properties of $\mu_\alpha\boxplus\mu_\beta$ and of $\mu_A\boxplus\mu_B$.
In particular, under Assumption  \ref{a.regularity of the measures}, 
we show that $\mu_\alpha\boxplus\mu_\beta$  has a square-root decay at the lower edge, see (\ref{17080390}).

\subsection{Main results}
To state our results, we introduce some more terminology. 
We  denote the Green function or resolvent of $H$ and its normalized trace by 
\begin{align*}
G(z)\equiv G_H(z)\deq \frac{1}{H-z}\,, \qquad m_H(z)\deq\ntr G(z)=\frac{1}{N}\sum_{i=1}^N G_{ii}(z)\,, \qquad\qquad z\in \mathbb{C}^+.  
\end{align*}
Observe that $m_H(z)$ is also the Stieltjes transform of $\mu_H$, \ie
\begin{align*}
m_H(z)= \int_\R \frac{1}{x-z} \dd\mu_H(x)=\frac{1}{N}\sum_{i=1}^N \frac{1}{\lambda_i-z}, \qquad z\in \mathbb{C}^+.
\end{align*}
We further set 
\begin{align}
\mathcal{K}\deq \|A\|+\|B\|+1\,.  \label{17072840}
\end{align}
Moreover, for any  spectral parameter $z=E+\mathrm{i}\eta\in \mathbb{C}^+$, we let
\begin{align}
\kappa\equiv \kappa(z)\deq  \min \{|E-E_-|, |E-E_+|\}\,,  \label{17080102}
\end{align}
with $E_\pm$ given in~\eqref{17080330}. We then introduce the  following domain of the spectral parameter $z$: For any $0<a\le b$ and $0<\tau<\frac{E_+-E_-}{2}$,
\begin{align}
\mathcal{D}_\tau(a, b)\deq \{z=E+\ii \eta\in \mathbb{C}^+:  -\mathcal{K}\leq E\leq E_-+\tau, \quad a\leq \eta\leq b\}. \label{17020201}
\end{align}
For any (small) positive constant $\gamma>0$, we set 
\begin{align*}
\eta_{\rm m}\deq N^{-1+\gamma}.
\end{align*}
Let $\eta_\mathrm{M}>1$ be some sufficiently large constant. 
In the rest of the paper, we will mainly work in the regime $z\in \mathcal{D}_{\tau}(\eta_{\rm m}, \eta_\mathrm{M})$ with sufficiently small constant $\tau>0$. In particular, we usually have $\eta_{\mathrm{m}}\le \eta\le\eta_{\mathrm{M}}$.

We also need the following definition on high-probability estimates from~\cite{EKY}. In Appendix~\ref{appendix A} we collect some of its properties.

\begin{defn}\label{definition of stochastic domination}
Let $\mathcal{X}\equiv \mathcal{X}^{(N)}$ and $\mathcal{Y}\equiv \mathcal{Y}^{(N)}$ be two sequences of
 nonnegative random variables. We say that~$\mathcal{Y}$ stochastically dominates~$\mathcal{X}$ if, for all (small) $\epsilon>0$ and (large)~$D>0$,
\begin{align}
\P\big(\mathcal{X}^{(N)}>N^{\epsilon} \mathcal{Y}^{(N)}\big)\le N^{-D},
\end{align}
for sufficiently large $N\ge N_0(\epsilon,D)$, and we write $\mathcal{X} \prec \mathcal{Y}$ or $\mathcal{X}=O_\prec(\mathcal{Y})$.
 When
$\mathcal{X}^{(N)}$ and $\mathcal{Y}^{(N)}$ depend on a parameter $v\in \mathcal{V}$ (typically an index label or a spectral parameter), then $\mathcal{X}(v) \prec \mathcal{Y} (v)$, uniformly in $v\in \mathcal{V}$, means that the threshold $N_0(\epsilon,D)$ can be chosen independently of $v$. 
\end{defn}

With these definitions and notations, we now state our main result.
\begin{thm}[Local law at the regular edge] \label{thm. strong law at the edge} Suppose that Assumptions \ref{a.regularity of the measures} and \ref{a. levy distance} hold. Let $\tau>0$ be a sufficiently small constant and fix any (small) constants $\gamma>0$ and $\varepsilon>0$. Let $d_1, \ldots, d_N\in \mathbb{C}$ be any deterministic complex number satisfying 
\begin{align*}
\max_{i\in \llbracket 1, N\rrbracket} |d_i| \leq 1.
\end{align*}
Then
\begin{align}
\Big| \frac{1}{N} \sum_{i=1}^N d_i\Big(G_{ii}(z)-\frac{1}{a_i-\omega_B(z)}\Big)\Big|\prec \frac{1}{N\eta} \label{17072330}
\end{align}
holds uniformly on $\mathcal{D}_\tau(\eta_{\rm m},\eta_\mathrm{M})$ with $\eta_{\rm m}=N^{-1+\gamma}$ and any  constant $\eta_\mathrm{M}>0$. In particular, choosing $d_i=1$ for all $i\in \llbracket 1, N\rrbracket$, we have the estimate
\begin{align}
\Big| m_H(z)-m_{\mu_A\boxplus\mu_B}(z)\Big|\prec \frac{1}{N\eta}\,, \label{17011304}
\end{align}
uniformly on $\mathcal{D}_\tau(\eta_{\rm m},\eta_\mathrm{M})$. Moreover, we have the improved estimate
\begin{align}
\Big| m_H(z)-m_{\mu_A\boxplus\mu_B}(z)\Big|\prec \frac{1}{N(\kappa+\eta)}\,, \label{17072847}
\end{align}
uniformly for all $z=E+\ii\eta\in \mathcal{D}_\tau(0,\eta_\mathrm{M})$ with $E\leq E_--N^{-\frac{2}{3}+\varepsilon}$. Here, $\kappa=|E-E_-|$ is given in~\eqref{17080102}.

\end{thm}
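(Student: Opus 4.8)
The plan is to use the by-now standard two-step strategy for local laws, adapted to the pair of subordination equations \eqref{170730100} playing the role of the self-consistent (Dyson) equation: first a \emph{deterministic} stability analysis of this system near the lower edge $E_-$, and second a \emph{high-probability} estimate of the error by which the empirical resolvent $G(z)$ fails to satisfy it, the two being combined through a continuity (bootstrap) argument that propagates control in $\eta$ from the trivial scale $\eta=\eta_{\mathrm M}$ down to $\eta=\eta_{\mathrm m}=N^{-1+\gamma}$.

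For the probabilistic input I would introduce \emph{approximate subordination functions} $\omega_A^c(z),\omega_B^c(z)$ defined in terms of the normalized traces $\ntr G(z)$, $\ntr AG(z)$, $\ntr BG(z)$ (equivalently, through the natural self-energy operators of the model $A+UBU^*$). Exploiting the invariance of the Haar measure under unitary conjugation, one derives identities for these quantities that reproduce \eqref{170730100} up to error terms $\Xi_A,\Xi_B$. Bounding $\Xi_A,\Xi_B$ relies on concentration of measure for $U$ (Lipschitz/Gaussian-type a priori bounds), large-deviation estimates for quadratic forms in Haar-random vectors, and a fluctuation-averaging step that, as in the bulk analysis of \cite{BES16}, requires carefully chosen counterterms and weights before a recursive moment estimate can be started. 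The genuinely new point, flagged in Section~\ref{sec:general}, is to identify a particular linear combination of the two leading fluctuating quantities whose variance is strictly smaller than those of its constituents, reflecting a strong hidden correlation; only with this additional cancellation does one obtain an error bound of the schematic form
\[
 |\Xi_A|+|\Xi_B| \prec \frac{\sqrt{\Im m_{\mu_A\boxplus\mu_B}(z)}}{N\eta} + (\text{subleading terms})\,,
\]
in which the $\Im m$-weight correctly encodes the local density and which is sharp enough to survive the degradation of stability at the edge.

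For the deterministic input I would invoke the results of Section~\ref{s. subordination at the edge}: $\mu_\alpha\boxplus\mu_\beta$ (and, by the closeness assumption, $\mu_A\boxplus\mu_B$) has a square-root edge at $E_-$, and the operator obtained by linearizing \eqref{170730100} around $\omega_\alpha,\omega_\beta$ has smallest singular value of order $\sqrt{\kappa+\eta}$ near $E_-$. Inverting it turns a perturbation $\Xi$ into a deviation $|\omega_A^c-\omega_A|+|\omega_B^c-\omega_B| \lesssim |\Xi|/\sqrt{\kappa+\eta}$; combined with $\Im m_{\mu_A\boxplus\mu_B}\sim\sqrt{\kappa+\eta}$ and the error estimate above, this is exactly self-improving and yields $|m_H-m_{\mu_A\boxplus\mu_B}|\prec 1/(N\eta)$, while the same scheme, carried through at the level of individual resolvent entries $G_{ii}-(a_i-\omega_B)^{-1}$ with the deterministic weights $d_i$, gives the anisotropic bound \eqref{17072330}. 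The bootstrap then proceeds as usual: at $\eta=\eta_{\mathrm M}$ the statement is elementary by a direct large-$\eta$ expansion; decreasing $\eta$, a weak a priori bound feeds the stability estimate, which in turn upgrades the a priori bound, and one closes the argument over a dyadic mesh of $\mathcal D_\tau(\eta_{\mathrm m},\eta_{\mathrm M})$ with a union bound, giving \eqref{17011304}.

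Finally, the improved estimate \eqref{17072847} for $E\le E_--N^{-2/3+\varepsilon}$ comes from re-running the stability inversion in the spectral region lying strictly outside $\mathrm{supp}\,\mu_\alpha\boxplus\mu_\beta$, where $\Im\omega_A,\Im\omega_B$ and $\Im m_{\mu_A\boxplus\mu_B}$ are all of order $\eta/\sqrt{\kappa+\eta}$ rather than $\sqrt{\kappa+\eta}$; feeding these smaller imaginary parts back into $\Xi_A,\Xi_B$ and into the (now $\sim\kappa$-sized) stability constant produces $|m_H-m_{\mu_A\boxplus\mu_B}|\prec 1/(N(\kappa+\eta))$. The cut-off $\kappa\gtrsim N^{-2/3+\varepsilon}$ is precisely the window beyond which, by rigidity of the extreme eigenvalue, no eigenvalue of $H$ lies with high probability, which is what makes the stronger deterministic approximation consistent there. \textbf{The main obstacle} is the probabilistic step: the subordination errors must be controlled with a precision well beyond what the bulk treatment of \cite{BES16} provides, and isolating the extra order of magnitude hidden in the correlation of the two basic fluctuating quantities, while tracking the $\Im m$-weights that carry the edge density, is the technical core of the proof.
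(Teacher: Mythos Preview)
Your sketch for \eqref{17072330} and \eqref{17011304} is essentially the paper's route: approximate subordination functions $\omega_A^c,\omega_B^c$, entrywise control via the quantities introduced in Section~\ref{sec:general}, a two-stage fluctuation averaging culminating in the special combinations $\mathcal Z_1,\mathcal Z_2$ of \eqref{17021710} whose extra cancellation yields the optimal $\hat\Pi^2$-error \eqref{17030301}, and a continuity argument in $\eta$ (Lemma~\ref{lem.17030310}--Lemma~\ref{lem.17030512}) that splits into the linear and quadratic regimes according to whether $\sqrt{\kappa+\eta}$ dominates $\hat\Lambda$. One small correction: the sharp probabilistic error is not $\sqrt{\Im m}/(N\eta)$ but rather $\sqrt{\Im m\cdot|\mathcal S|}/(N\eta)+(N\eta)^{-2}$, cf.\ \eqref{17072960}; the extra $|\mathcal S|$ factor is what makes the cancellation with the $|\mathcal S|^{-1}$ from the stability inversion exact.

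For \eqref{17072847} there is a genuine gap. Feeding the outside-the-spectrum sizes $\Im m\sim \eta/\sqrt{\kappa+\eta}$ and $|\mathcal S|\sim\sqrt{\kappa+\eta}$ into \eqref{17030301} and inverting gives, after the self-improving iteration, only
\[
\Lambda(z)\;\prec\;\frac{1}{N\sqrt{(\kappa+\eta)\eta}}+\frac{1}{\sqrt{\kappa+\eta}(N\eta)^2}
\]
(this is Lemma~\ref{lem. away from the support}), which is \emph{not} $1/(N(\kappa+\eta))$ when $\eta\ll\kappa$, and moreover the whole machinery is only available for $\eta\ge\eta_{\mathrm m}$ while \eqref{17072847} is claimed on $\mathcal D_\tau(0,\eta_{\mathrm M})$. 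The paper closes this gap differently: it first uses the above bound together with \eqref{17011304} to prove rigidity of $\lambda_1$ (Theorem~\ref{thm. rigidity of eigenvalues}), ensuring that with high probability no eigenvalue lies within distance $\sim\kappa$ of $z$; then both $m_H$ and $m_{\mu_A\boxplus\mu_B}$ are analytic in a $\kappa$-neighbourhood of $z$, and a Cauchy contour of size $\sim\kappa$ transfers the already-proven $1/(N\eta)$ bound from the contour (where $|\tilde z-z|\sim\kappa$) to $z$, yielding $1/(N\kappa)$. Your remark about rigidity is the right ingredient, but the mechanism is the contour representation, not a further round of stability inversion.
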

Let $\gamma_j$ be the $j$-th $ N$-quantile of  $\mu_\alpha\boxplus \mu_\beta$, \ie $\gamma_j$ is the smallest real number such that
\begin{align}\label{quantile}
 \mu_{\alpha}\boxplus\mu_\beta\big((-\infty, \gamma_j]\big)= \frac{j}{N}.
\end{align}
Similarly, we define $\gamma_j^*$ to be the $j$-th $N$-quantile of $\mu_A\boxplus \mu_B$.

The following theorem is on the rigidity property of the eigenvalues of $H$. 
\begin{thm}[Rigidity at the lower edge] \label{thm. rigidity of eigenvalues} Suppose that Assumptions \ref{a.regularity of the measures} and \ref{a. levy distance} hold.  For any sufficiently small constant $c>0$, we have that for all $1\leq i\leq cN $,  
\begin{align}
   |\lambda_i-\gamma_i^*|\prec i^{-\frac{1}{3}}N^{-\frac23}. \label{17072845a}
\end{align}
In fact, the same estimate also holds if $\gamma_i^*$ is replaced with $\gamma_i$.
\end{thm}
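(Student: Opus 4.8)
The plan is to deduce the rigidity bound \eqref{17072845a} from the local law, Theorem~\ref{thm. strong law at the edge}, by the standard route through the integrated density of states, adapted to the square-root profile of the edge. Write $E_-^*\deq\inf\,\mathrm{supp}\,\mu_A\boxplus\mu_B$, and let $\mathfrak{n}(E)$ and $\mathfrak{n}_H(E)$ denote the fractions of mass that $\mu_A\boxplus\mu_B$ and $\mu_H$, respectively, put on $(-\infty,E]$. First I would record the edge behaviour of $\mu_A\boxplus\mu_B$: by the analysis of Section~\ref{s. subordination at the edge} (stability of the subordination equations together with Assumption~\ref{a. levy distance}), the measure $\mu_A\boxplus\mu_B$ has a square-root decay at $E_-^*$, its density being comparable to $\sqrt{x-E_-^*}$ for $x\in[E_-^*,E_-^*+\tau]$, and moreover $|E_-^*-E_-|\prec N^{-2/3}$. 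Consequently $\mathfrak{n}(E)\sim (E-E_-^*)^{3/2}$ for $E$ slightly above $E_-^*$, so the quantiles obey $\gamma_i^*-E_-^*\sim (i/N)^{2/3}$, the density of $\mu_A\boxplus\mu_B$ is comparable to $(i/N)^{1/3}$ throughout a neighbourhood of $\gamma_i^*$ of width comparable to $\gamma_i^*-E_-^*$, and the quantile spacing near $\gamma_i^*$ is of order $i^{-1/3}N^{-2/3}$, which is exactly the target precision.

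Next I would convert the local law into a bound on the counting function. The starting point is the elementary observation that, for $E=E_--N^{-2/3+\varepsilon}$ with $\varepsilon>0$ arbitrarily small and $\eta=N^{-2/3}$, the bound \eqref{17072847} together with the square-root estimate $\Im m_{\mu_A\boxplus\mu_B}(E+\ii\eta)\prec\eta\,\kappa^{-1/2}$ makes $\Im m_H(E+\ii\eta)$ smaller than $(N\eta)^{-1}$ by a power of $N$; since an eigenvalue within $\eta$ of $E$ would force $\Im m_H(E+\ii\eta)\ge(4N\eta)^{-1}$, a covering argument yields $\lambda_1\ge E_--N^{-2/3+\varepsilon}$, i.e.\ $H$ has no eigenvalue to the left of $E_-$ beyond the natural $N^{-2/3}$ scale. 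With this in hand I would use the Helffer--Sj\"ostrand formula to express $\mathfrak{n}_H(E)-\mathfrak{n}(E)$ as an integral of $\Im(m_H-m_{\mu_A\boxplus\mu_B})$ against a smoothed indicator of $(-\infty,E]$, splitting the imaginary-part integration at $\eta_{\rm m}=N^{-1+\gamma}$: for $\eta\ge\eta_{\rm m}$ the integrand is controlled by \eqref{17011304}; for $\eta<\eta_{\rm m}$ and $E$ sufficiently below $E_-$ by \eqref{17072847}; and for $\eta<\eta_{\rm m}$ near the edge by the trivial estimate $\Im m_H(E+\ii\eta)\le\frac{\eta_{\rm m}}{\eta}\Im m_H(E+\ii\eta_{\rm m})$ (monotonicity of $\eta\mapsto\eta\,\Im m_H(E+\ii\eta)$) together with the just-established absence of eigenvalues slightly below $E_-$. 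The outcome is
\begin{align*}
  \big|\mathfrak{n}_H(E)-\mathfrak{n}(E)\big|\prec\frac1N
\end{align*}
uniformly for $E$ in a fixed neighbourhood of $E_-$.

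The inversion is then routine. Since $\mathfrak{n}_H$ is a step function with $\mathfrak{n}_H(\lambda_i^-)\le i/N\le\mathfrak{n}_H(\lambda_i)$ and $\mathfrak{n}$ is continuous, the counting bound yields $|\mathfrak{n}(\lambda_i)-\mathfrak{n}(\gamma_i^*)|\prec 1/N$. A short a priori argument --- comparing $\lambda_i$ with the quantiles $\gamma_{c'i}^*$ and $\gamma_{C'i}^*$ for suitable constants $c'$ and $C'$, and anchoring the bottom of the spectrum with $\lambda_1\ge E_--N^{-2/3+\varepsilon}$ --- confines $\lambda_i$ to a neighbourhood of $\gamma_i^*$ of width comparable to $\gamma_i^*-E_-^*$, on which $\mathfrak{n}$ is strictly increasing with derivative comparable to $(i/N)^{1/3}$; dividing the previous display by this density gives $|\lambda_i-\gamma_i^*|\prec i^{-1/3}N^{-2/3}$, which is \eqref{17072845a}. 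Finally, to replace $\gamma_i^*$ by $\gamma_i$ it is enough to establish the deterministic bound $|\gamma_i-\gamma_i^*|\prec i^{-1/3}N^{-2/3}$; this follows by the same inversion applied to the comparison estimates $|\mathfrak{n}(E)-(\mu_\alpha\boxplus\mu_\beta)((-\infty,E])|\prec 1/N$ and $|E_-^*-E_-|\prec N^{-2/3}$, which are quantitative consequences of Assumption~\ref{a. levy distance} and the subordination-stability analysis of Section~\ref{s. subordination at the edge}.

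The step I expect to be the main obstacle is the passage from the local law to the $1/N$-precise counting estimate near the edge: on the scales $\eta\le\eta_{\rm m}$ right at $E_-$ neither \eqref{17011304} nor \eqref{17072847} is directly applicable, and one must combine the monotonicity of $\eta\mapsto\eta\,\Im m_H$, the bootstrapped absence of eigenvalues slightly below $E_-^*$, and a careful treatment of the Helffer--Sj\"ostrand kernel there; the remaining steps are standard, if somewhat lengthy, deterministic bookkeeping built on the square-root edge profile established in Section~\ref{s. subordination at the edge}.
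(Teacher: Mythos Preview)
Your overall strategy---exclude eigenvalues to the left of $E_-$ at scale $N^{-2/3}$, then run Helffer--Sj\"ostrand to get $|\mathfrak{n}_H-\mathfrak{n}|\prec 1/N$, then invert using the square-root profile---is exactly the paper's route. The difficulty is that the first step, as you have written it, is circular.

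You invoke \eqref{17072847} to show that $\Im m_H(E+\ii\eta)$ is strictly smaller than $(N\eta)^{-1}$ for $E$ slightly below $E_-$, and thereby exclude eigenvalues there. But in the paper's logical order, \eqref{17072847} is proved \emph{after} Theorem~\ref{thm. rigidity of eigenvalues}, and its proof (the contour argument in Section~\ref{s.rigidity}) explicitly uses the event $\{\lambda_1\ge E_--\tfrac14 N^{-2/3+\varepsilon}\}$, i.e.\ exactly the no-eigenvalue input you are trying to establish. The bound \eqref{17011304} alone is not enough here: at $\eta\sim N^{-2/3+\varepsilon}$ and $\kappa\sim N^{-2/3+6\varepsilon}$ it gives $|m_H-m_{\mu_A\boxplus\mu_B}|\prec (N\eta)^{-1}$, which is the \emph{same} order as $(N\eta)^{-1}$ and cannot rule out a single eigenvalue.

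The paper breaks this circle with Lemma~\ref{lem. away from the support}: going back into the self-consistent inequality \eqref{17030301} from Proposition~\ref{pro.17021715} and iterating it in the regime $E<E_-$, one gets the sharper bound
\[
\Lambda(z)\prec \frac{1}{N\sqrt{(\kappa+\eta)\eta}}+\frac{1}{\sqrt{\kappa+\eta}}\frac{1}{(N\eta)^2}
\]
on $\wt{\mathcal{D}}_{>}$, which on the line $\eta=N^{-2/3+\varepsilon}$, $\kappa\ge N^{-2/3+6\varepsilon}$ yields $|m_H-m_{\mu_A\boxplus\mu_B}|\prec N^{-\varepsilon}(N\eta)^{-1}$. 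That extra factor $N^{-\varepsilon}$ is precisely what forces $\Im m_H\ll (N\eta)^{-1}$ and excludes eigenvalues. This lemma is the missing ingredient in your outline; once you have it, everything downstream (Helffer--Sj\"ostrand needing only \eqref{17011304} and the regularity \eqref{mdiff1}--\eqref{mdiff2}, the inversion, and the passage $\gamma_i^*\to\gamma_i$ via \eqref{rigi2}) goes through essentially as you describe.
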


With the following additional assumptions on the upper edges of $\mu_\alpha$, $\mu_\beta$ and $\mu_A$, $\mu_B$, we can combine the current edge analysis with our strong local law in the bulk regime in~\cite{BES16}. This yields the rigidity result for the whole spectrum.
\begin{assumption}\label{a. rigidity entire spectrum} We assume the following:
\item[$(ii')$] In a small $\delta$-neighborhood of the upper edges of their supports, the measures $\mu_\alpha$ and $\mu_\beta$ have a power law behavior, namely, there is a (large) constant $C\geq 1$ and exponents ${ -1< t_+^\alpha, t_+^\beta<1}$ such that 
\begin{align*}
C^{-1}\leq\frac{\rho_\alpha(x)}{(E_+^\alpha-x)^{t_+^\alpha}}\leq C\,,\qquad\qquad \forall x\in [E_+^\alpha-\delta, E_+^\alpha]\,,  \nonumber\\
C^{-1}\leq\frac{\rho_\beta(x)}{(E_+^\beta-x)^{t_+^\beta}}\leq C\,,\qquad\qquad \forall x\in [E_+^\beta-\delta, E_+^\beta]\,,
\end{align*}
hold for some sufficiently small constant $\delta>0$. 
\item[$(v')$]  For the upper edges of $\mu_A$ and $\mu_B$, we have 
\begin{align*}
     \sup\,\mathrm{supp} \,\mu_A\le E_+^\alpha +\delta\,, \qquad\qquad  \sup\,\mathrm{supp}\, \mu_B\le E_+^\beta +\delta\,,
\end{align*}
for any constant $\delta>0$ when $N$ is sufficiently large.
\item[$(vii)$] The density function  of $\mu_\alpha\boxplus\mu_\beta$  has a single interval support, \ie
\begin{align*}
\mathrm{supp}\, \mu_\alpha\boxplus\mu_\beta=[E_-, E_+]\,,
\end{align*} 
and has strictly positive density on $(E_-,E_+)$.
\end{assumption}

\begin{cor} [Rigidity for the whole spectrum] \label{c. rigidity for whole spectrum} Suppose that Assumptions \ref{a.regularity of the measures},  \ref{a. levy distance} and \ref{a. rigidity entire spectrum} hold. Then we have, for all $i\in \llbracket 1, N\rrbracket $, the estimate
\begin{align}
 |\lambda_i-\gamma_i^*|\prec \max \big\{i^{-\frac{1}{3}}, (N-i+1)^{-\frac{1}{3}}\big\}N^{-\frac23}. \label{17072845}
\end{align}
The same estimate also holds if $\gamma_i^*$ is replaced with $\gamma_i$.
Moreover, we have the following estimate on the convergence rate of $\mu_H$,
\begin{align}
\sup_{x\in \mathbb{R}} \big| \mu_H((-\infty, x])-\mu_A\boxplus\mu_B((-\infty, x])\big|\prec \frac{1}{N}\,.  \label{17080225}
\end{align}
The same estimate also holds if $\mu_A\boxplus\mu_B$ is replaced by $\mu_\alpha\boxplus\mu_\beta$.
\end{cor}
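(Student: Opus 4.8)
\textbf{Proof proposal for Corollary~\ref{c. rigidity for whole spectrum}.}
The plan is to derive the three stated conclusions from Theorem~\ref{thm. strong law at the edge}, Theorem~\ref{thm. rigidity of eigenvalues}, and the bulk local law of \cite{BES16}, by a standard but careful patching of the three spectral regimes: the lower edge, the bulk, and the upper edge. First I would note that Assumption~\ref{a. rigidity entire spectrum}, specifically $(ii')$ and $(v')$, makes the hypotheses of Theorem~\ref{thm. strong law at the edge} and Theorem~\ref{thm. rigidity of eigenvalues} applicable \emph{verbatim} to the upper edge of $-H = (-A) + U(-B)U^*$, because reflecting $x \mapsto -x$ turns the upper edge of $\mu_\alpha \boxplus \mu_\beta$ into the lower edge of $\mu_{-\alpha} \boxplus \mu_{-\beta}$, and conditions $(ii')$, $(v')$ become exactly $(ii)$, $(v)$ for the reflected measures. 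Condition $(vii)$ guarantees there is no spectral gap, so the bulk regime of \cite{BES16} connects the two edge regimes without interruption. Thus \eqref{17072845a} holds for $1 \le i \le cN$ (lower edge), its reflected version gives $|\lambda_i - \gamma_i^*| \prec (N-i+1)^{-1/3} N^{-2/3}$ for $(1-c)N \le i \le N$ (upper edge), and the bulk rigidity from \cite{BES16} gives $|\lambda_i - \gamma_i^*| \prec N^{-1}$ (up to $N^\epsilon$) for $cN \le i \le (1-c)N$, which is stronger than the claimed $\min\{i^{-1/3},(N-i+1)^{-1/3}\}N^{-2/3} \sim N^{-2/3}$ in that range. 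Combining the three regimes yields \eqref{17072845}. The replacement of $\gamma_i^*$ by $\gamma_i$ follows from the quantitative closeness of $\mu_A \boxplus \mu_B$ and $\mu_\alpha \boxplus \mu_\beta$ established in Section~\ref{s. subordination at the edge} (the Lévy-distance bound \eqref{levy} propagates through the subordination equations to give $|\gamma_i - \gamma_i^*| \prec N^{-1}$ near the edges and in the bulk, which is absorbed into the right side).

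For the Kolmogorov--Smirnov bound \eqref{17080225}, the plan is to use the rigidity estimate \eqref{17072845} together with the analogous deterministic quantile estimate for $\mu_A\boxplus\mu_B$ versus $\mu_\alpha\boxplus\mu_\beta$. Writing $\mathbf{1}\{x \le \lambda_i\}$ in terms of the $i$-th eigenvalue and the $i$-th quantile, one has for each $x$ that $N\big(\mu_H((-\infty,x]) - \mu_A\boxplus\mu_B((-\infty,x])\big)$ is controlled by the number of indices $i$ for which $\lambda_i$ and $\gamma_i^*$ lie on opposite sides of $x$. By \eqref{17072845}, $|\lambda_i - \gamma_i^*| \prec \min\{i^{-1/3},(N-i+1)^{-1/3}\}N^{-2/3}$; summing the indicator that $|\lambda_i - \gamma_i^*|$ exceeds the distance from $\gamma_i^*$ to $x$, and using that the quantiles $\gamma_i^*$ have spacing comparable to the square-root edge profile (so that only $O(1)$ of them, up to $N^\epsilon$, can be displaced past any fixed $x$ near the edge, and $O(1)$ in the bulk), gives the bound $N^\epsilon$ for the counting function, hence \eqref{17080225}. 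The supremum over $x\in\mathbb{R}$ is handled by a standard $N^{-C}$-net argument in $x$ on the compact spectral window (using $\|A\|,\|B\|\le C'$), together with monotonicity of both cumulative distribution functions to pass from the net to all $x$.

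The main obstacle I anticipate is not any single regime but the \emph{uniformity at the crossover scales} — specifically, making sure the stochastic domination thresholds $N_0(\epsilon,D)$ from the edge theorems and from the bulk local law of \cite{BES16} can be chosen uniformly in $i$ as $i$ ranges over the overlapping windows $[cN/2, cN]$ (lower) and symmetrically at the top, so that the union bound over all $N$ indices costs only a harmless $N$ factor that is beaten by the arbitrary polynomial decay $N^{-D}$. This requires checking that the edge rigidity \eqref{17072845a} and the bulk rigidity genuinely agree (both give $\prec N^{-2/3}\cdot i^{-1/3} \sim N^{-2/3}$ and $\prec N^{-1+\epsilon}$ respectively, consistent on the overlap) and that the deterministic quantiles $\gamma_i$, $\gamma_i^*$ interpolate smoothly between the square-root edge behavior and the bulk — a consequence of the qualitative properties of $\mu_\alpha\boxplus\mu_\beta$ and $\mu_A\boxplus\mu_B$ proved in Section~\ref{s. subordination at the edge}, in particular the square-root decay \eqref{17080390} at $E_-$ (and its reflection at $E_+$). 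Everything else is routine bookkeeping.
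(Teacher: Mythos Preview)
Your proposal is correct and follows essentially the same route as the paper: reflect to get the upper-edge analogue of Theorem~\ref{thm. rigidity of eigenvalues}, invoke the bulk result of \cite{BES16} for the middle of the spectrum (using Assumption~\ref{a. rigidity entire spectrum}~$(vii)$ to rule out gaps), patch the three regimes, and then read off the Kolmogorov--Smirnov bound \eqref{17080225} from rigidity. The only minor organisational difference is that the paper first patches the \emph{local laws} (edge $+$ bulk) into a uniform estimate $|m_H - m_{\mu_A\boxplus\mu_B}|\prec (N\eta)^{-1}$ on the full spectral domain and then converts this into rigidity via the Helffer--Sj\"ostrand formula (together with the extreme-eigenvalue bounds), whereas you patch the \emph{rigidity statements} directly; both orderings work and the underlying content is the same.
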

\begin{rem}
In this work we focus on the extremal edges. Under Assumption~\ref{a.regularity of the measures} one can indeed prove~\cite{BES18a} that $\mu_\alpha\boxplus\mu_\beta$ is supported on a single interval. In case that $\mu_\alpha$ or $\mu_\beta$ are supported on several intervals, the free convolution $\mu_\alpha\boxplus\mu_\beta$ may also be supported on several intervals. In that case the presented work will directly apply to the smallest and largest edge points.
\end{rem}

 \begin{rem} All of our results above hold also for the orthogonal setup, \ie when $U$ is a random orthogonal matrix Haar distributed on the orthogonal group $\mathcal{O}(N)$. The proof is nearly the same as the unitary setup. A discussion on the necessary modification for the block additive model 
in the bulk regime can be found in Appendix C of \cite{BES16b}. Here for our model, the modification can be done in the same way.  We omit the details.
\end{rem}

\section{Properties of the subordination functions at the regular edge} \label{s. subordination at the edge}
In this section, we collect some key properties of the subordination functions and related quantities, that will often be used 
 in Sections~\ref{s. Entrywise estimate}-\ref{s.rigidity}. We first introduce 
\begin{align}
\mathcal{S}_{AB}&\equiv \mathcal{S}_{AB}(z)\deq  (F'_{A}(\omega_B(z))-1)(F'_{B}(\omega_A(z))-1)-1\,, \nonumber\\
\mathcal{T}_A&\equiv \mathcal{T}_A(z)\deq \frac{1}{2}\Big(F''_{A}(\omega_B(z)) (F'_{B}(\omega_A(z))-1)^2+F''_{B}(\omega_A(z))(F'_{A}(\omega_B(z))-1)  \Big)\,,\nonumber\\
  \mathcal{T}_B&\equiv \mathcal{T}_B(z)\deq \frac{1}{2}\Big(F''_{B}(\omega_A(z)) (F'_{A}(\omega_B(z))-1)^2+F''_{A}(\omega_B(z))(F'_{B}(\omega_A(z))-1)  \Big)\,, \label{17080110}
\end{align}
where we use the shorthand notation $F_A\equiv F_{\mu_A}$ and $F_B\equiv F_{\mu_B}$ for the negative reciprocal Stieltjes transforms of~$\mu_A$ and~$\mu_B$, and where $\omega_A$ and $\omega_B$ are the subordination functions associated  through~\eqref{le definiting equations}. The main result in this section is the following proposition on the spectral domain $\mathcal{D}_\tau(\eta_{\rm m}, \eta_\mathrm{M})$; see~\eqref{17020201}.

\begin{pro}\label{le proposition 3.1} Suppose that  Assumptions \ref{a.regularity of the measures} and  \ref{a. levy distance} hold.  Then, for sufficiently small constant $\tau>0$, we have the following statements:
\begin{itemize}
\item[$(i)$] There exist strictly positive constants $k$ and $K$, such that
\begin{align}
&\min_{i}| a_i-\omega_B(z)|\geq k\,, \qquad& &\min_i | b_i-\omega_A(z)|\geq k\,, \label{17020502}\\
&\big|\omega_A(z)\big|\leq K, \qquad&  &\big|\omega_B(z)\big|\leq K\,, \label{17020503}
\end{align}
hold uniformly on $\mathcal{D}_\tau(\eta_{\rm m},\eta_\mathrm{M})$, for $N$ sufficiently large.
\item[$(ii)$]  For the Stieltjes transform $m_{\mu_A\boxplus\mu_B}$ of $\mu_A\boxplus\mu_B$, we have that
\begin{align}
\Im m_{\mu_A\boxplus\mu_B} (z)\sim
\begin{cases}
\sqrt{\kappa+\eta}\,,\quad& \text{if }\qquad E\in \mathrm{supp}\,\mu_A\boxplus\mu_B\,,\\
\frac{\eta}{\sqrt{\kappa+\eta}}\,, & \text{if }\qquad E\not\in \mathrm{supp}\,\mu_A\boxplus\mu_B\,,
\end{cases}
 \label{17080120}
\end{align}
uniformly on $z=E+\ii\eta\in\mathcal{D}_\tau(\eta_{\rm m},\eta_\mathrm{M})$, for $N$ sufficiently large, with $\kappa$ given in~\eqref{17080102}.
\item[$(iii)$] For $\mathcal{S}_{AB}$, $\mathcal{T}_A$ and $\mathcal{T}_B$ defined in (\ref{17080110}), we have
\begin{align}
\mathcal{S}_{AB}(z)\sim  \sqrt{\kappa+\eta}\,, \quad\qquad |\mathcal{T}_A(z)|\leq C\,,\qquad\quad |\mathcal{T}_B(z)|\leq C\,, \label{17080121}
\end{align}
uniformly on $z\in \mathcal{D}_\tau(\eta_{\rm m}, \eta_\mathrm{M})$, for $N$ sufficiently large, with some constant $C$. In addition, for $z=E+\mathrm{i}\eta\in \mathcal{D}_\tau(\eta_{\rm m}, \eta_\mathrm{M})$ with $|E-E_-|\leq \delta$ and $\eta\leq \delta$ for some sufficiently small constant $\delta>0$, we also have 
\begin{align}
|\mathcal{T}_A(z)|\geq c\,,\quad \qquad  |\mathcal{T}_B(z)|\geq c\,, \label{17080122}
\end{align}
for $N$ sufficiently large, with some strictly positive constant $c=c(\delta)$. 
\item[$(iv)$] For $\omega_A$, $\omega_B$ and $\mathcal{S}_{AB}$ we have
\begin{align}\label{le lipschitz stuff}
|\omega_A'(z)| \leq C\frac{1}{\sqrt{\kappa+\eta}}\,,\quad \qquad |\omega_B'(z)| \leq C\frac{1}{\sqrt{\kappa+\eta}}\,,\quad \qquad |\mathcal{S}_{AB}' (z)|\leq C\frac{1}{\sqrt{\kappa+\eta}}\,,
\end{align}
any $z\in \mathcal{D}_\tau (\eta_{\rm m}, \eta_\mathrm{M})$, for $N$ sufficiently large, with some constant $C$. 
\end{itemize}
\end{pro}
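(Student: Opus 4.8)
The plan is to reduce everything to the scalar subordination system for the limiting profiles $\mu_\alpha,\mu_\beta$ via a stability/continuity argument, and then extract the square-root behaviour at $E_-$ from an explicit analysis of the fixed-point equations near the edge. First I would establish the analogue of all four statements for the pair $(\mu_\alpha,\mu_\beta)$ in a fixed (not $N$-dependent) neighbourhood of $E_-$. The key structural observation is that differentiating the defining relations $F_{\mu_\alpha}(\omega_\beta)=F_{\mu_\beta}(\omega_\alpha)$ and $\omega_\alpha+\omega_\beta-z=F_{\mu_\alpha}(\omega_\beta)$ in $z$ gives a $2\times 2$ linear system for $(\omega_\alpha',\omega_\beta')$ whose determinant is exactly $\mathcal{S}_{\alpha\beta}$; hence $\omega_\alpha'(z)=(F_{\mu_\beta}'(\omega_\alpha)-1)/\mathcal{S}_{\alpha\beta}$ and similarly for $\omega_\beta'$. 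Thus the whole question of edge behaviour boils down to showing $\mathcal{S}_{\alpha\beta}(z)\sim\sqrt{\kappa+\eta}$. To see this, I would argue that at the edge $\omega_\beta(E_-)$ lands at an interior point of $\mathrm{supp}\,\mu_A$ (away from the atoms after discretization, using positivity of $\rho_\alpha$ in the interior from Assumption~\ref{a.regularity of the measures}(i)), so $F_{\mu_\alpha},F_{\mu_\beta}$ and their first two derivatives are bounded and the second derivatives $F_{\mu_\alpha}'',F_{\mu_\beta}''$ are bounded away from zero there — this is the source of $(iii)$, in particular of the lower bound \eqref{17080122} on $\mathcal{T}_\alpha,\mathcal{T}_\beta$. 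Then a Puiseux/implicit-function analysis of the analytic system at $z=E_-$: the fact that $E_-$ is an edge of the free convolution forces $\mathcal{S}_{\alpha\beta}(E_-)=0$ (the system degenerates), and a Taylor expansion around that point, using $\mathcal{T}_\alpha(E_-)\neq 0$ to guarantee the expansion is genuinely of square-root type and not higher order, yields $\mathcal{S}_{\alpha\beta}(z)\sim\sqrt{E_--z}$ and hence \eqref{17080390}, i.e. $\Im m_{\mu_\alpha\boxplus\mu_\beta}$ has the square-root profile of $(ii)$.

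Second, I would transfer all of this to $(\mu_A,\mu_B)$ using Assumption~\ref{a. levy distance}(iv): the Lévy distance bound $\mathbf{d}\le N^{-1+\epsilon}$ together with the uniform boundedness of the supports and the control \eqref{mbound} gives $|F_{\mu_A}-F_{\mu_\alpha}|$, $|F_{\mu_B}-F_{\mu_\beta}|$, and the derivatives thereof, small (polynomially in $N$) uniformly on compact subsets of $\C^+$ staying a fixed distance from the supports — which is exactly the regime relevant here since, by $(i)$, $\omega_B(z)$ stays a distance $k$ from all $a_i$. One then runs a standard perturbative/bootstrap argument on the subordination system: the fixed-point map defining $(\omega_A,\omega_B)$ is a small perturbation of the one defining $(\omega_\alpha,\omega_\beta)$, its linearization is governed by $\mathcal{S}_{AB}$, and since $|\mathcal{S}_{\alpha\beta}(z)|\gtrsim\sqrt{\kappa+\eta}\gtrsim\sqrt{\eta_{\rm m}}=N^{-(1-\gamma)/2}$ on $\mathcal{D}_\tau(\eta_{\rm m},\eta_\mathrm{M})$, the perturbation (of size $N^{-1+\epsilon}$) is much smaller than the stability constant squared, so the perturbed solution exists, is unique, and satisfies $|\omega_A-\omega_\alpha|+|\omega_B-\omega_\beta|\prec N^{-1+\epsilon}/\sqrt{\kappa+\eta}$. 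This immediately upgrades $(i)$ (the min-distance and boundedness bounds survive the perturbation), $(ii)$ (the square-root profile of $\Im m$ is stable), $(iii)$ (closeness of $\mathcal{S}_{AB},\mathcal{T}_A,\mathcal{T}_B$ to $\mathcal{S}_{\alpha\beta},\mathcal{T}_\alpha,\mathcal{T}_\beta$, since the perturbation is far below $\sqrt{\kappa+\eta}$), and then $(iv)$ follows from the Cauchy-type formulas $\omega_A'=(F_B'(\omega_A)-1)/\mathcal{S}_{AB}$, $\omega_B'=(F_A'(\omega_B)-1)/\mathcal{S}_{AB}$, the boundedness of the numerators from $(i)$, and the lower bound on $\mathcal{S}_{AB}$ from $(iii)$; differentiating once more (or using a contour-integral representation and the $\sqrt{\kappa+\eta}$ lower bound on a slightly smaller contour) gives the bound on $\mathcal{S}_{AB}'$.

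The main obstacle I expect is the edge expansion establishing $\mathcal{S}_{\alpha\beta}(z)\sim\sqrt{\kappa+\eta}$ with \emph{matching upper and lower} constants uniformly down to $\eta=\eta_{\rm m}$, including the delicate case $E\notin\mathrm{supp}$ where one must see the crossover between the $\sqrt{\kappa}$ regime ($\eta\ll\kappa$) and the $\sqrt{\eta}$ regime. This requires knowing that the square-root singularity is ``non-degenerate'', i.e. that the would-be higher-order vanishing of $\mathcal{S}_{\alpha\beta}$ is excluded — and this is precisely what the hypothesis $-1<t_-^\alpha,t_-^\beta<1$ in Assumption~\ref{a.regularity of the measures}(ii) buys us: it guarantees $\omega_\beta(E_-)$ sits strictly inside $\mathrm{supp}\,\mu_\alpha$ (equivalently, $\mathrm{Im}\,\omega_\beta(E_-)=0$ but the edge of $\mu_\alpha\boxplus\mu_\beta$ is generated by the vanishing of $\mathcal{S}_{\alpha\beta}$ rather than by an edge of $\mu_\alpha$ or $\mu_\beta$), so that $F_{\mu_\alpha}''(\omega_\beta(E_-))$ and $F_{\mu_\beta}''(\omega_\alpha(E_-))$ are finite and nonzero, yielding $\mathcal{T}_\alpha(E_-),\mathcal{T}_\beta(E_-)\neq 0$ and thus a clean $\mathcal{S}_{\alpha\beta}(z)=c\sqrt{E_--z}\,(1+o(1))$. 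A secondary technical point is handling the atomic nature of $\mu_A,\mu_B$ (so $F_A,F_B$ have poles at the $a_i,b_i$): one controls this purely through the distance bound $(i)$ and the Lévy-distance hypothesis, never needing pointwise regularity of the empirical measures. The orthogonal case needs no change since this section is purely deterministic.
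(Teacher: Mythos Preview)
Your two-step architecture (analyze $(\mu_\alpha,\mu_\beta)$ first, then perturb to $(\mu_A,\mu_B)$ via stability of the subordination system with stability constant $\mathcal{S}$) matches the paper exactly, as does your derivation of $(iv)$ from the explicit formulas $\omega_A'=(F_B'(\omega_A)-1)/\mathcal{S}_{AB}$ and the bound on $\mathcal{S}_{AB}$ from $(iii)$. The perturbation step is also correct in outline and matches Lemma~\ref{thm.17080401}.

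However, your core mechanism for the edge analysis is wrong. You assert that $\omega_\beta(E_-)$ ``lands at an interior point of $\mathrm{supp}\,\mu_\alpha$'' and invoke positivity of $\rho_\alpha$ in the interior. This is impossible: if $\omega_\beta(E_-)$ were real and in the open interior of $\mathrm{supp}\,\mu_\alpha$, then $\Im m_{\mu_\alpha}(\omega_\beta(E_-))>0$, hence $\Im m_{\mu_\alpha\boxplus\mu_\beta}(E_-)>0$, contradicting that $E_-$ is the edge. The actual situation (Lemma~\ref{lemma 2} and crucially Lemma~\ref{lemma 3}) is that $\omega_\beta(E_-)$ is real and lies strictly \emph{below} $E_-^\alpha$, i.e.\ $\Re\omega_\beta(E_-)\le E_-^\alpha-k_0$ for some $k_0>0$. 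It is this strict separation from the support that makes $F_{\mu_\alpha}$ analytic in a neighbourhood of $\omega_\beta(E_-)$, with derivatives controlled by the Nevanlinna representation $F_{\mu_\alpha}'(\omega)=1+\int(x-\omega)^{-2}d\widehat\mu_\alpha$ and $F_{\mu_\alpha}''(\omega)=2\int(x-\omega)^{-3}d\widehat\mu_\alpha>0$ for real $\omega<E_-^\alpha$. Proving this strict separation is where Assumption~\ref{a.regularity of the measures}(ii) with $t_-^\alpha,t_-^\beta<1$ actually enters: the paper's Lemma~\ref{lemma 3} argues by contradiction, using~\eqref{good identity} and the power-law to show that if $\omega_\alpha(E_-)$ or $\omega_\beta(E_-)$ approached the respective support edges, then $|m_{\mu_\alpha\boxplus\mu_\beta}(E_-)|$ would be forced large, contradicting~\eqref{17080326}. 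Without this step you cannot justify boundedness and nonvanishing of $F_{\mu_\alpha}'',F_{\mu_\beta}''$ at the edge, and your Puiseux expansion for $\mathcal{S}_{\alpha\beta}$ has no foundation. The same gap propagates to $(i)$: the lower bound $\min_i|a_i-\omega_B(z)|\ge k$ relies on $\omega_B(z)$ staying uniformly below $E_-^\alpha$ (combined with~\eqref{supab}), not on it being in the interior.
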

The proof of Proposition~\ref{le proposition 3.1} is split into two steps. In the first step, carried out in Subsection~\ref{appendix. properties of free convolution}, we derive the analogous statements for the $N$-independent measures $\mu_\alpha$ and $\mu_\beta$. This step requires only Assumption~\ref{a.regularity of the measures}. In the second step, carried out in Subsection~\ref{app:stab}, we show that the statements carry over to the $N$-dependent measures $\mu_A$ and $\mu_B$ under Assumption~\ref{a. levy distance}, for $N$ sufficiently large.

\subsection{Free convolution measure $\mu_\alpha\boxplus\mu_\beta$} \label{appendix. properties of free convolution}
In this subsection, we derive some properties of the free additive convolution of $\mu_\alpha$ and $\mu_\beta$. We will always assume that $\mu_\alpha$ and $\mu_\beta$ satisfy Assumption~\ref{a.regularity of the measures}. From Assumption \ref{a.regularity of the measures} $(iii)$ we know that
\begin{align}
\sup_{z\in \mathbb{C}^+} |m_{\mu_\alpha\boxplus\mu_\beta}(z)|\leq C.  \label{17080326}
\end{align}
In addition, under Assumption \ref{a.regularity of the measures}, we see from Theorem 2.3 and Remark 2.4 in \cite{Bel1} that $\omega_\alpha(z)$, $\omega_\beta(z)$ and $m_{\mu_\alpha\boxplus\mu_\beta}(z)$ can be extended continuously to $\mathbb{C}^+\cup \mathbb{R}$. This together with (\ref{17080326}) implies that $\mu_\alpha\boxplus\mu_\beta$ is absolutely continuous with a continuous and bounded density function. 

Recall from Assumption~\ref{a.regularity of the measures} that $\mathrm{supp}\,\mu_\alpha= [E_-^\alpha,E_+^\alpha]$ and $\mathrm{supp}\,\mu_\beta=[E_-^\beta,E_+^\beta]$. We introduce the spectral domain $\mathcal{E}\subset\C$ by setting
\begin{align}
 \mathcal{E}\deq\{z\in\C^+\cup\R\,:\, E_-^\alpha+E_-^\beta-1\le \re z\le E_+^\alpha+E_+^\beta+1\,, 0\le\im z\le \eta_\mathrm{M}\}\,, 
\end{align}
where $\eta_\mathrm{M}>0$ is any constant. By Lemma~3.1 in~\cite{Voi86}, we have that $\mathrm{supp}\,\mu_\alpha\boxplus\mu_\beta\subset \mathcal{E}\cap\R$.

\begin{lem}\label{lemma 1}
There exists a constant $C$ such that
 \begin{align}
 \sup_{z\in\mathcal{E}}( |\omega_\alpha(z)|+|\omega_\beta(z)|)\le C\,.
 \end{align}
\end{lem}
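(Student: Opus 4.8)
\textbf{Proof proposal for Lemma~\ref{lemma 1}.}
The plan is to bound the subordination functions $\omega_\alpha,\omega_\beta$ on the compact region $\mathcal{E}$ by combining three facts: the uniform boundedness of $m_{\mu_\alpha\boxplus\mu_\beta}$ from \eqref{17080326}, the subordination identities \eqref{170730100} (with subscripts $(\alpha,\beta)$), and elementary bounds on negative reciprocal Stieltjes transforms of compactly supported measures. First I would recall that for any probability measure $\mu$ supported in $[-R,R]$ one has $|m_\mu(z)|\le 1/\mathrm{dist}(z,[-R,R])$, and, more usefully here, that $F_\mu(z)=z+O(1)$ for $z$ bounded away from $0$ in modulus of $m_\mu$, or more precisely $F_\mu(z)= z - \int x\,\dd\mu(x) + O(1/|z|)$ as $|z|\to\infty$, while $|F_\mu(z)|$ stays bounded on compact subsets of $\C^+$ provided $m_\mu(z)$ stays bounded away from zero there. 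Since $\mu_\alpha,\mu_\beta$ have compact support (inside $[E_-^\alpha,E_+^\alpha]$, resp.\ $[E_-^\beta,E_+^\beta]$) and, by Assumption~\ref{a.regularity of the measures}$(iii)$ together with \cite{Voi93}, $m_{\mu_\alpha\boxplus\mu_\beta}$ is bounded on all of $\C^+\cup\R$, the function $F_{\mu_\alpha\boxplus\mu_\beta}=-1/m_{\mu_\alpha\boxplus\mu_\beta}$ is bounded below in modulus, hence $\omega_\alpha(z)+\omega_\beta(z)=z+F_{\mu_\alpha\boxplus\mu_\beta}(z)$ is bounded on $\mathcal{E}$ since $z$ ranges over a compact set.

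It remains to separate the two subordination functions. From \eqref{170730100}, $m_{\mu_\alpha}(\omega_\beta(z))=m_{\mu_\alpha\boxplus\mu_\beta}(z)$, and the right-hand side is bounded; moreover, by the Nevanlinna representation, $\Im m_{\mu_\alpha\boxplus\mu_\beta}(z)\ge 0$ and $\Im\omega_\beta(z)\ge\Im z\ge 0$ (Proposition~\ref{le prop 1}$(i)$). Suppose for contradiction that $|\omega_\beta(z_n)|\to\infty$ along some sequence $z_n\in\mathcal{E}$. Since $\mathcal{E}$ is compact we may pass to a subsequence with $z_n\to z_\ast\in\mathcal{E}$. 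Because $\Im\omega_\beta\ge 0$ and $|\omega_\beta|\to\infty$, either $\omega_\beta(z_n)$ escapes to $\infty$ within $\C^+$, in which case $m_{\mu_\alpha}(\omega_\beta(z_n))\to 0$ (as $\mu_\alpha$ is compactly supported and $|m_{\mu_\alpha}(w)|\le 1/(|w|-R_\alpha)$ for $|w|$ large), forcing $m_{\mu_\alpha\boxplus\mu_\beta}(z_n)\to 0$; but this contradicts the quantitative lower bound $|m_{\mu_\alpha\boxplus\mu_\beta}(z)|\ge c>0$ valid on the compact set $\mathcal{E}$, which itself follows from \eqref{le definiting equations}/\eqref{170730100} and the fact that $\omega_\alpha(z)+\omega_\beta(z)-z=-1/m_{\mu_\alpha\boxplus\mu_\beta}(z)$ has bounded right-hand side — wait, that sum identity is exactly what we need, so let me argue directly: since $\omega_\alpha+\omega_\beta$ is bounded on $\mathcal{E}$ and $z$ is bounded, $|\omega_\beta|\to\infty$ forces $|\omega_\alpha|\to\infty$ as well, with $\omega_\alpha(z_n)\approx -\omega_\beta(z_n)$; but $\Im\omega_\alpha\ge 0$ and $\Im\omega_\beta\ge 0$ then force both imaginary parts to be bounded, so the divergence is along the real axis, $\Re\omega_\beta(z_n)\to\pm\infty$. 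In that regime $m_{\mu_\alpha}(\omega_\beta(z_n))\to 0$ and $m_{\mu_\beta}(\omega_\alpha(z_n))\to 0$, hence $m_{\mu_\alpha\boxplus\mu_\beta}(z_n)\to 0$, contradicting $|m_{\mu_\alpha\boxplus\mu_\beta}|\le C$ together with the normalization $\omega_\alpha+\omega_\beta-z=-1/m_{\mu_\alpha\boxplus\mu_\beta}$ whose left side would then be $\to 0$ while its right side $\to\infty$. This contradiction establishes $\sup_{\mathcal{E}}|\omega_\beta|<\infty$, and symmetrically $\sup_{\mathcal{E}}|\omega_\alpha|<\infty$.

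The main obstacle I anticipate is making the contradiction argument fully quantitative and uniform rather than merely a compactness/sequential argument: one wants a constant $C$ depending only on $\eta_\mathrm{M}$, the supports of $\mu_\alpha,\mu_\beta$, and the constant in \eqref{17080326}. To get this, I would avoid the subsequence extraction and instead argue directly: write $F_{\mu_\alpha}(\omega_\beta(z))=F_{\mu_\alpha\boxplus\mu_\beta}(z)$ from \eqref{le definiting equations}, note $|F_{\mu_\alpha\boxplus\mu_\beta}(z)|=1/|m_{\mu_\alpha\boxplus\mu_\beta}(z)|$ — here one needs also a \emph{lower} bound on $|m_{\mu_\alpha\boxplus\mu_\beta}|$ on $\mathcal{E}$, which holds because $\omega_\alpha,\omega_\beta\in\C^+$ have $\Im\ge\Im z$ so $|m_{\mu_\alpha\boxplus\mu_\beta}(z)|=|m_{\mu_\alpha}(\omega_\beta(z))|$ and $\Im m_{\mu_\alpha}(w)\ge \Im w\cdot\int(x-\Re w)^{-2}\ldots$; more simply, for $\Im z$ bounded below this is standard, and for $\Im z\to 0$ one invokes the continuous extension and strict positivity of the density of $\mu_\alpha\boxplus\mu_\beta$ on the relevant part, or the fact that $m$ never vanishes on $\C^+\cup\R$ for a measure with connected support. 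Granting the two-sided bound $c\le|F_{\mu_\alpha\boxplus\mu_\beta}(z)|\le C$ on $\mathcal{E}$, the identity $F_{\mu_\alpha}(\omega_\beta(z))=F_{\mu_\alpha\boxplus\mu_\beta}(z)$ confines $\omega_\beta(z)$ to the preimage under $F_{\mu_\alpha}$ of a bounded set, and since $F_{\mu_\alpha}(w)\sim w$ for large $|w|$ this preimage is bounded; a short covering/continuity argument then yields the uniform constant. I expect roughly half a page once the lower bound on $|m_{\mu_\alpha\boxplus\mu_\beta}|$ is pinned down cleanly.
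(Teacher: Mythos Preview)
Your first paragraph contains a direction error that propagates through the whole argument: from the upper bound $|m_{\mu_\alpha\boxplus\mu_\beta}(z)|\le C$ in~\eqref{17080326} you get $|F_{\mu_\alpha\boxplus\mu_\beta}(z)|\ge 1/C$, a \emph{lower} bound on $|F|$, not an upper bound. Hence $\omega_\alpha(z)+\omega_\beta(z)=z+F_{\mu_\alpha\boxplus\mu_\beta}(z)$ is \emph{not} shown to be bounded on $\mathcal{E}$, and the second paragraph's separation argument (``since $\omega_\alpha+\omega_\beta$ is bounded \dots'') collapses. You notice this in your third paragraph: everything hinges on a \emph{lower} bound $|m_{\mu_\alpha\boxplus\mu_\beta}(z)|\ge c>0$ uniformly on $\mathcal{E}$. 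But your proposed fixes do not secure it: the density of $\mu_\alpha\boxplus\mu_\beta$ is \emph{not} strictly positive on $\mathcal{E}\cap\R$ (it vanishes at the edges and outside the support), and connected support of $\mu_\alpha\boxplus\mu_\beta$ is \emph{not} available at this stage --- it appears only as the extra hypothesis $(vii)$ in Assumption~\ref{a. rigidity entire spectrum}, which Lemma~\ref{lemma 1} must not use. One could in principle rescue the lower bound via the subordination identity $m_{\mu_\alpha\boxplus\mu_\beta}=m_{\mu_\alpha}\circ\omega_\beta$ together with non-vanishing of $m_{\mu_\alpha}$ on $\overline{\C^+}$ (which does follow from the connected support of $\mu_\alpha$ in Assumption~\ref{a.regularity of the measures}$(i)$), but you have not proposed this, and it still needs care at the edges of $\mathrm{supp}\,\mu_\alpha$.

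The paper sidesteps the lower bound on $|m_{\mu_\alpha\boxplus\mu_\beta}|$ entirely by a direct two-case contradiction using only the raw subordination equations. If both $|\omega_\alpha(z)|>LM$ and $|\omega_\beta(z)|>L$, one expands $-m_{\mu_\alpha}(\omega_\beta)=1/(\omega_\alpha+\omega_\beta-z)$ and its $\beta$-analogue for large arguments to force $|\omega_\beta|/|\omega_\alpha|=O(|\omega_\alpha|^{-1})$, contradicting $|\omega_\beta|>L$. If instead $|\omega_\alpha(z)|>LM$ but $|\omega_\beta(z)|\le L$, the equation gives $|m_{\mu_\alpha}(\omega_\beta(z))|\le 2/(LM)$; but $m_{\mu_\alpha}$ has no zeros on bounded subsets of $\overline{\C^+}$ because $\mu_\alpha$ has connected support (this is where Assumption~\ref{a.regularity of the measures}$(i)$ enters), so $|m_{\mu_\alpha}(w)|\ge c(L)$ for $|w|\le L$, a contradiction for $M$ large. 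This argument is quantitative from the outset and uses only the structure of $\mu_\alpha,\mu_\beta$, never a global lower bound on $|m_{\mu_\alpha\boxplus\mu_\beta}|$.
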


\begin{proof}
 Let $L>\max\{|E_+^\alpha+E_+^\beta+1|,|E_-^\alpha+E_-^\beta-1|\}$ and $M>10$ be large numbers to be chosen later. We will argue by contradiction. Assume first that there is $z\in\mathcal{E}$ such that
 \begin{align}\label{ram1}
  |\omega_\alpha(z)|> LM \,,\qquad\qquad |\omega_\beta(z)|> L\,.
 \end{align}
Then we have from~\eqref{le definiting equations} that
\begin{align}
 \frac{1}{\omega_\alpha(z)+\omega_\beta(z)-z}=-\int_\R\frac{\dd\mu_\alpha(x)}{x-\omega_\beta(z)}&=\frac{1}{\omega_\beta(z)}+O((\omega_\beta(z))^{-2})\label{le first s}\,,\\
 \frac{1}{\omega_\alpha(z)+\omega_\beta(z)-z}=-\int_\R\frac{\dd\mu_\beta(x)}{x-\omega_\alpha(z)}&=\frac{1}{\omega_\alpha(z)}+O((\omega_\alpha(z))^{-2})\,,\label{le second s}
\end{align}
as $L\rightarrow\infty$. Thus we get from~\eqref{le second s}, as $z\in\mathcal{E}$, that in the same limit
\begin{align}\label{ram11}
 \frac{\omega_\beta(z)}{\omega_\alpha(z)}=O\left((\omega_\alpha(z)^{-1}\right)\,.
\end{align}
But then we have from~\eqref{ram1} and~\eqref{ram11} that
\begin{align}
  \frac{L}{|\omega_\alpha(z)|}\le\frac{|\omega_\beta(z)|}{|\omega_\alpha(z)|}\le C\frac{1}{|\omega_\alpha(z)|}\,,
\end{align}
hence for $L$ sufficiently large, we get a contradiction.

Next, assume that there is $z\in\mathcal{E}$ such that
\begin{align}\label{ram2}
 |\omega_\alpha(z)|> LM \,,\qquad\qquad |\omega_\beta(z)|\le L\,.
\end{align}
Then we conclude from~\eqref{le definiting equations} that
\begin{align}\label{ram4}
 \frac{1}{|m_{\mu_\alpha}(\omega_\beta(z))|}=|\omega_\alpha(z)+\omega_\beta(z)-z|\ge\frac{LM}{2}\,,
\end{align}
for $M$ sufficiently large, where we used that $z\in\mathcal{E}$. On the other hand, the Stieltjes transform $m_{\mu_\alpha}(z)$ does not have any zeros in $\mathcal{E}$ as the support of $\mu_\alpha$ is connected. Thus there is a constant $c>0$, depending on $L$, such that $|m_{\mu_\alpha}(z')|\ge c$, for all $z'\in\C^+$ with $|z'|\le L$. Hence, for $M$ sufficiently large, we get a contradiction from~\eqref{ram4}.

Finally, as both, ~\eqref{ram1} and~\eqref{ram2}, have been ruled out, we can conclude that
\begin{align}
 |\omega_\alpha(z) |\le LM\,,\qquad\qquad |\omega_\beta(z)|\le L\,,
\end{align}
for all $z\in \mathcal{E}$. This completes the proof of Lemma~\ref{lemma 1}. 
\end{proof}

Recall from (\ref{17080330}) that $E_-=\inf\,\mathrm{supp}\,\mu_\alpha\boxplus\mu_\beta$. Recall further that, for any spectral parameter $z$, $\kappa=\kappa(z)$ defined in (\ref{17080102}) is the distance of $\Re z$ to the endpoints of $\mathrm{supp}(\mu_\alpha\boxplus\mu_\beta)$. 

\begin{lem}\label{lemma 2}
Let $u\in\R$ with $u\le E_-$, then we have
 \begin{align}\label{le C15}
  \re\omega_\alpha(u)\le E_-^\beta\,,\qquad\qquad \re\omega_\beta(u)\le E_-^\alpha\,.
 \end{align}
 Moreover, $\re\omega_\alpha$ and $\re\omega_\beta$ are monotone increasing on $(-\infty,E_-)$.
\end{lem}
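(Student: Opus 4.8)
The plan is to reduce the statement to elementary monotonicity and sign properties of the scalar Stieltjes transforms $m_{\mu_\alpha},m_{\mu_\beta},m_{\mu_\alpha\boxplus\mu_\beta}$ on the real half-line to the left of their supports; the crucial intermediate fact is that $\omega_\alpha$ and $\omega_\beta$ are \emph{real-valued} on $(-\infty,E_-)$. Throughout I use that, under Assumption~\ref{a.regularity of the measures}, $\omega_\alpha,\omega_\beta$ and $m_{\mu_\alpha\boxplus\mu_\beta}$ extend continuously to $\C^+\cup\R$ (as recorded above from~\cite{Bel1}), so the two subordination identities of~\eqref{170730100}, with $(A,B)$ replaced by $(\alpha,\beta)$, persist at real spectral parameters, with $m_{\mu_\alpha},m_{\mu_\beta}$ read as the continuous boundary extensions from $\C^+$ when evaluated at real points.

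\emph{Step 1 (reality and localization).} Fix $u<E_-$. Since $u<E_-=\inf\mathrm{supp}\,\mu_\alpha\boxplus\mu_\beta$, the value $m_{\mu_\alpha\boxplus\mu_\beta}(u)=\int(x-u)^{-1}\dd(\mu_\alpha\boxplus\mu_\beta)(x)$ is real and strictly positive. From the identity $m_{\mu_\alpha}(\omega_\beta(u))=m_{\mu_\alpha\boxplus\mu_\beta}(u)$ together with $\im\omega_\beta(u)\ge\im u=0$ (Proposition~\ref{le prop 1}$(i)$) I first get $\im\omega_\beta(u)=0$: were it positive, $\im m_{\mu_\alpha}(\omega_\beta(u))=\im\omega_\beta(u)\int|x-\omega_\beta(u)|^{-2}\dd\mu_\alpha(x)>0$ would contradict the reality of the right-hand side. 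Hence $\omega_\beta(u)\in\R$. Next, the boundary extension of $m_{\mu_\alpha}$ satisfies $\im m_{\mu_\alpha}(t)=\pi\rho_\alpha(t)>0$ for $t\in(E_-^\alpha,E_+^\alpha)$ (Stieltjes inversion and continuity of $\rho_\alpha$), so $\omega_\beta(u)\notin(E_-^\alpha,E_+^\alpha)$; and $m_{\mu_\alpha}(t)=\int(x-t)^{-1}\dd\mu_\alpha(x)<0$ for every $t>E_+^\alpha$, so $\omega_\beta(u)\notin(E_+^\alpha,\infty)$. Thus $\omega_\beta$ maps the connected set $(-\infty,E_-)$ continuously into $(-\infty,E_-^\alpha]\cup\{E_+^\alpha\}$, so its image, being connected, lies in one of the two connected components of this set; the component $\{E_+^\alpha\}$ is impossible, since it would force $\omega_\beta$ to be constant on a real interval and hence, by analyticity and~\eqref{le limit of omega}, constant on $\C^+$. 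Therefore $\re\omega_\beta(u)=\omega_\beta(u)\le E_-^\alpha$ for all $u<E_-$, and $\re\omega_\beta(E_-)\le E_-^\alpha$ follows by continuity; exchanging the roles of $\alpha$ and $\beta$ gives $\re\omega_\alpha(u)\le E_-^\beta$. This is~\eqref{le C15}. Finally, the bound is in fact strict, $\omega_\beta(u)<E_-^\alpha$ for $u<E_-$: this is automatic if $m_{\mu_\alpha}(E_-^\alpha)=+\infty$ (then $m_{\mu_\alpha}(\omega_\beta(u))=m_{\mu_\alpha\boxplus\mu_\beta}(u)<\infty$ forces $\omega_\beta(u)\neq E_-^\alpha$), and otherwise follows because $u\mapsto m_{\mu_\alpha\boxplus\mu_\beta}(u)$ is strictly increasing on $(-\infty,E_-)$ while $m_{\mu_\alpha}(t)\le m_{\mu_\alpha}(E_-^\alpha)$ for $t\le E_-^\alpha$.

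\emph{Step 2 (monotonicity).} On $(-\infty,E_-^\alpha)$ the map $t\mapsto m_{\mu_\alpha}(t)$ has strictly positive derivative $\int(x-t)^{-2}\dd\mu_\alpha(x)$, hence is a strictly increasing bijection onto its image, with strictly increasing inverse $g_\alpha$. Similarly $u\mapsto m_{\mu_\alpha\boxplus\mu_\beta}(u)$ is strictly increasing on $(-\infty,E_-)$ (derivative $\int(x-u)^{-2}\dd(\mu_\alpha\boxplus\mu_\beta)(x)>0$). Since $\omega_\beta(u)<E_-^\alpha$ and $m_{\mu_\alpha}(\omega_\beta(u))=m_{\mu_\alpha\boxplus\mu_\beta}(u)$ for $u<E_-$, we obtain $\omega_\beta(u)=g_\alpha\big(m_{\mu_\alpha\boxplus\mu_\beta}(u)\big)$ on $(-\infty,E_-)$, a composition of two strictly increasing functions; hence $\re\omega_\beta=\omega_\beta$ is strictly increasing there, and likewise for $\re\omega_\alpha$ after exchanging $\alpha$ and $\beta$. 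This completes the argument.

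The routine inputs (differentiation under the integral, Stieltjes inversion) cause no trouble, and Step~2 is essentially a formality once Step~1 is in place. The one delicate point is Step~1: pinning down that $\omega_\beta(u)$ is real and lies (weakly, then strictly) to the left of $E_-^\alpha$ rather than inside $\mathrm{supp}\,\mu_\alpha$ or to its right. This rests on the continuity of $\omega_\alpha,\omega_\beta$ up to $\R$ (imported from~\cite{Bel1}) combined with the sign of $\im m_{\mu_\alpha}$ on the support, the sign of $m_{\mu_\alpha}$ to the right of it, and a connectedness argument; notably it uses no information about the square-root behavior of $\mu_\alpha\boxplus\mu_\beta$ at $E_-$, which is only established later in this section.
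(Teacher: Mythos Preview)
Your proof is correct and follows essentially the same route as the paper's: both localize $\omega_\beta(u)$ using the subordination identity $m_{\mu_\alpha}(\omega_\beta(u))=m_{\mu_\alpha\boxplus\mu_\beta}(u)$, the positivity of $m_{\mu_\alpha\boxplus\mu_\beta}$ below its support, the positivity of $\im m_{\mu_\alpha}$ inside $(E_-^\alpha,E_+^\alpha)$, and the negativity of $m_{\mu_\alpha}$ to the right of $E_+^\alpha$; and both deduce monotonicity from the strict monotonicity of $m_{\mu_\alpha}$ on $(-\infty,E_-^\alpha)$ and of $m_{\mu_\alpha\boxplus\mu_\beta}$ on $(-\infty,E_-)$. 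The only substantive organizational difference is that you first prove $\omega_\alpha,\omega_\beta$ are \emph{real} on $(-\infty,E_-)$ and then argue with values, whereas the paper works throughout with real parts (splitting into the cases $\re\omega_\alpha(y')\in(E_-^\beta,E_+^\beta)$ and $\re\omega_\alpha(y')\ge E_+^\beta$ and deriving a contradiction from $\re m_{\mu_\beta}(\omega_\alpha(y'))<0$ versus $\re m_{\mu_\alpha\boxplus\mu_\beta}(y')>0$). Your route is a touch cleaner, and the connectedness argument you use to discard $\{E_+^\alpha\}$ is fine once Schwarz reflection (implicit in your appeal to analyticity) is noted.
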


\begin{proof}
We argue by contradiction. Assume that there exists $y'$ with $y'\le E_-$ such that $\re \omega_\alpha(y')>E_-^\beta$. Then either $\Re\omega_\alpha(y')\in(E_-^\beta,E_+^\beta)$ or $\Re\omega_\alpha(y')\ge E_+^\beta$. In the first case, using that the imaginary part of the identity $m_{\mu_\alpha\boxplus\mu_\beta}(z)= m_\alpha(\omega_\beta(z))$, we conclude that $\im m_{\mu_\alpha\boxplus\mu_\beta}(y')>0$, \ie the density of $\mu_\alpha\boxplus\mu_\beta$ at $y'$ is strictly positive. This contradicts the definition of $E_-$ (as the lowest endpoint $\mathrm{supp}\,\mu_\alpha\boxplus\mu_\beta$).

In the second case, $\re\omega_\alpha(y')\ge E_+^\beta$, we have
\begin{align}\label{llk}
 \re m_{\mu_\beta}(\omega_\alpha(y'))=\int_{E_-^\beta}^{E_+^\beta}\frac{ (x-\re\omega_\alpha(y'))\dd\mu_\beta(x)}{|x-\omega_\alpha(y')|^2}<0\,.
\end{align}
However, since $\re m_{\mu_\beta}(\omega_\alpha(y'))=\re m_{\mu_\alpha\boxplus\mu_\beta}(y')$, we get a contradiction as
\begin{align}
 \re m_{\mu_\alpha\boxplus\mu_\beta}(y')=\int_y^\infty\frac{\dd\mu_\alpha\boxplus\mu_\beta(x)}{x-y'}>0\,,
\end{align}
by the definition of $E_-$. 

From the above, we get $\re \omega_\alpha(y')\le E_-^\beta$. Repeating the argument for $\omega_\beta$, we~obtain~\eqref{le C15}.

Finally, that $\re\omega_\alpha$ and $\re\omega_\beta$ are increasing on $(-\infty,E_-)$ follows from the observation that $\re m_{\mu_\alpha\boxplus\mu_\beta}$ is increasing on $(-\infty,E_-)$, the subordination property $m_{\mu_\alpha\boxplus\mu_\beta}(z)=m_{\mu_\beta}(\omega_\alpha(z))$ and~\eqref{llk}. The same argument shows that $\re \omega_\alpha$ is increasing on $(-\infty,E_-)$. This finishes the proof of Lemma~\ref{lemma 2}.
\end{proof}

We next show that we actually have $\re \omega_\alpha(E_-)\leq E_-^\beta-k_0$ and $\re \omega_\beta(E_-)\le E_-^\alpha-k_0$, for some constant $k_0>0$. Our argument relies on the following computational lemma.

\begin{lem}\label{lemma computation}
Let $\omega=\lambda+\ii \nu$, with $\nu\ge0$ and $|\omega|\le \vartheta$, for some small $\vartheta>0$. Let $-1<t<1$. 
Then,
\begin{align}
 \int_0^\vartheta\frac{x^t\,\dd x}{(x-\lambda)^2+\nu^2}\sim\begin{cases} \frac{\lambda^t}{\nu}\,,\qquad &\textrm{if}\qquad \lambda>\nu\,,\\ 
 |\omega|^{t-1}\sim \lambda^{t-1}\,,\qquad &\textrm{if}\qquad \lambda<-\nu\,,\\
                                              \nu^{t-1}\,,\qquad &\textrm{if}\qquad \nu>|\lambda|\,.
                                             \end{cases}
\end{align}
\end{lem}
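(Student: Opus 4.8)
The plan is to estimate the integral $I(\omega)\deq\int_0^\vartheta \frac{x^t\,\dd x}{(x-\lambda)^2+\nu^2}$ by splitting the interval $[0,\vartheta]$ according to the natural length scale set by the denominator, which is $|\omega|\sim\max\{|\lambda|,\nu\}$ (or just $\nu$ when $\lambda\ge 0$, since then $x-\lambda$ can be small). In each regime the denominator is essentially constant on the relevant part of the integration range, so the integral reduces to $\int x^t\,\dd x$ over a suitable subinterval, which is elementary because $-1<t<1$ guarantees that $x^t$ is integrable near $0$ and the primitive $\frac{x^{t+1}}{t+1}$ is well behaved.

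\textbf{Case $\lambda>\nu$ (so $\lambda>0$).} Here the denominator vanishes (in the sense of attaining its minimum $\nu^2$) near $x=\lambda$, which lies inside $(0,\vartheta)$. Split $[0,\vartheta]$ into the ``near'' region $|x-\lambda|\le\lambda/2$ and the ``far'' regions. On the near region the denominator is $\sim(x-\lambda)^2+\nu^2$ with $x\sim\lambda$, so $x^t\sim\lambda^t$ and the contribution is $\sim\lambda^t\int_{-\lambda/2}^{\lambda/2}\frac{\dd s}{s^2+\nu^2}\sim\lambda^t\cdot\frac{1}{\nu}$ (using $\lambda>\nu$ so that $\arctan(\lambda/\nu)\sim 1$). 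On the far regions $|x-\lambda|\gtrsim\lambda$, so the denominator is $\gtrsim\lambda^2$, and $\int_0^{\lambda/2}x^t\lambda^{-2}\,\dd x\sim\lambda^{t+1}\lambda^{-2}=\lambda^{t-1}$, which is $\le\lambda^t/\nu$ since $\nu<\lambda$; similarly for $x\in[3\lambda/2,\vartheta]$ one gets a contribution $\lesssim\lambda^{t-1}$ as well. Hence $I(\omega)\sim\lambda^t/\nu$.

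\textbf{Case $\lambda<0$, $|\lambda|>\nu$.} Then $x-\lambda=x+|\lambda|\ge|\lambda|>0$ for all $x\ge 0$, so the denominator is $\sim(x+|\lambda|)^2$. Thus $I(\omega)\sim\int_0^\vartheta\frac{x^t\,\dd x}{(x+|\lambda|)^2}$. For the upper bound, bound the integrand by $x^t|\lambda|^{-2}$ on $[0,|\lambda|]$, giving $\lesssim|\lambda|^{t+1}|\lambda|^{-2}=|\lambda|^{t-1}$, and by $x^{t-2}$ on $[|\lambda|,\vartheta]$, giving $\lesssim|\lambda|^{t-1}$ (here $t-2<-1$ makes the integral converge at the lower end $|\lambda|$ and be dominated by it). For the matching lower bound restrict to $x\in[|\lambda|/2,|\lambda|]$ where integrand $\gtrsim|\lambda|^{t-2}$, yielding $\gtrsim|\lambda|^{t-1}$. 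Since $|\omega|\sim|\lambda|$ in this regime, this is the claim.

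\textbf{Case $\nu>|\lambda|$.} Then the denominator satisfies $(x-\lambda)^2+\nu^2\sim x^2+\nu^2$ (because $|\lambda|<\nu$ and $|x-\lambda|\le x+|\lambda|\lesssim x+\nu$, while it is also $\ge\nu^2$). So $I(\omega)\sim\int_0^\vartheta\frac{x^t\,\dd x}{x^2+\nu^2}$. On $[0,\nu]$ the denominator is $\sim\nu^2$, giving $\sim\nu^{-2}\int_0^\nu x^t\,\dd x\sim\nu^{-2}\nu^{t+1}=\nu^{t-1}$; on $[\nu,\vartheta]$ the integrand is $\sim x^{t-2}$, giving $\lesssim\nu^{t-1}$ again. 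Hence $I(\omega)\sim\nu^{t-1}$. I do not expect a serious obstacle anywhere; the only point requiring mild care is making sure the ``far'' contributions are genuinely subleading in the first case and that one produces two-sided ($\sim$) bounds rather than just upper bounds, which is handled by the explicit restricted subintervals indicated above, and by invoking $-1<t<1$ to control the behavior of $\int x^t$ at both endpoints.
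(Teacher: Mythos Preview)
Your proof is correct and is exactly the kind of elementary case-by-case estimation the paper has in mind; the paper's own proof consists of the single sentence ``Follows from elementary estimations.'' Your argument supplies the details the paper omits, and there are no gaps.
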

\begin{proof}
 Follows from elementary estimations.
\end{proof}

Recall from~\eqref{le reciprocal m} that $F_{\mu}(w)=-1/m_{\mu}(w)$, $w\in\C^+$, denotes the negative reciprocal Stieltjes transform of any probability measure $\mu$. As $F_{\mu}\,:\C^+\rightarrow \C^+$ is analytic, and since $\mu$ is a probability measure, it admits the Nevanlinna  representation
\begin{align}\label{representation}
 F_{\mu}(z)-z=\Re F_{\mu}(\ii)+\int_\R\left(\frac{1}{x-z}-\frac{x}{1+x^2}\right)\,\dd \widehat\mu(x)\,,
\end{align}
 where $\widehat\mu$ is a Borel measure on $\R$. Assuming in addition that $\mu$ is compactly supported, a large $z$-expansion of both sides of~\eqref{representation} reveals that

\begin{align}
 \int_\R x\,\dd\mu(x)=\Re F_{\mu}(\ii)-\int_\R\frac{x}{1+x^2}\,\dd \widehat\mu(x)\,,
 \end{align}
 and
 \begin{align}\label{representation mass}
 \widehat\mu(\R)=\int_\R x^2\,\dd\mu(x)-\Big(\int_\R x\,\dd\mu(x)\Big)^2\,.
\end{align}

\begin{lem}\label{lemma 6}
Let $\mu$ be a probability measure on $\R$ which is absolutely continuous with respect to Lebesgue measure, is of bounded support and satisfies $m_{\mu}(x)\not=0$, for all $x\in\R\backslash \mathrm{supp}\, {\mu}$. Let $\widehat\mu$ be a Borel measure on $\R$ such that~\eqref{representation} holds, then we have that
\begin{align}\label{lefrasu}
 \mathrm{supp}\,\mu=\mathrm{supp}\,\widehat\mu\,.
\end{align}
\end{lem}
\begin{proof}
The proof is almost identical to the proof of Lemma~3.2 in~\cite{BES18a}, we repeat it here for convenience of the reader. Outside the support of $\mu$, the Stieltjes transform $m_{\mu}$ extends continuously to the real line and is real valued there. Taking the imaginary parts in~\eqref{representation} and using that $F_{\mu}(z)=-1/m_{\mu}(z)$, we get
\begin{align}\label{le smanl1}
 \frac{\im m_{\mu}(z)}{|m_{\mu}(z)|^2}-\im z=\im F_{\mu}(z)-z=\int_\R\frac{\im z}{|y-z|^2}\dd\widehat\mu(y)\,.
\end{align}
Since $m_{\mu}(x)\not=0$, for all $x\in\R\backslash \mathrm{supp}\, {\mu}$, we can take the limit $\im z\searrow 0$ in~\eqref{le smanl1}, and hence conclude by the Stieltjes inversion formula that $\widehat\mu$ is absolutely continuous with respect to Lebesgue measure on $\R\backslash\mathrm{supp}\,\mu_\alpha$ with vanishing density function. We conclude that $\mathrm{supp}\,\widehat\mu \subseteq \mathrm{supp}\,\mu$.

To conclude that $\mathrm{supp}\,\mu  \subseteq \mathrm{supp}\,\widehat\mu$ we argue by contradiction: Suppose that $\mathrm{supp}\,\widehat\mu$
is a proper subset of  $\mathrm{supp}\,\mu$. Then there is a non-empty open interval $I\subset\mathrm{supp}\,\mu\backslash\mathrm{supp}\,\widehat{\mu}$ such that $f(\omega)\deq F_\mu(\omega)-\omega\,:\,\C^+\rightarrow\C^+$ extends continuously to $I$ with $\im f(\omega)=0$, for all $\omega\in I$. Hence by the Schwarz reflection principle, $f$ extends analytically through $I$ and  $m_\mu$ is meromorphic on~$I$. However, since $I\subset\mathrm{supp}\,\mu$, we have $\lim_{\eta\searrow 0}\im m_{\mu}(\omega+\ii\eta)>0$ by Assumption~\ref{a.regularity of the measures}, for almost all $\omega\in I$. Since $m_\mu$ is meromorphic on $I$ and $\im f(\omega)=\im m_\mu(\omega)/|m_\mu(\omega)|^2$, $\omega\in I$, we hence also have $\lim_{\eta\searrow 0}\im f(\omega+\ii\eta)>0$ for almost all $\omega\in I$, a contradiction to $\im f(\omega)=0$, for all $\omega\in  I$. We conclude that $I$ is empty and we have $\mathrm{supp}\,\widehat\mu = \mathrm{supp}\,\mu$. This proves~\eqref{lefrasu}.\end{proof}

\begin{rem}
 The assumptions of Lemma~\ref{lemma 6} are satisfied for $\mu_\alpha$ and $\mu_\beta$ as follows easily from Assumption~\ref{a.regularity of the measures}. Note that $m_{\mu_\alpha}(x)\not=0$ for $x\in\R\backslash \mathrm{supp}\,\mu_{\alpha}$ is guaranteed by the single interval support condition. However, the condition that $m_{\mu_A}(x)\not=0$, respectively $m_{\mu_B}(x)\not=0$ cannot be guaranteed. But in this case we have the following inclusions for the supports of the $N$-dependent measures $\widehat\mu_A$ and $\widehat\mu_B$:
 \begin{align}\label{na da ischer der support}
  \mathrm{supp}\,\widehat{\mu}_A\subset I_{\mu_A}\,,\qquad \mathrm{supp}\,\widehat{\mu}_B\subset I_{\mu_B}\,,
 \end{align}
where $I_{\mu_A}$, $I_{\mu_B}$ is the smallest interval containing $\mathrm{supp}\,\mu_A$, $\mathrm{supp}\,\mu_B$. This easily follows from the proof of Lemma~\ref{lemma 6} by noticing that $m_{\mu_A}(x)\not=0$, for $x\in\R\backslash I_{\mu_A}$, since $E\mapsto \re m_{\mu_A}(E)$ is monotone on that domain, and similar for $m_{\mu_B}$.
\end{rem}

\begin{lem}\label{lemma 3}
There is a constant $k_0>0$, such that
 \begin{align}\label{key to everything}
 \re \omega_\alpha(E_-)\le E_-^\beta-k_0\,,\qquad\re\omega_\beta(E_-)\le E_-^\alpha-k_0\,.
 \end{align}
 Moreover, there exists a constant $C$, such that
 \begin{align}\label{a lot of rs}
  \im \omega_\alpha(z)+\im\omega_\beta(z)\le \eta+C\im m_{\mu_\alpha\boxplus\mu_\beta}(z)\,,
 \end{align}
for all $z\in\mathcal{E}$. The constants $k_0$ and $C$ only depend on $\mu_\alpha$ and $\mu_\beta$.
\end{lem}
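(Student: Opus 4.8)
The plan is to establish the two claims separately, using the preparatory lemmas above. For the bound \eqref{key to everything} the key point is that equality can only fail if $\omega_\beta$ approaches the edge $E_-^\alpha$ (and similarly $\omega_\alpha$ the edge $E_-^\beta$) as $z\to E_-$; I would rule this out by a quantitative analysis of the subordination equations near such a configuration. Suppose, for contradiction, that $\re\omega_\beta(E_-)=E_-^\alpha-\epsilon_0$ for arbitrarily small $\epsilon_0>0$ (Lemma~\ref{lemma 2} already gives $\le E_-^\alpha$). Using the power-law behavior of $\rho_\alpha$ at $E_-^\alpha$ from Assumption~\ref{a.regularity of the measures}$(ii)$ together with the computational Lemma~\ref{lemma computation}, I would estimate $\im m_{\mu_\alpha}(\omega_\beta(z))$ and, more importantly, the derivative $F_\alpha'(\omega_\beta(z))$, which blows up like a negative power of the distance to the edge when $t_-^\alpha<0$, or stays bounded but with a specific sign when $t_-^\alpha\ge 0$. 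Plugging this into the differentiated subordination equations and examining the resulting fixed-point relation for $\omega_\alpha,\omega_\beta$, one sees that the stability quantity $\mathcal S_{\alpha\beta}$ cannot vanish or change sign in the way required for $E_-$ to be a genuine (square-root) edge — equivalently, the existence of the edge forces both $\re\omega_\alpha(E_-)$ and $\re\omega_\beta(E_-)$ to stay a definite distance $k_0$ away from $E_-^\beta$ and $E_-^\alpha$ respectively. The constant $k_0$ comes out depending only on the constants $C,\delta,t_-^\alpha,t_-^\beta$ in Assumption~\ref{a.regularity of the measures}, hence only on $\mu_\alpha,\mu_\beta$.

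For the second claim \eqref{a lot of rs} I would take imaginary parts in the subordination equations \eqref{le definiting equations}. From $\omega_\alpha(z)+\omega_\beta(z)-z=F_{\mu_\alpha}(\omega_\beta(z))$ we get
\begin{align*}
\im\omega_\alpha(z)+\im\omega_\beta(z)-\eta=\im F_{\mu_\alpha}(\omega_\beta(z)).
\end{align*}
Now $\im F_{\mu_\alpha}(\omega_\beta(z))$ can be controlled via the representation \eqref{representation}: since $F_{\mu_\alpha}(w)-w$ has nonnegative imaginary part with a representing measure $\widehat{\mu_\alpha}$ of bounded support (Lemma~\ref{lemma 6} identifies $\mathrm{supp}\,\widehat{\mu_\alpha}=\mathrm{supp}\,\mu_\alpha$), one has $\im(F_{\mu_\alpha}(\omega_\beta)-\omega_\beta)=\im\omega_\beta\int (x-\re\omega_\beta)^{-2}+(\im\omega_\beta)^{-2}\,\dd\widehat{\mu_\alpha}(x)\le C\,\im\omega_\beta$ once we know, from part~(i) of the already-established pieces (or from Lemma~\ref{lemma 2} plus \eqref{key to everything}), that $\re\omega_\beta$ stays a bounded distance below $\mathrm{supp}\,\mu_\alpha$ for $z$ near $E_-$; for $z$ away from the edge the same bound holds more easily since $|\omega_\beta|$ is bounded (Lemma~\ref{lemma 1}) and $\im\omega_\beta\ge\im z$. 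Combining, $\im\omega_\alpha+\im\omega_\beta\le\eta+\im\omega_\beta+C\im\omega_\beta$, which after absorbing $\im\omega_\beta$ (or running the symmetric estimate for $F_{\mu_\beta}(\omega_\alpha)$ and averaging) yields $\im\omega_\alpha+\im\omega_\beta\le\eta+C'(\im\omega_\alpha+\im\omega_\beta)$ — not yet useful directly, so instead I would feed in the identity $\im m_{\mu_\alpha\boxplus\mu_\beta}(z)=\im m_{\mu_\alpha}(\omega_\beta(z))=\im\omega_\beta\cdot|m_{\mu_\alpha}(\omega_\beta)|^2\cdot(\text{something}\sim1)$, i.e. $\im\omega_\beta\sim |F_{\mu_\alpha}(\omega_\beta)|^2\im m_{\mu_\alpha\boxplus\mu_\beta}$, and symmetrically for $\im\omega_\alpha$, with the prefactors bounded because of \eqref{key to everything} and Lemma~\ref{lemma 1}. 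This gives $\im\omega_\alpha+\im\omega_\beta\le C\im m_{\mu_\alpha\boxplus\mu_\beta}$ directly on the part of $\mathcal E$ near $E_-$, and the crude bound $\im\omega_\bullet\le|\omega_\bullet|\le C$ handles the rest of $\mathcal E$ after noting $\im m_{\mu_\alpha\boxplus\mu_\beta}$ and $\eta$ are comparable to constants there; the $\eta$ term in \eqref{a lot of rs} absorbs the region where $\im m_{\mu_\alpha\boxplus\mu_\beta}$ is small but $\eta$ is not.

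The main obstacle is the first claim \eqref{key to everything}: ruling out that the subordination functions touch the edges of the supports of $\mu_\alpha,\mu_\beta$ at $z=E_-$ is exactly the mechanism that makes the edge of $\mu_\alpha\boxplus\mu_\beta$ \emph{regular} (square-root) rather than inheriting the possibly singular exponents $t_-^\alpha,t_-^\beta$, and it is precisely here that Assumption~\ref{a.regularity of the measures}$(ii)$ and the case analysis of Lemma~\ref{lemma computation} (the three regimes $\lambda>\nu$, $\lambda<0$ with $|\lambda|>\nu$, and $\nu>|\lambda|$) have to be used carefully; the bookkeeping of which power of which distance dominates, and extracting a uniform $k_0$ from it, is the delicate part. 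The second claim is comparatively routine once \eqref{key to everything} and the boundedness Lemmas~\ref{lemma 1}--\ref{lemma 6} are in hand.
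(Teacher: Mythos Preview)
Your plan has genuine gaps in both parts, and the order of dependence is inverted.

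\textbf{On \eqref{a lot of rs}.} You overcomplicate this and make it depend on \eqref{key to everything}, whereas the paper proves \eqref{a lot of rs} first, in one line, with no edge information at all. The point you are missing is that $\omega_\alpha(z)+\omega_\beta(z)-z=-1/m_{\mu_\alpha\boxplus\mu_\beta}(z)$ directly, so taking imaginary parts gives
\[
\im\omega_\alpha+\im\omega_\beta-\eta=\frac{\im m_{\mu_\alpha\boxplus\mu_\beta}}{|m_{\mu_\alpha\boxplus\mu_\beta}|^2}=|\omega_\alpha+\omega_\beta-z|^2\,\im m_{\mu_\alpha\boxplus\mu_\beta}\le C\,\im m_{\mu_\alpha\boxplus\mu_\beta},
\]
where the last inequality uses only the boundedness of $\omega_\alpha,\omega_\beta$ on $\mathcal E$ from Lemma~\ref{lemma 1}. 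No information about $\re\omega_\beta$ relative to $\mathrm{supp}\,\mu_\alpha$, no $\widehat\mu_\alpha$, and no splitting of $\mathcal E$ into near-edge and far regions is needed. Your detour through the Nevanlinna representation and then through $\im\omega_\beta\sim|F_{\mu_\alpha}|^2\im m_{\mu_\alpha\boxplus\mu_\beta}$ requires \eqref{key to everything} to bound the prefactor, which is circular if \eqref{a lot of rs} is to be used as an input.

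\textbf{On \eqref{key to everything}.} Your proposed mechanism --- that ``$\mathcal S_{\alpha\beta}$ cannot vanish or change sign in the way required for $E_-$ to be a genuine (square-root) edge'' --- is backwards: the square-root nature of $E_-$ and the behavior of $\mathcal S_{\alpha\beta}$ are established in Lemma~\ref{lemma 7} \emph{using} \eqref{key to everything} as the key input (it is what guarantees that $F_{\mu_\alpha},F_{\mu_\beta}$ are analytic near $\omega_\beta(E_-),\omega_\alpha(E_-)$ so that the Taylor expansion \eqref{taylor expansion} makes sense). You cannot invoke those facts here. The paper's argument is different and does not touch derivatives of the subordination equations. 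It first derives, from the imaginary parts of \eqref{le definiting equations}, the inequality
\[
\Big(\int_\R\frac{\dd\mu_\alpha(x)}{|x-\omega_\beta|^2}\Big)^{-1}+\Big(\int_\R\frac{\dd\mu_\beta(x)}{|x-\omega_\alpha|^2}\Big)^{-1}\ge |m_{\mu_\alpha\boxplus\mu_\beta}|^{-2},
\]
valid on all of $\mathcal E$. Then, setting $d_\alpha=|\re\omega_\alpha(E_-)-E_-^\beta|$, $d_\beta=|\re\omega_\beta(E_-)-E_-^\alpha|$ and assuming (WLOG) $d_\alpha\le d_\beta$, it runs a two-case contradiction. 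If $d_\alpha\le\epsilon k$ and $d_\beta>k$, Lemma~\ref{lemma computation} makes the $\mu_\beta$-integral large (so its reciprocal is $\le c(\epsilon k)^{1-t_-^\beta}$), and the Cauchy--Schwarz inequality applied to $m_{\mu_\alpha}(\omega_\beta)$ versus $\int|x-\omega_\beta|^{-2}\dd\mu_\alpha$ yields a \emph{strict} gap $1+C_S$ that cannot be closed by the small $(\epsilon k)^{1-t_-^\beta}$ term. If both $d_\alpha,d_\beta$ are small, both reciprocals are small by Lemma~\ref{lemma computation}, contradicting the lower bound $|m_{\mu_\alpha\boxplus\mu_\beta}(E_-)|^{-2}\ge c>0$ from \eqref{17080326}. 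The Cauchy--Schwarz step and the uniform bound \eqref{17080326} are the two ingredients your sketch does not identify; without them the computational lemma alone does not close the argument.
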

\begin{proof}Let $z\in\mathcal{E}$. Taking the imaginary part in the subordination equations~\eqref{le definiting equations} we get
 \begin{align*}
  \frac{\im\omega_\alpha(z)+\im\omega_\beta(z)-\im z}{|\omega_\alpha(z)+\omega_\beta(z)-z|^2}=\im m_{\mu_\alpha\boxplus\mu_\beta}(z)\,.
 \end{align*}
Thus we obtain
\begin{align*}
\im \omega_\alpha(z)+\im\omega_\beta(z)=\im z+|\omega_\alpha(z)+\omega_\beta(z)-z|^2\im m_{\mu_\alpha\boxplus\mu_\beta}(z)\le \eta+C\im m_{\mu_\alpha\boxplus\mu_\beta}(z)\,,
\end{align*}
where we used Lemma~\ref{lemma 1} to get the inequality. This proves~\eqref{a lot of rs}.

We move on to prove the estimates in~\eqref{key to everything}. Using
\begin{align}\label{needed later on}
\im m_{\mu_\alpha\boxplus\mu_\beta}(z)=\im \omega_\alpha(z)\int_\R\frac{\dd\mu_\beta(x)}{|x-\omega_\alpha(z)|^2}=\im\omega_\beta(z)\int_\R\frac{\dd\mu_\alpha(x)}{|x-\omega_\beta(z)|^2}\,,
\end{align}
and~\eqref{le definiting equations}, we can write
\begin{align*}
 \frac{\im m_{\mu_\alpha\boxplus\mu_\beta}(z)}{\im z}\bigg(\Big({\int_\R\frac{\dd\mu_\alpha(x)}{|x-\omega_\beta(z)|^2}}\Big)^{-1}+\Big({\int_\R\frac{\dd\mu_\beta(x)}{|x-\omega_\alpha(z)|^2}} \Big)^{-1}\bigg)-1&=\frac{\im m_{\mu_\alpha\boxplus\mu_\beta}(z)}{\im z}\frac{1}{|m_{\mu_\alpha\boxplus\mu_\beta}(z)|^2}\,,
\end{align*}
for all $z\in\mathcal{E}\cap\C^+$. Since $\im m_{\mu_\alpha\boxplus\mu_\beta}(z)/\im z>0$, for all $z\in\mathcal{E}\cap\C^+$, we obtain
\begin{align}\label{good identity}
\frac{\left|\int_\R\frac{\dd\mu_\alpha(x)}{x-\omega_\beta(z)} \right|^2}{\int_\R\frac{\dd\mu_\alpha(x)}{|x-\omega_\beta(z)|^2}}+\frac{\left|\int_\R\frac{\dd\mu_\beta(x)}{x-\omega_\alpha(z)}\right|^2}{\int_\R\frac{\dd\mu_\beta(x)}{|x-\omega_\alpha(z)|^2}}\ge 1\,,
\end{align}
for all $z\in\mathcal{E}\cap\C^+$, where we used the subordination equations to express $m_{\mu_\alpha\boxplus\mu_\beta}(z)$. To condense the notation we introduce the quantities
\begin{align}
 R_\alpha(\omega)\deq\frac{\left|\int_\R\frac{\dd\mu_\alpha(x)}{x-\omega} \right|^2}{\int_\R\frac{\dd\mu_\alpha(x)}{|x-\omega|^2}}\,,\qquad R_\beta(\omega)\deq\frac{\left|\int_\R\frac{\dd\mu_\beta(x)}{x-\omega}\right|^2}{\int_\R\frac{\dd\mu_\beta(x)}{|x-\omega|^2}}\,,\qquad \omega\in\C^+\,.
\end{align}

Fix some small $\vartheta>0$. Recalling Lemma~\ref{lemma computation}, we observe that there is a constant $c>0$ (depending on~$\vartheta$) such that
\begin{align}\label{from the computational lemma}
\int_{E_-^\beta}^{E_-^\beta+\vartheta}\frac{\dd\mu_\beta(x)}{|x-\omega|^2}\ge c\begin{cases}
                                                                                    \frac{(\re \omega-E_-^\beta)^{t_-^\beta}}{\im \omega}\,,\qquad &\textrm{ if }\qquad \re\omega-E_-^\beta\ge\im\omega\,,\\
                                                                                   |\re\omega-E_-^\beta|^{t_-^\beta-1}\,, &\textrm{ if }\qquad \re \omega-E_-^\beta\le-\im\omega\,,\\
                                                                                    (\im \omega)^{t_-^\beta-1}\,, &\textrm{ if }\qquad \im\omega>|\re\omega-E_-^\beta|\,,
                                                                                   \end{cases}
\end{align}
for all $\omega$ with $|\omega-E_-^\beta|\le\vartheta$. (Since $-1<t_-^\beta<1$,  the integral may be divergent in the limit $\im \omega\rightarrow 0$, but this does not affect the following argument.)

Similarly, we have for $\omega\in \C$ satisfying $|\omega-E_-^\beta|\le \vartheta$,
\begin{align}\label{le diverging2}
    \left|\int_\R \frac{\dd \mu_\beta(x)}{x-\omega}  \right|\le C+ C'\int_0^\vartheta \frac{x^{t_-^\beta} }{|x + E_-^\beta - \omega|} \,\dd x \le C +C' \,\big|E_-^\beta-\omega\big|^{t_-^\beta}\,,
\end{align}
for some strictly positive constants $C$ and $C'$ depending on $\vartheta$. In particular, for $t_-^\beta\in[0,1)$, the right side of~\eqref{le diverging2} is bounded. The inequalities~\eqref{from the computational lemma} and~\eqref{le diverging2} also hold true, upon possibly adjusting the constants, with the roles of $\alpha$ and $\beta$ interchanged. We remark that we used similar estimates in the proof of Lemma~3.12 in~\cite{BES18a}.

Next, we introduce the quantities
 \begin{align}
   d_\alpha(z)\deq \mathrm{dist}( \omega_\alpha(z),\mathrm{supp}\,\mu_\beta)\,,\qquad d_\beta(z)\deq \mathrm{dist}( \omega_\beta(z),\mathrm{supp}\,\mu_\alpha)\,.
 \end{align}
We now claim that there are constants $k_0>0$ and $\varrho>0$ such that $d_\alpha(z)\ge  k_0$ and $d_\beta(z)\ge k_0$ for all $z\in\C^+\cup\R$ with $|z-E_-|\le \varrho$. We proceed by distinguishing two cases: First assume that there is a $z$ with $|z-E_-|\le\varrho$ such that
\begin{align}\label{le case a}
 d_\alpha(z)\le \epsilon k\,,\qquad\qquad d_\beta(z)>k\,, 
\end{align}
for some small constants $k>0$ and $\epsilon>0$ to be chosen below.

For $t_-^\beta\ge 0$, we obtain from~\eqref{from the computational lemma} and~\eqref{le diverging2} that for such $z$, we have
\begin{align}\label{new liebling}
 R_\beta(\omega_\alpha(z))\le C\begin{cases}
                               (\re\omega_\alpha(z)-E_-^\beta)^{1-t_-^\beta}\,, \quad &\textrm{ if }\quad |\re\omega_\alpha(z)-E_-^\beta|\ge\im\omega_\alpha(z)\,,\\
                               (\im \omega_\alpha(z))^{1-t_-^\beta}\,,\quad &\textrm{ if }\quad |\re\omega_\alpha(z)-E_-^\beta|<\im\omega_\alpha(z)\,.
                              \end{cases}
\end{align}
Either way, we have $R_\beta(\omega_\alpha(z))\le C(d_\alpha(z))^{1-t_-^\beta}\le C(\epsilon k)^{1-t_-^\beta}$, where we used that $t_-^\beta<1$.

For $-1<t_-^\beta<0$, we obtain from~\eqref{from the computational lemma} and~\eqref{le diverging2} that for $z$ with $|z-E_-|\le\varrho$ and~\eqref{le case a} satisfied, 
\begin{align}\label{new liebling 2}
 R_\beta(\omega_\alpha(z))\le C\begin{cases}
                               \im \omega_\alpha(z) |\re\omega_\alpha(z)-E_-^\beta|^{t_-^\beta}\,, \quad &\textrm{ if }\quad \re\omega_\alpha(z)-E_-^\beta\ge\im\omega_\alpha(z)\,,\\
                               |\re \omega_\alpha(z)-E_-|^{1+t_-^\beta}\,, &\textrm{ if }\quad \re \omega_\alpha(z)-E_-^\beta\le-\im\omega_\alpha(z)\,,\\
                               (\im \omega_\alpha(z))^{1+t_-^\beta}\,,\quad &\textrm{ if }\quad |\re\omega_\alpha(z)-E_-^\beta|<\im\omega_\alpha(z)\,.
                               \end{cases}
\end{align}
In all three cases we find that $R_\beta(\omega_\alpha(z))\le C(d_\alpha(z))^{1+t_\beta}\le C(\epsilon k)^{1+t_-^\beta}$, where we used $1+t_-^\beta>0$. 

Since $d_\beta(z)>k$ and since we assumed that $\mu_\alpha$ is not a single point mass, we have by the Cauchy-Schwarz inequality that 
\begin{align}
 R_\alpha(\omega_\beta(z))=\frac{\left|\int_\R\frac{\dd\mu_\alpha(x)}{x-\omega_\beta(z)} \right|^2}{\int_\R\frac{\dd\mu_\alpha(x)}{|x-\omega_\beta(z)|^2}}\le 1-C_S(k,\varrho)\,,
\end{align}
for some strictly positive constant $C_S(k,\varrho)>0$ depending on $k$ and $\varrho$ (and $\mu_\alpha$). Hence, 
\begin{align}
R_\alpha(\omega_\beta(z))+R_\beta(\omega_\alpha(z))\le 1-C_S(k,\varrho)+C'(\epsilon k)^{1-|t_-^\beta|}\,.
\end{align}
with $C'$ depending on $\varrho$. Thus for $\epsilon<(C_S(k,\varrho)/C')^{1/(1-|t_-^\beta|)}/k$ we get a contradiction with~\eqref{good identity}, for any $k>0$. Thus there is no $z$ with $|z-E_-|\le\varrho$ such that~\eqref{le case a} can hold.

Assume thus that there is a $z$ with $|z-E_-|\le \varrho$ such that
\begin{align}\label{le case b}
 d_\alpha(z)\le \epsilon k\,,\qquad\qquad d_\beta(z)\le k\,, 
\end{align}
for some sufficiently small $k>0$ chosen below and with $\epsilon$ depending on $k$ as above. Following the argumentation in~\eqref{new liebling} and~\eqref{new liebling 2} with the roles of $\alpha$ and $\beta$ interchanged, we find
\begin{align}
 R_\alpha(\omega_\beta(z))=\frac{\left|\int_\R\frac{\dd\mu_\alpha(x)}{x-\omega_\beta(z)} \right|^2}{\int_\R\frac{\dd\mu_\alpha(x)}{|x-\omega_\beta(z)|^2}}\le Ck^{1-|t_-^\alpha|}\,,
\end{align}
while at the same time we have $R_\beta(\omega_\alpha(z))\le C(\epsilon k)^{1-|t_-^\beta|}$ as we had above, with the constants depending on $\varrho$. Hence choosing $k>0$ sufficiently small, we get a contradiction with~\eqref{good identity}, and we exclude~\eqref{le case b}.

 We can therefore conclude that, for $\epsilon>0$ and $k>0$ sufficiently small, we have for all $z$ with $|z-E_-|\le \varrho$, that
\begin{align}\label{le distance new}
 d_\alpha(z)> \epsilon k\,,\qquad\qquad d_\beta(z)> k\,.
\end{align}
Choosing $z=E_-$ this proves together with~\eqref{a lot of rs} and (\ref{le C15}) the estimates in~\eqref{key to everything} with $k_0\deq\epsilon k$. This concludes the proof of Lemma~\ref{lemma 3}.
\end{proof}

\begin{lem}\label{lemma 7}
The lowest endpoint $E_-$ of  $\mathrm{supp}\,\mu_\alpha\boxplus\mu_\beta$ is the smallest real solution to the equation
\begin{align}\label{ja wo ist das edge}
 (F'_{\mu_\alpha}(\omega_\beta(z))-1)(F'_{\mu_\beta}(\omega_\alpha(z))-1)=1\,,\qquad z\in\R\,.
\end{align}
Moreover, there are constants $\kappa_0>0$ and $\eta_0>0$ such that
\begin{align}\label{omega behavior}
 \Im m_{\mu_\alpha\boxplus\mu_\beta}(z)\sim\Im \omega_\alpha(z)\sim \im\omega_\beta(z)\sim\begin{cases}\sqrt{\kappa+\eta}\,,\quad & \textrm{if}\; E\ge E_-\,,\\ 
  \frac{\eta}{\sqrt{\kappa+\eta}}&\textrm{if }\; E<E_-\,,\end{cases} 
\end{align}
uniformly for all $z=E+\mathrm{i}\eta\in\mathcal{E}_0$ where
\begin{align}\label{def:eps0}
\mathcal{E}_0\deq  \big\{ z\in \C\,:\, -\kappa_0\le \Re z-E_- \le\kappa_0, 0\le \Im z\leq \eta_0\big\}\,.
\end{align}
\end{lem}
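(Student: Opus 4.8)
\textbf{Proof strategy for Lemma~\ref{lemma 7}.}
The plan is to analyze the scalar self-consistent equation obtained by eliminating $m_{\mu_\alpha\boxplus\mu_\beta}$ from the subordination system~\eqref{le definiting equations}, working near the candidate edge $E_-$. First I would establish the characterization of $E_-$ via~\eqref{ja wo ist das edge}. Differentiating the two subordination relations $F_{\mu_\alpha}(\omega_\beta(z))=F_{\mu_\beta}(\omega_\alpha(z))$ and $\omega_\alpha(z)+\omega_\beta(z)-z=F_{\mu_\alpha}(\omega_\beta(z))$ in $z$ gives a linear $2\times2$ system for $\omega_\alpha'(z),\omega_\beta'(z)$ whose determinant is exactly $\mathcal{S}_{\alpha\beta}(z)=(F'_{\mu_\alpha}(\omega_\beta)-1)(F'_{\mu_\beta}(\omega_\alpha)-1)-1$. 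For $z=E<E_-$ real, all quantities are real and $\omega_\alpha,\omega_\beta$ are real-analytic and (by Lemma~\ref{lemma 2}) monotone; the edge is the first point where $\omega_\alpha,\omega_\beta$ cannot be continued as real functions, i.e. where the Jacobian degenerates, which is precisely $\mathcal{S}_{\alpha\beta}(E_-)=0$, that is~\eqref{ja wo ist das edge}. That $E_-$ is the \emph{smallest} real root follows because on $(-\infty,E_-)$ one has $\Im m_{\mu_\alpha\boxplus\mu_\beta}=0$ while just to the right the density is positive; here one uses Lemma~\ref{lemma 3}, which guarantees $\omega_\alpha(E_-)$ and $\omega_\beta(E_-)$ sit at distance $\ge k_0$ below $E_-^\beta$ and $E_-^\alpha$ respectively, so $F'_{\mu_\alpha},F''_{\mu_\alpha},\ldots$ evaluated at $\omega_\beta(E_-)$ (and symmetrically) are finite and well-controlled, with $F''_{\mu_\alpha}(\omega_\beta(E_-))$ and $F''_{\mu_\beta}(\omega_\alpha(E_-))$ bounded away from $0$ (this is the $N$-independent analogue of~\eqref{17080122}).

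Next I would derive the square-root behavior~\eqref{omega behavior}. Set $w_\alpha=\omega_\alpha(z)-\omega_\alpha(E_-)$, $w_\beta=\omega_\beta(z)-\omega_\beta(E_-)$, and $\zeta=z-E_-$. Taylor-expanding the two subordination equations to second order around $(\omega_\alpha(E_-),\omega_\beta(E_-),E_-)$ and using $\mathcal{S}_{\alpha\beta}(E_-)=0$ to kill the linear part in the appropriate direction, one obtains a relation of the schematic form $c_1 w^2 = c_2\,\zeta + (\text{higher order})$, where $w$ is the relevant linear combination of $w_\alpha,w_\beta$ (the kernel direction of the degenerate Jacobian), $c_1\sim \mathcal{T}_\alpha$ or $\mathcal{T}_\beta$ is nonzero by the lower bound just discussed, and $c_2\neq0$ because $\partial_z$ enters~\eqref{le definiting equations} nontrivially. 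Inverting gives $w\sim\sqrt{\zeta}$, hence $\Im\omega_\alpha(z)\sim\Im\omega_\beta(z)\sim\sqrt{\kappa+\eta}$ for $E\ge E_-$ and $\sim\eta/\sqrt{\kappa+\eta}$ for $E<E_-$, after separating real and imaginary parts and matching the three regimes $\kappa\gg\eta$, $\kappa\sim\eta$, $\kappa\ll\eta$ as in Lemma~\ref{lemma computation}. Then $\Im m_{\mu_\alpha\boxplus\mu_\beta}(z)\sim\Im\omega_\alpha(z)\int_\R\dd\mu_\beta(x)/|x-\omega_\alpha(z)|^2$ from~\eqref{needed later on}, and since $\omega_\alpha(z)$ stays in a region where this integral is comparable to a constant (again by Lemma~\ref{lemma 3}), the same square-root law transfers to $\Im m_{\mu_\alpha\boxplus\mu_\beta}$, establishing~\eqref{17080390} on the restricted domain $\mathcal{E}_0$ for suitably small $\kappa_0,\eta_0$.

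The main obstacle is the bookkeeping in the perturbative step: one must check that the Jacobian of the subordination map at $E_-$ has a one-dimensional kernel (not two-dimensional) and that the restriction of the Hessian to that kernel is nondegenerate, so that the $w^2\sim\zeta$ scaling genuinely holds rather than a higher-order vanishing. This is exactly where the quantitative inputs $\mathcal{S}_{\alpha\beta}(E_-)=0$ together with $|\mathcal{T}_\alpha|,|\mathcal{T}_\beta|\gtrsim1$ are needed: the first says the kernel is nontrivial, and the second (combined with the structure of $\mathcal{S}_{\alpha\beta}'$) rules out the kernel being two-dimensional and forces the quadratic coefficient to be nonzero. Establishing these lower bounds on $\mathcal{T}_\alpha,\mathcal{T}_\beta$ reduces, via Lemma~\ref{lemma computation} and the power-law Assumption~\ref{a.regularity of the measures}$(ii)$, to showing that $F''_{\mu_\alpha}(\omega_\beta(E_-))$ — an integral of $(x-\omega_\beta(E_-))^{-3}$ against $\dd\mu_\alpha$ — is bounded below; this uses that $\omega_\beta(E_-)<E_-^\alpha-k_0$ so the integrand has a fixed sign and no cancellation, and that $\mu_\alpha$ has genuine mass near its lower edge. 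Once these structural facts are in hand, the remaining estimates are routine real-analysis manipulations of the kind already carried out in the proofs of Lemmas~\ref{lemma 2} and~\ref{lemma 3}.
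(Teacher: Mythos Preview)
Your proposal is correct and the underlying idea matches the paper's: the edge $E_-$ is where the Jacobian of the subordination system degenerates, i.e.\ $\mathcal{S}_{\alpha\beta}(E_-)=0$, and the nonvanishing of the second-order term forces the square-root scaling. The key analytic inputs you identify---Lemma~\ref{lemma 3} keeping $\omega_\alpha(E_-),\omega_\beta(E_-)$ at positive distance below the supports, and the sign-definiteness of $F''_{\mu_\alpha}(\omega_\beta(E_-)),F''_{\mu_\beta}(\omega_\alpha(E_-))$ via the Nevanlinna representation---are exactly what the paper uses.

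The execution differs in one respect. Rather than analyzing the $2\times2$ system directly and tracking the kernel direction of the degenerate Jacobian, the paper first reduces to a \emph{scalar} equation. Since $F'_{\mu_\beta}(\omega_\alpha(E_-))>1$ (from the Nevanlinna representation~\eqref{hurra}), the analytic inverse function theorem gives a local inverse $F_{\mu_\beta}^{(-1)}$, and one defines
\[
\widetilde z(\omega)\;=\;-F_{\mu_\alpha}(\omega)+\omega+F_{\mu_\beta}^{(-1)}\!\circ F_{\mu_\alpha}(\omega)\,,
\]
so that $\omega_\beta(z)$ solves $z=\widetilde z(\omega)$. One then computes $\widetilde z'(\omega_\beta(E_-))=0$ (equivalent to~\eqref{ja wo ist das edge}) and checks $\widetilde z''(\omega_\beta(E_-))>0$ directly, after which the square-root expansion $\omega_\beta(z)-\omega_\beta(E_-)\sim\sqrt{E_--z}$ is immediate by inverting the quadratic Taylor polynomial. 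This buys a cleaner bookkeeping: there is no kernel/cokernel analysis, no need to verify that the rank drop is exactly one, and the nondegeneracy of the quadratic coefficient is a single explicit positivity check rather than a Hessian-restricted-to-kernel computation. Your $2\times2$ route is equally valid and conceptually transparent, but carries more linear-algebraic overhead.
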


\begin{proof}[Proof of Lemma~\ref{lemma 7}]
 From Lemma~\ref{lemma 3} we know that $\Re\omega_\alpha(E_-)\le E_-^\beta -k_0$ and $\Re\omega_\beta(E_-)\le E_-^\alpha-k_0$, $k_0>0$.  From the subordination equations~\eqref{le definiting equations} and~\eqref{representation}, we have that
 \begin{align}\label{le franz}
  F_{\mu_\alpha\boxplus\mu_\beta}(z)=F_{\mu_\alpha}(\omega_\beta(z))=\Re F_{\mu_\alpha}(\ii)+\omega_\beta(z)+\int_\R\left(\frac{1}{x-\omega_\beta(z)}-\frac{x}{1+x^2}\right)\dd\widehat{\mu}_{\alpha}(x)\,,
 \end{align}
for a Borel measure $\widehat\mu_\alpha$ on $\R$ with, according to Lemma~\ref{lemma 6}, $\mathrm{supp}\,\widehat\mu_\alpha=\mathrm{supp}\,\mu_\alpha$. Arguing as in the proof of Lemma~\ref{lemma 6}, we notice that $u\in\R$ is an edge of the measure $\mu_\alpha\boxplus\mu_\beta$, if  $\im m_{\mu_\alpha\boxplus\mu_\beta}(u)=0$ and $m_{\mu_\alpha\boxplus\mu_\beta}$ fails to be analytic at $u\in\R$. Analyticity breaks down if either $F_{\mu_\alpha\boxplus\mu_\beta}(u)=0$ or, according to~\eqref{le franz}, if  $\omega_\beta(u)\in \mathrm{supp}\,\widehat{\mu}_\alpha=\mathrm{supp}\,\mu_\alpha$,  or if $\omega_\beta$ fails to be analytic at $u$. For the lowest edge at $u=E_-$, we can exclude $F_{\mu_\alpha\boxplus\mu_\beta}(u)=0$ by (\ref{17080326}) and also $\omega(u)\in\mathrm{supp}\,\mu_\alpha$ as $\re\omega_\alpha(E_-)\le E_-^\beta-k_0$, $k_0>0$. Thus $E_-\in\R$ is the smallest point where $\omega_\beta$ is not analytic.
 
We next claim that $\omega_\beta$ is not analytic at $u\in\R$ if $(F_{\mu_\alpha}'(\omega_\beta(u))-1)(F_{\mu_\beta}'(\omega_\alpha(u))-1)=1$. We argue as follows. From~\eqref{representation} we know that there is a Borel measure $\widehat\mu_\beta$ such that
 \begin{align}\label{le neva}
 F_{\mu_\beta}(\omega)=\Re F_{\mu_\beta}(\ii)+\omega+\int_{\R}\left(\frac{1}{x-\omega}-\frac{x}{1+x^2}\right)\,\dd\widehat\mu_\beta(x)\,,
 \end{align}
and $F_{\mu_\beta}$ is analytic in a disk of radius $k_0$ centered at $\omega=\omega_\beta(E_-)$ by (\ref{key to everything}). Here we also used that $\mathrm{supp}\,\widehat\mu_\beta=\mathrm{supp}\,\mu_\beta$ by Lemma~\ref{lemma 6}. It follows that
\begin{align}\label{hurra}
 F'_{\mu_\beta}(\omega)=1+\int_{\R}\frac{\dd\widehat\mu_\beta(x)}{(x-\omega)^2}\,,
\end{align}
and in particular that $F'_{\mu_\beta}(\omega_\alpha(E_-))>1$, since $\omega_\alpha(E_-)$ is real valued as $E_-$ is the lower endpoint of the support of $\mu_\alpha\boxplus\mu_\beta$ (recall~\eqref{a lot of rs}). By the analytic inverse function theorem, the functional inverse  $F^{(-1)}_{\mu_\beta}$ of $F_{\mu_\beta}$ is analytic in a neighborhood of $F_{\mu_\beta}(\omega_\alpha(E_-))$. Thus the function
\begin{align}\label{le z}
 \widetilde z(\omega)\deq -F_{\mu_\alpha}(\omega)+\omega+F^{(-1)}_{\mu_\beta}\circ F_{\mu_\alpha}(\omega)
\end{align}
is well-defined and analytic in a complex neighborhood of $\omega_\alpha(E_-)\in\R$. It follows from~\eqref{le definiting equations} that $\omega_\beta(z)$ is a solution $\omega=\omega_\beta(z)$ to the equation $z=\widetilde z(\omega)$ (with $\im \omega_\beta(z)\ge \im z$). Moreover, we have $\omega_\alpha(z)=F^{(-1)}_{\mu_\beta}\circ F_{\mu_\alpha}(\omega_\beta(z))$.

The function $\widetilde{z}(\omega)$ admits the following Taylor expansion in a complex neighborhood of $\omega_\beta(E_-)$,
\begin{align}\label{taylor expansion}
\widetilde{z}(\omega)=E_-+\widetilde{z}'(\omega_\beta(E_-))(\omega-\omega_\beta(E_-))+\frac{1}{2}\widetilde{z}''(\omega_\beta(E_-))(\omega-\omega_\beta(E_-))^2+O\left((\omega-\omega_\beta(E_-))^3\right)\,.
\end{align}
In particular, $\widetilde{z}(\omega)$ admits an inverse around $z=E_-$ that is locally analytic  if and only if $\widetilde z'(\omega_\beta(E_-))\not=0$. Thus the smallest edge $E_-$ of the support of $\mu_\alpha\boxplus\mu_\beta$, is the smallest $u\in\R$ such that $\widetilde z'(\omega_\beta(u))=0$. To find the location of the edge, we compute
\begin{align}
 \widetilde z'(\omega)=-F'_{\mu_\alpha}(\omega)+1+\frac{1}{F'_{\mu_\beta}\circ F_{\mu_\beta}^{(-1)}\circ F_{\mu_\alpha}(\omega)}F'_{\mu_\alpha}(\omega)\,.
\end{align}
Hence, choosing $\omega=\omega_\beta(z)$, we get
\begin{align}\label{stinker}
 \widetilde z'(\omega_\beta(z))=-F'_{\mu_\alpha}(\omega_\beta(z))+1+\frac{1}{F'_{\mu_\beta}(\omega_\alpha(z))}F'_{\mu_\alpha}(\omega_\beta(z))\,,
\end{align}
thence, from $\widetilde z'(\omega_\beta(E_-))=0$ we have
\begin{align}\label{stinker bis}
 (F'_{\mu_\alpha}(\omega_\beta(E_-))-1)(F'_{\mu_\beta}(\omega_\alpha(E_-))-1)=1\,.
\end{align}
This proves~\eqref{ja wo ist das edge}.
 
 We move on to proving~\eqref{omega behavior}. From~\eqref{le z} we compute,
\begin{align}
 \widetilde z''(\omega)&=-F''_{\mu_\alpha}(\omega)+\frac{1}{F'_{\mu_\beta}\circ F_{\mu_\beta}^{(-1)}\circ F_{\mu_\alpha}(\omega)}F''_{\mu_\alpha}(\omega)\nonumber\\&\qquad-\frac{1}{(F'_{\mu_\beta}\circ F_{\mu_\beta}^{(-1)}\circ F_{\mu_\alpha}(\omega))^3}\left(F''_{\mu_\beta}\circ F_{\mu_\beta}^{(-1)}\circ F_{\mu_\alpha}(\omega)\right)\cdot( F'_{\mu_\alpha}(\omega))^2\nonumber\,,
\end{align}
and thus by choosing $\omega=\omega_\beta(z)$, we get
\begin{align*}
 \widetilde z''(\omega_\beta(z))=-F''_{\mu_\alpha}(\omega_\beta(z))+\frac{1}{F'_{\mu_\beta}(\omega_\alpha(z))}F''_{\mu_\alpha}(\omega_\beta(z))-\frac{1}{(F'_{\mu_\beta}(\omega_\alpha(z)))^3}F''_{\mu_\beta}(\omega_\alpha(z))\cdot( F'_{\mu_\alpha}(\omega_\beta(z)))^2\,.
\end{align*}
This we can rewrite as
\begin{align}
 \widetilde z''(\omega_\beta(z))=\frac{F''_{\mu_\alpha}(\omega_\beta(z))}{F'_{\mu_\beta}(\omega_\alpha(z))}\big(1-F'_{\mu_\beta}(\omega_\alpha(z))\big)-\frac{1}{(F'_{\mu_\beta}(\omega_\alpha(z)))^3}F''_{\mu_\beta}(\omega_\alpha(z))\cdot\big( F'_{\mu_\alpha}(\omega_\beta(z))\big)^2\,.
\end{align}
Thus choosing~$z=E_-$ and recalling~\eqref{stinker} and~\eqref{stinker bis}, we get
\begin{align}\label{le ses}
 \widetilde z''(\omega_\beta(E_-))=\frac{F''_{\mu_\alpha}(\omega_\beta(E_-))}{F'_{\mu_\beta}(\omega_\alpha(E_-))}\big(1-F'_{\mu_\beta}(\omega_\alpha(E_-))\big)-\frac{F''_{\mu_\beta}(\omega_\alpha(E_-))}{F'_{\mu_\beta}(\omega_\alpha(E_-))}\big(F'_{\mu_\alpha}(\omega_\beta(E_-))-1\big)^2\,.
\end{align}
From~\eqref{hurra}, we directly get
\begin{align}\label{sas 1}
 F'_{\mu_\beta}(\omega_\alpha(E_-))=1+\int_\R\frac{\dd\widehat\mu_\beta(x)}{(x-\omega_\alpha(E_-))^2}>1\,,\qquad F'_{\mu_\alpha}(\omega_\beta(E_-))=1+\int_\R\frac{\dd\widehat\mu_\alpha(x)}{(x-\omega_\beta(E_-))^2}>1\,,
\end{align}
where we used that  $\widehat\mu_\alpha(\R)>0$ and $\widehat\mu_\beta(\R)>0$ as follows from~\eqref{representation mass} and the assumption that $\mu_\alpha$ and $\mu_\beta$ are not single point masses. Moreover, recalling from~\eqref{key to everything} that $\omega_\alpha(E_-)\le E_-^\beta-k_0$, $\omega_\beta(E_-)\le E_-^\alpha-k_0$, we obtain
\begin{align}\label{sas 2}
 F''_{\mu_\beta}(\omega_\alpha(E_-))=\int_\R\frac{\dd\widehat\mu_\beta(x)}{(x-\omega_\alpha(E_-))^3}>0\,,\qquad F''_{\mu_\alpha}(\omega_\beta(E_-))=\int_\R\frac{\dd\widehat\mu_\alpha(x)}{(x-\omega_\beta(E_-))^3}>0\,.
\end{align}
Thus we infer from~\eqref{le ses}, ~\eqref{sas 1} and~\eqref{sas 2} that there are constants $c>0$ and $C<\infty$ such~that
\begin{align}\label{flugi}
-C\le\widetilde z''(\omega_\beta(E_-))\le -c\,. 
\end{align}
Choosing $\omega=\omega_\beta(z)$ (thus $\widetilde z(\omega_\beta(z))=z)$
and using  $\widetilde z'(\omega_\beta(E_-))=0$, $\widetilde z''(\omega_\beta(E_-))<0$ in~\eqref{taylor expansion}, we get
\begin{align}\label{kind nervt}
\omega_\beta(z)-\omega_\beta(E_-)=\sqrt{\frac{-2}{\widetilde z''(\omega_\beta(E_-))}}\sqrt{E_--z}+O(|z-E_-|)\,,
\end{align}
 for $z$ in a neighborhood of $E_-$. The branch of the square root is chosen such that $\im \omega_\beta(z)>0$, $z\in\C^+$.

Next, setting $z=E+\ii\eta$, we observe that~\eqref{flugi} and~\eqref{kind nervt} imply, for $z$ near $E_-$, that
\begin{align}
 \im \omega_\beta(z)\sim\begin{cases} \sqrt{\kappa+\eta}\,,\qquad &\textrm{if}\; E\ge E_-\,, \\ \frac{\eta}{\sqrt{\kappa+\eta}}\,, &\textrm{if}\; E<E_-\,.\end{cases}
\end{align}
This proves the third estimate in~\eqref{omega behavior}. The second estimate is obtained in the same way by interchanging the roles of the indices $\alpha$ and $\beta$. Finally the first estimate follows from~\eqref{needed later on} and the fact that $\omega_\alpha(z)$ and $\omega_\beta(z)$, $z\in\mathcal{E}_0$, are away from the supports of the measure $\mu_\beta$ respectively $\mu_\alpha$ by~\eqref{key to everything} and~\eqref{kind nervt}. This shows~\eqref{omega behavior} and concludes the proof of Lemma~\ref{lemma 7}.
\end{proof}

\begin{rem} From (\ref{kind nervt}) and $m_{\mu_\alpha\boxplus\mu_\beta}(z)=m_{\mu_\alpha}(\omega_\beta(z))$ we get the precise behavior of $m_{\mu_\alpha\boxplus\mu_\beta}(z)$~on~$\mathcal{E}_0$, 
\begin{align*}
 m_{\mu_\alpha\boxplus\mu_\beta}(z)- m_{\mu_\alpha\boxplus\mu_\beta}(E_-)= m_{\mu_\alpha}'(\omega_\beta(E_-))\sqrt{\frac{-2 (\omega_\beta(E_-))}{\widetilde z''(\omega_\beta(E_-))}}\sqrt{E_--z}+O(|z-E_-|)\,,
\end{align*}
and thus by the Stieltjes inversion formula we have the square root behavior for the density of $\mu_\alpha\boxplus\mu_\beta$,
\begin{align}
{\rm d} \mu_{\alpha}\boxplus\mu_\beta (x)\sim \sqrt{x-E_-} \, {\rm d} x\,, \qquad \forall x\in [E_-, E_-+\kappa_0]\,. \label{17080390}
\end{align}
\end{rem}

\begin{cor}\label{le second corollary} Let $\mathcal{E}_0$ be as in~\eqref{def:eps0}. Then the following behaviors hold uniformly for $z\in\mathcal{E}_0$,
\begin{align}\label{mprime}
m'_{\mu_\alpha\boxplus\mu_\beta}(z)&\sim \frac{1}{\sqrt{|z-E_-|}}\,,\qquad & m''_{\mu_\alpha\boxplus\mu_\beta}(z)\sim \frac{1}{|z-E_-|^{3/2}}\,,\\
 \omega'_\alpha(z)&\sim \frac{1}{\sqrt{|z-E_-|}}\,,\qquad& \omega''_\alpha(z)\sim \frac{1}{|z-E_-|^{3/2}}\,,\label{bring it on 2}
 \end{align}
 and
 \begin{align}
 & F'_{\mu_\alpha}(\omega_\beta(z))\sim 1\,,& F''_{\mu_\alpha}(\omega_\beta(z))\sim 1\,,\qquad \qquad& F'''_{\mu_\alpha}(\omega_\beta(z))\sim 1\,.\label{bring it on 3}
 \end{align}
The same estimates hold true when the roles of the subscripts $\alpha$ and $\beta$ are interchanged.
\end{cor}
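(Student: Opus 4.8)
The plan is to derive every $\sim$-relation in the statement from the local behaviour of $\omega_\alpha$, $\omega_\beta$ and $m_{\mu_\alpha\boxplus\mu_\beta}$ near $E_-$ that has already been extracted in Lemma~\ref{lemma 7} and in the Remark following it. The one structural input I need beyond the literal statements there is that~\eqref{kind nervt} is not just an asymptotic relation: in the proof of Lemma~\ref{lemma 7} it is obtained by inverting the analytic map $\widetilde z(\omega)$ around $\omega_\beta(E_-)$, where $\widetilde z'(\omega_\beta(E_-))=0$ and $\widetilde z''(\omega_\beta(E_-))\sim 1$ by~\eqref{flugi}; consequently $\omega_\beta(z)$ is, in a neighbourhood of $E_-$, an \emph{analytic} function of $\zeta\deq\sqrt{E_--z}$ (branch fixed by $\zeta\in\C^+\cup\R$), of the form $\omega_\beta(z)=\omega_\beta(E_-)+a_1\zeta+(\text{higher order in }\zeta)$ with $a_1=2/\widetilde z''(\omega_\beta(E_-))$ and $|a_1|\sim 1$; the same holds for $\omega_\alpha$ and, composing with $m_{\mu_\alpha}$, for $m_{\mu_\alpha\boxplus\mu_\beta}$. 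Throughout I allow myself to shrink the constants $\kappa_0,\eta_0$ defining $\mathcal{E}_0$ in~\eqref{def:eps0} so that these expansions, as well as the inclusions used below, hold on all of $\mathcal{E}_0$.

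For~\eqref{bring it on 2} I would simply differentiate the $\zeta$-expansion, using $\dd\zeta/\dd z=-1/(2\zeta)$, hence $\dd/\dd z=-\tfrac{1}{2\zeta}\,\dd/\dd\zeta$. This gives $\omega_\beta'(z)=-\tfrac{1}{2\zeta}\,\tfrac{\dd\omega_\beta}{\dd\zeta}=-\tfrac{a_1}{2\zeta}(1+O(\zeta))$ and $\omega_\beta''(z)=-\tfrac{1}{4\zeta^3}\,\tfrac{\dd\omega_\beta}{\dd\zeta}+\tfrac{1}{4\zeta^2}\,\tfrac{\dd^2\omega_\beta}{\dd\zeta^2}$, whose leading term is $-\tfrac{a_1}{4\zeta^3}$. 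Since $|\zeta|=|z-E_-|^{1/2}$ and $|a_1|\sim 1$, and the error terms are $\le\tfrac12$ in modulus once $\kappa_0,\eta_0$ are small, I get $|\omega_\beta'(z)|\sim|z-E_-|^{-1/2}$ and $|\omega_\beta''(z)|\sim|z-E_-|^{-3/2}$; the identical computation with $\alpha$ and $\beta$ interchanged yields the $\omega_\alpha$-statements.

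Next, for~\eqref{bring it on 3} I would use Lemma~\ref{lemma 3}: since $\omega_\beta(E_-)\le E_-^\alpha-k_0$, the disk $D\deq\{\omega\,:\,|\omega-\omega_\beta(E_-)|<k_0/2\}$ is disjoint from $\mathrm{supp}\,\mu_\alpha=\mathrm{supp}\,\widehat\mu_\alpha$ (Lemma~\ref{lemma 6}), and for $\omega\in D$ and $x\in\mathrm{supp}\,\widehat\mu_\alpha$ one has $\re(x-\omega)<0$; hence the integral representations~\eqref{hurra},~\eqref{sas 1},~\eqref{sas 2} and the analogous one for $F_{\mu_\alpha}'''$ all have sign-definite integrands. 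Using that $\widehat\mu_\alpha$ is a nonzero finite measure with bounded support (because $\mu_\alpha$ is not a single point mass), this gives $|F_{\mu_\alpha}^{(j)}(\omega)|\sim 1$ on $D$ for $j=1,2,3$; the lower bound for $j=1$ comes from $\widehat\mu_\alpha\not=0$, the lower bounds for $j=2,3$ from the sign-definiteness. In the same way $m_{\mu_\alpha}$ is analytic on $D$ with $m_{\mu_\alpha}'(\omega)=\int(x-\omega)^{-2}\,\dd\mu_\alpha(x)\sim 1$ (upper bound from $\mathrm{dist}(\omega,\mathrm{supp}\,\mu_\alpha)\ge k_0/2$ and $\|A\|\le C$, lower bound from $\mu_\alpha(\R)=1$) and $m_{\mu_\alpha}''(\omega)=O(1)$. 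Since $\omega_\beta(z)\in D$ for $z\in\mathcal{E}_0$ by~\eqref{kind nervt} and the smallness of $\kappa_0,\eta_0$, evaluating these at $\omega=\omega_\beta(z)$ gives~\eqref{bring it on 3}.

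Finally, for~\eqref{mprime} I would write $m_{\mu_\alpha\boxplus\mu_\beta}=m_{\mu_\alpha}\circ\omega_\beta$ and combine the last two paragraphs: $m_{\mu_\alpha\boxplus\mu_\beta}'(z)=(m_{\mu_\alpha}'\circ\omega_\beta)(z)\,\omega_\beta'(z)\sim|z-E_-|^{-1/2}$, while in $m_{\mu_\alpha\boxplus\mu_\beta}''(z)=(m_{\mu_\alpha}''\circ\omega_\beta)(z)\,\omega_\beta'(z)^2+(m_{\mu_\alpha}'\circ\omega_\beta)(z)\,\omega_\beta''(z)$ the first summand is $O(|z-E_-|^{-1})$ and the second is $\sim|z-E_-|^{-3/2}$, so the second dominates once $\kappa_0,\eta_0$ are small, giving $m_{\mu_\alpha\boxplus\mu_\beta}''(z)\sim|z-E_-|^{-3/2}$; the $\alpha\leftrightarrow\beta$ statements follow verbatim. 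I do not anticipate any real obstacle: the substantive work is entirely contained in Lemma~\ref{lemma 7}, and the only points needing care are the consistent choice of branch for $\sqrt{E_--z}$ on $\mathcal{E}_0$ and the (harmless) repeated shrinking of $\kappa_0,\eta_0$ so that, uniformly on $\mathcal{E}_0$, the leading terms of the $\zeta$-expansions genuinely dominate and $\omega_\beta(\mathcal{E}_0)$, $\omega_\alpha(\mathcal{E}_0)$ stay inside the relevant disks around $\omega_\beta(E_-)$, $\omega_\alpha(E_-)$.
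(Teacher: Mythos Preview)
Your proof is correct and follows essentially the same route as the paper: both arguments rest on the square-root (Puiseux) expansion of $\omega_\alpha$, $\omega_\beta$ and $m_{\mu_\alpha\boxplus\mu_\beta}$ in $\sqrt{E_--z}$ from Lemma~\ref{lemma 7}, together with the analyticity of $F_{\mu_\alpha}$, $F_{\mu_\beta}$, $m_{\mu_\alpha}$ away from the supports guaranteed by Lemma~\ref{lemma 3}; the only difference is that the paper first records~\eqref{mprime} and then obtains~\eqref{bring it on 2} via the identity $F'_{\mu_\alpha}(\omega_\beta)\,\omega_\beta'=-m_{\mu_\alpha\boxplus\mu_\beta}'/m_{\mu_\alpha\boxplus\mu_\beta}^2$, whereas you reverse the order and use the chain rule. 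One small slip: for $\omega\in D$ and $x\in\mathrm{supp}\,\widehat\mu_\alpha$ you have $\re(x-\omega)>0$, not $<0$; this does not affect your sign-definiteness argument at the real point $\omega_\beta(E_-)$ (where $x-\omega>0$), from which the lower bounds on $|F_{\mu_\alpha}^{(j)}|$ extend to $D$ by continuity.
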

\begin{proof}
 Having established~\eqref{omega behavior} for the behavior of $\omega_\alpha$ and $\omega_\beta$ around the smallest edge $E_-$, the behaviors in~\eqref{mprime} follow directly. Using the subordination equations~\eqref{le definiting equations}, we note that $F'_{\mu_\alpha}(\omega_\beta(z))\omega'_\beta(z)=F'_{\mu_\beta}(\omega_\alpha(z))\omega'_\alpha(z)=-m_{\mu_\alpha\boxplus\mu_\beta}'(z)/(m_{\mu_\alpha\boxplus\mu_\beta}(z))^{2}$, which together with~\eqref{mprime} imply~\eqref{bring it on 2}. Finally,~\eqref{bring it on 3} follows directly from the analyticity of $F_{\mu_\beta}$ and $F_{\mu_\alpha}$ in neighborhood of $\omega_\alpha(E_-)$, respectively $\omega_\beta(E_-)$. 
\end{proof}

 Let us define a second subdomain $\mathcal{E}_{\kappa_0}$ of $\mathcal{E}$ by setting
\begin{align} \label{def:E kappa0}
\mathcal{E}_{\kappa_0}\deq  
\{z\in\mathcal{E}\,:\, E_-^\alpha+E_-^\beta-1\le \re z-E_-\le \kappa_0\,, 0\le\im z\le \eta_\mathrm{M}\}
\end{align}
with $\kappa_0$ and $\eta_\mathrm{M}$ as in~\eqref{def:eps0}. Note that $\mathcal{E}_0\subset\mathcal{E}_{\kappa_0}\subset\mathcal{E}$. 
We further introduce the functions 
\begin{align}\label{T}
 \mathcal{S}_{\alpha\beta}&\equiv \mathcal{S}_{\alpha\beta}(z)\deq  (F'_{\mu_\alpha}(\omega_\beta(z))-1)(F'_{\mu_\beta}(\omega_\alpha(z))-1)-1\,, \nonumber\\
\mathcal{T}_\alpha&\equiv \mathcal{T}_\alpha(z)\deq \frac{1}{2}\big(F''_{\mu_\alpha}(\omega_\beta(z)) \big(F'_{\mu_\beta}(\omega_\alpha(z))-1\big)^2+
 F''_{\mu_\beta}(\omega_\alpha(z))\big(F'_{\mu_\alpha}(\omega_\beta(z))-1)  \big)\,, \nonumber\\
\mathcal{T}_\beta&\equiv \mathcal{T}_\beta(z)\deq \frac{1}{2}\big(F''_{\mu_\beta}(\omega_\alpha(z)) \big(F'_{\mu_\alpha}(\omega_\beta(z))-1\big)^2+F''_{\mu_\alpha}(\omega_\beta(z))\big(F'_{\mu_\beta}(\omega_\alpha(z))-1)  \big)\,,\qquad z\in\C^+\,.
\end{align}
These functions are essentially the first and second order derivatives of the subordination equations~(\ref{le definiting equations}).  We have the following corollary on the estimates of $m_{\mu_\alpha\boxplus\mu_\beta}$, $\omega_\alpha$, $\omega_\beta$ and also the above functions.

\begin{cor} \label{cor.17080370} Let $\mathcal{E}_{\kappa_0}$ be as in~\eqref{def:E kappa0} and let $\mathcal{E}_0$ be as in~\eqref{def:eps0}. Then
\begin{align}\label{kev1}
\Im m_{\mu_\alpha\boxplus\mu_\beta}(z)\sim \Im \omega_\alpha(z)\sim \Im \omega_\beta(z)\sim   
\begin{cases}\sqrt{\kappa+\eta}\,, \qquad&\textrm{if } E\ge E_-\,,\\
\frac{\eta}{\sqrt{\kappa+\eta}}\,, & \textrm{if } E<E_-\,,
\end{cases}
\end{align}
 and
 \begin{align}\label{kev2}
  \mathcal{S}_{\alpha\beta}(z)\sim\sqrt{\kappa+\eta}
 \end{align}
hold uniformly for $z\in\mathcal{E}_{\kappa_0}$, with $\kappa$ given in~\eqref{17080102}. Moreover, we have
\begin{align}\label{kev3}
\mathcal{T}_\alpha(z)\sim 1\,,\qquad \qquad
 \mathcal{T}_\beta(z)\sim 1\,,
\end{align}
uniformly for $z\in\mathcal{E}_0$, respectively
\begin{align}\label{kev3334}
&|\mathcal{T}_\alpha(z)|\leq C\,,\qquad\qquad |\mathcal{T}_\beta(z)|\leq C\,,
\end{align}
uniformly for $z\in\mathcal{E}_{\kappa_0}$, for some constant $C$.
\end{cor}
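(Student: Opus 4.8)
The plan is to read off the estimates on the small box $\mathcal{E}_0$ directly from the local analysis already carried out in Lemma~\ref{lemma 7} and Corollary~\ref{le second corollary}, and then to transport them to the larger domain $\mathcal{E}_{\kappa_0}$ by a separate argument valid away from the edge $E_-$. On $\mathcal{E}_0$, statement~\eqref{kev1} is precisely~\eqref{omega behavior}. For~\eqref{kev3} I would use that, by Lemma~\ref{lemma 3}, $\omega_\beta(E_-)$ lies at distance at least $k_0$ to the left of $\mathrm{supp}\,\widehat\mu_\alpha=\mathrm{supp}\,\mu_\alpha$ and $\omega_\alpha(E_-)$ at distance at least $k_0$ to the left of $\mathrm{supp}\,\widehat\mu_\beta=\mathrm{supp}\,\mu_\beta$; differentiating the Nevanlinna representation~\eqref{representation} then shows that $F'_{\mu_\alpha}(\omega_\beta(E_-))-1$, $F'_{\mu_\beta}(\omega_\alpha(E_-))-1$, $F''_{\mu_\alpha}(\omega_\beta(E_-))$ and $F''_{\mu_\beta}(\omega_\alpha(E_-))$ are all strictly positive and finite, hence $\mathcal{T}_\alpha(E_-)\sim 1$, $\mathcal{T}_\beta(E_-)\sim 1$, and~\eqref{kev3} follows on $\mathcal{E}_0$ by continuity after shrinking $\kappa_0,\eta_0$. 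For~\eqref{kev2} on $\mathcal{E}_0$, differentiating the subordination equations~\eqref{le definiting equations} yields the identities $\omega_\beta'(z)\,\mathcal{S}_{\alpha\beta}(z)=-F'_{\mu_\beta}(\omega_\alpha(z))$ and $\omega_\alpha'(z)\,\mathcal{S}_{\alpha\beta}(z)=-F'_{\mu_\alpha}(\omega_\beta(z))$; combining the first with $|\omega_\beta'(z)|\sim|z-E_-|^{-1/2}$ from~\eqref{bring it on 2}, with $F'_{\mu_\beta}(\omega_\alpha(z))\sim 1$ from~\eqref{bring it on 3}, and with $|z-E_-|\sim\kappa+\eta$ on $\mathcal{E}_0$, gives $\mathcal{S}_{\alpha\beta}(z)\sim\sqrt{\kappa+\eta}$ there.

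To propagate to $\mathcal{E}_{\kappa_0}$, I would decompose $\mathcal{E}_{\kappa_0}\setminus\mathcal{E}_0=\mathcal{R}_1\cup\mathcal{R}_2$ with $\mathcal{R}_1=\{z\in\mathcal{E}_{\kappa_0}:\re z\le E_--\kappa_0\}$ and $\mathcal{R}_2=\{z\in\mathcal{E}_{\kappa_0}:\im z\ge\eta_0\}$, and first establish that on $\mathcal{E}_{\kappa_0}\setminus\mathcal{E}_0$ the points $\omega_\alpha(z)$ and $\omega_\beta(z)$ stay in a fixed compact set and at distance at least a constant $c>0$ from $\mathrm{supp}\,\mu_\beta$, resp.\ from $\mathrm{supp}\,\mu_\alpha$. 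On $\mathcal{R}_2$ this is immediate from $\im\omega_\alpha(z),\im\omega_\beta(z)\ge\im z\ge\eta_0$ together with the bound of Lemma~\ref{lemma 1}. On the part of $\mathcal{R}_1$ with small $\im z$ it follows from the real-axis input of Lemmas~\ref{lemma 2} and~\ref{lemma 3}, which give $\re\omega_\alpha(E)\le E_-^\beta-k_0$, $\re\omega_\beta(E)\le E_-^\alpha-k_0$ for real $E\le E_-$, with $\im\omega_\alpha(E)=\im\omega_\beta(E)=0$ there (by~\eqref{needed later on} and $\im m_{\mu_\alpha\boxplus\mu_\beta}(E)=0$), together with the continuous extension of $\omega_\alpha,\omega_\beta$ to $\C^+\cup\R$ and uniform continuity on the compact set $\mathcal{E}_{\kappa_0}$; the remainder of $\mathcal{R}_1$ is contained in $\mathcal{R}_2$.

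Once $\omega_\alpha,\omega_\beta$ are kept away from the supports, \eqref{representation} immediately gives $|F'_{\mu_\alpha}(\omega_\beta(z))|,|F''_{\mu_\alpha}(\omega_\beta(z))|,|F'_{\mu_\beta}(\omega_\alpha(z))|,|F''_{\mu_\beta}(\omega_\alpha(z))|\le C$ on $\mathcal{E}_{\kappa_0}\setminus\mathcal{E}_0$, which is~\eqref{kev3334}. For~\eqref{kev1} there, the lower bounds come from $\im\omega_\alpha(z),\im\omega_\beta(z)\ge\im z=\eta$ and from $\im m_{\mu_\alpha\boxplus\mu_\beta}(z)=\im\omega_\beta(z)\int\frac{\dd\mu_\alpha(x)}{|x-\omega_\beta(z)|^2}\ge c\eta$ (using Lemma~\ref{lemma 1}), while the matching upper bounds come from~\eqref{a lot of rs} together with the elementary estimate $\im m_{\mu_\alpha\boxplus\mu_\beta}(z)\le C\eta$ on $\mathcal{R}_1$ (since $|x-\re z|\ge\kappa_0$ on $\mathrm{supp}\,\mu_\alpha\boxplus\mu_\beta$) and $\im m_{\mu_\alpha\boxplus\mu_\beta}(z)\le\eta_0^{-1}$ on $\mathcal{R}_2$; these are $\sim\sqrt{\kappa+\eta}$, resp.\ $\sim\eta/\sqrt{\kappa+\eta}$, because $\kappa+\eta\sim 1$ on $\mathcal{R}_1$ and $\kappa+\eta$ is bounded above and below by positive constants on $\mathcal{R}_2$.

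The remaining and most delicate point is the lower bound $|\mathcal{S}_{\alpha\beta}(z)|\ge c$ on $\mathcal{E}_{\kappa_0}\setminus\mathcal{E}_0$ (the upper bound being contained in the bounds on $F'$ above). On the real segment $E_-+E_-^\alpha+E_-^\beta-1\le E\le E_--\kappa_0$ the function $\mathcal{S}_{\alpha\beta}$ is real-valued and, by Lemma~\ref{lemma 7} (recall~\eqref{ja wo ist das edge}), which characterizes $E_-$ as the smallest real zero of $(F'_{\mu_\alpha}(\omega_\beta(\cdot))-1)(F'_{\mu_\beta}(\omega_\alpha(\cdot))-1)-1=\mathcal{S}_{\alpha\beta}(\cdot)$, it does not vanish there, hence is $\ge c>0$ by compactness; uniform continuity extends this to $\im z$ below a threshold $\eta_1$. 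For $\eta_1\le\im z\le\eta_\mathrm{M}$ I would use the identity $\mathcal{S}_{\alpha\beta}(z)=-F'_{\mu_\beta}(\omega_\alpha(z))/\omega_\beta'(z)$ with the Schwarz--Pick bound $|\omega_\beta'(z)|\le\im\omega_\beta(z)/\im z\le C$, together with a lower bound on $|F'_{\mu_\beta}(\omega_\alpha(z))|$; the latter can be obtained from $F'_{\mu_\beta}(w)=m'_{\mu_\beta}(w)/m_{\mu_\beta}(w)^2$, the bound~\eqref{17080326} on $m_{\mu_\alpha\boxplus\mu_\beta}=m_{\mu_\beta}\circ\omega_\alpha$, and the absence of critical points of $m_{\mu_\beta}$ in $\C^+$ coming from the single-interval support in Assumption~\ref{a.regularity of the measures}$(i)$ (directly on $\mathcal{R}_1$ one has $\im F'_{\mu_\beta}(\omega_\alpha(z))>0$ since $\re\omega_\alpha(z)<E_-^\beta$, and on the remaining compact set one can alternatively invoke $\mathcal{S}_{\alpha\beta}(\ii\eta)\to-1$ as $\eta\to\infty$ together with continuity). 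This uniform non-degeneracy of $\mathcal{S}_{\alpha\beta}$ away from the edge is the main obstacle; once it is in place, all the asserted $\sim$-relations follow by combining it with the estimates of the previous paragraph.
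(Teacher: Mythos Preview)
Your plan on $\mathcal{E}_0$ and your treatment of~\eqref{kev1} and~\eqref{kev3334} on the complementary region are correct and essentially match the paper: on $\mathcal{E}_0$ everything is read off from Lemma~\ref{lemma 7} and Corollary~\ref{le second corollary}, while on $\mathcal{E}_{\kappa_0}\setminus\mathcal{E}_0$ one has $\kappa+\eta\sim 1$ and the paper, like you, uses~\eqref{a lot of rs} together with the trivial lower bound $\im\omega_\alpha,\im\omega_\beta\ge\eta$ and the elementary estimate $\im m_{\mu_\alpha\boxplus\mu_\beta}(z)\sim\eta$. Your argument for the lower bound on $|\mathcal{S}_{\alpha\beta}|$ along the real segment $(-\infty,E_--\kappa_0]$ and its continuity extension to small $\eta$ is also in the spirit of the paper (the paper uses the monotonicity of $\omega_\alpha,\omega_\beta$ from Lemma~\ref{lemma 2} and the explicit formula~\eqref{hurra} to get $0\le(F'_{\mu_\alpha}(\omega_\beta)-1)(F'_{\mu_\beta}(\omega_\alpha)-1)\le 1-c$ directly, but your compactness formulation would work as well).

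The gap is in your argument for the lower bound on $|\mathcal{S}_{\alpha\beta}(z)|$ in the strip $\eta_1\le\im z\le\eta_{\mathrm M}$. Your identity $\omega_\beta'\,\mathcal{S}_{\alpha\beta}=-F'_{\mu_\beta}(\omega_\alpha)$ and the Schwarz--Pick bound $|\omega_\beta'|\le\im\omega_\beta/\im z\le C$ are fine, but the crucial input, a uniform lower bound on $|F'_{\mu_\beta}(\omega_\alpha(z))|$, is not established. The assertion that ``$m'_{\mu_\beta}$ has no critical points in $\C^+$ because of the single-interval support'' is not a standard fact and you give no proof; for instance, $\Im\int\frac{\dd\widehat\mu_\beta(x)}{(x-w)^2}$ can vanish whenever $\re w$ lies in the convex hull of $\mathrm{supp}\,\mu_\beta$, so the sign argument you use for $\re\omega_\alpha<E_-^\beta$ breaks down once $\eta$ is large enough to push $\re\omega_\alpha$ past $E_-^\beta$. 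Your fallback ``$\mathcal{S}_{\alpha\beta}(\ii\eta)\to-1$ together with continuity'' does not give non-vanishing on a bounded set either.

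The paper closes this hole by a different, self-contained estimate that avoids any statement about zeros of $F'_{\mu_\beta}$: from the Nevanlinna representation one has
\[
|F'_{\mu_\alpha}(\omega_\beta(z))-1|\le\int_\R\frac{\dd\widehat\mu_\alpha(x)}{|x-\omega_\beta(z)|^2}=\frac{\Im F_{\mu_\alpha}(\omega_\beta(z))-\Im\omega_\beta(z)}{\Im\omega_\beta(z)}=\frac{\Im\omega_\alpha(z)-\eta}{\Im\omega_\beta(z)}\,,
\]
using~\eqref{le definiting equations} in the last step, and similarly with $\alpha,\beta$ swapped. Multiplying gives
\[
\big|(F'_{\mu_\alpha}(\omega_\beta)-1)(F'_{\mu_\beta}(\omega_\alpha)-1)\big|\le\frac{(\Im\omega_\alpha-\eta)(\Im\omega_\beta-\eta)}{\Im\omega_\alpha\,\Im\omega_\beta}\le 1-c\,,
\]
since $\eta\ge\eta_0$ and $\Im\omega_\alpha,\Im\omega_\beta\le C$ by~\eqref{a lot of rs}. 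This yields $|\mathcal{S}_{\alpha\beta}|\ge c$ directly. Replacing your Schwarz--Pick step by this inequality fixes the argument.
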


\begin{proof}[Proof of Corollary \ref{cor.17080370}]
 Having established~\eqref{omega behavior} for the behavior of $\omega_\alpha$ and $\omega_\beta$ on $\mathcal{E}_0$, the behaviors in~\eqref{kev1},~\eqref{kev2} and~\eqref{kev3} can be checked by elementary computations using Taylor expansions as in the proof of Lemma~\ref{lemma 7}, and the estimates in~\eqref{sas 1} and~\eqref{sas 2} . 
 
 Consider now the complementary domain $\mathcal{E}_{\kappa_0}\setminus \mathcal{E}_0$. Observe that  $\kappa+\eta\sim 1$ in $\mathcal{E}_{\kappa_0}\setminus \mathcal{E}_0$. Hence, we have
 \begin{align}
 \Im m_{\mu_\alpha\boxplus\mu_\beta}(z)=\int_\R\frac{\eta}{(x-E)^2+\eta^2} \,{\rm d} \mu_{\alpha}\boxplus\mu_\beta(x)\sim \eta \label{17080350}
 \end{align}
 uniformly on $\mathcal{E}_{\kappa_0}\setminus \mathcal{E}_0$. Then, from (\ref{a lot of rs}), (\ref{17080350}) and $\Im \omega_\alpha(z)\geq \eta$, $\Im \omega_\beta(z)\geq \eta$, we get 
 \begin{align}
 \Im \omega_\alpha(z)\sim \eta\,, \qquad\qquad   \Im \omega_\alpha(z)\sim \eta\,. \label{17080380}
 \end{align}
Observe that  both estimates in (\ref{kev1}) are of the same order as $\eta$ if $z\in \mathcal{E}_{\kappa_0}\setminus\mathcal{E}_0$.  Hence, we have
(\ref{kev1}).
 
 Next, we show that (\ref{kev2}) can be extended to the whole $\mathcal{E}_{\kappa_0}\setminus \mathcal{E}_0$. Since $\kappa+\eta\sim 1$, it suffices to show that  the left side of (\ref{kev2}) is comparable to $1$ on $\mathcal{E}_{\kappa_0}\setminus \mathcal{E}_0$. We first consider real $z\in [E_-^\alpha+E_-^\beta-1, E_-]$. Using  (\ref{hurra}) and its analogue for $F'_{\mu_\alpha}$, (\ref{stinker bis}), (\ref{kev2}), the monotonicity of $\omega_\alpha(z)$ and $\omega_\beta(z)$ on $(-\infty, E_--\kappa_0]$ (\cf Lemma \ref{lemma 2}), and (\ref{key to everything}),  we  see that 
 \begin{align*}
0\leq (F'_{\mu_\alpha}(\omega_\beta(z))-1)(F'_{\mu_\beta}(\omega_\alpha(z))-1)\leq 1- c\,, \qquad \quad\forall  z\in [E_-^\alpha+E_-^\beta-1, E_--\kappa_0] \,,
 \end{align*}
 for some small constant $c>0$.  Hence, we have 
 \begin{align}
 \big|(F'_{\mu_\alpha}(\omega_\beta(z))-1)(F'_{\mu_\beta}(\omega_\alpha(z))-1)-1\big|\sim 1\,, \qquad\quad\forall  z\in [E_-^\alpha+E_-^\beta-1, E_--\kappa_0]\,.  \label{17080360}
 \end{align}
 Then, (\ref{17080360}) can be extended to all $z=E+\ii\eta$, with $E\in [E_-^\alpha+E_-^\beta-1, E_--\kappa_0]$ and $0\leq \eta\leq \wt{\eta}_0$ for sufficiently small constant $\wt{\eta}_0>0$ by continuity. This together with (\ref{kev2}) gives the estimate in the regime $E\in [E_-^\alpha+E_-^\beta-1, E_-+\kappa_0]$ and $0\leq \eta\leq \eta_0$ after possibly reducing $\eta_0$ to $\wt{\eta}_0$ if $\eta_0>\wt{\eta}_0$. 
 
 It remains to show that the left side of (\ref{kev2}) is proportional to $1$ when $E\in [E_-^\alpha+E_-^\beta-1, E_-+\kappa_0]$ and $\eta_0\leq \eta\leq \eta_\mathrm{M}$.  To this end, we first recall (\ref{hurra}), and observe from   (\ref{le franz}) that 
 \begin{align}
\frac{\Im F_{\mu_\alpha}(\omega_\beta(z))-\Im \omega_\beta(z)}{\Im \omega_\beta(z)}=\int_\R\frac{1}{|x-\omega_\beta(z)|^2}\,\dd\widehat{\mu}_{\alpha}(x)\,. \label{17080378}
 \end{align}
 Hence, using (\ref{hurra}), (\ref{17080378}) and their $F_{\mu_\beta}$ analogues,  we have  
 \begin{align}
 |(F'_{\mu_\alpha}(\omega_\beta(z))-1)(F'_{\mu_\beta}(\omega_\alpha(z))-1)| &\leq \frac{\Im F_{\mu_\alpha}(\omega_\beta(z))-\Im \omega_\beta(z)}{\Im \omega_\beta(z)}\frac{\Im F_{\mu_\beta}(\omega_\alpha(z))-\Im \omega_\alpha(z)}{\Im \omega_\alpha(z)}\nonumber\\
 & = \frac{\Im \omega_\beta(z)-\eta}{\Im \omega_\beta(z)}\frac{\Im \omega_\alpha(z)-\eta}{\Im \omega_\alpha(z)}\leq 1-c\,, \label{17080379}
 \end{align}
 for a strictly positive constant $c$, where in the second step we used the  second equation  in (\ref{le definiting equations}) and in the last step we used that $\eta\geq  \eta_0$ and (\ref{17080380}).   Then, from (\ref{17080379}) we get (\ref{kev2}) in the whole of $\mathcal{E}_{\kappa_0}$.
 
Similarly, the upper bound in  (\ref{kev3334}) follows from (\ref{17080380}), (\ref{key to everything}), the monotonicity in Lemma \ref{lemma 2}, and the continuity of $\omega_\alpha$ and $\omega_\beta$.   Omitting the details, we conclude the proof of Corollary \ref{cor.17080370}.
\end{proof}
At this stage we have completed the first step in the proof of Proposition~\ref{le proposition 3.1}. In the next subsection, we carry out the second step where we translate results obtained so far for $\mu_\alpha$ and $\mu_\beta$ to the measures $\mu_A$ and $\mu_B$ by giving the actual proof of Proposition~\ref{le proposition 3.1}.

\subsection{Proof of Proposition~\ref{le proposition 3.1}}\label{app:stab}

\newcommand{\eps}{\varepsilon}

  Consider the $N$-dependent measures $\mu_A$ and $\mu_B$ while always assuming that they satisfy Assumption~\ref{a. levy distance}. 
Let $\omega_A(z)$ and $\omega_B(z)$ denote the subordination functions associated by~\eqref{170730100} to the measures $\mu_A$ and $\mu_B$. Recall further the definition of the $z$-dependent quantities $\mathcal{S}_{AB}$, $\mathcal{T}_A$ and $\mathcal{T}_B$ in~\eqref{17080110}. 

Recall that $E_-=\inf\,\mathrm{supp}\,\mu_\alpha\boxplus\mu_\beta$. Fix sufficiently small $\eps,\delta>0$ and let the domain $\mathcal{D}$ be defined by
\begin{align}\label{le domain DD}
   \mathcal{D}\deq \mathcal{D}_{\mathrm{in}}\cup \mathcal{D}_{\mathrm{out}}\,,
 \end{align}
 with
 \begin{align*}
  \mathcal{D}_{\mathrm{in}}& \deq\{ z\in \C^+ : |z-E_-|\le \delta\} \cap \{ \im z\ge N^{-1+10\eps}, \re z>E_- - N^{-1+10\eps}  \} \,,\\
  \mathcal{D}_{\mathrm{out}}&\deq  \{ z\in \C^+: |z-E_-|\le \delta\}\cap \{ \re z <E_- - N^{-1+10\eps} \} \,.
\end{align*}

Notice that the bounds on  $A, B$-quantities will be for spectral parameters $z$ that are
separated away from the limiting spectrum (\eg by assuming that $\im z\ge N^{-1+10\eps}$) 
unlike in case of the $\alpha, \beta$-quantities.

\begin{lem}\label{thm.17080401}
Let $\mu_A$, $\mu_B$, $\mu_\alpha$ and $\mu_\beta$ satisfy Assumptions~\ref{a.regularity of the measures} and~\ref{a. levy distance}.
Then there is a constant $C^*$ such that  for any $z\in\mathcal{D}$ we have
\begin{align}\label{omegadiff}
  |\omega_A (z)- \omega_\alpha(z)| +  |\omega_B (z)- \omega_\beta(z)|  &  \le C^* 
       \frac{N^{-1+\eps}} {\sqrt{|z-E_-|}} \le N^{-1/2+\eps}\,,\\
|\mathcal{S}_{AB}(z)|& \sim \sqrt{|z-E_-|}\,,\label{Sdiff}
\end{align}
and
\begin{equation}\label{Tdiff}
  |\mathcal{T}_A(z)| \sim 1\,, \qquad |\mathcal{T}_B(z)| \sim 1\,,
\end{equation}
for $N$ sufficiently large. Moreover, we have for any $z\in\mathcal{D}$ that
\begin{align}
      \im m_{\mu_A\boxplus\mu_B}(z) &\sim  \sqrt{|z-E_-|}\,, \qquad\qquad & &z \in\mathcal{D}_{\mathrm{in}}  \,,\label{mdiff1}\\
       \im m_{\mu_A\boxplus\mu_B}(z) &\lesssim
    \frac{\im z + O(N^{-1+\eps})}{ \sqrt{|z-E_-|}} \,,\qquad & &z\in \mathcal{D}_{\mathrm{out}}\,,\label{mdiff2}
\end{align}
for $N$ sufficiently large. Furthermore, for the imaginary parts the bound \eqref{omegadiff} is, for $N$ sufficiently large, sharpened to
\begin{equation}\label{imbound}
   |\im \omega_A -\im \omega_\alpha| + |\im\omega_B-\im\omega_\beta| \lesssim
    \frac{ (\im \omega_\alpha + \im \omega_\beta) N^{-1+\eps} + \im z}{\sqrt{|z-E_-|}}\,,
\end{equation}
for $z\in \mathcal{D}_{\mathrm{out}}$, $\eta\le N^{-1}$,
which implies that
\begin{equation}\label{bottom}
\inf \mathrm{supp}\, \mu_{A}\boxplus \mu_B \ge  E_-- N^{-1+10\eps}\,.
\end{equation}

Away from the spectral edge we have the following weaker versions of \eqref{Sdiff}, \eqref{Tdiff}: 
\begin{equation}\label{Sdiff1}
  |\mathcal{S}_{AB}(z)| \sim 1\,,
  \end{equation}
\begin{equation}\label{Tdiff1}
  |\mathcal{T}_A(z)| +|\mathcal{T}_B(z)| \le C\,,
\end{equation}
hold uniformly  for any $z$ with $\delta \le |z-E_-|\le C$,  for $N$ sufficiently large.
\end{lem}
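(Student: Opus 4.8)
\emph{Proof strategy.} The plan is to view the subordination system \eqref{170730100} for $(\omega_A,\omega_B)$ as a small perturbation of the one for $(\omega_\alpha,\omega_\beta)$, exploiting that the relevant \emph{stability operator} of this system is governed precisely by $\mathcal{S}_{\alpha\beta}$, whose edge behavior was pinned down in Corollary~\ref{cor.17080370}. As a preliminary input I would first record a quantitative comparison of reciprocal Stieltjes transforms. By Lemma~\ref{lemma 3} (i.e.\ \eqref{key to everything}), the Lipschitz bound \eqref{bring it on 2} near the edge, and the monotonicity of Lemma~\ref{lemma 2} together with Lemma~\ref{lemma 1} away from it (shrinking $\delta$ if needed), the points $\omega_\beta(z)$ and $\omega_\alpha(z)$ stay a fixed distance away from $\mathrm{supp}\,\mu_\alpha\cup\mathrm{supp}\,\mu_A$, respectively $\mathrm{supp}\,\mu_\beta\cup\mathrm{supp}\,\mu_B$, for all $z\in\mathcal{D}\cup\{\delta\le|z-E_-|\le C\}$; here Assumption~\ref{a. levy distance} $(v)$ is used to locate $\mathrm{supp}\,\mu_A,\mathrm{supp}\,\mu_B$. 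Hence, by the standard estimate relating the L\'evy distance $\mathbf{d}=d_L(\mu_A,\mu_\alpha)+d_L(\mu_B,\mu_\beta)\le N^{-1+\epsilon}$ (see \eqref{levy}) to differences of Stieltjes transforms at points away from the support,
\[
 |F_{\mu_A}(\omega_\beta(z))-F_{\mu_\alpha}(\omega_\beta(z))|+|F_{\mu_B}(\omega_\alpha(z))-F_{\mu_\beta}(\omega_\alpha(z))|\lesssim\mathbf{d}\,,
\]
with the analogous bounds for the first, second and third derivatives of $F$.

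Next comes the stability argument. Write $\Psi(w_1,w_2;z)\deq\big(F_{\mu_A}(w_2)-F_{\mu_B}(w_1),\;w_1+w_2-z-F_{\mu_A}(w_2)\big)$, so that $\Psi(\omega_A,\omega_B;z)=0$ by \eqref{170730100}. A direct computation shows that the Jacobian $\partial_{(w_1,w_2)}\Psi$ has determinant $\big(F'_{\mu_A}(w_2)-1\big)\big(F'_{\mu_B}(w_1)-1\big)-1$, which at $(w_1,w_2)=(\omega_A,\omega_B)$ equals $\mathcal{S}_{AB}(z)$, while the entries of its adjugate are $O(1)$ by \eqref{bring it on 3}. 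Substituting $(\omega_\alpha,\omega_\beta)$ into $\Psi$ and subtracting the corresponding $(\alpha,\beta)$–system (which vanishes there), the preliminary input gives $\|\Psi(\omega_\alpha,\omega_\beta;z)\|\lesssim\mathbf{d}$. Taylor expanding $\Psi$ around $(\omega_\alpha,\omega_\beta)$, inverting the Jacobian — whose determinant is $\sim\mathcal{S}_{\alpha\beta}(z)\sim\sqrt{|z-E_-|}$ by \eqref{kev2}, once $(\omega_A,\omega_B)$ is known to be close to $(\omega_\alpha,\omega_\beta)$ — and absorbing the quadratic remainder, which is controlled by the $O(1)$ second derivatives from \eqref{bring it on 3}, yields the self-consistent inequality
\[
 |\omega_A-\omega_\alpha|+|\omega_B-\omega_\beta|\ \lesssim\ \frac{\mathbf{d}+\big(|\omega_A-\omega_\alpha|+|\omega_B-\omega_\beta|\big)^2}{\sqrt{|z-E_-|}}\,.
\]
This is closed by a continuity argument in $z$: one starts from large $\Im z$, where $\omega_A,\omega_\alpha\approx z$ and the difference is trivially $O(\mathbf{d})$, and decreases $\Im z$ (or moves along paths inside $\mathcal{D}$), using that throughout $\mathcal{D}$ one has $|z-E_-|\ge N^{-1+10\epsilon}$, so $\sqrt{|z-E_-|}\ge N^{-1/2+5\epsilon}$ dominates the a~priori gap and renders the quadratic term negligible; along the way $\omega_B$ inherits the ``away from $\mathrm{supp}\,\mu_A$'' property from $\omega_\beta$. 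This gives \eqref{omegadiff}. Once \eqref{omegadiff} holds, the estimates \eqref{Sdiff}, \eqref{Tdiff}, \eqref{Sdiff1}, \eqref{Tdiff1} follow by perturbation from the corresponding $(\alpha,\beta)$–quantities of Corollary~\ref{cor.17080370}: e.g.\ $|\mathcal{S}_{AB}(z)-\mathcal{S}_{\alpha\beta}(z)|\lesssim\mathbf{d}+(|\omega_A-\omega_\alpha|+|\omega_B-\omega_\beta|)\lesssim N^{-1+c\epsilon}/\sqrt{|z-E_-|}=o(\sqrt{|z-E_-|})$ on $\mathcal{D}$, so $\mathcal{S}_{AB}\sim\mathcal{S}_{\alpha\beta}\sim\sqrt{|z-E_-|}$, and likewise $\mathcal{T}_A\sim\mathcal{T}_\alpha\sim1$ on $\mathcal{D}_{\mathrm{in}}$ via \eqref{kev3}, with the away-from-edge statements coming from \eqref{kev3334}.

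For the imaginary-part estimates I would use $m_{\mu_A\boxplus\mu_B}(z)=m_{\mu_A}(\omega_B(z))$ and $\Im m_{\mu_A}(\omega_B(z))=\Im\omega_B(z)\int\frac{\dd\mu_A(x)}{|x-\omega_B(z)|^2}$; bounding $\Im\omega_B(z)$ and the integral through \eqref{omegadiff}, \eqref{omega behavior} and \eqref{kev1} (and $\omega_B$ being away from $\mathrm{supp}\,\mu_A$) yields \eqref{mdiff1} on $\mathcal{D}_{\mathrm{in}}$, where $\Im\omega_B\sim\sqrt{\kappa+\eta}\sim\sqrt{|z-E_-|}$. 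On $\mathcal{D}_{\mathrm{out}}$ the modulus bound \eqref{omegadiff} is too crude and must be sharpened: taking imaginary parts in the two subordination systems and subtracting produces a \emph{linear} system for $(\Im\omega_A-\Im\omega_\alpha,\;\Im\omega_B-\Im\omega_\beta)$ whose coefficient matrix is again a perturbation of the stability operator (determinant $\sim\sqrt{|z-E_-|}$) and whose inhomogeneity is controlled by $\Im z$ together with $(\Im\omega_\alpha+\Im\omega_\beta)\mathbf{d}$; inverting it gives \eqref{imbound}, whence also \eqref{mdiff2}. Finally, letting $\eta\searrow0$ in \eqref{imbound} for fixed $E<E_--N^{-1+10\epsilon}$ (with $\eta\le N^{-1}$) forces $\Im\omega_B(E+\ii0)$, hence $\Im m_{\mu_A\boxplus\mu_B}(E+\ii0)$, to vanish; combined with the absence of atoms of $\mu_A\boxplus\mu_B$ in that region (here $\|A\|,\|B\|$ are bounded and the real subordination functions are analytic there, so one inspects where analyticity can break down), this yields \eqref{bottom}.

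I expect the main obstacle to be the degeneration of the stability operator at the edge: $\det\partial\Psi=\mathcal{S}_{AB}\sim\sqrt{|z-E_-|}\to0$ forces the unavoidable factor $1/\sqrt{|z-E_-|}$ in \eqref{omegadiff}, and running the continuity/bootstrap all the way down to $|z-E_-|\sim N^{-1+10\epsilon}$ while keeping the quadratic self-improvement term strictly subleading — and while propagating the ``away from the supports'' property of $\omega_A,\omega_B$ — is the delicate point. Equally delicate is the refined bound \eqref{imbound}, with its extra $\Im z$ in the numerator: it is exactly what is needed to deduce $\Im m_{\mu_A\boxplus\mu_B}(E+\ii0)=0$ for $E$ a tiny distance below $E_-$, i.e.\ \eqref{bottom}, and it genuinely requires tracking the imaginary parts through the linearized system rather than using the modulus estimate \eqref{omegadiff}.
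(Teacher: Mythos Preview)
Your proposal is correct and follows essentially the same route as the paper: perturb the subordination system from $(\mu_\alpha,\mu_\beta)$ to $(\mu_A,\mu_B)$ with error of size $\mathbf d$ (using that $\omega_\alpha,\omega_\beta$ stay away from the relevant supports), invoke a local stability lemma whose linearization has determinant $\mathcal S_{\alpha\beta}\sim\sqrt{|z-E_-|}$, run a continuity argument from large $\eta$ down through $\mathcal D$ to obtain \eqref{omegadiff}, and then deduce \eqref{Sdiff}--\eqref{Tdiff1} and \eqref{mdiff1}--\eqref{mdiff2} by direct comparison with the $(\alpha,\beta)$--quantities of Corollary~\ref{cor.17080370}. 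Your treatment of \eqref{imbound} and \eqref{bottom} --- take imaginary parts in both subordination systems, subtract, and invert the resulting linear system (determinant again $\sim\sqrt{|z-E_-|}$, inhomogeneity $\lesssim \im z+(\im\omega_\alpha+\im\omega_\beta)\mathbf d$), then let $\eta\searrow 0$ --- is exactly what the paper does.
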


\begin{proof} We start by rewriting the subordination equation for $\mu_A$ and $\mu_B$ (\cf~\eqref{le definiting equations} with $\mu_1=\mu_A$, $\mu_2=\mu_B$) as
 \begin{align}\label{le perturbed subo}
  F_{\mu_\alpha}(\omega_B(z))  -\omega_A(z)-\omega_B(z)+z &= r_1(z)\,,\nonumber\\
  F_{\mu_\beta}(\omega_A(z))  -\omega_A(z) - \omega_B(z) +z & = r_2(z)\,,
 \end{align}
 where we introduced
 \begin{align}\label{the r's}
   r_1(z)\deq F_{\mu_\alpha}(\omega_B(z)) - F_{\mu_A}(\omega_B(z))\,,\qquad \qquad r_2(z)\deq F_{\mu_\beta}(\omega_A(z)) - F_{\mu_B}(\omega_A(z))\,.
   \end{align}
   
   We rely on the following local stability result for the system~\eqref{le perturbed subo}.
 \begin{lem}\label{lemma local stability}
  Let $\omega_\alpha(z)$ and $\omega_\beta(z)$ be the unique solutions to~\eqref{le definiting equations} with $\mu_1=\mu_\alpha$ and $\mu_2=\mu_\beta$. Fix $z_0\in\mathcal{D}$. Assume that the functions $\omega_A$, $\omega_B\,:\,\C^+\rightarrow \C^+$ and $r_1$, $r_2\,:\,\C^+\rightarrow \C$ satisfy~\eqref{le perturbed subo} with $z=z_0$. Assume moreover that there is a function $q\equiv q(z)$ such that
 \begin{align}\label{le apriori closeness}
 |\omega_A(z_0)-\omega_\alpha(z_0)|\le q(z_0)\,,\qquad |\omega_B(z_0)-\omega_\beta(z_0)|\le q(z_0)\,,  
 \end{align}
 with $q(z)=o(1)$ and $q(z)/ \mathcal{S}_{\alpha\beta}(z)=o(1)$ as $N\to \infty$, uniformly in $z\in\mathcal{D}$, where $\mathcal{S}_{\alpha\beta}$ is given in~\eqref{T}. Then we have
 \begin{equation}\label{omom}
     |\omega_A(z_0)-\omega_\alpha(z_0)|+ |\omega_B(z_0) - \omega_\beta(z_0)| \le C \frac{|r_1(z_0)|+|r_2(z_0)|}{|\mathcal S_{\alpha\beta}(z_0)|}\,,
 \end{equation}
for $N$ sufficiently large, with some constant $C$ independent of $N$ and $z_0$.

 \end{lem}

\begin{proof}
The proof is similar to the proof of Proposition~4.1 in~\cite{BES15}. We start by Taylor expanding $F_{\mu_\alpha}(\omega_B(z_0))$ to second order around $\omega_\beta(z_0)$,  so that the first equation in~\eqref{le perturbed subo} reads
\begin{align*}
 F_{\mu_\alpha}(\omega_\beta(z_0))+F'_{\mu_\alpha}(\omega_\beta(z_0))\big(\omega_B(z_0)-\omega_\beta(z_0)\big)  -\omega_A(z_0)-\omega_B(z_0)+z_0 &= r_1(z_0)+O(|\omega_B(z_0)-\omega_\beta(z_0)|^2)\,,
\end{align*}
where we used that $F''_{\mu_\alpha}(\omega_\beta(z))$ is uniformly bounded for all $z\in\mathcal{D}$ (see~\eqref{bring it on 3}) and hence so is $F''_{\mu_\alpha}(\widetilde\omega)$ for all $\widetilde\omega\in\C$ in a $q(z_0)$-neighborhood of $\omega_\beta(z_0)$. Subtracting from this last equation the subordination equation $F_{\mu_\alpha}(\omega_\beta(z_0))-\omega_\alpha(z_0)-\omega_\beta(z_0)+z_0=0$, we obtain
\begin{align}\label{le A}
\big(F'_{\mu_\alpha}(\omega_\beta(z_0))-1\big)\Omega_B(z_0)  -\Omega_A(z_0) &= r_1(z_0)+O\left(|\Omega_B(z_0)|^2\right)\,,
 \end{align}
where we introduced $\Omega_B(z_0)\deq \omega_B(z_0)-\omega_\beta(z_0)$ and $\Omega_A(z_0)\deq\omega_A(z_0)-\omega_\alpha(z_0)$. Repeating the above with the roles of $(\alpha,A)$ and $(\beta,B)$ interchanged, we also obtain
\begin{align}\label{le B}
 \big(F'_{\mu_\beta}(\omega_\alpha(z_0))-1\big)\Omega_A(z_0)  -\Omega_B(z_0)  &= r_2(z_0)+O\left(|\Omega_A(z_0)|^2\right)\,.
\end{align}
Combining~\eqref{le A} and~\eqref{le B}, we conclude that
\begin{align}\label{smitty}
|\Omega_A(z_0)|&\le C\frac{|r_1(z_0)|+|r_2(z_0)|}{|\mathcal S_{\alpha\beta}(z_0)|}+C'\frac{|\Omega_A(z_0)|^2}{|\mathcal S_{\alpha\beta}(z_0)|}\,,\nonumber\\
|\Omega_B(z_0)|&\le C\frac{|r_1(z_0)|+|r_2(z_0)|}{|\mathcal S_{\alpha\beta}(z_0)|}+C'\frac{|\Omega_B(z_0)|^2}{|\mathcal S_{\alpha\beta}(z_0)|}\,,
\end{align}
with $\mathcal S_{\alpha\beta}$ given in~\eqref{T}, for some numerical constant $C$ and $C'$ independent of $N$ and $z_0$.

Next, since we are assuming that $|\Omega_A(z_0)|/|\mathcal S_{\alpha\beta}|\le q(z_0)/|\mathcal S_{\alpha\beta}|=o(1)$, and similarly for $\Omega_B(z_0)$, we conclude from~\eqref{smitty} that
\begin{align}
|\Omega_A(z_0)|+|\Omega_B(z_0)|&\le 4C\frac{|r_1(z_0)|+|r_2(z_0)|}{|\mathcal S_{\alpha\beta}(z_0)|}\,,
\end{align}
which, upon renaming the constants was to be proved.
\end{proof}

Returning to the proof of Lemma~\ref{thm.17080401}, we use a continuity argument to prove~\eqref{omegadiff}  for spectral 
parameters $z \in \mathcal{D}$ close to the imaginary axis. First, 
for any $z\in\mathcal{D}$ with $\im z=\eta_{\mathrm{M}}$, for some small
 fixed $\eta_{\mathrm{M}}\sim  1$, the local linear stability result of Lemma 4.2 of~\cite{BES15} shows that
  $ |\omega_A(z)-\omega_\alpha(z)|+ |\omega_B(z) - \omega_\beta(z)| \le 2|r_1(z)|+2|r_2(z)|\le N^{-1+2\epsilon}$, provided that $\im \omega_A(z) - \im z \ge c>0$ and $\im\omega_B(z)-\im z\ge c>0$. These bounds  follow from the subordination equation and
 the representation
\begin{align}
    \im \omega_A(z) - \im z =  \im F_{\mu_A}(\omega_B(z)) - \im \omega_B(z) = (\im z)\int_\R \frac{\dd\widehat\mu_A(x)}{|x-z|^2} \ge c'>0\,,
 \end{align}
 if $\im z\ge \eta_{\mathrm{M}}$, and similarly for $\omega_B$.  Here we also used that $\widehat\mu_A(\R)>0$; see~\eqref{representation mass}.

Having established~\eqref{omegadiff} for any $z\in \mathcal{D}$ with $\Im z= \eta_M$, we fix $E=\Re z$ and reduce the imaginary part of $z$.
Let $\eta_E$ be the smallest number such that ~\eqref{omegadiff} holds for any $z= E+\ii\eta$ with $\eta\in [\eta_E, \eta_M]$; 
our goal is to show that $\eta_E \le N^{-1+10\varepsilon}$. Suppose this is not the case. Use Lemma~\ref{lemma local stability}
with $q(z)\deq N^{-1+5\epsilon}/\sqrt{|z-E_-|}$ and 
with the choice $z_0:= E+\ii(\eta_E- \zeta)\in \mathcal{D}$ for some tiny $\zeta>0$. Since $\omega_A$ and $\omega_\alpha$ are
continuous (even  analytic) at $z_0$,
for a sufficiently small $\zeta>0$, the estimate~\eqref{omegadiff} for $z=E+\ii\eta_E$ guarantees~\eqref{le apriori closeness}
for $z_0$. The other conditions of Lemma~\ref{lemma local stability}, namely 
$q(z)=o(1)$ and $q(z)/ \mathcal{S}_{\alpha\beta}(z)=o(1)$, clearly hold 
since $\mathcal{S}_{\alpha\beta}(z)\sim\sqrt{|z-E_-|}$ by~\eqref{kev2} and $|z-E_-|>N^{-1+10\epsilon}$ for $z\in\mathcal{D}$.

Hence~\eqref{omom} follows by Lemma~\ref{lemma local stability}. We will verify below that
the right hand side of~\eqref{omom}, with this choice of $z_0$, is smaller than the estimate $C^*N^{-1+\eps} /\sqrt{|z_0-E_-|}$ in~\eqref{omegadiff}.
This shows that~\eqref{omegadiff} holds for $z_0$ with imaginary part below $\eta_E$, contradicting to the
definition of $\eta_E$. This  proves~\eqref{omegadiff} for all $z\in \mathcal{D}$.

It remains to bound the  right side of~\eqref{omom} under~\eqref{le apriori closeness}.
For $\delta>0$ in~\eqref{le domain DD} sufficiently small, it follows from~\eqref{key to everything} and~\eqref{kind nervt}, that $\re \omega_\beta(z_0)<E_-^\alpha-k_0/2$, and hence under~\eqref{le apriori closeness} that $\re \omega_B(z_0)<E_-^\alpha-k_0/4$, 
 for $N$ sufficiently large. Let now $I$ be a finite open interval such that $\mathrm{supp}\,\mu_A$, $\mathrm{supp}\,\mu_\alpha\subset I$, $\mathrm{dist}(\re \omega_B(z_0),I)\ge k_0/8$. Let $h\in C^2(\R)$, then integration by parts shows that 
\begin{align}\label{le new guy}
 \Big|\int_\R h(x)\dd\mu_A(x)-\int_\R h(x)\dd\mu_\alpha(x)\Big|= \Big|\int_\R h'(x)(\mathcal{F}_{\mu_A}(x)-\mathcal{F}_{\mu_\alpha}(x))\dd x\Big|\,,
\end{align}
where $\mathcal{F}_{\mu_A}$ and $\mathcal{F}_{\mu_\alpha}$ are the cumulative distribution functions of $\mu_A$ and $\mu_\alpha$. Thus, letting $s:=d_L(\mu_A,\mu_\alpha)$, with $d_L$ the L\'evy distance, we get
\begin{align}\label{le star}
  \Big|\int_\R h(x)(\dd\mu_A(x)-\dd\mu_\alpha(x))\Big|&=\Big| \int_\R \big(h'(x)-h'(x+s)\big)\mathcal{F}_{\mu_A}(x)\dd x+\int_\R h'(x+s)\big(\mathcal{F}_{\mu_A}(x)-\mathcal{F}_{\mu_\alpha}(x+s)\big)\dd x\Big|\nonumber\\
  &\le Cs \Big(\sup_{x\in \R} |h''(x)|+\int_\R|h'(x)|\dd x\Big)\,,  
\end{align}
where we used that $|\mathcal{F}_{\mu_A}(x)-\mathcal{F}_{\mu_\alpha}(x+s)|\le s$ from the definition of the L\'evy distance. Let now $\chi$ be a smooth cut-off function such that $\chi(x)=1$, if $x\in I$, and $\chi(x)=0$, if $\mathrm{dist}(x,I)\ge \frac{k_0}{100}$. Then using~\eqref{le star} with $h(x)=\chi(x)(x-\omega_B(z_0))^{-1}$ we conclude from~\eqref{le star} that, under~\eqref{le apriori closeness},
\begin{align}\label{label estimate for the ST}
 |m_{\mu_A}(\omega_B(z_0))-m_{\mu_\alpha}(\omega_B(z_0))|\le Cs\le C\mathbf{d}\,, 
\end{align}
where $C$ depends only on $k_0$ and with $\mathbf{d}$ given in~\eqref{levy}. Finally, by~\eqref{key to everything} we have
\begin{align*}
 m_{\mu_\alpha}(\omega_\beta(E_-))=\int_{\R}\frac{\dd\mu_\alpha(x)}{x-\omega_\beta(E_-)}\ge c_1>0\,.
\end{align*}
Furthermore, by the analyticity of the Stieltjes transform outside the support of $\mu_\alpha$ 
and under~\eqref{le apriori closeness} we conclude that
 $\Re m_{\mu_\alpha}(\omega_B(z_0))\ge c_1/2$, for sufficiently small $\delta$ in~\eqref{le domain DD}. 
 Similarly, $\Re m_{\mu_A}(\omega_B(z_0))\ge c_1/3$ from~\eqref{label estimate for the ST} as $\mathbf{d}= o(1)$.
 Hence, recalling~\eqref{the r's}, we get under~\eqref{le apriori closeness}, that  for sufficiently large $N$,
\begin{align}
 |r_1(z_0)|=|F_{\mu_A}(\omega_B(z))-F_{\mu_\alpha}(\omega_B(z_0))|&\le\frac{|m_{\mu_A}(\omega_B(z_0))-m_{\mu_\alpha}(\omega_B(z_0))|}{|m_{\mu_A}(\omega_B(z_0))m_{\mu_\alpha}(\omega_B(z_0))|}\nonumber\\&\le C\mathbf{d}\le CN^{-1+\eps}\,. 
\end{align}
Interchanging the roles of $(\mu_A,\mu_\alpha,\omega_B(z_0))$ and $(\mu_B,\mu_\beta,\omega_A(z_0))$, 
we obtain in the same way that $|r_2(z_0)|\le C\mathbf{d} \le CN^{-1+\eps}$, assuming~\eqref{le apriori closeness}.

Finally,  the denominator of~\eqref{omom} can be bounded
as $\mathcal{S}_{\alpha\beta}(z_0) \sim\sqrt{|z_0-E_-|}$ by~\eqref{kev2}. Thus  we have
 $$
     |\omega_A(z_0)-\omega_\alpha(z_0)|+ |\omega_B(z_0) - \omega_\beta(z_0)| \lesssim \frac{\mathbf{d}}{|\mathcal S_{\alpha\beta}(z_0)|} \le
     C^*     \frac{N^{-1+\eps}} {\sqrt{|z_0-E_-|}}\lesssim  N^{-1/2+\eps}\,
  $$
  if $C^*$ is chosen sufficiently large.
  The last step uses that $|z_0-E_-|>N^{-1+10\epsilon}$ since $z_0\in\mathcal{D}$.
This completes the proof of  \eqref{omegadiff}.

From this bound we can compare $\mathcal{S}_{\alpha\beta}$ and $\mathcal{S}_{AB}$,  $\mathcal{T}_\alpha$ and $\mathcal{T}_A$, and $\mathcal{T}_\beta$ and $\mathcal{T}_B$, \eg
\begin{align*}
   |\mathcal{S}_{AB}(z) - \mathcal{S}_{\alpha\beta}(z)|& \le  | (F'_{\mu_A}(\omega_B(z))-1)(F'_{\mu_B}(\omega_A(z))-1) - 
    (F'_{\mu_A}(\omega_\beta(z))-1)(F'_{\mu_B}(\omega_\alpha(z))-1)|\\
&\quad+ | (F'_{\mu_A}(\omega_\beta(z))-1)(F'_{\mu_B}(\omega_\alpha(z))-1) - 
    (F'_{\mu_\alpha}(\omega_\beta(z))-1)(F'_{\mu_\beta}(\omega_\alpha(z))-1)|\\
& \lesssim  |\omega_A(z)-\omega_\alpha(z)|+ |\omega_B(z) - \omega_\beta(z)|  +d \lesssim N^{-1/2+\eps}\,, \qquad z\in\mathcal{D}\,,
\end{align*}
(in the first estimate we used that $F$'s are all regular and in the second we used the same in addition to \eqref{key to everything} and
\eqref{supab}).
Since $|\mathcal{S}_{\alpha\beta}|\ge N^{-1/2+5\eps}$ in this regime, we immediately get \eqref{Sdiff}.
The bounds~\eqref{Tdiff},~\eqref{mdiff1},~\eqref{mdiff2},~\eqref{Sdiff1} are proven exactly in the same way
by showing that the difference between the finite-$N$ quantity and the limiting quantity is
smaller than the size of the limiting quantity given in \eqref{T} and~\eqref{mprime}.

The proof of \eqref{imbound} requires one more argument. 
Outside of the support, \eqref{omegadiff} is not optimal for the imaginary parts. Recall $r_1$ and $r_2$ from~\eqref{the r's}, $z\in\C^+$. Clearly
$$
  |\im r_1(z)|   \le C (\im \omega_B(z))  N^{-1+\eps}, \qquad  |\im r_2(z)|   \le C (\im \omega_A(z))  N^{-1+\eps}\,,\qquad\qquad z\in\mathcal{D}\,,
$$
since 
$$
   \im F_{\mu_\alpha}(\omega_B(z)) = \frac{\im m_{\mu_\alpha}(\omega_B(z))}{| m_{\mu_\alpha}(\omega_B(z))|^2} = \frac { \im\omega_B(z)}{| m_{\mu_\alpha}(\omega_B(z))|^2}
    \int_\R \frac{\dd\mu_\alpha(x)}{|x-\omega_B(z)|^2}\,,
$$
so changing $\alpha$ to $A$ yields a factor $N^{-1+\eps}$ by \eqref{levy}, since $\omega_B(z)$ is away from the support 
of~$\mu_\alpha$ as $\omega_\beta(z)$ is away from the support of $\mu_\alpha$ and we have $|\omega_B(z)-\omega_\beta(z)|\le N^{-1/2+\epsilon}$ by~\eqref{omegadiff}. Taking imaginary parts in~\eqref{le perturbed subo} and using the representations from~\eqref{representation} yields the estimates
\begin{align}\label{one}
     \im \omega_B(z) \int_\R \frac{\dd\widehat \mu_\alpha(x)}{|x-\omega_B(z)|^2}  - \im\omega_A(z) + \im z &= \im r_1(z) = O\big(
      \im \omega_B(z)  N^{-1+\eps}\big)\,,\nonumber\\
     \im \omega_A(z) \int_\R \frac{\dd\widehat \mu_\beta(x)}{|x-\omega_A(z)|^2}  - \im\omega_B(z) + \im z &= \im r_2(z)= O\big(
     \im \omega_A(z)  N^{-1+\eps}\big)\,,
\end{align}
$z\in\mathcal{D}$. Similarly, starting from the subordination equations for $\mu_\alpha$ and $\mu_\beta$, we have
\begin{align}\label{one1}
     \im \omega_\beta(z)  \int_\R \frac{\dd\widehat \mu_\alpha(x)}{|x-\omega_\beta(z)|^2}  - \im\omega_\alpha(z) + \im z &= 0\,,\nonumber\\
     \im \omega_\alpha(z) \int_\R \frac{\dd\widehat \mu_\beta(x)}{|x-\omega_\alpha(z)|^2} - \im\omega_\beta(z) + \im z &= 0\,.
\end{align}
In fact, using~\eqref{omegadiff} we can change $\omega_B$ to $\omega_\beta$ and $\omega_A$ to $\omega_\alpha$ in the integrands and error terms in~\eqref{one},  to get
\begin{align}\label{one2}
     \im \omega_B(z) \int_\R \frac{\dd\widehat \mu_\alpha(x)}{|x-\omega_\beta(z)|^2}  - \im\omega_A(z) + \im z &= O\big(
      \im \omega_\beta(z)  N^{-1+\eps}\big)+O\Big(\im \omega_\beta(z) \frac{N^{-1+\epsilon}}{\sqrt{|z-E_-|}}\Big)\,,\nonumber\\
     \im \omega_A(z)  \int_\R \frac{\dd\widehat \mu_\beta(x)}{|x-\omega_\alpha(z)|^2} - \im\omega_B(z) + \im z &= O\big(
     \im \omega_\alpha(z)  N^{-1+\eps}\big)+O\Big(\im \omega_\alpha(z) \frac{N^{-1+\epsilon}}{\sqrt{|z-E_-|}}\Big)\,,
\end{align}
$z\in\mathcal{D}$. Subtracting \eqref{one1} from \eqref{one2} and using that for very small 
$\eta$ the determinant of the resulting linear system is very close to 
  $\mathcal S_{\alpha\beta}(z)\sim \sqrt{|z-E_-|}$, $z\in\mathcal{D}$,
from \eqref{Sdiff}, we get
\begin{align*}
 |\im\omega_A(z)-\im\omega_\alpha(z)|+|\im\omega_B(z)-\im\omega_\beta(z)|\lesssim\frac{\im \omega_\alpha(z)+\im\omega_\beta(z)}{\sqrt{|z-E_-|}}N^{-1+\epsilon}+ \frac{\im\omega_\alpha(z)+\im\omega_\beta(z)}{|z-E_-|}N^{-1+\epsilon}	\,.
\end{align*}
Hence, recalling the bound in~\eqref{kev1} for $\im \omega_\alpha(z)$ and $\im\omega_\beta(z)$, we get
for $z\in \mathcal{D}_{\mathrm{out}}$ with $\eta\le N^{-1}$,
\begin{align*}
|\im\omega_A(z)-\im\omega_\alpha(z)|+|\im\omega_B(z)-\im\omega_\beta(z)|&\lesssim \frac{\im \omega_\alpha(z)+\im\omega_\beta(z)}{\sqrt{|z-E_-|}}N^{-1+\epsilon}+ \frac{\eta}{\sqrt{|z-E_-|}}\frac{N^{-1+\epsilon}}{{|z-E_-|}}	\nonumber\\
&\lesssim \frac{\im \omega_\alpha(z)+\im\omega_\beta(z)}{\sqrt{|z-E_-|}}N^{-1+\epsilon}+ \frac{\eta}{\sqrt{|z-E_-|}}\,.
\end{align*}
This proves~\eqref{imbound}.

Finally, to prove \eqref{bottom}, let $z= x+ \ii\eta$ with $x\le E_-- N^{-1+10\eps}$. At a distance of at least $N^{-1}$ below $E_-$,~we~get
\begin{align*}
   \im m_{\mu_\alpha\boxplus\mu_\beta} (z)= \im z \int_\R \frac{\dd\mu_\alpha\boxplus\mu_\beta(x)}{|x-z|^2} \le  C N\im z\,.
   \end{align*}
Moreover from $m_{\mu_\alpha\boxplus\mu_\beta}(z)= m_\alpha(\omega_\beta(z))$ we have $\im m_\alpha (\omega_\beta(z)) \sim \im \omega_\beta(z)$
since $\omega_\beta(z)$ is away from the support of $\mu_\alpha$. The same holds for $\omega_\alpha(z)$, so we get $\im  \omega_\alpha(z)+ \im \omega_\beta(z) \le C N\im z$.
Taking $\eta\searrow 0$, we note that the right hand side of \eqref{imbound} goes to zero. Thus
we get $\im \omega_A(x) = \im \omega_B (x) =0$. Since $\im m_{\mu_A\boxplus \mu_B}(z) \sim \im \omega_A(z)$
in this regime,~$x$ cannot lie in the support of $\mu_A\boxplus \mu_B$. This proves~\eqref{bottom}.
\end{proof}

Recall that $\gamma_j$ denoted the $j$-th  $N$-quantile of $\mu_\alpha\boxplus\mu_\beta$
from \eqref{quantile} and similarly let $\gamma^*_j$ denote the $j$-th  $N$-quantile of 
 $\mu_A\boxplus\mu_B$, \ie these are the smallest  numbers $\gamma_j$ and $\gamma_j^*$ such that
$$
   \mu_\alpha\boxplus\mu_\beta\big( (-\infty, \gamma_j]\big)
    = \mu_A\boxplus\mu_B\big( (-\infty, \gamma_j^*]\big)   = \frac{j}{N}.
    $$

\begin{lem}[Rigidity]  Suppose Assumptions~\ref{a.regularity of the measures} and \ref{a. levy distance} hold. Fix some sufficiently small $0<c<1$.
 Then we have, for any small $\epsilon>0$, the rigidity bound
\begin{equation}\label{rigi2}
    |\gamma_j -\gamma_j^*|\le   j^{-1/3} N^{-\frac{2}{3}+\eps}, \qquad j\in\llbracket 1, cN\rrbracket\,,
\end{equation}
for $N$ sufficiently large depending on $\epsilon$ and $c$.

Under the additional Assumption~\ref{a. rigidity entire spectrum} we have the rigidity estimate for all quantiles, \ie
\begin{equation}\label{rigi}
   |\gamma_j -\gamma_j^*|\le  \min\{ j^{-1/3}, (N+1-j)^{-1/3}\} N^{-\frac{2}{3}+\eps}, \qquad j\in\llbracket 1,N\rrbracket\,.
\end{equation}
\end{lem}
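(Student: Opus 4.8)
The plan is to pass from the eigenvalue quantiles to the cumulative distribution functions. Write $n(x)\deq(\mu_\alpha\boxplus\mu_\beta)((-\infty,x])$ and $n^*(x)\deq(\mu_A\boxplus\mu_B)((-\infty,x])$, so that $n(\gamma_j)=n^*(\gamma_j^*)=j/N$. The first ingredient concerns only the limiting measure: integrating the square-root law \eqref{17080390} gives $n(x)\sim(x-E_-)^{3/2}$ on $[E_-,E_-+\kappa_0]$, hence $\gamma_j-E_-\sim(j/N)^{2/3}$ and $\rho_{\mu_\alpha\boxplus\mu_\beta}(\gamma_j)\sim(j/N)^{1/3}$ for all $j\in\llbracket 1,cN\rrbracket$ with $c$ small. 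Under Assumption~\ref{a. rigidity entire spectrum}, the same bookkeeping at the upper edge (via $(ii')$) and in the bulk (where $\rho_{\mu_\alpha\boxplus\mu_\beta}\sim1$, with no interior gap by $(vii)$) gives the two-sided analogue for all $j\in\llbracket 1,N\rrbracket$.

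Second, I would prove the Stieltjes-transform comparison
\[
 \bigl|m_{\mu_A\boxplus\mu_B}(z)-m_{\mu_\alpha\boxplus\mu_\beta}(z)\bigr|\ \lesssim\ \frac{N^{-1+C\eps}}{\sqrt{|z-E_-|}}\,,\qquad z\in\mathcal D\,.
\]
Using $m_{\mu_A\boxplus\mu_B}=m_{\mu_A}\circ\omega_B$ and $m_{\mu_\alpha\boxplus\mu_\beta}=m_{\mu_\alpha}\circ\omega_\beta$, and the fact that $\omega_\beta(z),\omega_B(z)$ stay a fixed distance from $\mathrm{supp}\,\mu_A$ on $\mathcal D$ (by \eqref{key to everything}, \eqref{supab} and \eqref{omegadiff}), where $m_{\mu_A}$ is Lipschitz, I split $m_{\mu_A}(\omega_B(z))-m_{\mu_\alpha}(\omega_\beta(z))$ into $[m_{\mu_A}(\omega_B(z))-m_{\mu_A}(\omega_\beta(z))]+[m_{\mu_A}(\omega_\beta(z))-m_{\mu_\alpha}(\omega_\beta(z))]$ and bound the first term by $|\omega_B(z)-\omega_\beta(z)|\lesssim N^{-1+c\eps}/\sqrt{|z-E_-|}$ from \eqref{omegadiff}, the second by $C\mathbf{d}\lesssim N^{-1+\eps}$ from \eqref{levy}.

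Third, I would upgrade this to a comparison of distribution functions. A Helffer--Sj\"ostrand representation of $n(x)-n^*(x)$ tested against an almost analytic extension of a smoothed indicator on scale $\eta_\star\deq N^{-1+10\eps}$ reduces the estimate, in the region $\im z\ge\eta_\star$, to the bound of the previous step (integrable since $\int_0^\delta\!\dd t/\sqrt{\max(t,\eta_\star)}\lesssim1$) plus the crude tail bound $|m_{\mu_A\boxplus\mu_B}-m_{\mu_\alpha\boxplus\mu_\beta}|(z)\lesssim N^{-1+\eps}|z|^{-2}$ at large $|z|$ (from the first-moment comparison $|\int t\,\dd(\mu_A\boxplus\mu_B-\mu_\alpha\boxplus\mu_\beta)(t)|\lesssim\mathbf{d}$); in the strip $0<\im z<\eta_\star$ it is bounded by $\lesssim\eta_\star\sup_x(\Im m_{\mu_A\boxplus\mu_B}+\Im m_{\mu_\alpha\boxplus\mu_\beta})(x+\ii\eta_\star)\lesssim\eta_\star$, using $\Im m\lesssim\sqrt{\kappa+\eta}$ near the edges from \eqref{17080120}, \eqref{mdiff1}, \eqref{mdiff2} and $\Im m\lesssim1$ elsewhere. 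This yields $\sup_{|x-E_-|\le\delta}|n(x)-n^*(x)|\lesssim N^{-1+C\eps}$, and likewise near $E_+$ under Assumption~\ref{a. rigidity entire spectrum}.

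Finally, I would convert this into the quantile estimate. Fix $j\in\llbracket 1,cN\rrbracket$ with $j\ge N^{C\eps}$ and suppose, say, $\gamma_j\le\gamma_j^*$; then $\gamma_j^*\in[E_-,E_-+\delta]$ and $\int_{\gamma_j}^{\gamma_j^*}\dd(\mu_\alpha\boxplus\mu_\beta)=n(\gamma_j^*)-n^*(\gamma_j^*)\lesssim N^{-1+C\eps}$, while $\rho_{\mu_\alpha\boxplus\mu_\beta}\gtrsim\sqrt{\gamma_j-E_-}\sim(j/N)^{1/3}$ on $[\gamma_j,\gamma_j^*]$, forcing $\gamma_j^*-\gamma_j\lesssim(N/j)^{1/3}N^{-1+C\eps}=j^{-1/3}N^{-2/3+C\eps}$; the case $\gamma_j^*\le\gamma_j$ is symmetric, using $\gamma_j^*\ge\inf\mathrm{supp}\,\mu_A\boxplus\mu_B\ge E_--N^{-1+10\eps}$ from \eqref{bottom} to see that $\rho_{\mu_\alpha\boxplus\mu_\beta}$ is again $\gtrsim(j/N)^{1/3}$ on the relevant part of $[\gamma_j^*,\gamma_j]$. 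For the remaining small indices $j<N^{C\eps}$ one uses $\gamma_j-E_-\lesssim N^{-2/3+C\eps}$ together with \eqref{bottom}, which already gives $|\gamma_j-\gamma_j^*|\lesssim N^{-2/3+C\eps}$. This proves \eqref{rigi2}; \eqref{rigi} follows by combining this with the mirror-image argument at the upper edge and the bulk rigidity of \cite{BES16}, after relabelling the constant in the exponent. I expect the delicate point to be the third step: the Stieltjes error $N^{-1+C\eps}/\sqrt{|z-E_-|}$ blows up as $z\to E_-$, so the contributions from $\im z\ge\eta_\star$ and from the strip below $\eta_\star$ must be balanced with care, and this is precisely where the sharp edge estimates \eqref{17080120}, \eqref{mdiff1}--\eqref{mdiff2} of Lemma~\ref{thm.17080401} are needed rather than generic Stieltjes-transform bounds.
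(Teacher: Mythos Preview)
Your approach is correct but takes a considerably more technical route than the paper for the key step of comparing the cumulative distribution functions $n$ and $n^*$. Where you set up a Stieltjes-transform comparison via \eqref{omegadiff} and then run a Helffer--Sj\"ostrand argument to control $\sup_x|n(x)-n^*(x)|$, the paper simply invokes the $1$-Lipschitz continuity of free additive convolution in L\'evy distance (Proposition~4.13 of \cite{BeV93}): from \eqref{levy} one gets $d_L(\mu_A\boxplus\mu_B,\mu_\alpha\boxplus\mu_\beta)\le\mathbf{d}\le N^{-1+\eps}$ in one line, and the bounded density of $\mu_\alpha\boxplus\mu_\beta$ near the edge then converts L\'evy closeness into the Kolmogorov bound $\sup_{x\le E_-+\kappa_0}|n(x)-n^*(x)|\lesssim N^{-1+\eps}$ directly. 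The paper still handles the smallest quantiles separately, using \eqref{bottom} and \eqref{mdiff1} to show $|\gamma_j^*-E_-|\sim(j/N)^{2/3}$, just as you do in your last step. Your route is self-contained (it does not rely on the external Lipschitz lemma) and your balancing of the integrable singularity $N^{-1+C\eps}/\sqrt{|z-E_-|}$ against the contribution from the strip $0<\im z<\eta_\star$ is done correctly; but it is more work than necessary here. One small correction: for \eqref{rigi} the paper does not invoke bulk rigidity from \cite{BES16} (that result concerns $\mu_H$ versus $\mu_A\boxplus\mu_B$, a different comparison); the same L\'evy-distance argument, combined with the bulk density lower bound coming from $(vii)$ and \eqref{17080326}, already gives \eqref{cumulative} for all $x\in\R$, and \eqref{rigi} follows immediately.
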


\begin{proof}
The proof of these rigidity results are fairly straightforward  from the information collected so far,
by using standard arguments to translate the closeness of Stieltjes transform of
two measures into closeness of their quantiles. We will just outline the argument.
Recall the domain $\mathcal{E}_{\kappa_0}$ from  \eqref{def:eps0}.

First, we establish that there are at most $N^\eps$ $\gamma_j$-quantiles as well as $N^\eps$
 $\gamma_j^*$-quantiles  in an $N^{-2/3+\eps}$ vicinity of $E_-=\inf\,\mathrm{supp}\,\mu_\alpha\boxplus\mu_\beta$.
This fact is immediate for the $\gamma_j$ quantiles since their distribution is given by the regular square root law, see
\eqref{17080390}. For the $\gamma_j^*$-quantiles, we know 
from \eqref{bottom}  that 
 $\gamma_1^* \ge E_- - N^{-1+10\eps}$. 
We compute from \eqref{mdiff1}
\begin{align*}
  \frac{j}{N} &= \int_{-\infty}^{\gamma_j^*} {\rm d} \mu_{A}\boxplus \mu_B(x) = \int_{E_--N^{-1+10\eps}}^{\gamma_j^*} \dd\mu_{A}\boxplus \mu_B(x) 
  \le C\int_{E_--N^{-1+10\eps}}^{\gamma_j^*} \im m_{\mu_A\boxplus\mu_B}(x+\ii N^{-1+10\eps}) {\rm d}x\\
 &\le C\int_{E_--N^{-1+10\eps}}^{\gamma_j^*} \big[ |x-E_-| + N^{-1+10\eps}\big]^{1/2} {\rm d}x \le C |\gamma_j^*-E_-|^{3/2} + CN^{-1+10\eps}|\gamma_j^*-E_-|\,,
\end{align*}
 which means that 
$$
|\gamma_j^*-E_-|\ge c\Big(\frac{j}{N}\Big)^{2/3}\,,
$$
with some positive constant $c>0$. So we have
\begin{equation}\label{first}
   \gamma_j^* \ge E_- + c N^{-2/3+\eps}, \qquad \mbox{if} \quad  j\ge c N^{3\eps/2},
\end{equation}
and note that the condition 
$ j\ge  cN^{3\eps/2} $ is equivalent to $\gamma_j \ge  E_-+ cN^{-2/3+\eps}$. In the other direction~we~use
$$
  \int_{E_--N^{-1+10\eps}}^{\gamma_j^*} \dd\mu_{A}\boxplus \mu_B(x)
  \ge c\int_{E_--N^{-1+10\eps}}^{\gamma_j^*} \im m_{\mu_A\boxplus \mu_B}(x+\ii N^{-1+10\eps})\, {\rm d}x
$$
if $|\gamma_j^* - E_-|\gg N^{-1+10\eps}$. Using again \eqref{mdiff1} we get 
$$
 \frac{j}{N} \ge c |\gamma_j^*-E_-|^{3/2}, \qquad \mbox{\ie} \qquad \gamma_j^* \le E_-+ C \Big(\frac{j}{N}\Big)^{2/3} \qquad \forall j,
$$
since this latter bound also holds in the case, when $|\gamma_j^* - E_-|\gg N^{-1+10\eps}$ is not satisfied.

Thus we have established 
\begin{equation}\label{ed}
     |\gamma_j - \gamma_j^*|  \le |\gamma_j -E_-| + |\gamma_j^*-E_-| \le C N^{-2/3+\eps}, \qquad \mbox{whenever}\quad 
      \gamma_j \le E_- + N^{-2/3+\eps}.
\end{equation}

From the continuity of the free convolution (Proposition 4.13 of \cite{BeV93}) and
the condition \eqref{levy} we get
$$
  {\rm d}_L (\mu_{A}\boxplus \mu_B, \mu_\alpha \boxplus \mu_\beta)\le
   {\rm d}_L(\mu_A, \mu_\alpha) + {\rm d}_L(\mu_B, \mu_\beta)\le N^{-1+\epsilon}\,.
$$
On the other hand, the definition of the L\'evy distance and the boundedness of  the density of
$\mu_{\alpha} \boxplus \mu_\beta$  below  $E_-+\kappa_0$ (see \eqref{17080390}) 
directly imply that 
\begin{align}\label{cumulative}
  \big| \mu_A \boxplus \mu_B \big( (-\infty, x) \big) -  \mu_\alpha \boxplus \mu_\beta \big( (-\infty, x) \big) \big|\le CN^{-1+\eps}
\end{align}
holds for any $x\le E_-+\kappa_0$.  Together with \eqref{ed}, this estimate immediately implies
the bound \eqref{rigi2}.

For the proof of \eqref{rigi}, we note that $(ii')$  and $(v')$ of Assumption~\ref{a. rigidity entire spectrum}
guarantee that near the upper edge of the support of $\mu_\alpha\boxplus\mu_\beta$
a similar rigidity statement holds as \eqref{rigi2}. Finally, $(ii')$ of Assumption~\ref{a. rigidity entire spectrum}
together with the continuity and boundedness of the density  of $\mu_\alpha\boxplus\mu_\beta$
(see \eqref{17080326}) imply that the density has a positive lower and upper bound away the two extreme edges
of its support. These information together with \eqref{levy} 
are sufficient to conclude that \eqref{cumulative} hold uniformly for any $x\in\R$. The corresponding
result \eqref{rigi} for the quantiles follows immediately. 
\end{proof}

\begin{proof}[Proof of Proposition~\ref{le proposition 3.1}] First, on the domain $\mathcal{D}$, $(i)$ of Proposition~\ref{le proposition 3.1}  follows from (\ref{omegadiff}), (\ref{key to everything}), the assumption (\ref{supab}) and also the continuity of $\omega_\alpha$ and $\omega_\beta$. In the complementary domain $\mathcal{D}_\tau(\eta_{\rm m}, \eta_\mathrm{M})\setminus \mathcal{D}$,  we first prove  (\ref{17020503}). Using the equations $m_{\mu_A\boxplus\mu_B}=m_{\mu_A}(\omega_B)= m_{\mu_B}(\omega_A)$, we see that the upper bounds on $\omega_A$ and $\omega_B$ follow from the fact that $|m_{\mu_A\boxplus\mu_B}(z)|\geq c$,  which can easily be derived from the rigidity~(\ref{rigi2}). For (\ref{17020502}), we further split into two regimes. In the regime $\eta\geq \eta_0$ for some small $\eta>0$, we  use  the fact $\Im \omega_A(z), \Im \omega_B(z)\geq \eta$ directly. In the regime $\eta\leq \eta_0$, we use   the continuity of $\omega_A$ and $\omega_B$, and also the monotonicity of the $ \omega_A (u)$ and $ \omega_B (u)$ for $ u\in (-\infty, E_--\delta]$  which can be  proved similarly to the monotonicity of $ \omega_\alpha (u)$ and $ \omega_\beta (u)$ (\cf (\ref{le C15})).

Similarly, on the domain $\mathcal{D}$, Proposition~\ref{le proposition 3.1} $(ii)$ follows from (\ref{mdiff1}) and (\ref{mdiff2}) directly. In the complementary domain $\mathcal{D}_\tau(\eta_{\rm m}, \eta_\mathrm{M})\setminus \mathcal{D}$, we apply again the rigidity result (\ref{rigi2}) to conclude the proof.

Statement $(iii)$ follows from~(\ref{Sdiff}), (\ref{Tdiff}), (\ref{Sdiff1}) and~(\ref{Tdiff1}). 

Finally, to prove item $(iv)$, we differentiate the subordination equations~\eqref{le definiting equations} with respect to $z$~to~get
\begin{align*}
\left(
\begin{array}{ccc}
1 & 1-F_A'(\omega_B(z))\\
1-F_B'(\omega_A(z)) & 1
\end{array}
\right) \left(\begin{array}{cc}
\omega_A'(z)\\
\omega_B'(z)
\end{array}
\right)
=\left(\begin{array}{cc}
1\\
1
\end{array}
\right)\,,
\end{align*}
with the shorthand $F_A\equiv F_{\mu_A}$, $F_B\equiv F_{\mu_B}$.
Hence, 
\begin{align*}
\left(\begin{array}{cc}
\omega_A'(z)\\
\omega_B'(z)
\end{array}
\right)=-\mathcal{S}^{-1}(z)  \left(\begin{array}{cc}
F'_A(\omega_B(z))\\
F'_B(\omega_A(z))
\end{array}
\right),
\end{align*}
where $\mathcal{S}\equiv \mathcal {S}_{AB}$. Using (\ref{17080110}) and (\ref{17020502}) and~\eqref{17080121}, we directly get the first two estimates in~\eqref{le lipschitz stuff}, since $F'_A(\omega_B(z))$ and $F_B'(\omega_A(z))$ are uniformly bounded on $\mathcal{D}_\tau(\eta_{\rm m}, \eta_\mathrm{M})$ by~\eqref{17020502}.

  Next, from the definition of $\mathcal{S}(z)$ in (\ref{17080110}), we observe that 
\begin{align}
|\mathcal{S}'(z)|=\Big|F''_B(\omega_A)(F'_A(\omega_B)-1)\omega_A'(z)+F''_A(\omega_B)(F'_B(\omega_A)-1) \omega_B'(z)\Big|\leq C |\mathcal{S}^{-1}(z)  |\,, \label{17072610}
\end{align}
where in the second step we used (\ref{17020502}) and the first two estimates in (\ref{le lipschitz stuff}). Hence, by~\eqref{17080121} we get the third estimate in~\eqref{le lipschitz stuff} and statement $(iv)$ is proved. This finishes the proof of Proposition~\ref{le proposition 3.1}. 
\end{proof}

\section{General structure  of the proof} \label{sec:general}
\subsection{Partial randomness decomposition} In this subsection, we recall  the partial randomness decomposition of the Haar unitary matrix used in~\cite{BES15b}, which will often be used below.

 Let $\mathbf{u}_i=(u_{i1}, \ldots, u_{iN})^T$ be the $i$-th column of the Haar distributed matrix $U$.  Let $\theta_i$ be the argument of $u_{ii}$.  The following partial randomness decomposition of $U$ is taken from~\cite{DS87} (see also \cite{Mezzadri}): For any $i\in \llbracket 1, N\rrbracket$, we can write
 \begin{align}
 U=-\e{\mathrm{i}\theta_i}R_i U^{\la i\ra}\,, \label{17072510}
 \end{align}
 where $U^{\la i\ra}$ is a unitary block-diagonal matrix whose $(i,i)$-th entry equals $1$, and its $(i,i)$-minor is Haar distributed on $\mathcal{U}(N-1)$. Hence, $U^{\la i\ra}\mathbf{e}_i=\mathbf{e}_i$ and $\mathbf{e}_i^* U^{\la i\ra}=\mathbf{e}_i^*$, where $\mathbf{e}_i$ is the $i$-th coordinator vector.  Here $R_i$ is a reflection matrix, defined as 
 \begin{align}
 R_i\deq  I-\mathbf{r}_i\mathbf{r}_i^*\,, \label{17072593}
 \end{align}
 where 
 \begin{align}
 \mathbf{r}_i\deq\sqrt{2} \frac{\mathbf{e}_i+\e{-\mathrm{i}\theta_i}\mathbf{u}_i}{\|\mathbf{e}_i+\e{-\mathrm{i}\theta_i} \mathbf{u}_i\|}\,.  \label{17072517}
 \end{align}
 Using $U^{\la i\ra}\mathbf{e}_i=\mathbf{e}_i$ and (\ref{17072510}), we see that 
 \begin{align}
 \mathbf{u}_i=U\mathbf{e}_i= -\e{\mathrm{i}\theta_i}R_i\mathbf{e}_i\,.  \label{17072530}
 \end{align}
 Hence, $R_i=R_i^*$ is actually the Householder reflection (up to a sign) sending $\mathbf{e}_i$ to $-\e{-\mathrm{i}\theta_i}\mathbf{u}_i$.  With the decomposition in (\ref{17072510}), we can write 
 \begin{align*}
 H= A+\wt{B}=A+R_i \wt{B}^{\la i\ra} R_i\,, 
 \end{align*}
 where we introduced the notations 
 \begin{align*}
 \wt{B}\deq UBU^*, \qquad \wt{B}^{\la i\ra}\deq U^{\la i\ra} B (U^{\la i\ra})^*\,. 
 \end{align*}
 Observe that $\wt{B}^{\la i\ra}\mathbf{e}_i=b_i\mathbf{e}_i$ and $\mathbf{e}_i^* \wt{B}^{\la i\ra}=b_i\mathbf{e}_i^*$. Clearly, $\wt{B}^{\la i\ra}$ is independent of $\mathbf{u}_i$. 
 
 It is known that $\mathbf{u}_i\in S_{\mathbb{C}}^{N-1}\deq\{\mathbf{x}\in \mathbb{C}^N\,:\, \mathbf{x}^*\mathbf{x}=1\}$ is a uniformly distributed complex vector, and there exists a Gaussian vector $\wt{\mathbf{g}}_i\sim \mathcal{N}_{\mathbb{C}}(0, N^{-1}I_N)$ such that 
 \begin{align*}
 \mathbf{u}_i=\frac{\wt{\mathbf{g}}_i}{\|\wt{\mathbf{g}}_i\|}\,.
 \end{align*}
 We then further introduce the notations
 \begin{align}
 \mathbf{g}_i\deq\e{-\mathrm{i}\theta_i}\wt{\mathbf{g}}_i\,, \qquad \quad\mathbf{h}_i\deq\frac{\mathbf{g}_i}{\|\mathbf{g}_i\| }=\e{-\mathrm{i}\theta_i} \mathbf{u}_i\,, \qquad\quad\ell_i\deq\frac{\sqrt{2}}{\| \mathbf{e}_i+\mathbf{h}_i\|}\,.  \label{17072590}
 \end{align}
 Observe that the components $g_{ik}$ of $\mathbf{g}_i$ are independent. Moreover, for $k\neq i$, $g_{ik}\sim N_{\mathbb{C}}(0, \frac{1}{N})$ while $g_{ii}$ is a $\chi$-distributed random variable with $\mathbb{E}g_{ii}^2=\frac{1}{N}$.  With the above notations, we can write $\mathbf{r}_i$ in (\ref{17072517})~as
 \begin{align}
 \mathbf{r}_i=\ell_i (\mathbf{e}_i+\mathbf{h}_i)\,.  \label{170725100}
 \end{align}
In addition, using (\ref{17072530}) and the fact $R_i^2=I$, we have 
\begin{align}
R_i\mathbf{e}_i=-\mathbf{h}_i\,,\qquad \qquad R_i\mathbf{h}_i=-\mathbf{e}_i\,, \label{17072573}
\end{align}
which also imply 
\begin{align}
\mathbf{h}_i^* \wt{B}^{\la i\ra}R_i= -\mathbf{e}_i^* \wt{B}\,,\qquad \qquad \mathbf{e}_i^* \wt{B}
^{\la i\ra}R_i= -b_i\mathbf{h}_i^*=-\mathbf{h}_i^*\wt{B}\,.  \label{170726105}
\end{align}
Here, in the first equality of the second equation we used that $\mathbf{e}_i^* \wt{B}^{\la i\ra}=b_i\mathbf{e}_i$. We introduce the vectors 
\begin{align*}
\mathring{\mathbf{g}}_i\deq \mathbf{g}_i-g_{ii}\mathbf{e}_i\,,\qquad\qquad \mathring{\mathbf{h}}_i\deq \frac{\mathring{\mathbf{g}}_i}{\|\mathbf{g}_i\|}\,,
\end{align*}
where the $\chi$-distributed  variable $g_{ii}$ is kicked out.

\subsection{Summary of the proof route} 

In this subsection, we summarize the main route of the proof. 
While the final goal of the local law is to understand $G_{ii}$, $i\in\llbracket1,N\rrbracket$, and its averaged version, we work with several auxiliary quantities first.  To understand their origin, it is useful to review the structure of our previous proofs
of the local laws in the bulk  \cite{BES15b,BES16}.  We first  introduce the following control parameters
\begin{align}
\Psi\equiv \Psi(z)\deq \sqrt{\frac{1}{N\eta}}\,,\qquad\qquad\Pi\equiv \Pi(z)\deq \sqrt{\frac{\Im m_H}{N\eta}}\,, \label{17012101}
\end{align}
for $z\in\C^+$, where $\eta=\im z$. In~\cite{BES15b}, we investigated two main quantities:
 \begin{align}
S_i\equiv S_i(z)\deq  \mathbf{h}_i^* \wt{B}^{\la i\ra} G\mathbf{e}_i\,,\qquad \qquad T_i\equiv T_i(z)\deq \mathbf{h}_i^* G\mathbf{e}_i\,.  \label{17072580}
\end{align}
In particular we showed that
$$
      S_i = \frac{z-\omega_B(z)}{a_i-\omega_B(z)} + O_\prec (\Psi)\,, \qquad\qquad T_i = O_\prec (\Psi)\,, 
      $$
by performing integration by parts in the $\mathbf{h}_i^*$ variable.
Using the identity  
\begin{align*}
     G_{ii} = \frac{1- (\wt{B} G)_{ii}}{a_i-z}
\end{align*}
 and that  
\begin{align*}
(\wt{B} G)_{ii} &= \mathbf{e}_i^* R_i \wt{B}^{\la i\ra}  R_i G\mathbf{e}_i = 
- \mathbf{h}_i^*\wt{B}^{\la i\ra}  R_i G\mathbf{e}_i = - S_i +    \mathbf{h}_i^*\wt{B}^{\la i\ra}  \mathbf{r}_i   \mathbf{r}_i^*G\mathbf{e}_i\\
&=  - S_i +    \ell_i^2 ( \mathbf{h}_i^*\wt{B}^{\la i\ra}  \mathbf{h}_i +  b_ih_{ii})  (G_{ii}+T_i)\,,
\end{align*}
we obtained the entry-wise local law for $G_{ii}$ from a precise control on $S_i$ and $T_i$.

Technically $S_i$ is a better quantity than $G_{ii}$ to handle since integration by parts can be directly applied to it.
However, along the calculation the quantity $T_i$ appeared and a second integration by parts was needed to control it.
We obtained a closed system of equations on the expectations of $S_i$ and $T_i$  (see (6.23)--(6.24) of \cite{BES15b})
from which the entry-wise local law in the bulk followed.  

To obtain the law for the normalized trace of $G$ in \cite{BES16}, we performed fluctuation averaging, but again not for $G_{ii}$ directly. We considered
averages  (with arbitrary bounded weights $d_i$) of the quantity
\begin{align*}
      Z_i \deq Q_i + G_{ii} \Upsilon\,,
\end{align*}
where we defined
\begin{align}
 Q_i&\equiv  Q_i(z)\deq (\wt{B}G)_{ii}\ntr G-G_{ii} \ntr \wt{B}G\,, \label{17021701}\\
\Upsilon&\equiv \Upsilon(z)\deq  \ntr \wt{B}G-(\ntr \wt{B}G)^2+\ntr G\ntr \wt{B} G\wt{B}\,. \label{17020511}
\end{align}
From the entry-wise laws it is clear that $|Q_i|, |\Upsilon| \prec \Psi$, and now we improve these bounds, at least 
in averaged sense in case of  $Q_i$.
Notice that $Q_i$ is the most ``symmetric" quantity, in particular $\sum_i Q_i =0$,  but
technically it is not  convenient  to perform a high moment estimate for its weighted average,  $\frac{1}{N}\sum_i d_i Q_{i}$.
The reason is that one step of integration by parts generates an additional term, $G_{ii} \Upsilon$,  which is hard to control
directly. So instead of averaging $Q_i$, in \cite{BES16} we included a counter term, \ie we averaged $Z_i$ instead.
We first proved that the average is one order better, \ie 
\begin{align}\label{Qav}
   \Big| \frac{1}{N}\sum_{i=1}^N d_i Z_i\Big| \prec \Psi^2
\end{align}
 with arbitrary deterministic weights $|d_i|\le 1$.  
Then, using~\eqref{Qav} with $d_i \equiv 1$, we obtained $|\Upsilon| \prec \Psi^2$. Thus {\it a posteriori} we showed that the 
counter term $G_{ii} \Upsilon$ is irrelevant for estimates of order $\Psi^2$ and we obtained the same bound \eqref{Qav}
for $Q_i$ as well. Finally, the bounds on the average of $Q_i$ with careful  choices of the weights $d_i$ 
and using the algebraic identities between $G$ and $\wt{B} G$ yielded the averaged law for $G_{ii}$
with the optimal $O_\prec(\Psi^2)$ error.

All results in  \cite{BES15b,BES16} concerned the bulk. It is well known from the analogous 
results for Wigner matrices that the edge analysis is more difficult. The main reason is that the 
corresponding Dyson equation, the subordination equation in the current model,  is unstable at the spectral edge, hence more precise
estimates are necessary for the error terms. Theoretically, all error terms involving $\Psi = \frac{1}{\sqrt{N\eta}}$ should be
improved by a factor of $\sqrt{\im m}$, where we set $m\deq m_{\mu_A\boxplus\mu_B}$. This factor reflects that the density of states is small at the edge (at a square root edge
we have $\im m(z) \sim \sqrt{\kappa+ \eta}$, where $\eta =\im z$ and $\kappa$ is the distance of $\re z$ to the edge).
This improvement exactly compensates for the bound of order $(\kappa+\eta)^{-1/2}$ on the inverse of the
linearization of the subordination equation near the edge.
However, this improvement is quite complicated to obtain and the method in \cite{BES16} is not sufficient.

In this paper we present a new strategy to obtain the stronger bound. To prepare for the higher accuracy, already in 
the entry-wise law we work with two new quantities $P_i$ and $K_i$ instead of $S_i$ and $T_i$. They are defined as
\begin{align}
P_{i}&\equiv  P_i(z)\deq (\wt{B}G)_{ii} \ntr G-G_{ii} \ntr (\wt{B} G)+(G_{ii}+T_{i})\Upsilon\,,  \label{17011301}\\
 K_i&\equiv  K_i(z)\deq T_{i}+ (b_i T_{i}+(\wt{B}G)_{ii}) \ntr G-(G_{ii}+T_i) \ntr (\wt{B}G)\,. \label{17072420}
\end{align}
We recognize that $P_i = Q_i + (G_{ii}+T_i) \Upsilon = Z_i + T_i\Upsilon$, \ie we included an additional counter term $T_i\Upsilon$
to the previous $Z_i$. While {\it a posteriori} this counter term turns out to be irrelevant, it is necessary in order to perform the
integration by parts more precisely.
Similarly, 
\begin{align}\label{ki}
   K_i= \big(1 + b_i \ntr G- \ntr (\wt{B}G)\big) T_i + Q_i\,,
\end{align}
\ie $K_i$ is a linear combination of  $T_i$ and $Q_i$, it is nevertheless easier to work with $K_i$.

 The proof of the estimates of the aforementioned quantities is divided into three parts. All these parts
are performed for a fixed $z$ and under the condition that $G_{ii}$'s and $T_i$'s satisfy a weak a priori bound (\cf (\ref{17020501}) ).
This condition will be verified later in Section \ref{s.weak law}.

In the first part (Section~\ref{s. Entrywise estimate}) we obtain entry-wise bounds of the form
\begin{align}\label{entry}
     |K_i|, |Q_i|, |T_i|, |P_i| \prec \Psi, \qquad \mbox{as well as}\qquad |\Upsilon| \prec \Psi\,;
\end{align}
see Proposition \ref{pro.17020310}.  Notice that the estimates are still in terms of $\Psi =\frac{1}{\sqrt{N\eta}}$ without the improving factor 
$\sqrt{\im m}$. These results would be possible to derive directly  from the estimates in \cite{BES15b} 
by operating with $S_i$ and $T_i$, we nevertheless use the new quantities, since the formulas derived
along the entry-wise bounds will be used in the improved bounds later. 

There is yet another reason for introducing the new quantities $P_i$ and $K_i$, namely that in the current paper
we have also changed the strategy concerning the entry-wise laws. In \cite{BES15b}, a precursor to \cite{BES16},
we first proved entry-wise laws by deriving a system of equations
for the expectation values (of $S_i$ and $T_i$), complemented with concentration inequalities
to enhance them to high probability bounds. For the improved bound on averaged quantities
high moment estimates were performed only in  \cite{BES16}, using the entry-wise law as an input.
In the current paper we organize the proof in a more straightforward way, similarly to
\cite{BES16b}.
We bypass the fairly complicated concentration
argument leading to the entry-wise law in \cite{BES15b} and we rely on high moment estimates directly
even for the entry-wise law. This strategy is not only conceptually cleaner but also allows us to use  essentially
the same calculations for the entry-wise and the averaged law. The estimates of many error terms are
shared in the two parts of the proofs; in case of some other estimates it will be sufficient to point out the
necessary improvements.
However, high moment estimates require to consider more carefully chosen quantities. For example, no direct high moment
estimates are possible for $S_i$ since it is even not a small quantity. But high moment estimates for the smaller quantities $T_i$ and $Q_i$ produce additional terms that are difficult to handle. It turns out that 
the carefully chosen counter terms in $P_i$ and $K_i$ make them suitable for performing  high moment bounds.

More precisely, in the first step we compute the high moments of $K_i$ and conclude that $|K_i|\prec \Psi$.
In the second step we prove a high moment bound for  $P_i =  Q_i+(G_{ii} +T_i)\Upsilon$, \ie prove $|P_i|\prec \Psi$.
In the third step we average this bound and conclude $|\Upsilon| \prec \Psi$, which in turn yields that $|Q_i|\prec \Psi$.
Finally, from \eqref{ki} we conclude that $|T_i|\prec \Psi$. This proves \eqref{entry} and completes the entry-wise bounds.

In the second part of the proof (Section \ref{s. rough bound})  we derive a rough bound on the averaged quantities. 
We will focus on 
$\frac{1}{N}\sum_i d_i Q_i$,
since $Q_i$ is the most fundamental quantity.
 Averaged quantities typically are one order better than the trivial entry-wise
bounds indicate, \ie 
 $|\frac{1}{N}\sum_i d_i Q_i| \prec \Psi^2= (N\eta)^{-1}$, and indeed this was proven in \cite{BES16}
in the bulk and could be extended to the edge. In fact, 
due to the improvement at the edge, now we expect  a bound of order $\Pi^2\approx\im m/N\eta$, but we cannot obtain this in general.
In this second part of the proof,
 we  therefore  prove a bound  of the form 
 $$
  \Big|\frac{1}{N}\sum_i d_i Q_i\Big| \prec \Pi\Psi\approx\frac{\sqrt{\im m}}{N\eta},
  $$  which is ``half-way" between the standard fluctuation averaging
bound and the  expected optimal bound.
We  compute the high moments of $\frac{1}{N}\sum_i d_i Q_i$ to achieve this bound. Interestingly, 
the apparently leading term in the high moment calculation already gives the optimal bound $\Pi^2$
(first term on the right of \eqref{17071833}),
but a ``cross-term" (when the derivative hits another factor of $\frac{1}{N}\sum_i d_i Q_i$)
is responsible for the weaker $\Pi\Psi$ bound.

Another point to make is that it is not necessary to compute the high moments of another quantity 
for the rough averaged bound,
unlike in  \cite{BES15b,BES16} and in the first part of the current proof, where we always operated with two 
different quantities in parallel. Various error terms 
along the calculation of $\frac{1}{N}\sum_i d_i Q_i$ do contain $T_i$, but 
these terms  can all be estimated using
the entry-wise bound $|T_i|\prec \Psi$ only.  Choosing a special weight sequence $d_i$ we also improve the bound on $\Upsilon$ to
$|\Upsilon| \prec \Pi \Psi$. In particular we could obtain an improved averaged bound on $P_i = Q_i + (G_{ii}+T_i)\Upsilon$ immediately,
and with a little effort on~$K_i$ and~$T_i$ as well, but we do not need them.

Finally, in the third part of the proof (Section~\ref{s.optimal FL})  we obtain the optimal $\Pi^2$  bound for the average of $Q_i$, but
 only for two very specially chosen weights, see \eqref{17021710}--\eqref{17022001}. 
 In fact, only the estimates on the ``cross-term" need to be 
 improved and the weights are carefully chosen to achieve an additional cancellation.
 Nevertheless, linear combinations of $Q_i$'s with  these two special sequences of weights are sufficient to imply 
 an optimal  self-consistent inequality for $\Lambda\deq|\Lambda_A|+|\Lambda_B|$ (see \eqref{17030301}).

 The above three steps are done for a fixed $z$, under an a priori input on the bounds of $G_{ii}$'s and $T_i$'s, (\cf (\ref{17020501}) ). 
 In order to get these inputs uniformly in $\mathcal{D}_\tau(\eta_{\rm m},\eta_\mathrm{M})$, we need to perform a continuity argument 
 in the imaginary part of the spectral parameter $\eta=\im z$. In Section \ref{s.weak law} 
 we will prove in Theorem \ref{thm. weak law at the edge} that
 \begin{equation}\label{weaklawinformal}
   |P_{ii}|+ |K_{ii}|+ \Lambda_{\rm d}^c+\Lambda_{T} \prec \Psi\,, \qquad \mbox{and} \quad 
   \Lambda_{\rm d}+\Lambda_A+ \Lambda_B\prec\frac{1}{(N\eta)^{\frac13}}\,,
 \end{equation}
  uniformly in $\mathcal{D}_\tau(\eta_{\rm m},\eta_\mathrm{M})$. Note that the latter bound is weaker than our final goal
   of order $(N\eta)^{-1}$. Hence we  call the second inequality in \eqref{weaklawinformal} {\it weak local law}, and the final bound \eqref{17072330}, the optimal average law for $G_{ii}$,
  is called {\it strong local law} since it relies on the optimal $(N\eta)^{-1}$ bound in \eqref{weaklawinformal}.
 The reason for this two-level approach, common in
 most local law proofs, is that the  uniformity of the estimates in $z$ is obtained by 
 a continuity  argument that cannot be optimally performed along the high moment estimates behind the 
 fluctuation averaging. 
 In fact, in the bulk regime, Theorem 2.6 of \cite{BES15b},
 (as well as
for  the analogous proofs for Wigner-type  matrices)  fluctuation averaging and high moment estimates were
 not even needed at  this stage; a weak local law was obtained by a straightforward 
averaging of the entrywise law. The edge case is more subtle; $\Lambda$ satisfies a
quadratic inequality (see \eqref{19030801} later) which is linearly unstable. To counter this effect, 
we need stronger bounds on the error term.
 In the entrywise estimate for $G_{ii}$, our error bounds are given in terms of  $\frac{1}{N\eta} \Im (G_{ii}+\mathcal{G}_{ii})$ 
 (\cf (\ref{the tilda guys}), (\ref{17020550}), (\ref{17021202})) and we need to exploit the smallness of  $ \Im (G_{ii}+\mathcal{G}_{ii})$
 via replacing it by its averaged (in $i$) version,
 $\Im m $.  
 Since now our entrywise estimate itself is done via high moment bounds, pulling out the random and $i$-dependent error bound from the expectation and averaging it to get the improved
 bound for $\frac{1}{N}\sum_i d_i Q_i$ is not feasible. 
 Hence, we need to perform a high moment estimate for $\frac{1}{N}\sum_i d_i Q_i$ independently to get the weak law (Lemma~\ref{lem.19030901}).
 The point is that the weaker version of this high moment bound is still compatible with 
 the continuity argument (Section \ref{s.weak law}), 
 leading to the second estimate in \eqref{weaklawinformal}.  On the other hand
  \eqref{weaklawinformal} is enough to guarantee that the input  (\ref{17020501}) holds uniformly in $\mathcal{D}_\tau(\eta_{\rm m},\eta_\mathrm{M})$. 
 With this uniform input, one can show that the discussion in the previous three steps hold also uniformly, leading  eventually to the strong law
 uniformly in $\mathcal{D}_\tau(\eta_{\rm m},\eta_\mathrm{M})$.
 
 We present the three parts explained above (Sections~\ref{s. Entrywise estimate}--\ref{s.optimal FL}) first
 since they represent the essential and strongest ingredients of our proof.
 The proof of the weak law  in Section \ref{s.weak law} relies on similar steps, except that 
 instead of {\it assuming}  the controls on $G_{ii}$
 and $T_i$,  they are {\it enforced} by  inserting  smooth cutoff functions. Thus, along the continuity argument we  can
 bootstrap a single unconditional estimate (on the quantity with cutoffs), a procedure 
 compatible with the high moment method.  The cutoffs involve additional 
 error terms that are still affordable as we are not aiming at the optimal bound for the moment.

 At the end, in Section \ref{s. strong local law}, by inverting the self-consistent inequality  \eqref{17030301}, we conclude that  $\Lambda_\iota\deq \omega_\iota^c -\omega_\iota$, $\iota=A,B$, see (\ref{17072550}) for the definition of $\omega_\iota^c$, are both stochastically dominated by $\Psi^2$. We finally notice~that
 $$ 
    \frac{1}{N}\sum_{i=1}^N d_i \Big( G_{ii} - \frac{1}{a_i - \omega_B^c}\Big)
 $$
 may be expressed as a linear combination of the $Q_i$, see \eqref{17072501}, this quantity is already stochastically bounded
 by  $ \Pi\Psi \le  \Psi^2$ from  the second part of the proof. Since
  replacing $\omega_B^c$ with $\omega_B$ yields an error
 of at most $\Psi^2$, we obtain \eqref{17072330}, the optimal average law for $G_{ii}$.

 The actual proofs are considerably more complicated than this informal summary. On one hand, many error terms need
 to be estimated that have not been mentioned here, in particular we need fluctuation averaging with random 
 weights, a novel complication that has not been considered before.
 On the other hand, in this summary we used the deterministic $\Psi=(N\eta)^{-1/2}$ and $\Pi \approx (\im m/N\eta)^{1/2}$
 as control parameters. In fact, $\Pi$ is  random, see \eqref{17012101},  containing $\im m_H$
 which is $\im m_{A\boxplus B}$ up to a random error that itself depends on $\Lambda$.
   Therefore an additional bootstrap  for a fixed  $z$ is necessary to conclude a deterministic bound on $\Lambda$.

\section{Entry-wise Green function subordination}  \label{s. Entrywise estimate}

 In this section, we prove a subordination property for the Green function entries.  From this section to Appendix \ref{appendix C},  without loss of generality, we assume that 
\begin{align}
\ntr A=\ntr B=0\,.  \label{17072620}
\end{align}
 We define the {\it approximate subordination functions} as
\begin{align}
\omega_A^c(z)\deq z-\frac{\ntr AG(z)}{m_H(z)}\,,\quad \qquad \omega_B^c(z)\deq z-\frac{ \ntr \wt{B}G}{ m_{H}(z)}, \qquad\qquad z\in \mathbb{C}^+. \label{17072550} 
\end{align}
It will be seen that the functions $\omega_A^c$ and $\omega_B^c$ are good approximations of $\omega_A$ and $\omega_B$ defined in (\ref{le prop 1}) with $(\mu_1,\mu_2)=(\mu_A, \mu_B)$.  Switching the roles of $A$ and $B$, and also the roles of $U$ and $U^*$,  we introduce the following analogues of $\wt{B}$, $H$, and $G(z)$, respectively,
\begin{align}\label{the tilda guys}
\wt{A}\deq U^*AU\,,\qquad \qquad \mathcal{H}\deq B+\wt{A}\,,\qquad \qquad \mathcal{G}\equiv \mathcal{G}(z)\deq(\mathcal{H}-z)^{-1}\,.
\end{align}
Observe that, by the cyclicity of the trace,
\begin{align*}
\omega_A^c(z)=z-\frac{\ntr \wt{A}\mathcal{G}(z)}{m_H(z)}\,.
\end{align*}
 From (\ref{17072550}) and the identity $(A+\wt{B}-z)G=I$, it is easy to check that
\begin{align}
\omega_A^c(z)+\omega_B^c(z)-z=-\frac{1}{m_H(z)}\,, \qquad\qquad z\in \mathbb{C}^+\,.  \label{170725130}
\end{align}
Recall the  quantities $S_i$ and $T_i$ defined in (\ref{17072580}). 
We will also need their variants
\begin{align}
\mathring{S}_i\equiv \mathring{S}_i(z)\deq\mathring{\mathbf{h}}_i^* \wt{B}^{\la i\ra} G\mathbf{e}_i=S_i-h_{ii}b_iG_{ii}\,, \qquad \quad \mathring{T}_i\equiv \mathring{T}_i(z)\deq\mathring{\mathbf{h}}_i^* G\mathbf{e}_i=T_i-h_{ii}G_{ii}\,,  \label{17072581}
\end{align}
where the $\chi$ random variable $h_{ii}$ is kicked out.

Further,  we denote (dropping the $z$-dependence from the notation for brevity)
\begin{align}
\Lambda_{{\rm d} i}\deq\Big|G_{ii}-\frac{1}{a_i-\omega_B}\Big|\,,\qquad \qquad \Lambda_{{\rm d}}\deq\max_i \Lambda_{{\rm d} i}\,,\qquad \qquad\Lambda_T\deq\max_i|T_{i}|\,. \label{17072571}
\end{align} 
We also define $\Lambda_{{\rm d} i}^c$ and $\Lambda_{{\rm d}}^c$ analogously by replacing $\omega_B$ by $\omega_B^c$ in the definitions of $\Lambda_{{\rm d} i}$ and $\Lambda_{{\rm d}}$, respectively, ~\eg

\begin{align}
\Lambda_{{\rm d} i}^c\deq\Big|G_{ii}-\frac{1}{a_i-\omega_B^c}\Big|\, , \qquad \Lambda_{{\rm d}}^c\deq \max_i \Lambda_{{\rm d} i}^c.  \label{17080305}
\end{align}

In addition, we  use the notations $\wt{\Lambda}_{{\rm d} i}, \wt{\Lambda}_{{\rm d}}, \wt{\Lambda}_T, \wt{\Lambda}_{{\rm d} i}^c, \wt{\Lambda}_{{\rm d}}^c$ to represent their analogues, obtained by switching the roles of $A$ and $B$, and the roles of $U$ and $U^*$, in the definitions of   $\Lambda_{{\rm d} i}, \Lambda_{{\rm d}}, \Lambda_T, \Lambda_{{\rm d} i}^c, \Lambda_{{\rm d}}^c$,~\eg

\begin{align}\label{tildedef}
\wt{\Lambda}_{{\rm d} i}\deq\Big|\mathcal{G}_{ii}-\frac{1}{b_i-\omega_A}\Big|\,, \qquad \wt{\Lambda}_{{\rm d} i}^c\deq\Big|\mathcal{G}_{ii}-\frac{1}{b_i-\omega_A^c}\Big|\,.
\end{align}

Recall $P_{i}$, $K_i$, and $\Upsilon$ defined in~\eqref{17011301},~\eqref{17072420} and~\eqref{17020511}. Note that all these quantities have analogues with tilde when the roles of $A$ and $B$, and also the roles of $U$ and $U^*$ are switched. 

We further observe the elementary identities 
\begin{align}
\wt{B}G=I-(A-z)G\,, \qquad\qquad G\wt{B}=I-G(A-z)\,. \label{17020508}
\end{align}
Using the first identity in (\ref{17020508}), we can rewrite $\Upsilon$ defined in~\eqref{17020511} as
\begin{align}
\Upsilon=\ntr AG\; \ntr \wt{B}G-\ntr G\;\ntr \wt{B}G A=\frac{1}{N}\sum_{i=1}^N   a_i \Big(G_{ii} \ntr \wt{B}G-(\wt{B}G)_{ii} \ntr G\Big)\,. \label{17011302}
\end{align}

To ease the presentation, we further introduce the control parameter
\begin{align}
\Pi_i\equiv \Pi_i(z)\deq\sqrt{\frac{\Im (G_{ii}(z)+\mathcal{G}_{ii}(z))}{N\eta}}\,,\qquad\qquad i\in\llbracket 1,N\rrbracket \,.\label{17020550}
\end{align}
Note that since $\|H\|< \mathcal{K}$ (\cf (\ref{17072840})), it is easy to see that $\Im G_{ii}(z)\gtrsim \eta$ and $\Im \mathcal{G}_{ii}(z)\gtrsim \eta$ for all $z\in \mathcal{D}_\tau(0,\eta_\mathrm{M})$, by spectral decomposition. This implies 
\begin{align}
\frac{1}{\sqrt{N}}\lesssim \Pi_i(z)\,, \qquad \qquad\forall z\in \mathcal{D}_\tau(0,\eta_\mathrm{M})\,. \label{17020530}
\end{align}

In this section, we derive the following  Green function subordination property. Recall the definitions of $P_i$ and $K_i$ in~\eqref{17011301} and~\eqref{17072420}, as well as the definition of the control parameter $\Psi$ in~\eqref{17012101}.

\begin{pro} \label{pro.17020310} Suppose that the assumptions of Theorem \ref{thm. strong law at the edge} hold.  Fix $z\in \mathcal{D}_\tau(\eta_{\rm m},\eta_\mathrm{M})$. Assume~that 
\begin{align}
\Lambda_{{\rm d}} (z)\prec N^{-\frac{\gamma}{4}}\,, \qquad \wt{\Lambda}_{{\rm d}}(z)\prec N^{-\frac{\gamma}{4}}\,, \qquad \Lambda_{T}(z)\prec 1\,, \qquad \wt{\Lambda}_T(z)\prec 1\,. \label{17020501}
\end{align} 
Then we have, for all $i\in \llbracket 1, N\rrbracket$, that
\begin{align}
| P_i(z)|\prec \Psi(z)\,, \qquad\qquad | K_i(z)|\prec \Psi(z)\,. \label{17020301}
\end{align}
In addition, we also have that
\begin{align}
|\Upsilon(z) |\prec \Psi(z) \label{17020302}
\end{align}
and, for all $i\in \llbracket 1, N\rrbracket$, that
\begin{align}
\Lambda_{{\rm d} i}^c(z)\prec \Psi(z),\qquad\qquad |T_{i}|\prec \Psi(z)\,. \label{17020303}
\end{align}
The same statements hold if we switch the roles of $A$ and $B$, and also the roles of $U$ and $U^*$.
\end{pro}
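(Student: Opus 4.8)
The plan is to run a high-moment (self-consistent recursive moment) estimate directly on the two carefully chosen quantities $K_i$ and $P_i$, exactly as foreshadowed in Section~\ref{sec:general}, and then close the remaining bounds algebraically. We work at a fixed $z\in\mathcal{D}_\tau(\eta_{\rm m},\eta_\mathrm{M})$ and condition on the a priori event in \eqref{17020501}, so that all entries $G_{ii}$, $\mathcal{G}_{ii}$, and the subordination quantities $\mathcal{S}_{AB},\mathcal{T}_A,\mathcal{T}_B$ are under control by Proposition~\ref{le proposition 3.1}. Throughout we use the partial randomness decomposition \eqref{17072510}, writing $U=-\e{\ii\theta_i}R_iU^{\langle i\rangle}$ with $R_i=I-\mathbf r_i\mathbf r_i^*$ and $\mathbf r_i=\ell_i(\mathbf e_i+\mathbf h_i)$, and we integrate by parts in the (conditionally) Gaussian vector $\mathbf g_i$ (cf.\ Lemma on Gaussian integration by parts in Appendix~\ref{appendix A}), carefully tracking the $\chi$-variable $g_{ii}$ via the ``ring'' quantities $\mathring S_i,\mathring T_i,\mathring{\mathbf h}_i$. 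The algebraic inputs are the resolvent identities \eqref{17020508}, the reflection identities \eqref{17072573}--\eqref{170726105}, and the expression $G_{ii}=(1-(\wt BG)_{ii})/(a_i-z)$; these convert $K_i$ and $P_i$ into forms whose derivative under integration by parts reproduces the leading deterministic structure plus genuinely smaller error terms.

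\textbf{Step 1 (the quantity $K_i$).} I would first establish $|K_i|\prec\Psi$ by bounding $\E|K_i|^{2p}$ for arbitrary fixed $p$. The point of the specific combination in \eqref{17072420}, equivalently \eqref{ki}, is that when one performs integration by parts in $\mathbf h_i^*$ on $T_i=\mathbf h_i^*G\mathbf e_i$, the dangerous ``non-small'' pieces (those proportional to $S_i$ and to $1+b_i\ntr G-\ntr\wt BG$) are exactly the ones packaged into $K_i$, so that $K_i$ itself, unlike $T_i$, is directly amenable to a self-consistent moment bound: its derivative produces only terms already of size $O_\prec(\Psi)$ times lower powers of $K_i$, plus genuine error terms controlled by \eqref{17020501} and the entrywise resolvent bounds. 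One obtains a recursive inequality of Young type $\E|K_i|^{2p}\lesssim \sum_{\text{terms}}\Psi^{\#}\,\E|K_i|^{2p-\#}$ which closes to $|K_i|\prec\Psi$.

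\textbf{Step 2 (the quantity $P_i$ and then $\Upsilon$).} Next I would bound $\E|P_i|^{2p}$ to get $|P_i|\prec\Psi$. Here $P_i=Q_i+(G_{ii}+T_i)\Upsilon$, and the extra counterterm $T_i\Upsilon$ relative to the $Z_i$ of \cite{BES16} is what makes one step of integration by parts produce a \emph{closed} expression: the term $G_{ii}\Upsilon$ that appeared as an obstruction in the bulk analysis is now already present inside $P_i$, and the new $T_i\Upsilon$ piece absorbs the analogous term generated by differentiating $T_i$. Using $|K_i|\prec\Psi$ from Step~1 and $\Lambda_T\prec1$ together with the resolvent identities \eqref{17020508} and \eqref{17011302}, the recursive moment estimate closes and gives $|P_i|\prec\Psi$. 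Averaging this bound with weight $a_i/ N$ and invoking the representation \eqref{17011302} of $\Upsilon$ (so that $\frac1N\sum_i a_i(\,\cdot\,)$ of the $Q_i$-type part telescopes against $\Upsilon$ itself, cf.\ $\sum_iQ_i=0$ and the structure of $P_i$), one extracts $|\Upsilon|\prec\Psi$, which is \eqref{17020302}.

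\textbf{Step 3 (closing $T_i$ and $\Lambda_{\rm di}^c$).} With $|\Upsilon|\prec\Psi$ and $|P_i|\prec\Psi$ in hand, the identity $P_i=Q_i+(G_{ii}+T_i)\Upsilon$ and the boundedness of $G_{ii}$ (from \eqref{17020502}) give $|Q_i|\prec\Psi$. Then \eqref{ki}, i.e.\ $K_i=(1+b_i\ntr G-\ntr\wt BG)T_i+Q_i$, together with $|K_i|\prec\Psi$ and the fact that the scalar prefactor $1+b_i\ntr G-\ntr\wt BG$ is bounded below in modulus on $\mathcal{D}_\tau$ (this uses Proposition~\ref{le proposition 3.1}$(i)$ and the defining subordination equations \eqref{170725130}, since $1+b_i\ntr G-\ntr\wt BG$ is a finite-$N$ perturbation of $1-m_{\mu_B}'(\omega_A)\cdot(\ldots)$ type expression that is $\sim1$), yields $|T_i|\prec\Psi$, the second bound in \eqref{17020303}. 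Finally, plugging $(\wt BG)_{ii}$ from the decomposition of $(\wt BG)_{ii}$ (the display just above Proposition~\ref{pro.17020310} in the bulk-analysis recollection, with $S_i$, $T_i$, $\ell_i$) into $G_{ii}=(1-(\wt BG)_{ii})/(a_i-z)$, solving for $G_{ii}-\frac1{a_i-\omega_B^c}$ using $\omega_B^c=z-\ntr\wt BG/m_H$ from \eqref{17072550} and the normalization \eqref{170725130}, and using $|S_i-\frac{z-\omega_B^c}{a_i-\omega_B^c}(\text{something})|$-type cancellations together with $|T_i|\prec\Psi$, gives $\Lambda_{\rm di}^c\prec\Psi$, the first bound in \eqref{17020303}. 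The symmetric statements follow by the $A\leftrightarrow B$, $U\leftrightarrow U^*$ exchange, using $\mathcal H,\mathcal G$ from \eqref{the tilda guys}.

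\textbf{Main obstacle.} The crux is Steps~1 and~2: organizing the integration by parts so that the recursion genuinely closes. The difficulty is that a naive IBP on $T_i$ or $Q_i$ spawns terms (e.g.\ involving $S_i$, or $G_{ii}\Upsilon$, or higher cumulant contributions from the $\chi$-variable $g_{ii}$ and from $\ell_i^2=2/\|\mathbf e_i+\mathbf h_i\|^2$) that are not individually small; the whole design of $K_i$ and $P_i$ is to make these dangerous terms either cancel or reassemble into $K_i$, $P_i$, $\Upsilon$ themselves. Bookkeeping the many error terms — in particular separating the $g_{ii}$-dependence cleanly via $\mathring S_i,\mathring T_i$ and showing the neglected pieces are $O_\prec(\Psi^2)$ or $O_\prec(N^{-1/2}\Psi)$ — is the technical heart, and it is here that one must also use the a priori bounds \eqref{17020501} to control the ``off-diagonal'' resolvent contributions appearing inside the IBP. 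Once the correct quantities are identified, each individual estimate is of the standard fluctuation-averaging type, but the combinatorics of verifying that no term of size exactly $\Psi$ times a unit power of the target quantity survives (which would break the recursion) is delicate and will occupy the bulk of Section~\ref{s. Entrywise estimate}.
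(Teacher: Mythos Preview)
Your plan is essentially the paper's own route: recursive moment estimates on $K_i$ and $P_i$ (the paper's Lemma~\ref{lem.17021230}), then algebraic closure to get $\Upsilon$, $Q_i$, $\Lambda_{{\rm d}i}^c$, and $T_i$. Two small corrections to your Step~2--3 bookkeeping, however, are worth flagging.

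First, your extraction of $|\Upsilon|\prec\Psi$ is muddled. You write ``averaging with weight $a_i/N$'' and cite both \eqref{17011302} and $\sum_iQ_i=0$; these are two different identities and you should pick one. The clean argument (the paper's) uses the \emph{uniform} average: since $\sum_iQ_i=0$, one has $\frac1N\sum_iP_i=\big(\frac1N\sum_i(G_{ii}+T_i)\big)\Upsilon$, and the bracket is $m_H+\frac1N\sum_iT_i$. The weighted version with $a_i$ gives a coefficient $\ntr AG-1+\frac1N\sum_ia_iT_i$, which is not obviously bounded away from zero on $\mathcal D_\tau$.

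Second, and relatedly, to conclude that the coefficient $m_H+\frac1N\sum_iT_i$ is $\sim1$ you cannot use only the a~priori $\Lambda_T\prec1$ from \eqref{17020501}; you need an intermediate \emph{crude} improvement $\Lambda_T\prec N^{-\gamma/4}$ before the sharp $|T_i|\prec\Psi$. The paper obtains this from $|K_i|\prec\Psi$ via \eqref{ki}: rewriting as $(1+b_i\ntr G-\ntr\wt BG)T_i=-Q_i+K_i$ and using \eqref{17020535} to identify the prefactor with $m_{\mu_A\boxplus\mu_B}(b_i-\omega_A)+O_\prec(N^{-\gamma/4})$, which is $\sim1$ by \eqref{17020502}, one first gets $|T_i|\prec N^{-\gamma/4}$ (since at this stage only $|Q_i|\prec1$ is known). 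Only after $|\Upsilon|\prec\Psi$ and hence $|Q_i|\prec\Psi$ do you feed back into the same identity to upgrade to $|T_i|\prec\Psi$. Your Step~3 has the right idea for the final upgrade but skips this preliminary pass.

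One further remark: the paper in fact proves the recursive moment estimates for $P_i$ and $K_i$ in parallel (Lemma~\ref{lem.17021230}), not sequentially; the $P_i$ estimate does not use $|K_i|\prec\Psi$ as an input but rather handles the emerging $\mathring T_i$ term by a \emph{second} integration by parts (see \eqref{17021024}). Your sequential ordering would also work, but the combinatorics are cleanest when both recursions are closed independently using only \eqref{17020501} and the deterministic bounds of Proposition~\ref{le proposition 3.1}.
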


Before the actual proof of Proposition \ref{pro.17020310}, we establish several bounds that follow from the assumption in (\ref{17020501}). From the definitions in (\ref{17072571}), the assumptions in  (\ref{17020501}), together with (\ref{17020502}), we see~that 
\begin{align}
\max_{i\in \llbracket 1, N\rrbracket }|G_{ii}|\prec 1\,,\qquad \qquad \max_{i\in \llbracket 1, N\rrbracket }|T_{i}|\prec 1\,. \label{17020505}
\end{align}
Analogously, we also have $\max_{i\in \llbracket 1, N\rrbracket }|\mathcal{G}_{ii}|\prec 1$. Hence, under (\ref{17020501}), we see that 
\begin{align*}
\max_{i\in \llbracket 1, N\rrbracket }\Pi_i(z)\prec \Psi(z). 
\end{align*}
Moreover, using the identities in (\ref{17020508}), 
we also get from the first bound in (\ref{17020505}) that
\begin{align}
\max_{i\in \llbracket 1, N\rrbracket} |(XGY)_{ii}|\prec 1, \qquad\qquad X, Y=I \;\text{or}\; \wt{B}. \label{170726100}
\end{align}
In addition, from (\ref{170730100}) we see that
\begin{align}
\frac{1}{N}\sum_{i=1}^N   \frac{1}{a_i-\omega_B(z)}=m_{\mu_A}(\omega_B(z))=m_{\mu_A\boxplus\mu_B} (z). \label{17020507}
\end{align}
Then, the first bound in (\ref{17020501}), together with (\ref{17020507}), (\ref{17020508}), (\ref{17020503}) and  (\ref{17020502}), leads to the following estimates
\begin{align}
\ntr G&= m_{\mu_A\boxplus\mu_B} +O_\prec(N^{-\frac{\gamma}{4}})\,,\nonumber\\ 
\ntr \wt{B} G&= (z-\omega_B) m_{\mu_A\boxplus \mu_B} +O_\prec(N^{-\frac{\gamma}{4}})\,, \nonumber\\
 \ntr \wt{B} G\wt{B}&=(\omega_B-z) \big(1+(\omega_B-z) m_{\mu_A\boxplus\mu_B}\big)+O_\prec(N^{-\frac{\gamma}{4}})\,. \label{17020535}
\end{align}
Furthermore, by (\ref{17020502}), (\ref{17020503}),  and (\ref{17020507}), we see that all the above tracial quantities are $O_\prec(1)$
. This also implies that $|\Upsilon|\prec 1$, (\cf (\ref{17020511})).  Moreover, from  
(\ref{17072550}) and the first two equations in (\ref{17020535}), we can get the following rough estimate under  (\ref{17020501}) and (\ref{17020502}),
\begin{align}
\omega_B^c=\omega_B+O_\prec(N^{-\frac{\gamma}{4}})\,.  \label{170725110}
\end{align}

Further, we make the following convention in the rest of the paper: the notation $O_\prec(\Psi^k)$, for any given integer $k$, represents some generic (possibly) $z$-dependent random variable $X\equiv X(z)$ which satisfies 
\begin{align*}
|X|\prec \Psi^k, \qquad \text{and}\qquad \mathbb{E}|X|^q\prec \Psi^{qk}\,,
\end{align*}
for any given positive integer $q$. The first bound above follows from the original definition of  the notation $O_\prec(\cdot)$ directly. It turns out that it is more convenient to require the second one in our discussions below as well. It will be clear that the second bound always follows from the first one whenever this notation will be used. For more details, we refer to the paragraph above Proposition~6.1 in~\cite{BES16}.  Analogously, for all notation of the form $O_\prec(\Gamma)$ with some deterministic control parameter $\Gamma$, we make the same convention. 

 \begin{proof}[Proof of Proposition \ref{pro.17020310}]
To prove (\ref{17020301}), it suffices to show the high order moment estimates 
\begin{align}
\mathbb{E}\big[ | P_i|^{2p}\big]\prec \Psi^{2p}\,,\qquad \qquad \mathbb{E} \big[ | K_i|^{2p}\big]\prec \Psi^{2p}\,, \label{17072410}
\end{align}
for any fixed $p\in \mathbb{N}$. Let us introduce the notations
\begin{align}
\mathfrak{m}_i^{ (k,l)}\deq P_{i}^k\overline{ P_i^l}\,,\quad \quad \mathfrak{n}_i^{{ (k,l)}}\deq K_i^k\overline{ K_i^l},\qquad \qquad k,l\in \mathbb{N}\,,\quad\quad i\in\llbracket 1,N\rrbracket\,. \label{17072350}
\end{align}

With the definitions in (\ref{17072350}) and the convention made after (\ref{170725110}), we have the following recursive moment estimates. This type of estimates were used first in~\cite{LS16} to derive local laws for sparse Wigner matrices.
\begin{lem}[Recursive moment estimate for $ P_i$ and $ K_i$] \label{lem.17021230}  Suppose the assumptions of Proposition \ref{pro.17020310}. Then, for any fixed integer $p\geq 1$ and any $i\in \llbracket 1, N\rrbracket$, we have
\begin{align}
\mathbb{E}[\mathfrak{m}_i^{(p,p)}]&=\mathbb{E}[O_\prec(\Psi)\mathfrak{m}_i^{(p-1,p)}]+\mathbb{E}[O_\prec(\Psi^2) \mathfrak{m}_i^{(p-2,p)}]+\mathbb{E}[O_\prec(\Psi^2) \mathfrak{m}_i^{(p-1,p-1)}]\,,\label{17021020}\\
 \mathbb{E}[\mathfrak{n}_i^{(p,p)}]&=\mathbb{E}[O_\prec(\Psi)\mathfrak{n}_i^{(p-1,p)}]+\mathbb{E}[O_\prec(\Psi^2) \mathfrak{n}_i^{(p-2,p)}]+\mathbb{E}[O_\prec(\Psi^2) \mathfrak{n}_i^{(p-1,p-1)}]\,, \label{17021021}
\end{align} 
where we made the convention $\mathfrak{m}_i^{(0,0)}=\mathfrak{n}_i^{(0,0)}=1$ and $\mathfrak{m}_i^{(-1,1)}=\mathfrak{n}_i^{(-1,1)}=0$ if $p=1$.
\end{lem}
Although in the statements of Lemma~\ref{lem.17021230}, we use $\Psi$,  in the proof, we actually get better estimates in terms of $\Pi_i^2$ instead of $\Psi^2$ for some error terms. We will keep the stronger form of these estimates since the same errors will appear in the averaged bounds in Section  \ref{s. rough bound} as well.  The average of these errors  is typically smaller than $\Psi^2$.

\begin{proof}[Proof of Lemma \ref{lem.17021230} ] The proof is very similar to that of Lemma 7.3 of \cite{BES16b}, which is presented for the  block additive model in the bulk regime. It suffices to go through the strategy in \cite{BES16b} for our additive model again. The strategy also works well at the regular edge, provided (\ref{17020502}) and (\ref{17020503}) hold. In addition, instead of the control parameter  $\Psi$ used in the proof of Lemma 7.3 of \cite{BES16b}, we aim here at controlling many errors in terms of $\Pi_i$.  This requires a more careful estimate on the error terms. Due to the similarity to the proof of Lemma 7.3 of \cite{BES16b}, we only sketch the proof of  Lemma \ref{lem.17021230}  in the sequel. 

For each $i\in \llbracket 1, N\rrbracket$, we write
\begin{align}
\mathbb{E}[\mathfrak{m}_i^{(p,p)}] =\mathbb{E}[ P_i\mathfrak{m}_i^{(p-1,p)}]&=\mathbb{E}[(\wt{B}G)_{ii} \ntr G \mathfrak{m}_i^{(p-1,p)}]+\mathbb{E}\big[\big(-G_{ii}\ntr \wt{B}G+(G_{ii}+T_{i})\Upsilon\big)\mathfrak{m}_i^{(p-1,p)}\big]\,,\label{17020532}
\end{align}
respectively,
\begin{align}
\mathbb{E}[\mathfrak{n}_i^{(p,p)}] =\mathbb{E}[ K_i \mathfrak{n}_i^{(p-1,p)}]=\mathbb{E}[T_i \mathfrak{n}_i^{(p-1,p)}]+\mathbb{E}\big[\big( (b_iT_i+(\wt{B}G)_{ii})\ntr G- (G_{ii}+T_i) \ntr \wt{B}G\big)\mathfrak{n}_i^{(p-1,p)}\big]\,. \label{17020533}
\end{align}
Using the fact $\mathbf{e}_i^*R_i=-\mathbf{h}_i^*$ (\cf (\ref{17072573})), we can write
\begin{align}
(\wt{B}G)_{ii}&=\mathbf{e}_i^* R_i\wt{B}^{\la i\ra} R_i G\mathbf{e}_i=-\mathbf{h}_i^* \wt{B}^{\la i\ra} R_i G\mathbf{e}_i=-\mathbf{h}_i^* \wt{B}^{\la i\ra}G\mathbf{e}_i+\ell_i^2 \mathbf{h}_i^* \wt{B}^{\la i\ra}(\mathbf{e}_i+\mathbf{h}_i)(\mathbf{e}_i+\mathbf{h}_i)^*G\mathbf{e}_i\nonumber\\
&=-S_i+\ell_i^2(b_i h_{ii}+\mathbf{h}_i^* \wt{B}^{\la i\ra}\mathbf{h}_i) (G_{ii}+T_i)= -\mathring{S}_{i}+\varepsilon_{i1}\,, \label{17020531}
\end{align} 
where $S_i$ and $\mathring{S}_i$ are defined in (\ref{17072580}) and (\ref{17072581}), respectively, $\ell_i$ is defined in~\eqref{17072590} and
\begin{align}
\varepsilon_{i1}\deq\big((\ell_i^2-1)b_i h_{ii}+\ell_i^2 \mathbf{h}_i^* \wt{B}^{\la i\ra} \mathbf{h}_i\big) G_{ii}+\ell_i^2 \big(b_i h_{ii}+\mathbf{h}_i^* \wt{B}^{\la i\ra}\mathbf{h}_i\big) T_i\,. \label{17071805}
\end{align}
With the aid of Lemma \ref{lem.091720}, it is elementary to check 
\begin{align}
|h_{ii}|\prec \frac{1}{\sqrt{N}}\,,\qquad\quad |\ell_i^2-1|\prec \frac{1}{\sqrt{N}}\,,\quad \qquad |\mathbf{h}_i^* \wt{B}^{\la i\ra}\mathbf{h}_i|\prec \frac{1}{\sqrt{N}}\,,  \label{17021540}
\end{align}
where in the last inequality we also used the fact that $\ntr \wt{B}^{\la i\ra}=\ntr B=0$, under the convention (\ref{17072620}). 
Applying the bounds in (\ref{17020505}) and (\ref{17021540}),
 it is easy to see that 
\begin{align}
|\varepsilon_{i1}|\prec \frac{1}{\sqrt{N}}\,. \label{17020534}
\end{align}

Substituting (\ref{17020531}) and (\ref{17020534}) into the first term on the right hand side of (\ref{17020532}), we have
\begin{align}
\mathbb{E}[(\wt{B}G)_{ii} \ntr G \mathfrak{m}_i^{(p-1,p)}]=-\mathbb{E}[\mathring{S}_i \ntr G \mathfrak{m}_i^{(p-1,p)}]+\mathbb{E}[O_\prec(N^{-\frac12})\mathfrak{m}_i^{(p-1,p)}]\,, \label{17020536}
\end{align}
where for the second term on the right hand side above we also used $\ntr G=O_\prec(1)$; \cf (\ref{17020535}). We recall the definition  of $\mathring{S}_i$ from (\ref{17072581}) and rewrite 
\begin{align*}
\mathring{S}_i=\sum_{k}^{(i)} \bar{g}_{ik} \frac{1}{\|\mathbf{g}_i\| }\mathbf{e}_k^*\wt{B}^{\la i\ra}G\mathbf{e}_i.
\end{align*}
Hereafter, we use the notation $\sum_{k}^{(i)}$ to represent the sum over $k\in \llbracket 1, N\rrbracket\setminus\{i\}$.
Thus, the first term on the right of (\ref{17020536}) is of the form $\mathbb{E}[\sum_{k}^{(i)} \bar{g}_{ik} \langle \cdots \rangle]$, where $\langle \cdots\rangle$ can be regarded as a function of the $\bar{g}_{ik}$'s and the $g_{ik}$'s.  Recall the following integration by parts formula for complex centered Gaussian variables,
\begin{align}
\int_{\mathbb{C}} \bar{g} f(g,\bar{g}) \e{-\frac{|g|^2}{\sigma^2}} {\rm d}^2 g=\sigma^2 \int_{\mathbb{C}} \partial_g f(g,\bar{g}) \e{-\frac{|g|^2}{\sigma^2}}{\rm d}^2 g \,, \label{17021237}
\end{align}
for any differentiable function $f: \mathbb{C}^2\to \mathbb{C}$. Applying (\ref{17021237}) to the first term on the right of (\ref{17020536}), we~get
\begin{align}
&\mathbb{E}[\mathring{S}_i \ntr G \mathfrak{m}_i^{(p-1,p)}]=\frac{1}{N} \sum_k^{(i)} \mathbb{E}\Big[ \frac{1}{\|\mathbf{g}_i\| }\frac{\partial (\mathbf{e}_k^*\wt{B}^{\la i\ra} G\mathbf{e}_i)}{\partial g_{ik}}\ntr G \mathfrak{m}_i^{(p-1,p)}\Big]\nonumber\\
&\qquad+ \frac{1}{N} \sum_k^{(i)} \mathbb{E}\Big[ \frac{\partial \|\mathbf{g}_i\| ^{-1}}{\partial g_{ik}} \mathbf{e}_k^*\wt{B}^{\la i\ra} G\mathbf{e}_i\ntr G \mathfrak{m}_i^{(p-1,p)}\Big]+\frac{1}{N} \sum_k^{(i)} \mathbb{E}\Big[ \frac{\mathbf{e}_k^*\wt{B}^{\la i\ra} G\mathbf{e}_i}{\|\mathbf{g}_i\| }  \frac{\partial \ntr G}{\partial g_{ik}}\mathfrak{m}_i^{(p-1,p)} \Big]\nonumber\\
&\qquad+ \frac{p-1}{N} \sum_k^{(i)} \mathbb{E}\Big[ \frac{\mathbf{e}_k^*\wt{B}^{\la i\ra} G\mathbf{e}_i}{\|\mathbf{g}_i\| }  \ntr G \frac{\partial  P_i}{\partial g_{ik}}\mathfrak{m}_i^{(p-2,p)}\Big]+ \frac{p}{N} \sum_k^{(i)} \mathbb{E} \Big[ \frac{\mathbf{e}_k^*\wt{B}^{\la i\ra} G\mathbf{e}_i}{\|\mathbf{g}_i\| }  \ntr G \frac{\partial \overline{ P_i}}{\partial g_{ik}}\mathfrak{m}_i^{(p-1,p-1)}\Big]. \label{17020540}
\end{align} 
Analogously, by $T_i=\mathring{T}_i+h_{ii}G_{ii}$, (\ref{17072581}),  the first bound in (\ref{17020505}), the first bound in (\ref{17021540}), and also (\ref{17020530}), we can write the first term on the right hand side of (\ref{17020533}) as
\begin{align}
\mathbb{E}[T_i \mathfrak{n}_i^{(p-1,p)}]=\mathbb{E}[\mathring{T}_i \mathfrak{n}_i^{(p-1,p)}]+\mathbb{E}[O_\prec(N^{-\frac12}) \mathfrak{n}_i^{(p-1,p)}]\,. \label{17071802}
\end{align}
Similarly to (\ref{17020540}), applying the integration by parts formula, we obtain 
\begin{align}
&\mathbb{E}[\mathring{T}_i  \mathfrak{n}_i^{(p-1,p)}]=\frac{1}{N} \sum_k^{(i)} \mathbb{E}\Big[ \frac{1}{\|\mathbf{g}_i\| }\frac{\partial (\mathbf{e}_k^*G\mathbf{e}_i)}{\partial g_{ik}} \mathfrak{n}_i^{(p-1,p)}\Big]+ \frac{1}{N} \sum_k^{(i)} \mathbb{E}\Big[ \frac{\partial \|\mathbf{g}_i\| ^{-1}}{\partial g_{ik}} \mathbf{e}_k^* G\mathbf{e}_i \mathfrak{n}_i^{(p-1,p)}\Big]\nonumber\\
&\;\;+ \frac{p-1}{N} \sum_k^{(i)} \mathbb{E}\Big[ \frac{\mathbf{e}_k^* G\mathbf{e}_i}{\|\mathbf{g}_i\| }   \frac{\partial  K_i}{\partial g_{ik}}\mathfrak{n}_i^{(p-2,p)}\Big]+ \frac{p}{N} \sum_k^{(i)} \mathbb{E} \Big[ \frac{ \mathbf{e}_k^* G\mathbf{e}_i}{\|\mathbf{g}_i\| }  \frac{\partial \overline{ K_i}}{\partial g_{ik}}\mathfrak{n}_i^{(p-1,p-1)}\Big]\,. \label{17021011111}
\end{align}

First, we consider the first term on the right side of (\ref{17020540}).  Recall $\ell_i$ from (\ref{17072590}). For brevity, we set
\begin{align}
c_i\deq\frac{\ell_i^2}{\|\mathbf{g}_i\| }. \label{170725102}
\end{align}
 It is elementary to derive  that 
\begin{align}
&\frac{\partial G}{\partial g_{ik}}= c_i\big(G\mathbf{e}_k (\mathbf{e}_i+\mathbf{h}_i)^* \wt{B}^{\la i\ra} R_i G+GR_i\wt{B}^{\la i\ra}\mathbf{e}_k (\mathbf{e}_i+\mathbf{h}_i)^*G\big)+\Delta_G(i,k)\,. \label{17071801}
\end{align}
Here $\Delta_G(i,k)$ is a small remainder, defined as 
\begin{align}
\Delta_G(i,k)\deq-G\Delta_R(i,k)\wt{B}^{\la i\ra} R_i G-GR_i\wt{B}^{\la i\ra}\Delta_R(i,k)G, \label{17022801}
\end{align}
where
\begin{align}
\Delta_R(i,k)\deq\frac{\ell_i^2}{2\|\mathbf{g}_i\| ^2} \bar{g}_{ik}\big(\mathbf{e}_i\mathbf{h}_i^*+\mathbf{h}_i\mathbf{e}_i^*+2\mathbf{h}_i\mathbf{h}_i^*\big)-\frac{\ell_i^4}{2\|\mathbf{g}_i\| ^3} g_{ii}\bar{g}_{ik}\big(\mathbf{e}_i+\mathbf{h}_i\big)\big(\mathbf{e}_i+\mathbf{h}_i\big)^*\,.  \label{170729100}
\end{align}
The $\Delta_G(i,k)$'s are irrelevant error terms. We handle quantities with $\Delta_G(i,k)$ separately in Appendix~\ref{appendix B}.

Analogously to~(7.55) of \cite{BES16b}, using (\ref{17071801}), we can get 
\begin{multline}
\frac{1}{N} \sum_k^{(i)} \frac{\partial (\mathbf{e}_k^* \wt{B}^{\la i\ra}G\mathbf{e}_i)}{\partial g_{ik}}=-c_i \frac{1}{N} \sum_k^{(i)}  \mathbf{e}_k^* \wt{B}^{(i)} G\mathbf{e}_k (b_i T_i +(\wt{B}G)_{ii})\\
 +c_i\frac{1}{N} \sum_k^{(i)} \mathbf{e}_k^* \wt{B}^{\la i\ra} G R_i \wt{B}^{\la i\ra} \mathbf{e}_k (G_{ii}+T_i)+\frac{1}{N} \sum_k^{(i)} \mathbf{e}_k^*\wt{B}^{\la i\ra} \Delta_G(i,k) \mathbf{e}_i\,. \label{17020551}
\end{multline}
Note that $T_i$ naturally appears in the first term of  (\ref{17020540}) after integrating by parts the $\mathring{S}_i$ term. This explains why we need to study the high moments of $K_i$ to get another equation. 
Now, we claim that 
\begin{align}
\frac{1}{N} \sum_k^{(i)}  \mathbf{e}_k^* \wt{B}^{(i)} G\mathbf{e}_k=\ntr \wt{B}G+O_\prec(\Pi_i^2)\,,\qquad \frac{1}{N} \sum_k^{(i)} \mathbf{e}_k^* \wt{B}^{\la i\ra} G R_i \wt{B}^{\la i\ra} \mathbf{e}_k=\ntr \wt{B}G\wt{B}+O_\prec(\Pi_i^2)\,, \label{17072401}
\end{align}
with $\Pi_i$ given in~\eqref{17020550}. We state the proof for the first estimate in (\ref{17072401}).  Note that
\begin{align}
\frac{1}{N} \sum_k^{(i)}  \mathbf{e}_k^* \wt{B}^{(i)} G\mathbf{e}_k= \ntr \wt{B}^{\la i\ra} G-\frac{1}{N} (\wt{B}^{\la i\ra} G)_{ii}=\ntr \wt{B}^{\la i\ra} G+O_\prec(\frac{1}{N})\,, \label{17021510}
\end{align}
where the last step follows from the identity $(\wt{B}^{\la i\ra} G)_{ii}=b_i G_{ii}$ and (\ref{17020505}).  Then, using that $\wt{B}^{\la i\ra}=R_i\wt{B} R_i$ and $R_i=I-\mathbf{r}_i\mathbf{r}_i^*$ (\cf (\ref{17072593})), we see that
\begin{align*}
\ntr \wt{B}G-\ntr \wt{B}^{\la i\ra} G= \ntr \wt{B} G- \ntr R_i\wt{B} R_iG=\frac{1}{N} \mathbf{r}_i^* \wt{B}G\mathbf{r}_i+\frac{1}{N}\mathbf{r}_i^*G\wt{B} \mathbf{r}_i-\frac{1}{N}\mathbf{r}_i^*\wt{B}\mathbf{r}_i \mathbf{r}_i^*G\mathbf{r}_i\,.
\end{align*}
Using (\ref{170725100}), $\ell_i=1+O_\prec(\frac{1}{\sqrt{N}})$ and $\|\mathbf{r}_i^* \wt{B}\|\lesssim 1$, we get by Cauchy-Schwarz that 
\begin{align*}
\big|\mathbf{r}_i^* \wt{B}G\mathbf{r}_i\big|\lesssim\Big( \|G\mathbf{e}_i\|^2+\|G\mathbf{h}_i\|^2\Big)^{\frac{1}{2}}=\Big(\frac{\Im (G_{ii}+ \mathbf{h}_i^* G\mathbf{h}_i)}{\eta}\Big)^{\frac{1}{2}}= \Big(\frac{\Im (G_{ii}+\mathcal{G}_{ii})}{\eta}\Big)^{\frac{1}{2}},
\end{align*}
with $\mathcal{G}$ given in~\eqref{the tilda guys}, where in the last step we used
\begin{align}
\mathbf{h}_i^* G\mathbf{h}_i=\mathbf{u}_i^* G\mathbf{u}_i=\mathbf{e}_i^*U^*GU\mathbf{e}_i=\mathcal{G}_{ii} \label{170726140}
\end{align}
and the identities  $|G|^2=\frac{1}{\eta} \Im G$ and $ |\mathcal{G}|^2=\frac{1}{\eta} \Im \mathcal{G}$. 
Similarly, we have
\begin{align*}
\big|\mathbf{r}_i^*G\wt{B} \mathbf{r}_i\big|\lesssim \Big(\frac{\Im (G_{ii}+\mathcal{G}_{ii})}{\eta}\Big)^{\frac{1}{2}},\qquad  \big|\mathbf{r}_i^*G \mathbf{r}_i\big|\lesssim \Big(\frac{\Im (G_{ii}+\mathcal{G}_{ii})}{\eta}\Big)^{\frac{1}{2}}.
\end{align*}
Hence,  we have
\begin{align}
\big|\ntr \wt{B}G-\ntr \wt{B}^{\la i\ra} G\big|\lesssim  \frac{1}{N} \Big(\frac{\Im (G_{ii}+\mathcal{G}_{ii})}{\eta}\Big)^{\frac{1}{2}}\lesssim \frac{\Im (G_{ii}+\mathcal{G}_{ii})}{N \eta}=O_\prec(\Pi_i^2)\,, \label{17021511}
\end{align}
where in the second step, we used the fact $\Im G_{ii}, \Im \mathcal{G}_{ii}\gtrsim \eta$. Combining  (\ref{17021510}) with (\ref{17021511}) we obtain the first estimate of (\ref{17072401}).  The second estimate in (\ref{17072401}) is proved in the same way.  

Hence, using (\ref{17072401}) and the first estimate in (\ref{17030105}), we obtain from (\ref{17020551}) that
\begin{align}
\frac{1}{N} \sum_k^{(i)} \frac{\partial (\mathbf{e}_k^* \wt{B}^{\la i\ra}G\mathbf{e}_i)}{\partial g_{ik}}=-c_i \ntr \wt{B}G \big(b_i T_i+(\wt{B}G)_{ii}\big)+c_i \ntr \wt{B} G\wt{B} \big( G_{ii}+T_i\big)+O_\prec({\Pi}_i^2)\,. \label{17020801}
\end{align}
Analogously, we can show that
\begin{align}
\frac{1}{N}\sum_k^{(i)} \frac{\partial (\mathbf{e}_k^* G\mathbf{e}_i)}{\partial g_{ik}}=-c_i\ntr G\big( b_i T_i+(\wt{B}G)_{ii}\big)+c_i\ntr \wt{B}G \big( G_{ii}+T_i\big)+O_\prec({\Pi}_i^2)\,. \label{17020802}
\end{align}
Using (\ref{17020533}), (\ref{17071802}), (\ref{17021011111}) and (\ref{17020802}) and the estimate $\frac{c_i}{\|\mathbf{g}_i\|}=1+O_\prec(\frac{1}{\sqrt{N}})$, we obtain 
\begin{multline}
\mathbb{E}[ \mathfrak{n}_i^{(p,p)}]=\mathbb{E}\Big[ O_\prec(\Psi)\mathfrak{n}_i^{(p-1,p)}\Big]+ \frac{1}{N} \sum_k^{(i)} \mathbb{E}\Big[ \frac{\partial \|\mathbf{g}_i\| ^{-1}}{\partial g_{ik}} \mathbf{e}_k^* G\mathbf{e}_i \mathfrak{n}_i^{(p-1,p)}\Big]\\+ \frac{p-1}{N} \sum_k^{(i)} \mathbb{E}\Big[ \frac{\mathbf{e}_k^* G\mathbf{e}_i}{\|\mathbf{g}_i\| }   \frac{\partial  K_i}{\partial g_{ik}}\mathfrak{n}_i^{(p-2,p)}\Big]+ \frac{p}{N} \sum_k^{(i)} \mathbb{E} \Big[ \frac{\mathbf{e}_k^* G\mathbf{e}_i}{\|\mathbf{g}_i\| }   \frac{\partial \overline{ K_i}}{\partial g_{ik}}\mathfrak{n}_i^{(p-1,p-1)}\Big]\,. \label{17021011}
\end{multline}

Then, combining (\ref{17020801}) with (\ref{17020802}), we  obtain
\begin{multline}
\frac{1}{N}\sum_k^{(i)} \frac{\partial (\mathbf{e}_k^* \wt{B}^{\la i\ra}G\mathbf{e}_i)}{\partial g_{ik}} \ntr G =-c_i(G_{ii}+T_i) \big(\ntr \wt{B}G-\Upsilon\big)+\frac{1}{N}\sum_k^{(i)} \frac{\partial (\mathbf{e}_k^* G\mathbf{e}_i)}{\partial g_{ik}} \ntr \wt{B}G+O_\prec({\Pi}_i^2)\\=-c_i(G_{ii}+T_i) \big(\ntr \wt{B}G-\Upsilon\big)+\mathring{T}_i \ntr \wt{B}G+\Big(\frac{1}{N}\sum_k^{(i)} \frac{\partial (\mathbf{e}_k^* G\mathbf{e}_i)}{\partial g_{ik}}-\mathring{T}_i \Big)\ntr \wt{B}G+O_\prec({\Pi}_i^2)\,. \label{17020812}
\end{multline}
Recall the definition of $c_i$ from (\ref{170725102}). It is elementary to check that
\begin{align}
c_i=\|\mathbf{g}_i\| -h_{ii}-\big( \|\mathbf{g}_i\| ^2-1\big)+O_\prec(\frac{1}{N})\,. \label{17020811}
\end{align}
Plugging (\ref{17020811}) into (\ref{17020812}) and also using the second equation in (\ref{17072581}), we can write
\begin{multline}
\frac{1}{N}\sum_k^{(i)} \frac{\partial (\mathbf{e}_k^* \wt{B}^{\la i\ra}G\mathbf{e}_i)}{\partial g_{ik}} \ntr G= -\|\mathbf{g}_i\| \big( G_{ii}\ntr \wt{B}G-(G_{ii}+T_i)\Upsilon\big)\\
+\Big(\frac{1}{N}\sum_k^{(i)} \frac{\partial (\mathbf{e}_k^* G\mathbf{e}_i)}{\partial g_{ik}}-\|\mathbf{g}_i\| \mathring{T}_i \Big)\ntr \wt{B}G+\varepsilon_{i2}+O_\prec({\Pi}_i^2), \label{17021008}
\end{multline}
where $\varepsilon_{i2}$ collects irrelevant terms
\begin{align}
\varepsilon_{i2}\deq &\big(\|\mathbf{g}_i\| -c_i\big) \big(G_{ii}\ntr \wt{B}G-(G_{ii}+T_i)\Upsilon\big)+\big(\|\mathbf{g}_i\| \mathring{T}_i-c_i T_i\big)\ntr \wt{B}G\nonumber\\
=& \big( \|\mathbf{g}_i\| ^2-1\big)G_{ii}\ntr \wt{B}G- \big(h_{ii}+\big( \|\mathbf{g}_i\| ^2-1\big)\big) (G_{ii}+T_i)\Upsilon\nonumber\\
&\qquad+\big(h_{ii}+\big( \|\mathbf{g}_i\| ^2-1\big)\big)T_i \ntr \wt{B}G+O_\prec\big(\frac{1}{N}\big)\,. \label{17021004}
\end{align}
From the estimates $|h_{ii}|\prec\frac{1}{\sqrt{N}}$, $\|\mathbf{g}_i\| =1+O_\prec(\frac{1}{\sqrt{N}})$,  (\ref{17020505}) and the observation that the tracial quantities are $O_\prec (1)$, we~see that
\begin{align}
\varepsilon_{i2}=O_\prec\big(\frac{1}{\sqrt{N}}\big)\,. \label{170729110}
\end{align}

Combining (\ref{17020532}), (\ref{17020531}), (\ref{17020540}) and (\ref{17021008}), we have 
\begin{align}
&\mathbb{E}[\mathfrak{m}_i^{(p,p)}] =-\mathbb{E}[(\mathring{S}_{i}+\varepsilon_{i1}) \ntr G \mathfrak{m}_i^{(p-1,p)}]+\mathbb{E}\big[\big(-G_{ii}\ntr \wt{B}G+(G_{ii}+T_{i})\Upsilon\big)\mathfrak{m}_i^{(p-1,p)}\big]\nonumber\\
&= \mathbb{E}\Big[ \Big(\mathring{T}_i-\frac{1}{\|\mathbf{g}_i\| }\frac{1}{N} \sum_k^{(i)}\frac{\partial (\mathbf{e}_k^*G\mathbf{e}_i)}{\partial g_{ik}}\Big)\ntr \wt{B}G \mathfrak{m}_i^{(p-1,p)}\Big]-\frac{1}{N} \sum_k^{(i)} \mathbb{E}\Big[ \frac{\partial \|\mathbf{g}_i\| ^{-1}}{\partial g_{ik}} \mathbf{e}_k^*\wt{B}^{\la i\ra} G\mathbf{e}_i\ntr G \mathfrak{m}_i^{(p-1,p)}\Big]\nonumber\\
& -\frac{1}{N} \sum_k^{(i)} \mathbb{E}\Big[ \frac{\mathbf{e}_k^*\wt{B}^{\la i\ra} G\mathbf{e}_i}{\|\mathbf{g}_i\| }  \frac{\partial \ntr G}{\partial g_{ik}}\mathfrak{m}_i^{(p-1,p)} \Big]- \frac{p-1}{N} \sum_k^{(i)} \mathbb{E}\Big[ \frac{\mathbf{e}_k^*\wt{B}^{\la i\ra} G\mathbf{e}_i}{\|\mathbf{g}_i\| }  \ntr G \frac{\partial  P_i}{\partial g_{ik}}\mathfrak{m}_i^{(p-2,p)}\Big]\nonumber\\
&-\frac{p}{N} \sum_k^{(i)} \mathbb{E} \Big[ \frac{\mathbf{e}_k^*\wt{B}^{\la i\ra} G\mathbf{e}_i}{\|\mathbf{g}_i\| }  \ntr G \frac{\partial \overline{ P_i}}{\partial g_{ik}}\mathfrak{m}_i^{(p-1,p-1)}\Big]+\mathbb{E}\Big[\Big(\varepsilon_{i1}\ntr G-\frac{1}{\|\mathbf{g}_i\| } \varepsilon_{i2}+O_\prec(\Pi_i^2) \Big) \mathfrak{m}_i^{(p-1,p)}\Big]. \label{17021023}
\end{align}
For the first term on the right of (\ref{17021023}), analogously to (\ref{17021011111}), applying (\ref{17021237}) to the $\mathring{T}_i$-term, we get
\begin{align}
 &\mathbb{E}\Big[\Big(\mathring{T}_i- \frac{1}{\|\mathbf{g}_i\| }\frac{1}{N} \sum_k^{(i)}\frac{\partial (\mathbf{e}_k^*G\mathbf{e}_i)}{\partial g_{ik}}\Big)\ntr \wt{B}G \mathfrak{m}_i^{(p-1,p)}\Big]\nonumber\\
 &=\frac{1}{N} \sum_k^{(i)} \mathbb{E}\Big[ \frac{1}{\|\mathbf{g}_i\| }\frac{\partial \ntr \wt{B}G}{\partial g_{ik}} \mathbf{e}_k^* G\mathbf{e}_i \ntr \wt{B}G \mathfrak{m}_i^{(p-1,p)}\Big] +\frac{1}{N} \sum_k^{(i)} \mathbb{E}\Big[ \frac{\partial \|\mathbf{g}_i\| ^{-1}}{\partial g_{ik}} \mathbf{e}_k^* G\mathbf{e}_i \ntr \wt{B}G \mathfrak{m}_i^{(p-1,p)}\Big]\nonumber\\
&\qquad+ \frac{p-1}{N} \sum_k^{(i)} \mathbb{E}\Big[ \frac{ \mathbf{e}_k^* G\mathbf{e}_i}{\|\mathbf{g}_i\| }  \frac{\partial  P_i}{\partial g_{ik}}\ntr \wt{B}G \mathfrak{m}_i^{(p-2,p)}\Big]+ \frac{p}{N} \sum_k^{(i)} \mathbb{E} \Big[ \frac{\mathbf{e}_k^* G\mathbf{e}_i}{\|\mathbf{g}_i\| }   \frac{\partial \overline{ P_i}}{\partial g_{ik}}\ntr \wt{B}G \mathfrak{m}_i^{(p-1,p-1)}\Big]. \label{17021024}
\end{align}

Recall the estimates of $\varepsilon_{i1}$ and $\varepsilon_{i2}$ in (\ref{17020534}) and (\ref{170729110}), respectively, which implies that $|\varepsilon_{i1}|\prec\Psi$ and $|\varepsilon_{i2}|\prec \Psi$.  Therefore, to show (\ref{17021020}), it suffices to estimate the four last terms in the right side of (\ref{17021023}), and all the terms on the right side of (\ref{17021024}). Then, in order  to show (\ref{17021021}), it suffices to estimate the last three  terms on the right side of  (\ref{17021011}).  All these terms can be estimated based on the following lemma.

\begin{lem} \label{lem.17021201} Suppose the assumptions in Proposition \ref{pro.17020310} hold. Set $X_i=I$ or $\wt{B}^{\la i\ra}$. Let $Q$ be any (possibly random) diagonal matrix satisfying $\|Q\|\prec 1$ and $X=I$ or $A$. We have the following estimates
\begin{align}
&\frac{1}{N} \sum_k^{(i)}  \frac{\partial \|\mathbf{g}_i\| ^{-1}}{\partial g_{ik}} \mathbf{e}_k^* X_i G\mathbf{e}_i=O_\prec(\frac{1}{N}), \qquad& &\frac{1}{N} \sum_k^{(i)} \mathbf{e}_i^*X \frac{\partial G}{\partial g_{ik}} \mathbf{e}_i\mathbf{e}_k^* X_i G\mathbf{e}_i=O_\prec({\Pi}_i^2), \nonumber\\
& \frac{1}{N}  \sum_k^{(i)} \frac{\partial T_{i}}{ \partial g_{ik}} \mathbf{e}_k^* X_i G\mathbf{e}_i= O_\prec({\Pi}_i^2),\qquad& &\frac{1}{N}  \sum_k^{(i)} \ntr \Big(Q X\frac{\partial G}{\partial g_{ik}}\Big) \mathbf{e}_k^* X_i G\mathbf{e}_i=O_\prec\big(\Psi^2\Pi_i^2\big),\nonumber\\
&\frac{1}{N}  \sum_k^{(i)} \ntr \Big(Q X\frac{\partial G}{\partial g_{ik}}\Big) \mathbf{e}_k^* X_i \mathring{\mathbf{g}}_i=O_\prec\big(\Psi^2\Pi_i^2\big).  \label{17021202}
\end{align}
In addition, the same estimates hold if we replace $\frac{\partial G}{\partial g_{ik}}$ and $\frac{\partial T_i}{\partial g_{ik}}$ by their complex conjugates $\frac{\partial \overline{G}}{\partial g_{ik}}$ and $\frac{\partial \overline{T}_i}{\partial g_{ik}}$ in the last four equations above.
\end{lem}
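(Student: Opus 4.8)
The plan is to establish Lemma~\ref{lem.17021201} by feeding the explicit derivative formula \eqref{17071801} for $\partial G/\partial g_{ik}$ (together with the analogue $\partial T_i/\partial g_{ik}=-\tfrac{\bar g_{ik}}{2\|\mathbf{g}_i\|^2}T_i+\mathbf{h}_i^*\tfrac{\partial G}{\partial g_{ik}}\mathbf{e}_i$, which reduces the $\partial T_i/\partial g_{ik}$ estimate to the $\partial G/\partial g_{ik}$ ones, and the corresponding formulas for the complex conjugates) into each of the five sums, then performing the summation over $k\in\llbracket1,N\rrbracket\setminus\{i\}$ via $\sum_k^{(i)}\mathbf{e}_k\mathbf{e}_k^*=I-\mathbf{e}_i\mathbf{e}_i^*$. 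This collapses every column sum into a bounded matrix product plus a rank-one remainder carried by the $\mathbf{e}_i\mathbf{e}_i^*$ term, and the latter is always of the same order or smaller than the main term since it is a single product of entries. The resulting bilinear forms are then bounded by Cauchy--Schwarz combined with the Ward identities $GG^*=\eta^{-1}\Im G$, $\mathbf{h}_i^*G\mathbf{h}_i=\mathcal{G}_{ii}$ (see \eqref{170726140}) and $\mathbf{h}_i^*GG^*\mathbf{h}_i=\eta^{-1}\Im\mathcal{G}_{ii}$, the a priori bounds \eqref{17020505}, \eqref{170726100}, the fluctuation bounds \eqref{17021540}, the inequality $\Im G_{ii},\Im\mathcal{G}_{ii}\gtrsim\eta$, and the identities \eqref{170726105} and \eqref{17020508}, which let one re-express the row/column vectors appearing in \eqref{17071801} (such as $\mathbf{w}^*\wt{B}^{\la i\ra}R_iG=-(b_i\mathbf{h}_i^*+\mathbf{e}_i^*\wt{B})G$ and $\mathbf{e}_i^*\wt{B}G=\mathbf{e}_i^*-(a_i-z)\mathbf{e}_i^*G$) so that each resolvent sits next to a localised vector $\mathbf{e}_i$ or $\mathbf{h}_i$. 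The terms containing the remainder $\Delta_G(i,k)$ from \eqref{17022801} are irrelevant and deferred to Appendix~\ref{appendix B}. The whole computation is a refinement of the one underlying Lemma~7.3 of \cite{BES16b}; the new point is to track the sharper control parameter $\Pi_i$ in place of $\Psi$.

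The first estimate is immediate: $\partial\|\mathbf{g}_i\|^{-1}/\partial g_{ik}=-\tfrac12\|\mathbf{g}_i\|^{-3}\bar g_{ik}$, so the sum equals $-\tfrac{1}{2N}\|\mathbf{g}_i\|^{-3}\mathring{\mathbf{g}}_i^*X_iG\mathbf{e}_i$, which is $N^{-1}$ times a quantity of the form $\mathring S_i$ or $\mathring T_i$ and hence $O_\prec(N^{-1})$ by \eqref{17020505}, \eqref{170726100}. For the second and third estimates, inserting \eqref{17071801}, the typical term is $\tfrac{c_i}{N}\sum_k^{(i)}(\,\cdot\,G)_{ik}(X_iG)_{ki}$ times a bounded scalar (e.g. $b_iT_i+(\wt{B}G)_{ii}=O_\prec(1)$), and Cauchy--Schwarz in $k$ bounds it by $\tfrac{C}{N}(\sum_k|(\,\cdot\,G)_{ik}|^2)^{1/2}(\sum_k|(X_iG)_{ki}|^2)^{1/2}$; using that $X=I$ or $A$ is diagonal so $X\mathbf{e}_i=a_i\mathbf{e}_i$, that $\wt{B}^{\la i\ra}$ and $X_i$ are bounded, and the Ward identities, each factor under the square root is $\lesssim\eta^{-1}\Im(G_{ii}+\mathcal{G}_{ii})$ (the $\mathbf{h}_i$ inside \eqref{17071801} producing $\Im\mathcal{G}_{ii}$, the $\mathbf{e}_i$ producing $\Im G_{ii}$). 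Hence the bound is $\tfrac{C}{N}\eta^{-1}\Im(G_{ii}+\mathcal{G}_{ii})=O_\prec(\Pi_i^2)$ by \eqref{17020550}.

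The fourth and fifth estimates carry an extra factor $N^{-1}$ from the normalised trace. Writing each rank-one piece of $\frac{\partial G}{\partial g_{ik}}$ from \eqref{17071801}, taking the trace against $QX$, and collapsing $\sum_k^{(i)}$, the fourth sum becomes $\tfrac{c_i}{N^2}\mathbf{w}^*\wt{B}^{\la i\ra}R_iG\,QXG\,X_iG\mathbf{e}_i$ (and a symmetric partner from the other piece) up to lower order; Cauchy--Schwarz then gives $\tfrac{C}{N^2}\|\mathbf{w}^*\wt{B}^{\la i\ra}R_iG\|\,\|QXGX_iG\mathbf{e}_i\|\lesssim\tfrac{C}{N^2}\,\eta^{-1/2}(\Im(G_{ii}+\mathcal{G}_{ii}))^{1/2}\cdot\eta^{-3/2}(\Im G_{ii})^{1/2}=O_\prec(\Psi^2\Pi_i^2)$, the first factor being controlled via the identities above and the second via $\|G\|\le\eta^{-1}$. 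For the fifth sum the weight $\mathbf{e}_k^*X_i\mathring{\mathbf{g}}_i$ replaces $(X_iG)_{ki}$; the collapsed form is $\tfrac{c_i}{N^2}\mathbf{w}^*\wt{B}^{\la i\ra}R_iG\,QXG\,X_i\mathring{\mathbf{g}}_i$, and since $\|GX_i\mathring{\mathbf{g}}_i\|\le\eta^{-1}\|X_i\mathring{\mathbf{g}}_i\|\lesssim\eta^{-1}$ while the first factor is again $\lesssim\eta^{-1/2}(\Im(G_{ii}+\mathcal{G}_{ii}))^{1/2}$, Cauchy--Schwarz yields $\tfrac{C}{N^2}\eta^{-3/2}(\Im(G_{ii}+\mathcal{G}_{ii}))^{1/2}$, which is $\le\Psi^2\Pi_i^2$ because $\Im(G_{ii}+\mathcal{G}_{ii})\gtrsim\eta$. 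The complex-conjugate versions are identical since the relevant operator norms ($\|G^*\mathbf{e}_i\|$, $\|G^*\mathbf{h}_i\|$, $\|G\|$) are unchanged under conjugation.

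The main obstacle is organisational rather than conceptual: one must enumerate the many terms produced by \eqref{17071801} and its $T_i$- and conjugate-analogues, and verify in each that the two powers of $(N\eta)^{-1}$ in $\Psi^2\Pi_i^2$ arise from the correct pairing of a combinatorial $N^{-1}$ (from a column sum or from the normalised trace) with a Ward factor $\eta^{-1}$. The two structural facts that make this pairing possible are that $X=A$ is diagonal, so it preserves the localisation of $\mathbf{e}_i$ and never costs a factor of $\|G\|$, and that all the $\mathbf{h}_i$-dependent corrections ($h_{ii}$, $\|\mathbf{g}_i\|^2-1$, $\ell_i^2-1$, $\mathbf{h}_i^*\wt{B}^{\la i\ra}\mathbf{h}_i$) are $O_\prec(N^{-1/2})$ by \eqref{17021540} and hence only generate admissible error terms.
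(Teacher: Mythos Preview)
Your proposal is correct and follows essentially the same approach as the paper's proof in Appendix~\ref{appendix B}: insert the derivative formula~\eqref{17071801}, collapse the $k$-sum via $\sum_k^{(i)}\mathbf{e}_k\mathbf{e}_k^*=I^{\la i\ra}$, reduce the scalar factors $(\mathbf{e}_i+\mathbf{h}_i)^*\wt{B}^{\la i\ra}R_iG\mathbf{e}_i$ and $(\mathbf{e}_i+\mathbf{h}_i)^*G\mathbf{e}_i$ to $O_\prec(1)$ via~\eqref{170726105} and~\eqref{17020505}, bound the remaining bilinear forms by Cauchy--Schwarz and the Ward identity, and defer the $\Delta_G$ contributions to Lemma~\ref{lem. estimate for delta terms}. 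The only cosmetic difference is that for the second estimate the paper obtains the slightly sharper $\Im G_{ii}/\eta$ (since the $\mathbf{h}_i$ sits only in the bounded scalar there), whereas you write $\Im(G_{ii}+\mathcal{G}_{ii})/\eta$; both are $O_\prec(\Pi_i^2)$.
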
  
The proof of Lemma \ref{lem.17021201} is postponed to Appendix \ref{appendix B}.  We also remark here that last equation in (\ref{17021202}) will not be used in the remaining proof of Lemma \ref{lem.17021230}, but it will be used in Section \ref{s. rough bound}.
With the aid of Lemma \ref{lem.17021201}, the remaining proof of Lemma \ref{lem.17021230}  is the same as the counterpart to the proof of Lemma 7.3 in \cite{BES16b}. The only difference is that we use the improved bounds in  Lemma \ref{lem.17021201} instead of those in Lemma 7.4 in \cite{BES16b}. Specifically, the estimates for the second term of (\ref{17021011}), the second term of (\ref{17021023}), and the second term of (\ref{17021024}) follow from the first equation in (\ref{17021202}). 
The third term of (\ref{17021023}) and the first term of (\ref{17021024}) can be estimated by the fourth  equation in (\ref{17021202}), after writing $\ntr \wt{B}G= 1-\ntr (A-z)G$.  All the other terms have $\frac{\partial K_i}{\partial g_{ik}}$ and $\frac{\partial P_i}{\partial g_{ik}}$ or their complex conjugate  involved. Recall the definitions in (\ref{17011301}) and (\ref{17072420}), and also the first equation in (\ref{17020508}). Then, by the chain rule, we see that all terms in (\ref{17021011}), (\ref{17021023}) and (\ref{17021024}),  with $\frac{\partial K_i}{\partial g_{ik}}$ and $\frac{\partial P_i}{\partial g_{ik}}$ or their complex conjugate counterparts  involved,  can be estimated by combining the second to the fourth equations in (\ref{17021202}). 
This completes the proof of Lemma \ref{lem.17021230}.
 \end{proof}
 With Lemma \ref{lem.17021230} , we can complete the proof of Proposition \ref{pro.17020310}.
 The proof is nearly the same as that for Theorem 7.2 in \cite{BES16b}. For the convenience of the reader, we sketch it below.  
 
 Fix $z\in \mathcal{D}_\tau(\eta_{\rm m},\eta_\mathrm{M})$. Using Young's inequality, we obtain from (\ref{17021020}) that for any given (small) $\varepsilon>0$,
 \begin{align*}
\mathbb{E}\big[\mathfrak{m}_i^{(p,p)}(z) \big]\leq \frac{1}{3} \frac{1}{2p} N^{2p\varepsilon}\Psi^{2p}+3\frac{2p-1}{2p} N^{-\frac{2p\varepsilon}{2p-1}} \mathbb{E} \big[\mathfrak{m}_i^{(p,p)}(z)\big]\,.  
 \end{align*}
 Since $\varepsilon>0$ was arbitrary, this implies the first bound in (\ref{17072410}). The second one then follows from (\ref{17021021}) in the same manner.  By Markov's inequality, we get  (\ref{17020301}). 
 
 Next, we show how (\ref{17020302}) and (\ref{17020303}) follow from (\ref{17020301}) and the assumption (\ref{17020501}). To this end, we first prove the following crude bound 
 \begin{align}
 \Lambda_T(z)\prec N^{-\frac{\gamma}{4}}\,.  \label{17072441}
 \end{align}
 From the definition in (\ref{17072420}), we can rewrite the second estimate in (\ref{17020301}) as
 \begin{align}
(1+b_i\ntr G-\ntr (\wt{B}G))T_{i}= G_{ii}\ntr (\wt{B}G)-(\wt{B}G)_{ii} \ntr G+O_\prec(\Psi)\,.  \label{17072432}
 \end{align}
  Using the identity
  \begin{align}
  (\wt{B}G)_{ii}=1-(a_i-z)G_{ii}(z)\,, \label{170725131}
  \end{align}
  and approximate $G_{ii}$ by $(a_i-\omega_B)^{-1}$,  we get from (\ref{17020501}) and (\ref{17020502}) that
 \begin{align}
 (\wt{B}G)_{ii}=\frac{z-\omega_B}{a_i-\omega_B}+O_\prec(N^{-\frac{\gamma}{4}})\,. \label{17072431}
 \end{align}
 We also recall the estimates of the tracial quantities in (\ref{17020535}) under the assumption (\ref{17020501}). Plugging (\ref{17072431}), (\ref{17020535}) and the first bound in the assumption (\ref{17020501}) into (\ref{17072432}), we~get
 \begin{align}
 \big(1+(b_i-z+\omega_B) m_{\mu_A\boxplus \mu_B}+O_\prec(N^{-\frac{\gamma}{4}})\big)T_{i}= O_\prec(N^{-\frac{\gamma}{4}})+O_\prec(\Psi)=O_\prec(N^{-\frac{\gamma}{4}})\,, \label{17072440}
 \end{align}
 where in the last step we used that  $\Psi\leq N^{-\frac{\gamma}{2}}$ for all $\eta\geq \eta_{\rm m}$. From the second line in (\ref{170730100}), we note~that 
 \begin{align*}
 1+(b_i-z+\omega_B) m_{\mu_A\boxplus \mu_B}=m_{\mu_A\boxplus \mu_B}\Big(\frac{1}{m_{\mu_A\boxplus \mu_B}}+b_i-z+\omega_B\Big)=  m_{\mu_A\boxplus \mu_B}(b_i-\omega_A)\,. 
 \end{align*}
 Using (\ref{17020502}) and $\|A\|, \|B\|\leq C$, we get $|m_{\mu_A\boxplus \mu_B}(b_i-\omega_A)|\gtrsim 1$.  This together with (\ref{17072440}) implies  (\ref{17072441}). 
 
To prove (\ref{17020302}), we recall the definition of $P_i$ in (\ref{17011301}), which implies that
 \begin{align}
 \frac{1}{N}\sum_{i=1}^N (G_{ii}+T_i) \Upsilon=\frac{1}{N}\sum_{i=1}^N P_i= O_\prec(\Psi)\,. \label{17072447}  
 \end{align} 
Using the facts $\frac{1}{N}\sum_{i=1}^NG_{ii}=m_{\mu_A\boxplus\mu_B}+O_{\prec}(N^{-\frac{\gamma}{4}})$ (\cf (\ref{17020535})), and $\frac{1}{N}\sum_{i=1}^N T_i=O_\prec(N^{-\frac{\gamma}{4}})$,   and also $|m_{\mu_A\boxplus\mu_B}|\gtrsim 1$, we get  (\ref{17020302}) from (\ref{17072447}). 

Then, combining (\ref{17020302}) with the first estimate in (\ref{17020301}), we get 
\begin{align}
(\wt{B}G)_{ii}\ntr G-G_{ii}\ntr \wt{B}G=O_\prec(\Psi)\,.  \label{17072450}
\end{align}
Applying the identity (\ref{170725131}) and the definition of $\omega_B^c$, we can rewrite (\ref{17072450}) as 
\begin{align*}
\big((a_i-\omega_B^c)G_{ii}-1\big) \ntr G=O_\prec(\Psi)\,.
\end{align*}
As shown above that $|\ntr G|\gtrsim 1$ with high probability under the assumption (\ref{17020501}), we get  
$(a_i-\omega_B^c)G_{ii}-1=O_\prec(\Psi)$. By (\ref{170725110}) and (\ref{17020502}), we also note that $|a_i-\omega_B^c|\gtrsim 1$ with high probability.  This  further implies the first estimate in  (\ref{17020303}). 

Finally, plugging (\ref{17072450}) back to (\ref{17072432}), we can improve the right hand side of  (\ref{17072440}) to $O_\prec(\Psi)$. Then the second estimate in (\ref{17020303}) follows.  This completes the proof of Proposition \ref{pro.17020310}.  
 \end{proof}

\section{Rough fluctuation averaging for general linear combinations} \label{s. rough bound} 
In this section, we prove a rough fluctuation averaging estimate for the basic quantities $Q_i$ defined in  (\ref{17021701}). 
From (\ref{17072450}), we see that 
\begin{align}
 |Q_i(z)|\prec \Psi\,,\qquad\qquad i\in\llbracket 1,N\rrbracket\,,\qquad z \in \mathcal{D}_\tau(\eta_{\rm m},\eta_\mathrm{M})\,,\label{17021737}
\end{align}
if the assumptions  of Proposition \ref{pro.17020310} hold.

Recall the definition of the control parameters $\Pi$ and $\Pi_i$ in~\eqref{17012101} and~\eqref{17020550}, respectively. The following proposition states that the average of the $Q_i$'s is typically smaller than an individual $Q_i$. 
\begin{pro} \label{lem. rough fluctuation averaging} Fix a $z\in \mathcal{D}_\tau(\eta_{\rm m},\eta_\mathrm{M})$. Suppose that the assumptions  of Proposition \ref{pro.17020310} hold. Set $X_i=I$ or $\wt{B}^{\la i\ra}$.   Let $d_1, \ldots, d_N\in \mathbb{C}$ be possibly $H$-dependent quantities satisfying $\max_j|d_j|\prec 1$. Assume that they depend only weakly on the randomness in the sense that the following hold, for all $i,j\in \llbracket 1, N\rrbracket$,
\begin{align}
 \frac{1}{N}  \sum_k^{(i)} \frac{\partial d_j}{\partial g_{ik}} \mathbf{e}_k^* X_i G\mathbf{e}_i=O_\prec\big(\Psi^2\Pi_i^2\big)\,,\qquad\qquad  \frac{1}{N} \sum_k^{(i)} \frac{\partial d_j}{\partial g_{ik}} \mathbf{e}_k^* X_i \mathring{\mathbf{g}}_i=O_\prec\big(\Psi^2\Pi_i^2\big)\,, 
\label{17022530}
\end{align}
and the same bounds hold when the $d_j$'s are replaced by their complex conjugates $\overline{d_j}$.
Suppose that $\Pi(z)\prec \hat{\Pi}(z)$ for some deterministic and positive function $\hat{\Pi}(z)$ that satisfies ${\frac{1}{\sqrt{N\sqrt{\eta}}}}+\Psi^2\prec \hat{\Pi}\prec \Psi$.
Then,
\begin{align}
\Big|\frac{1}{N} \sum_{i=1}^N d_i  Q_i\Big | \prec \Psi \hat{\Pi}\,. \label{170723113}
\end{align}
\end{pro}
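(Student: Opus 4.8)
The plan is to follow the high-moment route used for the entrywise bounds in Section~\ref{s. Entrywise estimate}, but now applied directly to the averaged quantity $\Xi\deq\frac1N\sum_i d_i Q_i$. Fix an integer $p\ge 1$ and consider $\mathbb{E}\big[\,|\Xi|^{2p}\,\big]=\mathbb{E}\big[\Xi\,\Xi^{p-1}\overline{\Xi}^{\,p}\big]$. Using $Q_i=(\wt B G)_{ii}\ntr G-G_{ii}\ntr\wt BG$ together with the representation $(\wt BG)_{ii}=-\mathring S_i+\varepsilon_{i1}$ from~\eqref{17020531}, we expand $\Xi$ so that each term carries a factor $\mathring S_i$ (or the irrelevant $\varepsilon_{i1}$), and the $\mathring S_i$-term is of the form $\frac1N\sum_i d_i\ntr G\sum_k^{(i)}\bar g_{ik}\,\tfrac1{\|\mathbf g_i\|}\mathbf e_k^*\wt B^{\la i\ra}G\mathbf e_i$. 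Apply the complex Gaussian integration-by-parts identity~\eqref{17021237} in the variable $g_{ik}$. The derivative $\partial_{g_{ik}}$ can hit: (a) the explicit $\mathbf e_k^*\wt B^{\la i\ra}G\mathbf e_i$ factor, (b) the prefactor $\|\mathbf g_i\|^{-1}$, (c) the tracial factor $\ntr G$, (d) the weight $d_i$, or (e) one of the $p-1$ remaining copies of $\Xi$ (or $p$ copies of $\overline\Xi$) — the ``cross-term''.

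For the self-energy contribution (a), the same algebra as in~\eqref{17020551}--\eqref{17021008} (with $Q_i$ in place of $P_i$) shows that the leading part reconstructs $-\|\mathbf g_i\|\,Q_i$ up to an error $O_\prec(\Pi_i^2)$ plus the irrelevant pieces $\varepsilon_{i1},\varepsilon_{i2}$; after averaging with $d_i$ and using $\|\mathbf g_i\|=1+O_\prec(N^{-1/2})$ and $\max_i\Pi_i\prec\Psi$, this produces the recursive structure $\mathbb{E}[O_\prec(\hat\Pi\Psi)\,\Xi^{p-1}\overline\Xi^{\,p}]+\cdots$, where the improvement from $\Psi$ to $\hat\Pi\Psi$ comes from the extra average over $i$ of the $\Pi_i^2$-type errors (here one uses $\frac1N\sum_i\Pi_i^2=\frac{\Im m_H}{N\eta}=\Pi^2\prec\hat\Pi^2$, together with the hypothesis $\frac1{\sqrt{N\sqrt\eta}}\prec\hat\Pi\prec\Psi$). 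Terms (b), (c), (d) are handled by Lemma~\ref{lem.17021201} and hypothesis~\eqref{17022530}, which were precisely designed to give $O_\prec(\Psi^2\Pi_i^2)$ after the inner sum over $k$, hence $O_\prec(\Psi^2\hat\Pi^2)$ after the average over $i$ — an acceptable negligible error. The one genuinely new contribution is the cross-term (e): when $\partial_{g_{ik}}$ hits another factor $\Xi=\frac1N\sum_j d_j Q_j$, the chain rule brings down $\partial_{g_{ik}}Q_j$, and summing over $k$ and over $i$ yields, after the computations analogous to Lemma~\ref{lem.17021201}, a bound of size $\Pi^2$ times the lower-order moment — schematically $\mathbb{E}[O_\prec(\hat\Pi^2)\,\Xi^{p-2}\overline\Xi^{\,p}]+\mathbb{E}[O_\prec(\hat\Pi^2)\,\Xi^{p-1}\overline\Xi^{\,p-1}]$.

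Collecting all contributions gives a recursive moment inequality of the form
\begin{align}\label{rec-rough}
\mathbb{E}\big[|\Xi|^{2p}\big]\le \mathbb{E}\big[O_\prec(\hat\Pi\Psi)\,|\Xi|^{2p-1}\big]+\mathbb{E}\big[O_\prec(\hat\Pi^2\Psi^2+\hat\Pi^4)\,|\Xi|^{2p-2}\big]+\mathbb{E}\big[O_\prec(\hat\Pi^2)\,|\Xi|^{2p-2}\big]\,,
\end{align}
and since $\hat\Pi\le\Psi$ the dominant scale on the right is $\hat\Pi\Psi$ for the first term and $(\hat\Pi\Psi)^2$ for the rest (using $\hat\Pi^2\le\hat\Pi\Psi$ only when multiplied by the already-small $\Psi^2$, and noting $\hat\Pi^4\le\hat\Pi^2\Psi^2$). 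Then Young's inequality applied to~\eqref{rec-rough}, exactly as in the conclusion of the proof of Proposition~\ref{pro.17020310}, gives $\mathbb{E}[|\Xi|^{2p}]\prec(\hat\Pi\Psi)^{2p}$ for every $p$, and Markov's inequality yields $\Xi\prec\hat\Pi\Psi$, which is~\eqref{170723113} once we recall $\Pi\prec\hat\Pi$. The main obstacle is the bookkeeping for the cross-term (e): one must verify that hitting a second copy of $\Xi$ does \emph{not} cost a full power of $\Psi$ but only $\hat\Pi$ (so that the net effect is $\hat\Pi^2$, not $\hat\Pi\Psi\cdot\Psi$), which relies on the fact that $\partial_{g_{ik}}Q_j$ is itself a ``fluctuating'' quantity whose $k$-sum is controlled by the last three estimates of Lemma~\ref{lem.17021201} — this is exactly the point where the Ward-identity gain $\sum_i\Im G_{ii}=\eta^{-1}\Im\ntr G\cdot N$ converts $\Psi^2$ into $\Pi^2$. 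A secondary subtlety is that $d_i$ and $\hat\Pi$ may depend on the randomness through $H$, so the stochastic-domination bookkeeping must be done with the convention (fixed before Proposition~\ref{pro.17020310}) that $O_\prec$ errors also satisfy the corresponding moment bounds; the hypotheses~\eqref{17022530} are stated in exactly the form needed for this to go through.
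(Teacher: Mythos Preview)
There is a genuine gap in your treatment of the ``self-energy'' contribution (a). The algebra in \eqref{17020551}--\eqref{17021008} does \emph{not} reconstruct $Q_i$ up to $O_\prec(\Pi_i^2)$; those formulas were engineered so that, together with the counter-term $(G_{ii}+T_i)\Upsilon$ built into $P_i$, the leading piece cancels. If you run the same computation for $Q_i=(\wt BG)_{ii}\ntr G-G_{ii}\ntr\wt BG$, then after integrating by parts on $\mathring S_i$ and combining with the explicit $-G_{ii}\ntr\wt BG$, the leftover is $-(G_{ii}+T_i)\Upsilon$ (in addition to the $\mathring T_i$--piece and the genuine $O_\prec(\Pi_i^2)$ errors). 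Averaged against $d_i$ this is of order $|\Upsilon|$, and from the entrywise stage you only have $|\Upsilon|\prec\Psi$, which is far too big for your claimed first coefficient $\hat\Pi\Psi$. Averaging over $i$ does not help here, because $G_{ii}\sim 1$.

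The paper repairs this in two steps you omitted. First, it rewrites $\frac1N\sum_i d_i Q_i=\frac1N\sum_i(\wt BG)_{ii}\,\ntr G\,\tau_{i1}$ with $\tau_{i1}=d_i-\ntr DG/\ntr G$; after the same integration by parts the $\Upsilon$-contribution becomes $\frac1N\sum_i(G_{ii}+T_i)\tau_{i1}\Upsilon$, and crucially $\sum_i G_{ii}\tau_{i1}=0$, leaving only $\frac1N\sum_i T_i\tau_{i1}\Upsilon=O_\prec(\Psi\hat\Upsilon)$ for any deterministic bound $|\Upsilon|\prec\hat\Upsilon$. This extra factor of $\Psi$ is precisely what your argument misses. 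Second, a bootstrap: taking $d_i=a_i$ gives $\Upsilon=-\frac1N\sum_i a_i Q_i$, so the recursion yields $|\Upsilon|\prec\Psi\hat\Upsilon+\Psi\hat\Pi$, and iterating drives $\hat\Upsilon$ down to $\Psi\hat\Pi$; only then does the recursion close with first coefficient $O_\prec(\hat\Pi^2)$. A secondary point: your cross-term (e) is actually $O_\prec(\Psi^2\hat\Pi^2)$ by \eqref{17021303}, not merely $O_\prec(\hat\Pi^2)$; with the weaker bound you wrote, Young's inequality would only give $|\Xi|\prec\hat\Pi$, so your displayed recursion would not close to $\hat\Pi\Psi$ even if the $\Upsilon$ issue were absent.
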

We remark that whenever the $d_j$'s are deterministic, (\ref{17022530}) trivially holds. However, we will also need~(\ref{170723113}) with certain random $d_j$'s that satisfy (\ref{17022530}).

  For any $d_i$'s satisfying the assumption in Proposition \ref{lem. rough fluctuation averaging}, we introduce the notation 
\begin{align}
\mathfrak{m}^{(k,l)}\deq\Big(\frac{1}{N}\sum_{i=1}^N   d_i  Q_i\Big)^k\Big(\frac{1}{N}\sum_{i=1}^N   \overline{d_i}\; \overline{ Q_i}\Big)^l\,,\qquad\qquad k,l\in\N\,. \label{17071810}
\end{align}
Similarly to Lemma \ref{lem.17021230}, it suffices to prove the following recursive moment estimate.
\begin{lem} \label{lem.17021231} Fix a $z\in \mathcal{D}_\tau(\eta_{\rm m},\eta_\mathrm{M})$. Suppose that the assumptions  of Proposition \ref{lem. rough fluctuation averaging} hold. Then, for any fixed integer $p\geq 1$, we have
\begin{align}
\mathbb{E}\big[ \mathfrak{m}^{(p,p)}\big]=\mathbb{E}\big[O_\prec(\hat{\Pi}^2)\mathfrak{m}^{(p-1,p)}\big]+\mathbb{E}\big[O_\prec(\Psi^2\hat{\Pi}^2) \mathfrak{m}^{(p-2,p)}\big]+\mathbb{E}\big[O_\prec(\Psi^2\hat{\Pi}^2) \mathfrak{m}^{(p-1,p-1)}\big]. \label{17071833}
\end{align}
\end{lem}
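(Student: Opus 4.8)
The plan is to prove Lemma~\ref{lem.17021231} by the same integration-by-parts-and-cumulant-expansion scheme already used for Lemma~\ref{lem.17020520}, but now applied to the averaged quantity $\frac{1}{N}\sum_i d_iQ_i$ rather than to $P_i$ or $K_i$. I would start from $\mathbb{E}[\mathfrak{m}^{(p,p)}] = \mathbb{E}\big[\frac{1}{N}\sum_i d_iQ_i\,\mathfrak{m}^{(p-1,p)}\big]$ and substitute $Q_i = (\wt{B}G)_{ii}\ntr G - G_{ii}\ntr\wt{B}G$. Using $\mathbf{e}_i^*R_i = -\mathbf{h}_i^*$ as in~\eqref{17020531}, I rewrite $(\wt{B}G)_{ii} = -\mathring{S}_i + \varepsilon_{i1}$ with $|\varepsilon_{i1}|\prec N^{-1/2}$, which turns the main term into $\mathbb{E}\big[\frac{1}{N}\sum_i d_i(-\mathring{S}_i\ntr G - G_{ii}\ntr\wt{B}G)\,\mathfrak{m}^{(p-1,p)}\big]$ plus a negligible $\varepsilon_{i1}$-contribution. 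Expanding $\mathring{S}_i = \sum_k^{(i)}\bar g_{ik}\|\mathbf{g}_i\|^{-1}\mathbf{e}_k^*\wt{B}^{\la i\ra}G\mathbf{e}_i$ and applying the complex Gaussian integration by parts~\eqref{17021237} in $g_{ik}$, I generate: (a) terms where $\partial_{g_{ik}}$ hits $\mathbf{e}_k^*\wt{B}^{\la i\ra}G\mathbf{e}_i$ and $\ntr G$ — these should, together with the explicit $-G_{ii}\ntr\wt{B}G$ piece and the algebra around~\eqref{17020801}--\eqref{17021008}, collapse to give back a copy of $Q_i$ times a $\|\mathbf{g}_i\|$-factor, reproducing the self-consistent structure and yielding the leading $O_\prec(\hat\Pi^2)$ term; (b) terms where $\partial_{g_{ik}}$ hits $\|\mathbf{g}_i\|^{-1}$, controlled by the first estimate in~\eqref{17021202}; (c) cross-terms where $\partial_{g_{ik}}$ hits one of the remaining $p-1$ factors of $\frac{1}{N}\sum_j d_jQ_j$ (or its conjugate), producing $\mathfrak{m}^{(p-2,p)}$ and $\mathfrak{m}^{(p-1,p-1)}$; and (d) terms where $\partial_{g_{ik}}$ hits $d_j$, controlled by hypothesis~\eqref{17022530}.

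The key point, already flagged in the informal outline around~\eqref{17071833}, is that the apparently leading term of type (a) actually produces the \emph{optimal} bound $\Pi^2 \prec \hat\Pi^2$ (using $\Pi\prec\hat\Pi$), since it carries the gain $\Im m_H/(N\eta)$ from the averaging; this is where the improved estimates in Lemma~\ref{lem.17021201}, stated in terms of $\Pi_i^2$ rather than $\Psi^2$, are essential, together with the standard fluctuation-averaging bound for $\frac{1}{N}\sum_i d_i(\cdots)$. For the cross-terms of type (c): differentiating $\frac{1}{N}\sum_j d_jQ_j$ in $g_{ik}$ and resumming over $k$ gives, via the chain rule on $(\wt{B}G)_{jj}$, $G_{jj}$, $\ntr G$, $\ntr\wt{B}G$ and (through~\eqref{17020508}) on $(A-z)G$, a double sum over $i,j$ of derivatives of Green-function entries; these are estimated by the last three equations in~\eqref{17021202} (applied with $Q$ a suitable diagonal matrix built from $d_j$ and $\ntr$-quantities, using $\max_j|d_j|\prec 1$) and by the fluctuation-averaging cancellation, which together yield the $O_\prec(\Psi\hat\Pi)$ size of $\frac{1}{N}\sum_i d_iQ_i$ appearing as an extra factor — hence the $O_\prec(\Psi^2\hat\Pi^2)$ prefactor on $\mathfrak{m}^{(p-2,p)}$ and $\mathfrak{m}^{(p-1,p-1)}$. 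The $\varepsilon_{i1}$- and $\varepsilon_{i2}$-type remainders, the $\Delta_G(i,k)$-terms, and all $O_\prec(1/N)$ pieces contribute strictly smaller errors; the $\Delta_G$-terms specifically are deferred to Appendix~\ref{appendix B} exactly as in the proof of Lemma~\ref{lem.17020520}.

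Since the entire computation is structurally parallel to that of Lemma~\ref{lem.17020520} (and to Lemma~7.3 of~\cite{BES16b}), I would not reproduce every term but instead: set up the expansion, identify which of the five estimates in Lemma~\ref{lem.17021201} controls each category (b)--(d) of term, verify that hypothesis~\eqref{17022530} exactly covers the $\partial_{g_{ik}}d_j$ contributions, and carefully track that the leading type-(a) term is $O_\prec(\hat\Pi^2)$ by invoking $\Pi\prec\hat\Pi$ together with $\tfrac{1}{\sqrt{N\sqrt\eta}}\prec\hat\Pi\prec\Psi$, which also ensures all genuinely smaller errors ($N^{-1/2}$, $1/N$, $\Psi^2\Pi_i^2$ averaged) are absorbed into one of the three terms on the right of~\eqref{17071833}.

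I expect the main obstacle to be the bookkeeping of the cross-terms of type (c): one must show that when the derivative $\partial_{g_{ik}}$ lands on another factor $\frac{1}{N}\sum_j d_jQ_j$, the resulting double sum over $i$ and $j$ — which involves derivatives of $(\wt{B}G)_{jj}$, $G_{jj}$ and the traces with respect to $g_{ik}$ — still exhibits the full fluctuation-averaging gain \emph{in both} indices, so that its size is $O_\prec(\Psi^2\hat\Pi^2)$ and not merely $O_\prec(\Psi^2\Pi^2)$ coming from a single index. This is precisely the ``cross-term'' responsible for the weaker $\Psi\hat\Pi$ bound in Proposition~\ref{lem. rough fluctuation averaging} (as opposed to the optimal $\Pi^2$), and getting its order exactly right — using the last two lines of~\eqref{17021202} with the random diagonal matrix $Q$ and checking that the weak-dependence hypothesis~\eqref{17022530} is strong enough — is the delicate step. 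Everything else reduces to careful but routine adaptation of the bulk argument in~\cite{BES16b}.
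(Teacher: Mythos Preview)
Your overall plan---integration by parts on $\mathring S_i$ followed by the estimates of Lemma~\ref{lem.17021201}---is the right framework, and your treatment of the cross-terms (type~(c)) and of the $d_j$-derivatives (type~(d)) is essentially correct. However, there is a genuine gap in your handling of the type-(a) leading term.

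After you write $(\wt BG)_{ii}=-\mathring S_i+\varepsilon_{i1}$ and integrate by parts, the algebra in~\eqref{17021008} does \emph{not} ``collapse to give back a copy of $Q_i$''; it produces instead the residual $-(G_{ii}+T_i)\Upsilon$. Combining this with your explicit $-d_iG_{ii}\,\ntr\wt BG$ term, the leading contribution becomes $-\frac{1}{N}\sum_i d_i(G_{ii}+T_i)\Upsilon$, which is only $O_\prec(|\Upsilon|)=O_\prec(\Psi)$ a~priori (using~\eqref{17020302} and $G_{ii}=O_\prec(1)$)---far too large. The paper closes this gap with two devices you have not mentioned. First, it rewrites $\frac1N\sum_i d_iQ_i=\frac1N\sum_i(\wt BG)_{ii}\,\ntr G\,\tau_{i1}$ with the \emph{centered} weight $\tau_{i1}\deq d_i-\ntr DG/\ntr G$ (see~\eqref{17021232}); the crucial point is the exact identity $\sum_iG_{ii}\tau_{i1}=0$, so that after integration by parts the $G_{ii}\Upsilon$ piece vanishes identically and only $\frac1N\sum_iT_i\tau_{i1}\Upsilon=O_\prec(\Psi\hat\Upsilon)$ survives, where $\hat\Upsilon$ is any deterministic bound on $|\Upsilon|$. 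Second, even $\Psi\hat\Upsilon$ is not yet $\hat\Pi^2$: the paper first proves the weaker recursion~\eqref{17072903} carrying the extra $O_\prec(\Psi\hat\Upsilon)$ error, then bootstraps by specializing to $d_i=a_i$ (so that $\frac1N\sum_id_iQ_i=-\Upsilon$) and iterating the resulting self-improving bound $|\Upsilon|\prec N^{-\gamma/4}\hat\Upsilon+\Psi\hat\Pi$ until $\hat\Upsilon=\Psi\hat\Pi$; only then can the extra term $\Psi^2\hat\Pi$ be absorbed into $O_\prec(\hat\Pi^2)$. Without these two steps your type-(a) term is too large by at least a factor $\Psi/\hat\Pi$.

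A secondary point: the $\varepsilon_{i1}$- and $\varepsilon_{i2}$-remainders are only $O_\prec(N^{-1/2})$ termwise, which is not $\prec\hat\Pi^2$ in general; the paper treats their averages $\varepsilon_1,\varepsilon_2$ by a further round of integration by parts (Lemma~\ref{lem.17021301}, estimate~\eqref{17021311}), and you should not dismiss them as ``strictly smaller errors''.
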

\begin{proof}[Proof of Proposition \ref{lem. rough fluctuation averaging}]
Like the proof of (\ref{17020301}) from Lemma \ref{lem.17021230}, with Lemma \ref{lem.17021231}, we can get (\ref{170723113}) by applying Young's and Markov's inequalities. This completes the proof of Proposition~\ref{lem. rough fluctuation averaging}.
\end{proof}
\begin{proof}[Proof of Lemma \ref{lem.17021231}] We first claim that it suffices to prove the following statement: If $|\Upsilon(z)|\prec \hat{\Upsilon}(z)$ for any deterministic and positive function $\hat{\Upsilon}(z)\leq\Psi(z)$, then  
\begin{align}
\mathbb{E}\big[ \mathfrak{m}^{(p,p)}\big]=&\mathbb{E}\big[(O_\prec(\hat{\Pi}^2)+O_\prec(\Psi \hat{\Upsilon}))\mathfrak{m}^{(p-1,p)}\big]+\mathbb{E}\big[O_\prec(\Psi^2\hat{\Pi}^2) \mathfrak{m}^{(p-2,p)}\big]\nonumber\\
&\qquad+\mathbb{E}\big[O_\prec(\Psi^2\hat{\Pi}^2) \mathfrak{m}^{(p-1,p-1)}\big]\,. \label{17072903}
\end{align}
Indeed, the same as the proof of (\ref{17020301}) from Lemma \ref{lem.17021230},  we can  again apply Young's inequality and Markov's inequality to get, for any $d_i$'s satisfying the assumptions in Proposition \ref{lem. rough fluctuation averaging}, that~\eqref{17072903} implies
\begin{align}
\Big|\frac{1}{N}\sum_{i=1}^N d_i Q_i\Big|\prec \hat{\Pi}^2+\Psi \hat{\Upsilon}+ \Psi\hat{\Pi}\prec \Psi \hat{\Upsilon}+ \Psi\hat{\Pi}\,, \label{17072901}
\end{align}
where in the last step we used the assumption $\hat{\Pi}\prec\Psi$. 

Next, recall from (\ref{17011302}) that
\begin{align*}
\Upsilon=-\frac{1}{N}\sum_{i=1}^N a_i Q_i\,.
\end{align*}
Choosing $d_i=a_i$ for all $i$, we get from (\ref{17072901}) 
\begin{align}
|\Upsilon|\prec \Psi \hat{\Upsilon}+ \Psi\hat{\Pi}\prec N^{-\frac{\gamma}{4}} \hat{\Upsilon}+ \Psi\hat{\Pi}\,. \label{17072902}
\end{align}
Using the right hand side of (\ref{17072902}) as a new deterministic bound of  $\Upsilon$ instead of the initial $\hat{\Upsilon}$ in  (\ref{17072903}), and perform the above argument iteratively, we can finally get 
\begin{align}
|\Upsilon|\prec  \Psi\hat{\Pi}\,. \label{19030601}
\end{align}
Hence, at the end, we can choose $\hat{\Upsilon}= \Psi\hat{\Pi}$ in (\ref{17072903}) and get 
\begin{align}
\mathbb{E}\big[ \mathfrak{m}^{(p,p)}\big]=&\mathbb{E}\big[(O_\prec(\hat{\Pi}^2)+O_\prec(\Psi^2 \hat{\Pi}))\mathfrak{m}^{(p-1,p)}\big]+\mathbb{E}\big[O_\prec(\Psi^2\hat{\Pi}^2) \mathfrak{m}^{(p-2,p)}\big]\nonumber\\
&\qquad+\mathbb{E}\big[O_\prec(\Psi^2\hat{\Pi}^2) \mathfrak{m}^{(p-1,p-1)}\big].  \label{17072910}
\end{align}
 Observe that by the assumption ${\frac{1}{\sqrt{N\sqrt{\eta}}}}+\Psi^2\prec \hat{\Pi}$,  the $O_\prec(\Psi^2 \hat{\Pi})$ term can be absorbed in the $O_\prec(\hat{\Pi}^2)$ term in (\ref{17072910}). Hence, we conclude (\ref{17071833}) from (\ref{17072903}). Therefore, in the sequel, we will focus on proving~\eqref{17072903}. 

Denote by  $D\deq\text{diag}(d_i)_{i=1}^N$. 
We first write
\begin{align}
\frac{1}{N} \sum_{i=1}^N d_i  Q_i
= \frac{1}{N}\sum_{i=1}^N (\wt{B}G)_{ii} \big(d_i \ntr G- \ntr DG\big)=\frac{1}{N}\sum_{i=1}^N (\wt{B}G)_{ii}\ntr G \tau_{i1} , \label{17021232}
\end{align}
where we introduced the notation 
\begin{align}
\tau_{i1}\deq d_i-\frac{\ntr D G}{\ntr G}.  \label{17021305}
\end{align}

Similarly to the proof of (\ref{17020301}), we approximate $(\wt{B}G)_{ii}$ by $-\mathring{S}_i$ (\cf (\ref{17020531})), and then perform an integration by parts using~\eqref{17021237} with respect to $\mathring{\mathbf{g}}_i$ in $\mathring{S}_i$.   More specifically, we write
\begin{align}
\mathbb{E}\big[ \mathfrak{m}^{(p,p)}\big] &=\frac{1}{N}\sum_{i=1}^N  \mathbb{E}\Big[ (\wt{B}G)_{ii} \ntr G \tau_{i1}\mathfrak{m}^{(p-1,p)}\Big]\nonumber\\
&=-\frac{1}{N}\sum_{i=1}^N  \mathbb{E}\Big[ \mathring{S}_{i} \ntr G \tau_{i1}\mathfrak{m}^{(p-1,p)}\Big]+
\mathbb{E}\Big[ \varepsilon_1\mathfrak{m}^{(p-1,p)}\Big] \,, \label{17021240}
\end{align}
where we used the notation 
\begin{align}
\varepsilon_{1}\deq\frac{1}{N}\sum_{i=1}^N   \varepsilon_{i1} \ntr G \tau_{i1}. \label{17021320}
\end{align}
Here $\varepsilon_{i1}$ is defined in (\ref{17071805}). To ease the presentation, we further introduce the notation
\begin{align}
 \tau_{i2}\deq-\tau_{i1} \ntr \wt{B}G.  \label{1702130512}
\end{align}
 Using assumption (\ref{17020501}), (\ref{17020535}), and also~\eqref{17020502}, one checks that $|\tau_{i1}|\prec 1$, $|\tau_{i2}|\prec 1$, for all $i\in \llbracket 1, N\rrbracket $.

Analogously to  (\ref{17020540}), applying~(\ref{17021237}) to the first term on the right hand side of (\ref{17021240}), we obtain
\begin{align}
\frac{1}{N}\sum_{i=1}^N  \mathbb{E}\Big[ \mathring{S}_{i} &\ntr G \tau_{i1}\mathfrak{m}^{(p-1,p)}\Big]=\frac{1}{N^2} \sum_{i=1}^N  \sum_k^{(i)} \mathbb{E}\Big[ \frac{1}{\|\mathbf{g}_i\| }\frac{\partial (\mathbf{e}_k^*\wt{B}^{\la i\ra} G\mathbf{e}_i)}{\partial g_{ik}}\ntr G \tau_{i1} \mathfrak{m}^{(p-1,p)}\Big]\nonumber\\
&\quad+ \frac{1}{N^2} \sum_{i=1}^N  \sum_k^{(i)} \mathbb{E}\Big[ \frac{\partial \|\mathbf{g}_i\| ^{-1}}{\partial g_{ik}} \mathbf{e}_k^*\wt{B}^{\la i\ra} G\mathbf{e}_i\ntr G \tau_{i1} \mathfrak{m}^{(p-1,p)}\Big]\nonumber\\
&\quad +\frac{1}{N^2} \sum_{i=1}^N   \sum_k^{(i)} \mathbb{E}\Big[ \frac{1}{\|\mathbf{g}_i\| } \mathbf{e}_k^*\wt{B}^{\la i\ra} G\mathbf{e}_i \frac{\partial (\ntr G \tau_{i1})}{\partial g_{ik}}\mathfrak{m}^{(p-1,p)} \Big]\nonumber\\
&\quad+ \frac{p-1}{N^2} \sum_{i=1}^N   \sum_k^{(i)} \mathbb{E}\Big[ \frac{1}{\|\mathbf{g}_i\| } \mathbf{e}_k^*\wt{B}^{\la i\ra} G\mathbf{e}_i \ntr G \tau_{i1} \Big(\frac{1}{N}\sum_{j=1}^N  \frac{\partial   (d_jQ_j)}{\partial g_{ik}}\Big)\mathfrak{m}^{(p-2,p)}\Big]\nonumber\\
&\quad+ \frac{p}{N^2} \sum_{i=1}^N   \sum_k^{(i)} \mathbb{E} \Big[ \frac{1}{\|\mathbf{g}_i\| } \mathbf{e}_k^*\wt{B}^{\la i\ra} G\mathbf{e}_i \ntr G \tau_{i1} \Big(\frac{1}{N}\sum_{j=1}^N   \frac{\partial (\overline{d_j}\overline{ Q_j})}{\partial g_{ik}}\Big)\mathfrak{m}_i^{(p-1,p-1)}\Big]. \label{17021250}
\end{align}
First, we estimate the first term on the right hand side of (\ref{17021250}). Using (\ref{17021008}) and the bound
\begin{align*}
\frac{1}{N}\sum_{i=1}^N \Pi_i^2\leq 2 \Pi^2,
\end{align*}
 we have
\begin{align*}
&\frac{1}{N^2} \sum_{i=1}^N  \sum_k^{(i)} \frac{1}{\|\mathbf{g}_i\| }\frac{\partial (\mathbf{e}_k^*\wt{B}^{\la i\ra} G\mathbf{e}_i)}{\partial g_{ik}}\ntr G \tau_{i1}=-\frac{1}{N} \sum_{i=1}^N   \big( G_{ii}\ntr \wt{B}G-(G_{ii}+T_i)\Upsilon\big) \tau_{i1}\nonumber\\
 &\qquad\qquad \qquad\qquad+\frac{1}{N^2} \sum_{i=1}^N  \sum_k^{(i)} \Big(\mathring{T}_i-\frac{1}{\|\mathbf{g}_i\| }\frac{\partial (\mathbf{e}_k^* G\mathbf{e}_i)}{\partial g_{ik}} \Big) \tau_{i2}+\varepsilon_{2}+O_\prec(\Pi^2)\,,
 \end{align*}
 where we have introduced
 \begin{align}
 \varepsilon_{2}\deq\frac{1}{N} \sum_{i=1}^N  \frac{1}{\|\mathbf{g}_i\| } \tau_{i1}\varepsilon_{i2}\,; \label{17021310}
 \end{align}
 see~(\ref{17021004}) for the definition of $\varepsilon_{i2}$. 
 According to the definition in (\ref{17021305}), we observe that 
 \begin{align}
&\frac{1}{N} \sum_{i=1}^N   \big( G_{ii}\ntr \wt{B}G-(G_{ii}+T_i)\Upsilon\big)\tau_{i1}=\frac{1}{N^2} \sum_{i=1}^N   G_{ii} \tau_{i1}\big(\ntr \wt{B}G-\Upsilon\big) - \frac{1}{N} \sum_{i=1}^N   T_i \tau_{i1} \Upsilon  =O_\prec(\Psi \hat{\Upsilon})\,. \label{19030922}
 \end{align}
 Here in the last step we used the facts 
 \begin{align}
 \sum_{i=1}^N   G_{ii} \tau_{i1}=0\,,\qquad\qquad \frac{1}{N}\sum_{i=1}^N    T_i \tau_{i1} \Upsilon =O_\prec(\Psi \hat{\Upsilon})\,, \label{170728100}
 \end{align}
where the second estimate is implied by  the second estimate in (\ref{17020303}), and the assumption that $|\Upsilon|\prec \hat{\Upsilon}$.

 Therefore, for the first term on the right hand side of (\ref{17021250}), we have
 \begin{align}
 &\frac{1}{N^2} \sum_{i=1}^N  \sum_k^{(i)} \mathbb{E}\Big[ \frac{1}{\|\mathbf{g}_i\| }\frac{\partial (\mathbf{e}_k^*\wt{B}^{\la i\ra} G\mathbf{e}_i)}{\partial g_{ik}}\ntr G \tau_{i1}  \mathfrak{m}^{(p-1,p)}\Big]\nonumber\\
 &= \frac{1}{N^2} \sum_{i=1}^N  \sum_k^{(i)}\mathbb{E}\Big[ \Big(\mathring{T}_i-\frac{1}{\|\mathbf{g}_i\| }\frac{\partial (\mathbf{e}_k^* G\mathbf{e}_i)}{\partial g_{ik}} \Big)\tau_{i2} \mathfrak{m}^{(p-1,p)}\Big]+\mathbb{E}\big[(\varepsilon_{2}+O_\prec(\Pi^2)+O_\prec(\Psi \hat{\Upsilon}))\mathfrak{m}^{(p-1,p)}\big]\nonumber\\
 &= \frac{1}{N^2} \sum_{i=1}^N   \sum_k^{(i)} \mathbb{E}\Big[ \frac{\partial \|\mathbf{g}_i\| ^{-1}}{\partial g_{ik}} \mathbf{e}_k^* G\mathbf{e}_i \tau_{i2} \mathfrak{m}^{(p-1,p)}\Big] +\frac{1}{N^2} \sum_{i=1}^N   \sum_k^{(i)} \mathbb{E}\Big[ \frac{1}{\|\mathbf{g}_i\| }\frac{\partial \tau_{i2}}{\partial g_{ik}} \mathbf{e}_k^* G\mathbf{e}_i  \mathfrak{m}^{(p-1,p)}\Big]\nonumber\\
&\qquad+ \frac{p-1}{N^2} \sum_{i=1}^N   \sum_k^{(i)} \mathbb{E}\Big[ \frac{1}{\|\mathbf{g}_i\| } \mathbf{e}_k^* G\mathbf{e}_i  \tau_{i2} \Big(\frac{1}{N}\sum_{j=1}^N    \frac{\partial  (d_jQ_j)}{\partial g_{ik}}\Big) \mathfrak{m}^{(p-2,p)}\Big]\nonumber\\
&\qquad+ \frac{p}{N^2} \sum_{i=1}^N   \sum_k^{(i)} \mathbb{E} \Big[ \frac{1}{\|\mathbf{g}_i\| } \mathbf{e}_k^* G\mathbf{e}_i  \tau_{i2} \Big(\frac{1}{N}\sum_{j=1}^N   \frac{\partial (\overline{d_j}\overline{ Q_j})}{\partial g_{ik}}\Big)\mathfrak{m}^{(p-1,p-1)}\Big]\nonumber\\
&\qquad+\mathbb{E}\big[\big(\varepsilon_{2}+O_\prec(\Pi^2)+O_\prec(\Psi \hat{\Upsilon})\big)\mathfrak{m}^{(p-1,p)}\big], \label{17021252}
 \end{align}
 where the second equation is obtained analogously to  (\ref{17021024}), by writing $\mathring{T}_i=\sum_k^{(i)}\bar{g}_{ik} \mathbf{e}_k^*G\mathbf{e}_i/\|\mathbf{g}_i\| $ and performing integration by parts with respect to the $g_{ik}$'s.
 
 According to (\ref{17021240}),  (\ref{17021250}),  and (\ref{17021252}), it suffices to estimate the last term on the right side of (\ref{17021240}), the last four  terms on the right side of (\ref{17021250}), and all the terms on the right side of (\ref{17021252}). All the desired estimates can be derived from the following lemma.
  \begin{lem} \label{lem.17021301}  Fix a $z\in \mathcal{D}_\tau(\eta_{\rm m},\eta_\mathrm{M})$. Suppose that the assumptions  of Proposition \ref{lem. rough fluctuation averaging} hold, especially (\ref{17022530}) holds for $d_1, \ldots, d_N$ in the definition (\ref{17071810}). Let  $\tilde{d}_1, \ldots, \tilde{d}_N \in \mathbb{C}$ be any (possibly random) numbers with the bound $\max_i|\tilde{d}_i|\prec 1$. Let $Q$ be any  (possibly random) diagonal matrix that satisfies $\| Q\|\prec 1$.  Set $X=I$ or $A$, and set $X_i=I$ or $\wt{B}^{\la i\ra}$. Then we have 
 \begin{align}
 &\frac{1}{N^2} \sum_{i=1}^N   \sum_k^{(i)} \tilde{d}_i \frac{\partial \|\mathbf{g}_i\| ^{-1}}{\partial g_{ik}} \mathbf{e}_k^* X_iG\mathbf{e}_i=O_\prec(\frac{1}{N})\,,\label{17021302}\\
 &  \frac{1}{N^2} \sum_{i=1}^N   \sum_k^{(i)} \tilde{d}_i \ntr \Big(Q X\frac{\partial G}{\partial g_{ik}}\Big) \mathbf{e}_k^* X_iG\mathbf{e}_i=O_\prec(\Psi^2\Pi^2) \,,\label{17021303}
 \end{align} 
 and  the same estimate holds if we replace $\frac{\partial G}{\partial g_{ik}}$ by the complex conjugate $\frac{\partial \overline{G}}{\partial g_{ik}}$  in (\ref{17021303}).
 Further, we~have 
 \begin{multline}
\mathbb{E} \big[ \varepsilon_j\mathfrak{m}^{(p-1,p)}\big]=\mathbb{E}\big[O_\prec(\hat{\Pi}^2)\mathfrak{m}^{(p-1,p)}\big]\\+\mathbb{E}\big[O_\prec(\Psi^2\hat{\Pi}^2) \mathfrak{m}^{(p-2,p)}\big]+\mathbb{E}\big[O_\prec(\Psi^2\hat{\Pi}^2) \mathfrak{m}^{(p-1,p-1)}\big]\,, \qquad j=1,2.\label{17021311}
 \end{multline}
 \end{lem}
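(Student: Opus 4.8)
The plan is to follow the two–step integration–by–parts scheme used for the bulk analysis in \cite{BES16b} (Lemma~7.4) and \cite{BES16}, upgrading the deterministic control parameter $\Psi$ to the local quantities $\Pi$ and $\Pi_i$. The four ingredients I would use throughout are: the a priori entrywise and tracial bounds of Proposition~\ref{pro.17020310} (so that $\max_i|G_{ii}|$, $\max_i|T_i|$, $\max_i\Pi_i\prec\Psi$, $|\tau_{i1}|,|\tau_{i2}|\prec 1$, and \eqref{17020535} hold); the concentration estimates for spherical quadratic forms (Lemma~\ref{lem.091720}), giving $|h_{ii}|$, $\bigl|\|\mathbf{g}_i\|^2-1\bigr|$, $|\mathbf{h}_i^*\wt{B}^{\la i\ra}\mathbf{h}_i|\prec N^{-1/2}$ as in \eqref{17021540}; the resolvent derivative formula \eqref{17071801} together with the bookkeeping of the small remainders $\Delta_G(i,k)$ carried out in Appendix~\ref{appendix B}; and the identities $|G|^2=\eta^{-1}\Im G$, $\mathbf{h}_i^*G\mathbf{h}_i=\mathcal{G}_{ii}$ (cf.~\eqref{170726140}) and $R_i\wt{B}^{\la i\ra}(\mathbf{e}_i+\mathbf{h}_i)=-b_i\mathbf{h}_i-\wt{B}\mathbf{e}_i$ (cf.~\eqref{17072573},~\eqref{170726105}), which yield $\|G\mathbf{e}_i\|=\|G^*\mathbf{e}_i\|=\sqrt{\eta^{-1}\Im G_{ii}}$, $\|G^*\mathbf{h}_i\|=\sqrt{\eta^{-1}\Im\mathcal{G}_{ii}}$, and hence the Ward–type bound $\|G^*(b_i\mathbf{h}_i+\wt{B}\mathbf{e}_i)\|\lesssim\sqrt{\eta^{-1}\Im(G_{ii}+\mathcal{G}_{ii})}=\sqrt{N}\,\Pi_i$, which is the mechanism that produces the extra factor $\Pi_i^2$.

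For \eqref{17021302} I would compute $\partial_{g_{ik}}\|\mathbf{g}_i\|^{-1}=-\tfrac12\|\mathbf{g}_i\|^{-3}\bar g_{ik}$ and note that $\sum_k^{(i)}\bar g_{ik}\,\mathbf{e}_k^*X_iG\mathbf{e}_i=\|\mathbf{g}_i\|\,\mathring{\mathbf{h}}_i^*X_iG\mathbf{e}_i$ equals $\|\mathbf{g}_i\|\mathring{S}_i$ or $\|\mathbf{g}_i\|\mathring{T}_i$; both are $O_\prec(1)$ by Proposition~\ref{pro.17020310}, so the prefactor $N^{-2}$ and the sum over $i$ give $O_\prec(N^{-1})$. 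For \eqref{17021303} I would insert \eqref{17071801}: the $\Delta_G$ contribution is negligible by Appendix~\ref{appendix B}, and in the leading part the sum over $k$ collapses $\sum_k^{(i)}\mathbf{e}_k\mathbf{e}_k^*$ to the identity (the $k=i$ term is a lower–order boundary contribution), leaving inner products of the form $\tfrac{c_i}{N}(b_i\mathbf{h}_i+\wt{B}\mathbf{e}_i)^*GQXGX_iG\mathbf{e}_i$ and a similar one. Bounding such a product by Cauchy--Schwarz and the Ward bounds above, together with $\|GX_iG\mathbf{e}_i\|\le\|G\|\|X_i\|\|G\mathbf{e}_i\|\lesssim\eta^{-1}\sqrt{\eta^{-1}\Im G_{ii}}$, yields after the $k$–sum a per–index contribution $O_\prec\!\bigl(\tfrac{c_i}{N}\eta^{-2}\Im(G_{ii}+\mathcal{G}_{ii})\bigr)=O_\prec(\eta^{-1}\Pi_i^2)$ (using $\Im(G_{ii}+\mathcal{G}_{ii})=N\eta\Pi_i^2$); with the outer prefactor $N^{-2}$ and $\tfrac1N\sum_i\Pi_i^2\le 2\Pi^2$ the full expression is $O_\prec\!\bigl((N\eta)^{-1}\Pi^2\bigr)=O_\prec(\Psi^2\Pi^2)$, and the complex–conjugate version is identical.

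For \eqref{17021311} I would exploit that $\varepsilon_1=\tfrac1N\sum_i\varepsilon_{i1}\ntr G\,\tau_{i1}$ and $\varepsilon_2=\tfrac1N\sum_i\|\mathbf{g}_i\|^{-1}\tau_{i1}\varepsilon_{i2}$, and that, by \eqref{17071805} and \eqref{17021004} (and the block–diagonality of $\wt{B}^{\la i\ra}$, which gives $\mathbf{h}_i^*\wt{B}^{\la i\ra}\mathbf{h}_i=b_i|h_{ii}|^2+\mathring{\mathbf{h}}_i^*\wt{B}^{\la i\ra}\mathring{\mathbf{h}}_i$), each of $\varepsilon_{i1},\varepsilon_{i2}$ is, up to an $O_\prec(N^{-1})$ remainder, a sum of products of one of the centered fluctuations $h_{ii}$, $\ell_i^2-1$, $\|\mathbf{g}_i\|^2-1$, $\mathring{\mathbf{h}}_i^*\wt{B}^{\la i\ra}\mathring{\mathbf{h}}_i$ (each $O_\prec(N^{-1/2})$ by \eqref{17021540}) with $O_\prec(1)$ bounded factors, the $\Upsilon$–containing summands additionally carrying a factor $O_\prec(\hat\Upsilon)$. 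A second integration by parts in the randomness of $\mathbf{g}_i$ — via \eqref{17021237} in the off–diagonal coordinates $g_{ik}$, $k\neq i$, after expressing these fluctuations as ratios of polynomials in $(g_{ik})_{k\neq i}$, and via the corresponding $\chi$–formula in $g_{ii}$ — then supplies the missing order: a derivative hitting a resolvent factor (inside $\ntr G$, $T_i$, $\tau_{i1}$, the traces in $\Upsilon$, or inside $\mathfrak m^{(p-1,p)}$) produces, after the $k$–sum and Cauchy--Schwarz with a Ward identity exactly as in the previous paragraph, a factor $O_\prec(\Pi_i^2)$, which becomes $O_\prec(\Psi^2\Pi_i^2)$ once it lands inside $\mathfrak m^{(p-1,p)}$ and lowers its index (compare the proof of Lemma~\ref{lem.17021231}); a derivative hitting a weight $d_j$ is controlled by the hypotheses \eqref{17022530}; and the terms where the derivative hits the fluctuation itself are handled by \eqref{17021302}--\eqref{17021303}. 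Averaging the main contribution with $\tfrac1N\sum_i\Pi_i^2\le 2\Pi^2\prec\hat\Pi^2$ then yields $\mathbb{E}[O_\prec(\hat\Pi^2)\mathfrak m^{(p-1,p)}]$ plus the lower–index terms, the $O_\prec(N^{-1})$ remainders and the diagonal–index pieces being subleading. The step I expect to be the main obstacle is precisely this last one: one must verify that after the second integration by parts every resulting term either admits a genuine Ward–identity gain down to $\Pi_i^2$ or has one of the controlled forms \eqref{17022530}, with no leftover term of order $\Psi^2$ that fails to average; in particular the $\Upsilon$–dependent summands of $\varepsilon_{i2}$ and the cross terms (where the derivative hits another factor of $\tfrac1N\sum_j d_jQ_j$) must be combined carefully with the bound $|\Upsilon|\prec\hat\Upsilon$ already extracted in the surrounding proof of Lemma~\ref{lem.17021231}.
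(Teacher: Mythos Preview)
Your treatment of \eqref{17021302} and \eqref{17021303} is correct and matches the paper: both estimates are just the first and fourth bounds of Lemma~\ref{lem.17021201}, averaged over $i$ via $\tfrac1N\sum_i\Pi_i^2\le 2\Pi^2$. The Ward-identity mechanism you describe is exactly the content of the proof of Lemma~\ref{lem.17021201} in Appendix~\ref{appendix B}.

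For \eqref{17021311} your strategy is sound in principle but more elaborate than what the paper does, and the extra work you anticipate (the ``$\chi$-formula in $g_{ii}$'') is in fact unnecessary. The paper's route is to \emph{simplify first, then integrate by parts once}. Concretely, since the assumptions of Proposition~\ref{pro.17020310} are in force, one already has $|T_i|,|\Upsilon|,\Lambda_{\mathrm d}^c\prec\Psi$ from \eqref{17020302}--\eqref{17020303}. Using these together with \eqref{17021540} and the hypothesis $\tfrac{1}{N\sqrt\eta}\prec\hat\Pi^2$, every summand of $\varepsilon_{i1}$ in \eqref{17071805} except $\ell_i^2(\mathbf h_i^*\wt B^{\la i\ra}\mathbf h_i)G_{ii}$ is already $O_\prec(\hat\Pi^2)$: the $(\ell_i^2-1)h_{ii}$ term carries two $N^{-1/2}$ factors, and the $T_i$ terms carry $\Psi\cdot N^{-1/2}$. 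In the surviving term one replaces $\ell_i^2\to1$, $G_{ii}\to(a_i-\omega_B^c)^{-1}$, and $\mathbf h_i\to\mathring{\mathbf h}_i$ (the diagonal piece vanishes since $\mathbf e_i^*\wt B^{\la i\ra}\mathring{\mathbf h}_i=0$), each replacement costing $O_\prec(\hat\Pi^2)$. This yields
\[
\varepsilon_{i1}=\frac{\mathring{\mathbf h}_i^*\wt B^{\la i\ra}\mathring{\mathbf h}_i}{a_i-\omega_B^c}+O_\prec(\hat\Pi^2),
\qquad
\varepsilon_{i2}=\bigl(\mathring{\mathbf g}_i^*\mathring{\mathbf g}_i-1\bigr)\frac{\ntr\wt BG}{a_i-\omega_B^c}+O_\prec(\hat\Pi^2),
\]
so $\varepsilon_j$ is, up to an acceptable error, a single off-diagonal Gaussian quadratic form times a bounded coefficient $\tau_{i3}$. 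One integration by parts via \eqref{17021237} in $(g_{ik})_{k\ne i}$ then suffices; the resulting five types of derivative terms are bounded exactly as you outline, using the last estimate of \eqref{17021202} and the hypotheses \eqref{17022530}.

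The payoff of the paper's reduction is that it eliminates any need to integrate by parts in the non-centered $\chi$-variable $g_{ii}$: all $h_{ii}$- and $(\ell_i^2-1)$-dependent pieces are absorbed into the $O_\prec(\hat\Pi^2)$ error \emph{before} any IBP. Your approach would work too, but you would have to either carry out a genuine Rayleigh/$\chi$ integration-by-parts (which is messier than \eqref{17021237}) or, more simply, make the same observation the paper makes and discard those terms up front. Either way, the obstacle you flag at the end dissolves once you use the $\Psi$-bounds on $T_i$, $\Upsilon$, $\Lambda_{\mathrm d}^c$ to isolate the single main term.
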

We postpone the proof of Lemma \ref{lem.17021301} for a moment and continue with the proof of  Lemma \ref{lem.17021231} instead. 

The second term of (\ref{17021250}) and the first term of (\ref{17021252}) are directly estimated by (\ref{17021302}).
Using the~definition of $\tau_{i1}$ in (\ref{17021305}) and of $\tau_{i2}$ in (\ref{1702130512}), the boundedness of the tracial quantities (\cf (\ref{17020535})), and the chain rule, we get the estimate on the third term of (\ref{17021250}) and the second term of (\ref{17021252}), using  (\ref{17021303}) and the assumption (\ref{17022530}).  For the last two terms of (\ref{17021250}), and the third and fourth terms of (\ref{17021252}), we~note~that
\begin{align*}
\frac{1}{N} \sum_{j=1}^N   d_j Q_j=\ntr D\wt{B}G\,\ntr G-\ntr\wt{B}G \,\ntr DG=\ntr D \,\ntr G-\ntr DG- \ntr DAG\,\ntr G+\ntr AG\, \ntr DG\,,
\end{align*}
where in the last step we used the first  identity of (\ref{17020508}). Hence, by the chain rule,  the fourth term of (\ref{17021250}) and the third term of (\ref{17021252}) are estimated with the aid of (\ref{17021303}) and (\ref{17022530}). The last term of (\ref{17021250}) and the fourth term of (\ref{17021252}) can be estimated analogously.
Finally, the  estimates of the second term of (\ref{17021240}) and  the last term of (\ref{17021252}) are given by  (\ref{17021311}). Thus we conclude the proof of Lemma~\ref{lem.17021231}.
\end{proof}

\begin{proof}[Proof of Lemma \ref{lem.17021301}]
Note that  (\ref{17021302}) and (\ref{17021303}) follow from the first and the second last estimates in (\ref{17021202}), respectively, by averaging over the index $i$.  Hence, it suffices to prove (\ref{17021311}). Recall the definition of $\varepsilon_1$ from (\ref{17021320}) and of $\varepsilon_{2}$ from (\ref{17021310}).

We first consider $\mathbb{E}[\varepsilon_1\mathfrak{m}^{(p-1,p)}]$. Recall the definition of $\varepsilon_{i1}$ from (\ref{17071805}). Using (\ref{17020302}),  (\ref{17020303}), the first bound in (\ref{17020505}), and (\ref{17021540}),
we have
\begin{align}
\varepsilon_{i1}=   \frac{\mathbf{h}_i^* \wt{B}^{\la i\ra} \mathbf{h}_i}{a_i-\omega_B^c}+O_\prec\big(\frac{\Psi}{\sqrt{N}}\big)= \frac{\mathring{\mathbf{h}}_i^* \wt{B}^{\la i\ra} \mathring{\mathbf{h}}_i}{a_i-\omega_B^c}+O_\prec(\hat{\Pi}^2)\,. \label{17021550}
\end{align}
Here the last step follows from  the assumption $\frac{1}{N\sqrt{\eta}} \prec \hat{\Pi}^2$, and that  $\mathbf{h}_i=\mathring{\mathbf{h}}_i+\frac{g_{ii}}{\|\mathbf{g}_i\|}\mathbf{e}_i$ with 
\begin{align*}
|g_{ii}|\prec \frac{1}{\sqrt{N}}\,, \qquad\qquad \mathring{\mathbf{h}}_i^* \wt{B}^{\la i\ra} \mathbf{e}_i=b_i\mathring{\mathbf{h}}_i^* \mathbf{e}_i=0\,. 
\end{align*}
Hence, by the definition of $\varepsilon_1$ in~(\ref{17021320}), we have
\begin{align*}
\varepsilon_1=\frac{1}{N}\sum_{i=1}^N \mathring{\mathbf{h}}_i^* \wt{B}^{\la i\ra} \mathring{\mathbf{h}}_i  \frac{d_i \ntr G-\ntr DG}{a_i-\omega_B^c}+O_\prec(\hat{\Pi}^2)= \frac{1}{N}\sum_{i=1}^N \mathring{\mathbf{h}}_i^* \wt{B}^{\la i\ra} \mathring{\mathbf{h}}_i  \tau_{i3}+O_\prec(\hat{\Pi}^2)\,,
\end{align*}
where  we   introduced the notation
\begin{align*}
\tau_{i3}\deq  \frac{d_i \ntr G-\ntr DG}{a_i-\omega_B^c}\,.
\end{align*}
Using the integration by parts formula~\eqref{17021237}, we obtain
\begin{align}
\frac{1}{N}\sum_{i=1}^N \mathbb{E}\big[ \mathring{\mathbf{h}}_i^* \wt{B}^{\la i\ra} \mathring{\mathbf{h}}_i  \tau_{i3}\mathfrak{m}^{(p-1,p)}\big]&= \frac{1}{N}\sum_{i=1}^N \sum_{k}^{(i)}\mathbb{E}\big[ \frac{1}{\|\mathbf{g}_i\| ^2} \bar{g}_{ik} \mathbf{e}_k^* \wt{B}^{\la i\ra} \mathring{\mathbf{g}}_i  \tau_{i3}\mathfrak{m}^{(p-1,p)}\big]\nonumber\\
&=\frac{1}{N^2}\sum_{i=1}^N \sum_{k}^{(i)}\mathbb{E}\Big[ \frac{\partial \big(\|\mathbf{g}_i\| ^{-2} \mathbf{e}_k^* \wt{B}^{\la i\ra} \mathring{\mathbf{g}}_i  \tau_{i3}\mathfrak{m}^{(p-1,p)}\big)}{\partial g_{ik}}\Big]. \label{17021531}
\end{align}
Note that
\begin{align}
&\frac{\partial \big(\|\mathbf{g}_i\| ^{-2} \mathbf{e}_k^* \wt{B}^{\la i\ra} \mathring{\mathbf{g}}_i  \tau_{i3}\mathfrak{m}^{(p-1,p)}\big)}{\partial g_{ik}}=\frac{ \partial \|\mathbf{g}_i\| ^{-2} }{ \partial g_{ik}} \mathbf{e}_k^* \wt{B}^{\la i\ra} \mathring{\mathbf{g}}_i  \tau_{i3}\mathfrak{m}^{(p-1,p)}+ \|\mathbf{g}_i\| ^{-2} \mathbf{e}_k^* \wt{B}^{\la i\ra} \mathbf{e}_k \tau_{i3}\mathfrak{m}^{(p-1,p)}\nonumber\\
&\quad +\|\mathbf{g}_i\| ^{-2} \mathbf{e}_k^* \wt{B}^{\la i\ra} \mathring{\mathbf{g}}_i  \frac{\partial \tau_{i3}}{\partial g_{ik}}\mathfrak{m}^{(p-1,p)}+ (p-1) \|\mathbf{g}_i\| ^{-2} \mathbf{e}_k^* \wt{B}^{\la i\ra} \mathring{\mathbf{g}}_i  \tau_{i3} \Big(\frac{1}{N}\sum_{j=1}^N   \frac{\partial (d_jQ_j)}{\partial g_{ik}}\Big)\mathfrak{m}^{(p-2,p)}\nonumber\\
&\quad + p \|\mathbf{g}_i\| ^{-2} \mathbf{e}_k^* \wt{B}^{\la i\ra} \mathring{\mathbf{g}}_i  \tau_{i3} \Big(\frac{1}{N}\sum_{j=1}^N   \frac{\partial (\overline{d_j}\overline{Q_j})}{\partial g_{ik}}\Big)\mathfrak{m}^{(p-1,p-1)}\,. \label{17021530}
\end{align}
Notice that $\frac{\partial \|\mathbf{g}_i\| ^{-2}}{\partial g_{ik}}=-\|\mathbf{g}_i\| ^{-4}\bar{g}_{ik}$ and that $\tau_{i3}=O_\prec (1)$. In addition, we also have that
\begin{align*}
\sum_k^{(i)}\bar{g}_{ik}\mathbf{e}_k=\mathring{\mathbf{g}}_i^*\,, \qquad\qquad \sum_{k}^{(i)} \mathbf{e}_k^* \wt{B}^{\la i\ra}\mathbf{e}_k=\text{Tr} B-b_i=b_i\,.  
\end{align*}
 Denoting by $\tilde{d}_1,\ldots, \tilde{d}_N\in \mathbb{C}$ generic (possibly random) numbers with $\max_i |\tilde{d}_i|\prec 1$, we see that the contributions from the first two terms on the right side of (\ref{17021530}) to (\ref{17021531}) follow from the~estimates  
\begin{align*}
\frac{1}{N^2}\sum_{i=1}^N  \tilde{d}_i\mathring{\mathbf{g}}_i^* \wt{B}^{\la i\ra} \mathring{\mathbf{g}}_i =O_\prec(\frac{1}{N})\,, \qquad\qquad
\frac{1}{N^2}\sum_{i=1}^N  \tilde{d}_i b_i \mathbf{e}_k^* \wt{B}^{\la i\ra} \mathbf{e}_k =O_\prec(\frac{1}{N}) \,.
\end{align*}
Here $\tilde{d}_i$ includes $\tau_{i3}$ and an appropriate power of $\|\mathbf{g}_i\|$. 
In addition, for the estimate of the remaining terms in (\ref{17021530}), 
  we claim that, for $X_i=I, \wt{B}^{\la i\ra}$,
\begin{align}
&\frac{1}{N^2}\sum_{i=1}^N \sum_k^{(i)} \tilde{d}_i \mathbf{e}_k^* X_i \mathring{\mathbf{g}}_i \frac{\partial \tau_{i3}}{\partial g_{ik}}=O_\prec(\Psi^2\Pi^2)\,,\\
 &\frac{1}{N^2}\sum_{i=1}^N \sum_k^{(i)} \tilde{d}_i \mathbf{e}_k^* X_i \mathring{\mathbf{g}}_i  \Big(\frac{1}{N}\sum_{j=1}^N   \frac{\partial (d_j Q_j)}{\partial g_{ik}}\Big)=O_\prec(\Psi^2\Pi^2)\,,\label{17022401}\\
  &\frac{1}{N^2}\sum_{i=1}^N \sum_k^{(i)} \tilde{d}_i \mathbf{e}_k^* X_i \mathring{\mathbf{g}}_i  \Big(\frac{1}{N}\sum_{j=1}^N   \frac{\partial (\overline{d_j} \overline{Q_j})}{\partial g_{ik}}\Big)=O_\prec(\Psi^2\Pi^2)\,. \label{17022402}
\end{align}
The  above three bounds follows from the last estimate in (\ref{17021202}) and the chain rule.   Hence, we  conclude the proof of (\ref{17021311}) with $j=1$. 

The proof of (\ref{17021311}) for $j=2$ is similar to $j=1$. Recall the definition of $\varepsilon_{i2}$ from (\ref{17021004}).  Using (\ref{17020302}),  (\ref{17020303}), the first bound in (\ref{17020505}), and also the bounds in (\ref{17021540}), we have
\begin{align*}
\varepsilon_{i2}= &\big( \|\mathbf{g}_i\| ^2-1\big)G_{ii}\ntr \wt{B}G+O_\prec\Big(\frac{\Psi}{\sqrt{N}}\Big)=\big( \mathring{\mathbf{g}}_i^*\mathring{\mathbf{g}}_i-1\big)\frac{\ntr \wt{B}G}{a_i-\omega_B^c}+O_\prec(\hat{\Pi}^2)\,,
\end{align*}
which possesses a very similar structure as (\ref{17021550}).   The remaining proof  is nearly the same as the case for $\varepsilon_{1}$; it suffices to replace $\mathring{\mathbf{g}}_i^*\wt{B}^{\la i\ra}\mathring{\mathbf{g}}_i$ by $\mathring{\mathbf{g}}_i^*\mathring{\mathbf{g}}_i$ throughout the proof. We thus omit the details.  Hence, we conclude the proof for Lemma \ref{lem.17021301}. 
\end{proof}

\section{Optimal fluctuation averaging} \label{s.optimal FL}
In this section, we establish the optimal fluctuation averaging estimate for a special linear combinations of the $Q_i$'s and their analogues, the $\mathcal{Q}_i$'s (see (\ref{17071820}) below), under assumption (\ref{17020501}).

Recall the definitions of the approximate subordination functions $\omega_A^c$ and $\omega_B^c$ in~\eqref{17072550}. We denote
\begin{align}
\Lambda_A\deq\omega_A^c-\omega_A\,,\qquad \Lambda_B\deq\omega_B^c-\omega_B\,, \qquad  \Lambda\deq|\Lambda_A|+|\Lambda_B|\,.\label{le gros lambda}
\end{align}
Recall $\mathcal{S}_{AB}$, $\mathcal{T}_A$ and $\mathcal{T}_B$ defined in (\ref{17080110}).  For brevity, in the sequel, we use the shorthand notation 
\begin{align*}
\mathcal{S}\equiv\mathcal{S}_{AB}. 
\end{align*}

\begin{pro} \label{pro.17021715}Fix a $z=E+\ii\eta\in \mathcal{D}_\tau(\eta_{\rm m},\eta_\mathrm{M})$. Suppose that the assumptions  of Proposition \ref{pro.17020310} hold.  Suppose that $\Lambda(z)\prec \hat{\Lambda}(z)$, for some deterministic and positive function $\hat{\Lambda}(z)\prec N^{-\frac{\gamma}{4}}$, then
\begin{align}
&\Big|\mathcal{S}\Lambda_\iota+\mathcal{T}_\iota\Lambda_\iota^2+O(\Lambda_\iota^3)\Big|\prec   \frac{\sqrt{(\Im m_{\mu_A\boxplus\mu_B}+\hat{\Lambda})(|\mathcal{S}|+\hat{\Lambda})}}{N\eta}+\frac{1}{(N\eta)^2},\qquad \iota=A, B\,. \label{17030301}
\end{align}
\end{pro}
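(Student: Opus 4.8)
The idea is to derive a self-consistent quadratic equation for $\Lambda_\iota$ from the exact subordination equations~\eqref{170730100}, \eqref{170725130} and the approximate ones, where the error terms in the approximate equations are precisely the fluctuation averages controlled in Sections~\ref{s. Entrywise estimate} and~\ref{s. rough bound}. First I would start from the identities
\begin{align*}
\frac{1}{N}\sum_{i=1}^N\frac{1}{a_i-\omega_B^c} &= m_H + (\text{error}), \qquad
\frac{1}{N}\sum_{i=1}^N\frac{a_i}{a_i-\omega_B^c} = z m_H - \ntr\wt B G + (\text{error}),
\end{align*}
obtained by expanding $G_{ii}=\frac{1}{a_i-\omega_B^c}+(G_{ii}-\frac{1}{a_i-\omega_B^c})$ and using the definition of $\omega_B^c$ in~\eqref{17072550}; the errors here are exactly linear combinations of the $Q_i$ with bounded weights, and also of $P_i$, $\Upsilon$, $T_i$. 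By Proposition~\ref{pro.17020310} and Proposition~\ref{lem. rough fluctuation averaging} these are all $O_\prec(\Psi\hat\Pi)$ once the weights are chosen appropriately, but for the optimal bound one must invoke the two distinguished weight sequences of Section~\ref{s.optimal FL} (the ones around~\eqref{17021710}--\eqref{17022001}), which upgrade the relevant linear combinations to $O_\prec(\Pi^2+(N\eta)^{-2})$. The same is done with the roles of $A,B$ and $U,U^*$ interchanged, producing the analogous relations for $\omega_A^c$.

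Next I would subtract the exact subordination system for $(\omega_A,\omega_B)$ from the approximate system for $(\omega_A^c,\omega_B^c)$. Writing $\Lambda_A=\omega_A^c-\omega_A$, $\Lambda_B=\omega_B^c-\omega_B$ and Taylor-expanding $F_{\mu_A}$, $F_{\mu_B}$ around $\omega_B$, $\omega_A$ respectively to second order (the third-order remainder is genuinely $O(\Lambda_\iota^3)$ because $F_{\mu_A}'''$, $F_{\mu_B}'''$ are bounded in the relevant region by Corollary~\ref{le second corollary} and Lemma~\ref{thm.17080401}), the linearized system has coefficient matrix whose determinant is exactly $\mathcal S=\mathcal S_{AB}$, and the quadratic terms assemble into $\mathcal T_A\Lambda_A^2$, $\mathcal T_B\Lambda_B^2$ by the very definition~\eqref{17080110} of $\mathcal T_A,\mathcal T_B$. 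Solving this $2\times 2$ system gives
\[
\mathcal S\,\Lambda_\iota + \mathcal T_\iota\Lambda_\iota^2 + O(\Lambda_\iota^3) = (\text{combination of the fluctuation-averaging errors}),\qquad \iota=A,B,
\]
so it remains to bound the right-hand side by $\frac{\sqrt{(\Im m_{\mu_A\boxplus\mu_B}+\hat\Lambda)(|\mathcal S|+\hat\Lambda)}}{N\eta}+\frac{1}{(N\eta)^2}$.

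For that bound I would use $\Pi^2=\Im m_H/(N\eta)$ together with the deterministic approximation $\Im m_H = \Im m_{\mu_A\boxplus\mu_B}+O_\prec(\text{something controlled by }\Lambda)$: from the entry-wise law~\eqref{17020303}, $\Lambda_{{\rm d} i}^c\prec\Psi$, and from Proposition~\ref{le proposition 3.1}$(ii)$ and $|\omega_B^c-\omega_B|=|\Lambda_B|\prec\hat\Lambda$ one gets $\Im m_H \lesssim \Im m_{\mu_A\boxplus\mu_B}+\hat\Lambda+\Psi\cdot(\dots)$, which after inserting into $\Pi^2$ yields $\Psi\hat\Pi \prec \frac{\sqrt{(\Im m_{\mu_A\boxplus\mu_B}+\hat\Lambda)}}{N\eta}\cdot(N\eta)^{1/2}$-type expressions; combining with $|\mathcal S|\sim\sqrt{\kappa+\eta}$ from~\eqref{17080121} and the fact that the optimal weight sequences carry an extra factor related to $\mathcal S$ produces the factor $\sqrt{|\mathcal S|+\hat\Lambda}$. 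The $(N\eta)^{-2}$ term is simply the ``apparently leading term'' $\Pi^2$-type contribution of Section~\ref{s.optimal FL} that already saturates at the optimal order.

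\textbf{Main obstacle.} The genuinely hard step is not the algebra of inverting the $2\times2$ system — that is routine given the definitions of $\mathcal S,\mathcal T_A,\mathcal T_B$ — but establishing that the ``cross-term'' contribution in the high-moment expansion of the distinguished linear combinations of $Q_i$ is of the optimal order $\Pi^2$ rather than the merely-rough $\Psi\Pi$ order of Proposition~\ref{lem. rough fluctuation averaging}. This is precisely where the two special weight sequences and the hidden strong correlation between the two basic fluctuating quantities (alluded to in the introduction) must be exploited; one must track the term where the integration-by-parts derivative hits another factor of the averaged quantity and show an additional cancellation, which requires the fluctuation-averaging-with-random-weights machinery and careful bookkeeping of the $\hat\Lambda$-dependence of $\Pi$ (since $\Pi$ itself contains $\Im m_H$, hence depends on $\Lambda$). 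Propagating $\hat\Lambda$ consistently through these estimates, so that the final bound is genuinely a self-consistent inequality for $\Lambda$ amenable to the later continuity argument in $\eta$, is the most delicate part.
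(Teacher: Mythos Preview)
Your proposal is correct and follows essentially the same route as the paper: write $\Phi_1^c,\Phi_2^c$ (the perturbed subordination equations evaluated at $(\omega_A^c,\omega_B^c)$) as weighted averages of the $Q_i$ and $\mathcal{Q}_i$, Taylor-expand around $(\omega_A,\omega_B)$, and eliminate $\Lambda_B$ to reach $\mathcal{Z}_1=\mathcal{S}\Lambda_A+\mathcal{T}_A\Lambda_A^2+O(\Lambda_A^3)+O((\Phi_2^c)^2)+O(\Phi_2^c\Lambda_A)$, then invoke the optimal bound $|\mathcal{Z}_1|\prec\hat\Pi^2$ of Proposition~\ref{pro. 17021720} together with the rough bound $|\Phi_2^c|\prec\Psi\hat\Pi$ from~\eqref{170724501}. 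One small correction to your heuristic: the factor $\sqrt{|\mathcal{S}|+\hat\Lambda}$ in $\hat\Pi$ does not come from the weight sequence itself but from the fact that $\partial_{g_{ik}}\mathcal{Z}_1$ carries an extra factor of $\mathcal{S}+O(\Lambda)$ (Lemma~\ref{lem. partial Z}), which is what makes the cross-term in the moment recursion $O_\prec(\hat\Pi^4)$ rather than $O_\prec(\Psi^2\Pi^2)$; the definition~\eqref{17072960} of $\hat\Pi$ is chosen precisely so that $(|\mathcal{S}|+\hat\Lambda)\Psi^2\Pi^2\le\hat\Pi^4$.
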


Before commencing the proof of Proposition~\ref{pro.17021715}, we first claim that the control parameter
$\hat{\Pi}$ in Proposition~\ref{lem. rough fluctuation averaging}  can be chosen as the square root of the right side of~\eqref{17030301} as long as $\Lambda\prec \hat{\Lambda}$, \ie 
\begin{align}
\hat{\Pi}\deq  \Bigg(  \frac{\sqrt{(\Im m_{\mu_A\boxplus\mu_B}+\hat{\Lambda})(|\mathcal{S}|+\hat{\Lambda})}}{N\eta}+\frac{1}{(N\eta)^2}\Bigg)^{\frac12}  \,.\label{17072960}
\end{align}
Indeed, observe that  when $\Lambda\prec \hat{\Lambda}\prec N^{-\frac{\gamma}{4}}$, we obtain from the second line of (\ref{170730100}) that 
\begin{align}
|m_H-m_{\mu_A\boxplus\mu_B}| &=|m_{H}m_{\mu_A\boxplus\mu_B}|\Big|\frac{1}{m_H(z)}-\frac{1}{m_{\mu_A\boxplus\mu_B}(z)}\Big|\nonumber\\
&\leq  C|m_{H}m_{\mu_A\boxplus\mu_B}|\,\Lambda\, \leq C|m_{H}-m_{\mu_A\boxplus\mu_B}| \,\Lambda\,+C|m_{\mu_A\boxplus\mu_B}|^2 \,\Lambda\, . \label{17073101}
\end{align}
which together with the fact $|m_{\mu_A\boxplus \mu_B}|\leq C$ implies
\begin{align}
|m_H-m_{\mu_A\boxplus\mu_B}|\prec \Lambda\prec \hat{\Lambda} \,.\label{17073111}
\end{align} 
Therefore, recalling~\eqref{17012101}, we have
\begin{align*}
\Pi^2\prec \frac{\Im m_{\mu_A\boxplus\mu_B}+\hat{\Lambda}}{N\eta}\prec \frac{\sqrt{(\Im m_{\mu_A\boxplus\mu_B}+\hat{\Lambda})(|\mathcal{S}|+\hat{\Lambda})}}{N\eta}\prec \Psi^2,
\end{align*}
where in the last two steps, we used that $\Im m_{\mu_A\boxplus\mu_B}\lesssim |\mathcal{S}|\prec 1$; (\ref{17080120}) and (\ref{17080121}). In addition, from~(\ref{17080120}) and (\ref{17080121}), we also have $\Im m_{\mu_A\boxplus\mu_B} |\mathcal{S}|\gtrsim \eta $. Thus we also have 
\begin{align*}
\frac{1}{N\sqrt{\eta}}\prec   \frac{\sqrt{(\Im m_{\mu_A\boxplus\mu_B}+\hat{\Lambda})(|\mathcal{S}|+\hat{\Lambda})}}{N\eta}\,. 
\end{align*}
From the definition of $\Pi$ in (\ref{17012101}), we note that $\Pi\prec \sqrt{\frac{\Im m_{\mu_A\boxplus\mu_B}+\hat{\Lambda}}{N\eta}}$ when $\Lambda\prec\hat{\Lambda}$.  Hence, up to a $\frac{1}{N\eta}$ term,  $\hat{\Pi}$ defined in \eqref{17072960} is 
 a deterministic bound of $\Pi$ inside the spectrum but it can be much
larger than $\Pi$ in the outside regime where $\mathcal{S}\gg \Im m_{\mu_A\boxplus\mu_B}$ (\cf (\ref{17080120}) and (\ref{17080121})).

With the above notation, we can rewrite (\ref{17030301}) as 
\begin{align}
&\Big|\mathcal{S}\Lambda_\iota+\mathcal{T}_\iota\Lambda_\iota^2+O(\Lambda_\iota^3)\Big|\prec   \hat{\Pi}^2,\qquad\qquad \iota=A, B. 
\end{align}

Recall the definition of $Q_i$ from (\ref{17021701}). We also introduce their analogues 
\begin{align}
 \mathcal{Q}_i\equiv  \mathcal{Q}_i(z)\deq(\wt{A}\mathcal{G})_{ii}\ntr \mathcal{G}-\mathcal{G}_{ii} \ntr \wt{A}\mathcal{G}\,,\qquad\qquad i\in\llbracket 1,N\rrbracket\,. \label{17071820}
\end{align}
with $\wt A$ and $\mathcal{G}$ given in~\eqref{the tilda guys}. To prove Proposition \ref{pro.17021715}, we need an optimal fluctuation averaging for a very special combination of the $Q_i$'s and the $\mathcal{Q}_i$'s.  To this end, we define the functions $\Phi_1,\Phi_2\,:\, (\C^+)^3\longrightarrow \C$, 
\begin{align}
\Phi_1(\omega_1,\omega_2,z)\deq F_A(\omega_2)-\omega_1-\omega_2+z\,,\qquad\qquad\Phi_2(\omega_1,\omega_2,z)\deq F_B(\omega_1)-\omega_1-\omega_2+z\,,  \label{17073115}
\end{align}
where $F_A(\,\cdot\,)\equiv F_{\mu_A}(\,\cdot\,)$ and $F_B(\,\cdot\,)\equiv F_{\mu_B}(\,\cdot\,)$ denote the negative reciprocal Stieltjes transforms of $\mu_A$ and $\mu_B$.  
From the subordination equation (\ref{170730100}), we have $\Phi_1(\omega_A, \omega_B,z)=\Phi_2(\omega_A, \omega_B,z)=0$, with $\omega_A\equiv \omega_A(z)$ and $\omega_B\equiv\omega_B(z)$. For brevity, we use the shorthand notations
\begin{align}
\Phi_1^c\deq\Phi_1(\omega_A^c,\omega_B^c,z)\,,\qquad \qquad \Phi_2^c\deq \Phi_2(\omega_A^c,\omega_B^c,z)\,. \label{072960}
\end{align}
Further, we define the quantities
\begin{align}
\mathcal{Z}_1\deq \Phi_1^c+(F_A'(\omega_B)-1)\Phi_2^c\,,
\qquad \qquad
\mathcal{Z}_2\deq\Phi_2^c+(F_B'(\omega_A)-1)\Phi_1^c\,. \label{17021710}
\end{align}
We are going to show that $\mathcal{Z}_1$ and $\mathcal{Z}_2$ are actually certain linear combinations of the $Q_i$'s and the $\mathcal{Q}_i$'s.
We start with the identities
\begin{align}
\Phi_1^c=  -\frac{F_A(\omega_B^c)}{(m_H(z))^2} \frac{1}{N} \sum_{i=1}^N  \frac{1}{a_i-\omega_B^c} Q_i\,,\qquad\qquad \Phi_2^c
= -\frac{F_B(\omega_A^c)}{(m_H(z))^2} \frac{1}{N} \sum_{i=1}^N  \frac{1}{b_i-\omega_A^c} \mathcal{Q}_i\,,  \label{17021711}
\end{align}
which can be derived by combining  (\ref{17072550}), (\ref{170725130}) and (\ref{170725131}).
For all $i\in \llbracket 1, N\rrbracket $, we set
\begin{align}
& \mathfrak{d}_{i,1}\deq -\frac{F_A(\omega_B^c)}{(m_H(z))^2}\frac{1}{a_i-\omega_B^c}\,,\qquad\qquad \mathfrak{d}_{i,2}\deq -(F_A'(\omega_B)-1)\frac{F_B(\omega_A^c)}{(m_H(z))^2}\frac{1}{b_i-\omega_A^c}\,. \label{17022001}
\end{align}
According to the definition in (\ref{17021710}), (\ref{17021711}), and also (\ref{17022001}), we can write
\begin{align}
\mathcal{Z}_1=\frac{1}{N} \sum_{i=1}^N   \mathfrak{d}_{i,1}  Q_i+ \frac{1}{N} \sum_{i=1}^N   \mathfrak{d}_{i,2}  \mathcal{Q}_i\,, \label{17012202}
\end{align}
and $\mathcal{Z}_2$ can be represented in a similar way. 

Now, we choose $d_i=\mathfrak{d}_{i,1}, i\in \llbracket 1, N\rrbracket$, in Proposition \ref{lem. rough fluctuation averaging}. Observe that $\mathfrak{d}_{i,1}$ can be regarded as a smooth function of $\ntr \wt{B}G=1-\ntr(A-z)G$ and $m_H(z)=\ntr G$, according to the definition in (\ref{17022001}) and that of $\omega_B^c$ in (\ref{17072550}). Then, using the chain rule and the estimates of the tracial quantities in (\ref{17020535}), one can check that the first equation in  assumption   (\ref{17022530}) is satisfied for the choice  $d_i=\mathfrak{d}_{i,1}, i\in \llbracket 1, N\rrbracket$, by using~(\ref{17021202}).  The second equation can be checked analogously.  Hence, applying  Proposition \ref{lem. rough fluctuation averaging}, we get 
\begin{align}
|\Phi_1^c|\prec \Psi\hat{\Pi}\,,\qquad\qquad |\Phi_2^c|\prec \Psi\hat{\Pi}\,,  \label{170724501}
\end{align}
where $\hat{\Pi}$ is chosen as in (\ref{17072960}).

 The main technical task in this section is to establish the following estimates for $\mathcal{Z}_1$ and $\mathcal{Z}_2$, where the previous order $\Psi\hat{\Pi}$ bounds from (\ref{170723113}) are strengthened. 
\begin{pro} \label{pro. 17021720}
Fix $z\in \mathcal{D}_\tau(\eta_{\rm m},\eta_\mathrm{M})$. Suppose that the assumptions  of Proposition \ref{pro.17020310} hold and that $\Lambda(z)\prec \hat{\Lambda}(z)$ for some deterministic and positive function $\hat{\Lambda}(z)\leq N^{-\frac{\gamma}{4}}$. Choose $\hat{\Pi}(z)$ as (\ref{17072960}).  Then,
\begin{align}
|\mathcal{Z}_1|\prec  \hat{\Pi}^2\,, \qquad\qquad |\mathcal{Z}_2|\prec  \hat{\Pi}^2 \,. \label{17021740}
\end{align}
\end{pro}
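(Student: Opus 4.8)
The plan is to prove Proposition~\ref{pro. 17021720} by a high-moment estimate, in complete analogy with the scheme already used for Lemma~\ref{lem.17021231}, but now squeezing out one more order of cancellation from the special weights $\mathfrak{d}_{i,1},\mathfrak{d}_{i,2}$ in~\eqref{17022001}. Concretely, set $\mathfrak{n}^{(k,l)}\deq \mathcal{Z}_1^k\overline{\mathcal{Z}_1}^l$ (and similarly with $\mathcal{Z}_2$), and aim to establish a recursive moment inequality of the schematic form
\begin{align*}
 \mathbb{E}[\mathfrak{n}^{(p,p)}]=\mathbb{E}[O_\prec(\hat\Pi^2)\mathfrak{n}^{(p-1,p)}]+\mathbb{E}[O_\prec(\Psi^2\hat\Pi^2)\mathfrak{n}^{(p-2,p)}]+\mathbb{E}[O_\prec(\Psi^2\hat\Pi^2)\mathfrak{n}^{(p-1,p-1)}]\,,
\end{align*}
from which~\eqref{17021740} follows by Young's and Markov's inequalities, exactly as Proposition~\ref{lem. rough fluctuation averaging} was deduced from Lemma~\ref{lem.17021231}. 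The starting point is the representation~\eqref{17012202}: $\mathcal{Z}_1$ is a specific weighted average of the $Q_i$'s plus a weighted average of the $\mathcal{Q}_i$'s, with weights that are smooth functions of the tracial quantities in~\eqref{17020535}; so one may literally re-run the integration-by-parts computation of Section~\ref{s. rough bound} (approximate $(\wt B G)_{ii}\approx -\mathring S_i$, integrate by parts in $\mathring{\mathbf g}_i$, then handle the $\mathring T_i$-term by a second integration by parts), and the same applies verbatim to the $\mathcal{Q}_i$ part after switching $A\leftrightarrow B$, $U\leftrightarrow U^*$. Lemma~\ref{lem.17021301} (and its constituent Lemma~\ref{lem.17021201}) already provide all the individual error-term bounds, and assumption~\eqref{17022530} is verified for $d_i=\mathfrak{d}_{i,1}$ by the chain rule and~\eqref{17021202}, as noted just before~\eqref{170724501}.

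The key difference, and the reason the weights are chosen as in~\eqref{17021710}–\eqref{17022001}, is the treatment of the ``cross-term'' — the term where a derivative $\partial/\partial g_{ik}$ hits one of the remaining factors $\mathcal{Z}_1$ inside $\mathfrak{n}^{(p-1,p)}$, producing $\frac{1}{N}\sum_j \partial(\mathfrak{d}_{j,1}Q_j+\mathfrak{d}_{j,2}\mathcal{Q}_j)/\partial g_{ik}$. In Section~\ref{s. rough bound} this cross-term was only bounded by $O_\prec(\Psi\hat\Pi)$ and was responsible for the suboptimal rough bound; here one must exploit that $\mathcal{Z}_1=\Phi_1^c+(F_A'(\omega_B)-1)\Phi_2^c$ is the first-order-stable combination of the two subordination defects. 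I would expand the two parallel computations for $\Phi_1^c$ and $\Phi_2^c$ and observe that their respective cross-terms differ precisely by the factor $(F_A'(\omega_B)-1)$ (up to errors controlled by $\Lambda\prec\hat\Lambda$ and the entrywise bounds of Proposition~\ref{pro.17020310}), so that the combination $\mathcal{Z}_1$ cancels the leading cross-contribution and leaves behind only terms of size $O_\prec(\Psi^2)$ or better. This is exactly the ``very special strong correlation between two basic fluctuating quantities'' advertised in the introduction: the linear combination fluctuates less than its constituents. Concretely one tracks, using~\eqref{17021711} and the explicit derivatives of $m_H$, $\ntr\wt BG$, $\ntr\wt BG\wt B$ via~\eqref{17020801}–\eqref{17020802} and their $\mathcal{G}$-analogues, that the $O(\Psi\hat\Pi)$-piece arising from differentiating $\Phi_1^c$ is matched by the one arising from $(F_A'(\omega_B)-1)\Phi_2^c$.

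I expect the main obstacle to be precisely this cancellation bookkeeping in the cross-term, complicated by the fact that the weights $\mathfrak{d}_{i,1},\mathfrak{d}_{i,2}$ are themselves random (they depend on $m_H$ and $\ntr\wt BG$), so differentiating them generates a genuine \emph{fluctuation averaging with random weights} — a complication not present in~\cite{BES16}. One must show that these extra derivative terms, after the $i$-average, are $O_\prec(\Psi^2\hat\Pi^2)$; this is where the bounds~\eqref{17021303}, \eqref{17022401}, \eqref{17022402} of Lemma~\ref{lem.17021301} and the improved $\Pi_i^2$-type estimates of Lemma~\ref{lem.17021201} enter, together with the {\it a priori} hypothesis $\Lambda\prec\hat\Lambda\prec N^{-\gamma/4}$ which lets one replace $\omega^c$ by $\omega$ and $m_H$ by $m_{\mu_A\boxplus\mu_B}$ wherever needed. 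A secondary technical point is to verify that the residual ``diagonal'' terms — the ones that would survive even for deterministic weights — are genuinely of order $\hat\Pi^2$: here one uses that $\sum_i G_{ii}\tau_{i1}$-type sums vanish or are small (as in~\eqref{170728100}) and that $\Im m_{\mu_A\boxplus\mu_B}\sim |\mathcal{S}|$ inside the spectrum while $\Im m_{\mu_A\boxplus\mu_B}\ll|\mathcal{S}|$ outside, so that $\hat\Pi^2$ as defined in~\eqref{17072960} is the correct self-improving scale. Once the recursive moment bound is in place, the conclusion~\eqref{17021740} and hence, via~\eqref{170730100}, Proposition~\ref{pro.17021715} follow by the by-now-standard polynomial-in-$p$ argument.
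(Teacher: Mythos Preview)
Your overall strategy is correct and matches the paper's: a recursive moment estimate for $\mathcal{Z}_1$, obtained by rerunning the integration-by-parts machinery of Section~\ref{s. rough bound}, with the only genuinely new ingredient being an improved bound on the cross-terms where $\partial/\partial g_{ik}$ hits another factor of $\mathcal{Z}_1$. You also correctly identify that the special form of $\mathcal{Z}_1$ is designed precisely to make these cross-terms small.

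There is, however, a quantitative gap. The recursion you write down,
\[
\mathbb{E}[\mathfrak{n}^{(p,p)}]=\mathbb{E}[O_\prec(\hat\Pi^2)\mathfrak{n}^{(p-1,p)}]+\mathbb{E}[O_\prec(\Psi^2\hat\Pi^2)\mathfrak{n}^{(p-2,p)}]+\mathbb{E}[O_\prec(\Psi^2\hat\Pi^2)\mathfrak{n}^{(p-1,p-1)}]\,,
\]
is \emph{not} strong enough to conclude $|\mathcal{Z}_1|\prec\hat\Pi^2$. Running Young's inequality on the last two terms produces a contribution of order $(\Psi^2\hat\Pi^2)^p=\Psi^{2p}\hat\Pi^{2p}$ to $\mathbb{E}|\mathcal{Z}_1|^{2p}$, and since $\hat\Pi\prec\Psi$ this only yields $|\mathcal{Z}_1|\prec\Psi\hat\Pi$ --- exactly the rough bound~\eqref{170724501} you already have. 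What is actually required (and what the paper proves in Lemma~\ref{lem.17022410}) is the sharper recursion with $O_\prec(\hat\Pi^4)$ in the last two slots.

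The missing piece is the precise size of the gain from the cancellation. Rather than comparing the two separate cross-term computations for $\Phi_1^c$ and $\Phi_2^c$, the cleanest route (the paper's Lemma~\ref{lem. partial Z}) is to differentiate $\mathcal{Z}_1$ directly as a function of $(\omega_A^c,\omega_B^c)$:
\[
\frac{\partial\mathcal{Z}_1}{\partial g_{ik}}
=\Big((F_A'(\omega_B)-1)(F_B'(\omega_A^c)-1)-1\Big)\frac{\partial\omega_A^c}{\partial g_{ik}}
+\big(F_A'(\omega_B^c)-F_A'(\omega_B)\big)\frac{\partial\omega_B^c}{\partial g_{ik}}\,.
\]
The first coefficient is $\mathcal{S}+O(\Lambda)$ and the second is $O(\Lambda)$. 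Since each $\partial\omega_\iota^c/\partial g_{ik}$, after the averaging in $\frac{1}{N^2}\sum_i\sum_k^{(i)}\tilde d_i\,\mathbf{e}_k^*X_iG\mathbf{e}_i(\cdot)$, is $O_\prec(\Psi^2\Pi^2)$ by~\eqref{17021303}, the whole cross-term is $O_\prec\big((|\mathcal{S}|+\hat\Lambda)\Psi^2\Pi^2\big)$. The definition~\eqref{17072960} of $\hat\Pi$ is exactly what makes $(|\mathcal{S}|+\hat\Lambda)\Psi^2\Pi^2\le\hat\Pi^4$. This is the step your write-up does not pin down: the residual after cancellation is not merely ``$O_\prec(\Psi^2)$ or better'' but carries the crucial extra factor $|\mathcal{S}|+\hat\Lambda$, and it is this factor that closes the recursion at the right order.
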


We postpone the proof of Proposition \ref{pro. 17021720} and first prove Proposition \ref{pro.17021715} with the aid of Proposition~\ref{pro. 17021720}.
\begin{proof}[Proof of Proposition \ref{pro.17021715}] By assumption, we see that  $|\Lambda_A|, |\Lambda_B|\prec N^{-\frac{\gamma}{4}}$. First of all, expanding $\Phi_1^c$ and $\Phi_2^c$ around $(\omega_A, \omega_B)$ and using the subordination equations $\Phi_1(\omega_A, \omega_B,z)=\Phi_2(\omega_A, \omega_B,z)=0$, we get
\begin{align} 
&\Phi_1^c=-\Lambda_A+(F'_A(\omega_B)-1)\Lambda_B+\frac{1}{2}F''_A(\omega_B) \Lambda_B^2+O(\Lambda_B^3)\,,\nonumber\\
&  \Phi_2^c=-\Lambda_B+(F'_B(\omega_A)-1)\Lambda_A+\frac{1}{2}F''_B(\omega_A) \Lambda_A^2+O(\Lambda_A^3)\,.  \label{17021730}
\end{align}
We rewrite the second equation in (\ref{17021730}) as
\begin{align}
\Lambda_B=-\Phi_2^c+(F'_B(\omega_A)-1)\Lambda_A+\frac{1}{2}F''_B(\omega_A) \Lambda_A^2+O(\Lambda_A^3)\,. \label{17021731}
\end{align}
Substituting (\ref{17021731}) into the first equation in (\ref{17021730}) yields
\begin{align*}
\Phi_1^c &=-(F'_A(\omega_B)-1)\Phi_2^c+\mathcal{S}\Lambda_A+\mathcal{T}_A\Lambda_A^2+O((\Phi_2^c)^2)+O(\Phi_2^c\Lambda_A)+O(\Lambda_A^3)\,,
\end{align*}
where  $\mathcal{T}_A$ is defined in  (\ref{17080110}). 
In light of the definition in (\ref{17021710}), we have
\begin{align}
\mathcal{Z}_1=\mathcal{S}\Lambda_A+\mathcal{T}_A\Lambda_A^2+O((\Phi_2^c)^2)+O(\Phi_2^c\Lambda_A)+ O(\Lambda_A^3)\,. \label{17021741}
\end{align}
Combination of (\ref{170724501}), (\ref{17021740}) with 
(\ref{17021741}) leads to 
\begin{align}
\big|\mathcal{S}\Lambda_A+\mathcal{T}_A\Lambda_A^2+O(\Lambda_A^3)\big|\prec \hat{\Pi}^2 +\Psi \hat{\Pi} \hat{\Lambda}\,. \label{1702175511}
\end{align}
The second term on the right hand side of (\ref{1702175511}) can be absorbed into the first term, in light of the fact that $\Psi\hat{\Lambda}\prec \hat{\Pi}$ (\cf (\ref{17072960})). Hence, we have 
\begin{align}
\big|\mathcal{S}\Lambda_A+\mathcal{T}_A\Lambda_A^2+O(\Lambda_A^3)\big|\prec \hat{\Pi}^2\,. \label{17021755}
\end{align}
Analogously, we also have
\begin{align}
\big|\mathcal{S}\Lambda_B+\mathcal{T}_B\Lambda_B^2+O(\Lambda_B^3)\big|\prec \hat{\Pi}^2\,. \label{17021756}
\end{align}
This completes the proof of Proposition \ref{pro.17021715}.
\end{proof}

It remains to prove Proposition \ref{pro. 17021720}.
We state the proof for $\mathcal{Z}_1$, $\mathcal{Z}_2$ is handled similarly.
We set
\begin{align*}
\mathfrak{l}^{(k,l)}\equiv\mathfrak{l}^{(k,l)}(z)\deq \mathcal{Z}_1^k\overline{\mathcal{Z}_1^l}\,,\qquad\qquad k,l\in \N\,.
\end{align*}  
We can now prove a stronger estimate one $\mathbb{E}[\mathfrak{l}^{(p,p)}]$ than the estimate obtained from Lemma \ref{lem.17021231} by improving the error terms from $O_\prec(\Psi\hat{\Pi})$ to $O_\prec(\hat{\Pi}^2)$.
\begin{lem} \label{lem.17022410}
Fix a $z\in \mathcal{D}_\tau(\eta_{\rm m},\eta_\mathrm{M})$. Suppose that the assumptions  of Proposition \ref{pro. 17021720} hold. For any fixed integer $p\geq 1$, we have
\begin{align*}
\mathbb{E}\big[ \mathfrak{l}^{(p,p)} (z)\big]=\mathbb{E}\big[O_\prec(\hat{\Pi}^2)\mathfrak{l}^{(p-1,p)} (z)\big]+\mathbb{E}\big[O_\prec(\hat{\Pi}^4) \mathfrak{l}^{(p-2,p)} (z)\big]+\mathbb{E}\big[O_\prec(\hat{\Pi}^4) \mathfrak{l}^{(p-1,p-1)}(z)\big]\,.
\end{align*}
\end{lem}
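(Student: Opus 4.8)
The plan is to establish the recursive moment estimate of Lemma~\ref{lem.17022410} by the same integration-by-parts scheme already developed in Sections~\ref{s. Entrywise estimate}--\ref{s. rough bound}, but carried out one order more carefully so that the ``cross terms'' produce $\hat\Pi^2$ rather than $\Psi\hat\Pi$. First I would write $\mathcal Z_1 = \frac1N\sum_i \mathfrak d_{i,1}Q_i + \frac1N\sum_i \mathfrak d_{i,2}\mathcal Q_i$ as in~\eqref{17012202}, and use the representation $Q_i = (\wt BG)_{ii}(d_i\ntr G - \ntr DG)$ (with $D=\mathrm{diag}(\mathfrak d_{i,1})$), together with the analogous expression for $\mathcal Q_i$. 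After replacing $(\wt BG)_{ii}$ by $-\mathring S_i$ up to the error $\varepsilon_{i1}$ (and likewise for the $\mathcal G$-side), I would perform integration by parts in the $\mathring{\mathbf g}_i$ variables via~\eqref{17021237}, exactly as in the proof of Lemma~\ref{lem.17021231}. This generates: a ``main term'' (derivative hitting $G$ inside the same factor), a ``cross term'' (derivative hitting another copy of $\mathcal Z_1$ in $\mathfrak l^{(p-1,p)}$ or $\mathfrak l^{(p-1,p-1)}$), several terms where the derivative hits the weights $\tau$-type quantities, and the $\varepsilon$-remainders.

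Next I would run through these contributions. The main term and the weight-derivative terms are handled verbatim by the estimates already proven in Lemma~\ref{lem.17021201} and Lemma~\ref{lem.17021301}, which give bounds of size $\hat\Pi^2$ (or better) after averaging; these contribute the $O_\prec(\hat\Pi^2)\mathfrak l^{(p-1,p)}$ and $O_\prec(\hat\Pi^4)$ terms. The $\varepsilon_{i1},\varepsilon_{i2}$ remainders are controlled as in~\eqref{17021311}, again using that $\frac{1}{N\sqrt\eta}\prec\hat\Pi^2$, so $\varepsilon_1,\varepsilon_2 = O_\prec(\hat\Pi^2)$ in averaged form. The crucial point is the cross term: in the rough estimate of Lemma~\ref{lem.17021231} this was the term responsible for the weaker $\Psi\hat\Pi$ bound. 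Here one exploits the special structure of the weights $\mathfrak d_{i,1},\mathfrak d_{i,2}$ in~\eqref{17022001}: they are precisely chosen so that, after the derivative acts on the other $\mathcal Z_1$-factor and one resums using $\sum_i G_{ii}\tau_{i1}=0$ (cf.~\eqref{170728100}) and the identities~\eqref{170725130},~\eqref{170725131} relating $\omega_A^c,\omega_B^c$ to $m_H$, an additional cancellation of one factor of $\Psi/\hat\Pi$ occurs. Concretely, I would show the derivative of $\mathfrak d_{i,1}$ and $\mathfrak d_{i,2}$ with respect to $g_{ik}$ produces, via the chain rule through $\ntr G$ and $\ntr\wt BG$, precisely the combination that reconstructs another factor of $\mathcal Z_1$ or $\mathcal Z_2$ up to terms of order $\hat\Pi^2$, so the cross term is $O_\prec(\hat\Pi^2)\cdot\mathcal Z_1$ in expectation, hence absorbed.

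The main obstacle I anticipate is exactly this cross-term bookkeeping: verifying that the linear combination defining $\mathcal Z_1$ in~\eqref{17021710}--\eqref{17021710}, together with the weights~\eqref{17022001}, is the unique one (up to the $\mathcal Z_2$ symmetry) for which the apparent $\Psi\hat\Pi$-size cross term collapses to $\hat\Pi^2$. This requires tracking the derivative $\partial_{g_{ik}}\bigl(\frac1N\sum_j\mathfrak d_{j,1}Q_j\bigr)$ carefully, separating the part where the derivative hits $Q_j$ (handled by the last estimates of~\eqref{17021202}, which are already of size $\Psi^2\Pi^2$, hence harmless) from the part where it hits the weight $\mathfrak d_{j,1}$, and showing the latter, after summation over $j$ using $\sum_j G_{jj}(\cdot)=0$-type identities, reproduces $\mathcal S\Lambda_A + \mathcal T_A\Lambda_A^2 + O(\Lambda_A^3)$ up to $O_\prec(\hat\Pi^2)$, which by~\eqref{170724501} is itself $O_\prec(\Psi\hat\Pi)$ and can be fed back. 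Once Lemma~\ref{lem.17022410} is in place, the conclusion~\eqref{17021740} of Proposition~\ref{pro. 17021720} follows by the now-standard Young/Markov argument, and Proposition~\ref{pro.17021715} follows as already shown.
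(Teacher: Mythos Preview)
Your overall scheme is right—follow the proof of Lemma~\ref{lem.17021231} and upgrade only the cross terms—but the mechanism you propose for the upgrade is not the one that works, and your claim that the ``derivative hits $Q_j$'' part is $\Psi^2\Pi^2$ and ``hence harmless'' is exactly backwards. The bound $\Psi^2\Pi^2\sim\Psi^2\hat\Pi^2$ is precisely the size that produced the weaker $\Psi\hat\Pi$ estimate in Lemma~\ref{lem.17021231}; it is \emph{not} dominated by the target $\hat\Pi^4$ (recall $\hat\Pi\prec\Psi$). So splitting $\partial_{g_{ik}}\mathcal Z_1$ into a $\partial Q_j$-part and a $\partial\mathfrak d_{j}$-part and bounding each separately cannot succeed: neither piece is individually small enough, and your proposed identity for the weight-derivative part (that it ``reconstructs $\mathcal Z_1$ or $\mathcal Z_2$'') does not hold in any useful form.

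The paper's argument (Lemma~\ref{lem. partial Z}) is much simpler and goes through a different chain rule. Rather than differentiating the representation~\eqref{17012202}, one differentiates the definition $\mathcal Z_1=\Phi_1(\omega_A^c,\omega_B^c,z)+(F_A'(\omega_B)-1)\Phi_2(\omega_A^c,\omega_B^c,z)$ directly as a function of the two scalars $\omega_A^c,\omega_B^c$. This gives
\[
\frac{\partial\mathcal Z_1}{\partial g_{ik}}=\Bigl[(F_A'(\omega_B)-1)(F_B'(\omega_A^c)-1)-1\Bigr]\frac{\partial\omega_A^c}{\partial g_{ik}}+\Bigl[F_A'(\omega_B^c)-F_A'(\omega_B)\Bigr]\frac{\partial\omega_B^c}{\partial g_{ik}},
\]
and by the regularity of $F_A,F_B$ the two coefficients are $\mathcal S+O(|\Lambda_A|)$ and $O(|\Lambda_B|)$ respectively—this is where the special linear combination~\eqref{17021710} enters. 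Since each $\partial\omega_\iota^c/\partial g_{ik}$ produces $O_\prec(\Psi^2\Pi^2)$ in the cross term by~\eqref{17021303}, the whole cross term is $(|\mathcal S|+\hat\Lambda)\Psi^2\Pi^2\prec\hat\Pi^4$ by the very definition~\eqref{17072960} of $\hat\Pi$. The cancellation happens at the level of the \emph{coefficients} of $\partial\omega_\iota^c/\partial g_{ik}$, not through any recombination into $\mathcal Z_1$ itself.
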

Now, with Lemma \ref{lem.17022410},  we can prove Proposition \ref{pro. 17021720}.
\begin{proof}[Proof of Proposition \ref{pro. 17021720}] Similarly to the proof of (\ref{17020301}) from Lemma \ref{lem.17021230}, with Lemma \ref{lem.17022410}, we can get (\ref{17021740}) by applying Young's and Markov's inequalities. This completes the proof of Proposition \ref{pro. 17021720}.
\end{proof}

In the sequel, we prove Lemma \ref{lem.17022410}.
\begin{proof} [Proof of Lemma \ref{lem.17022410}] Recall the definition of $\mathcal{Z}_1$ in (\ref{17012202}). We can write
\begin{align*}
\mathbb{E}\big[ \mathfrak{l}^{(p,p)}\big]= \frac{1}{N} \sum_{i=1}^N  \mathbb{E}\big[ \mathfrak{d}_{i,1}  Q_i \mathfrak{l}^{(p-1,p)}\big]+ \frac{1}{N} \sum_{i=1}^N   \mathbb{E}\big[\mathfrak{d}_{i,2}  \mathcal{Q}_i \mathfrak{l}^{(p-1,p)}\big].
\end{align*}
We only state the estimate for the first term on the right hand side above. The second term can be estimated in a similar way.  By (\ref{17021232}), we can write
\begin{align*}
\frac{1}{N} \sum_{i=1}^N   \mathfrak{d}_{i,1} Q_i= \frac{1}{N}\sum_{i=1}^N   (\wt{B}G)_{ii} \ntr G\tau_{i1},
\end{align*}
where we chose $d_i=\mathfrak{d}_{i,1}, i\in\llbracket 1,N\rrbracket$, in the definition of $\tau_{i1}$ in (\ref{17021305}). 

  Then, analogously to (\ref{17021240}), we can also write 
\begin{align}
&\frac{1}{N} \sum_{i=1}^N  \mathbb{E}\big[ \mathfrak{d}_{i,1}  Q_i \mathfrak{l}^{(p-1,p)}\big] =\frac{1}{N}\sum_{i=1}^N  \mathbb{E}\Big[ (\wt{B}G)_{ii} \ntr G \tau_{i1} \mathfrak{l}^{(p-1,p)}\Big]
\end{align}
with  $d_i=\mathfrak{d}_{i,1}, i\in\llbracket 1,N\rrbracket$.  Analogously to (\ref{17071833}), we can show 
\begin{align*}
&\frac{1}{N} \sum_{i=1}^N  \mathbb{E}\big[ \mathfrak{d}_{i,1}  Q_i \mathfrak{l}^{(p-1,p)}\big]=\mathbb{E}\big[O_\prec(\hat{\Pi}^2) \mathfrak{l}^{(p-1,p)}\big]+\mathbb{E}\big[O_\prec(\Psi^2\hat{\Pi}^2) \mathfrak{l}^{(p-2,p)}\big]+\mathbb{E}\big[O_\prec(\Psi^2\hat{\Pi}^2) \mathfrak{l}^{(p-1,p-1)}\big],
\end{align*}
where the last two terms come from the estimates of the analogues of the last two terms of (\ref{17021250}),  the third and fourth  terms in the right side of (\ref{17021252}), and also the terms in (\ref{17022401}) and (\ref{17022402}),  but with $\frac{1}{N}\sum_{j=1}^N   d_jQ_j$ replaced by $\mathcal{Z}_1$. It suffices to improve the estimates of these terms. All these terms contain a derivative $\frac{\partial \mathcal{Z}_1}{\partial g_{ik}}$ or  $\frac{\partial \overline{\mathcal{Z}}_1}{\partial g_{ik}}$, which is smaller than the derivative of an arbitrary linear combination $\partial (\frac{1}{N}\sum_i d_i Q_i)/\partial g_{ik}$ or $\partial (\frac{1}{N}\sum_i d_i \mathcal{Q}_i)/\partial g_{ik}$, due to the special choice of $\mathfrak{d}_{i,1}$'s and $\mathfrak{d}_{i,2}$'s. 
Specifically, we shall show the following lemma, which contains the estimates of all necessary terms. 
\begin{lem} \label{lem. partial Z} Fix a $z\in \mathcal{D}_\tau(\eta_{\rm m},\eta_\mathrm{M})$. Suppose that the assumptions  of Proposition \ref{pro.17020310} hold.  Let  $\tilde{d}_1, \ldots, \tilde{d}_N \in \mathbb{C}$ be (possibly random) numbers with $\max_i|\tilde{d}_i|\prec 1$. Let  $X_i=I$ or $\wt{B}^{\la i\ra}$. Then we~have
\begin{align}
&\frac{1}{N^2}\sum_{i=1}^N \sum_k^{(i)} \tilde{d}_i \mathbf{e}_k^* X_i G\mathbf{e}_i  \frac{\partial \mathcal{Z}_1}{\partial g_{ik}}=O_\prec(\hat{\Pi}^4)\,,\qquad & & \frac{1}{N^2}\sum_{i=1}^N \sum_k^{(i)} \tilde{d}_i \mathbf{e}_k^* X_i G\mathbf{e}_i  \frac{\partial \overline{\mathcal{Z}_1}}{\partial g_{ik}}=O_\prec(\hat{\Pi}^4)\,,\nonumber\\
&\frac{1}{N^2}\sum_{i=1}^N \sum_k^{(i)} \tilde{d}_i \mathbf{e}_k^* X_i \mathring{\mathbf{g}}_i  \frac{\partial  \mathcal{Z}_1}{\partial g_{ik}}=O_\prec(\hat{\Pi}^4)\,, \qquad & &  \frac{1}{N^2}\sum_{i=1}^N \sum_k^{(i)} \tilde{d}_i \mathbf{e}_k^* X_i \mathring{\mathbf{g}}_i \frac{\partial \overline{\mathcal{Z}_1}}{\partial g_{ik}}=O_\prec(\hat{\Pi}^4)\,.\label{17022420}
\end{align}
\end{lem}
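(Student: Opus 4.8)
\textbf{Proof proposal for Lemma~\ref{lem. partial Z}.}

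The plan is to exploit the special structure of the weights $\mathfrak{d}_{i,1}$ and $\mathfrak{d}_{i,2}$ built into $\mathcal{Z}_1$, which was designed precisely so that $\partial_{g_{ik}}\mathcal{Z}_1$ is one order smaller than the derivative of a generic linear combination $\partial_{g_{ik}}\big(\frac1N\sum_j d_j Q_j\big)$. First I would compute $\partial_{g_{ik}}\mathcal{Z}_1$ using the chain rule. Recall from~\eqref{17021710} that $\mathcal{Z}_1=\Phi_1^c+(F_A'(\omega_B)-1)\Phi_2^c$, where $\Phi_1^c,\Phi_2^c$ depend on the randomness only through $\omega_A^c,\omega_B^c$, hence only through the tracial quantities $\ntr G$, $\ntr AG$ (equivalently $\ntr\widetilde B G=1-\ntr(A-z)G$) and $\ntr\widetilde A\mathcal{G}$. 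Thus $\partial_{g_{ik}}\mathcal{Z}_1$ is a linear combination, with $O_\prec(1)$ coefficients, of $\partial_{g_{ik}}\ntr G$, $\partial_{g_{ik}}\ntr(A-z)G$, and $\partial_{g_{ik}}\ntr\widetilde A\mathcal{G}$. Expanding $\Phi_1^c,\Phi_2^c$ around $(\omega_A,\omega_B)$ as in~\eqref{17021730} and using $\Phi_1(\omega_A,\omega_B,z)=\Phi_2(\omega_A,\omega_B,z)=0$, the zeroth order term of $\mathcal{Z}_1$ in $\Lambda$ is $\mathcal{S}\Lambda_A+O(\Lambda_A^2)$; differentiating, the coefficient in front of $\partial_{g_{ik}}\omega_A^c$ is $O(\mathcal{S})+O(\Lambda)=O(|\mathcal{S}|+\hat\Lambda)$, and similarly for $\partial_{g_{ik}}\omega_B^c$. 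Since $\omega_A^c,\omega_B^c$ differ from the tracial quantities by smooth functions with $O_\prec(1)$ derivatives (using~\eqref{17020535} for boundedness and non-degeneracy of $m_H$), we get
\begin{align}\label{plan:derZ}
   \frac{\partial\mathcal{Z}_1}{\partial g_{ik}} = O_\prec\big(|\mathcal{S}|+\hat\Lambda\big)\Big( \frac{\partial\ntr G}{\partial g_{ik}}+\frac{\partial\ntr(A-z)G}{\partial g_{ik}}+\frac{\partial\ntr \widetilde A\mathcal{G}}{\partial g_{ik}}\Big)\,.
\end{align}
The crucial gain over a generic $d_i$ is this extra factor $|\mathcal{S}|+\hat\Lambda$, which is small near the edge.

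Next I would insert~\eqref{plan:derZ} into the four sums in~\eqref{17022420}. Each resulting term is of the form $\frac{1}{N^2}\sum_i\sum_k^{(i)} \tilde d_i'\, \ntr\big(QX\,\partial_{g_{ik}}G\big)\, \mathbf{e}_k^* X_i G\mathbf{e}_i$ (or with $\mathring{\mathbf g}_i$ in place of $G\mathbf{e}_i$, and with $\mathcal{G}$ in place of $G$ for the $\widetilde A\mathcal{G}$-piece), where $\tilde d_i'$ absorbs $\tilde d_i$, the $O_\prec(1)$ coefficients and an appropriate power of $\|\mathbf g_i\|$, so $\max_i|\tilde d_i'|\prec |\mathcal{S}|+\hat\Lambda$. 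By Lemma~\ref{lem.17021201} (last two displayed estimates in~\eqref{17021202}, averaged over $i$ as in Lemma~\ref{lem.17021301}), each such sum is $O_\prec\big((|\mathcal{S}|+\hat\Lambda)\,\Psi^2\Pi^2\big)$. The $\mathcal{G}$-analogue is handled identically after switching the roles of $A,B$ and $U,U^*$, which is legitimate under~\eqref{17020501} since the tilde-quantities satisfy the same bounds. It remains to verify the bookkeeping identity $(|\mathcal{S}|+\hat\Lambda)\Psi^2\Pi^2\prec\hat\Pi^4$. Recall $\Pi^2\prec(\Im m_{\mu_A\boxplus\mu_B}+\hat\Lambda)/(N\eta)$ from~\eqref{17073111}, and $\Psi^2=1/(N\eta)$, so $(|\mathcal{S}|+\hat\Lambda)\Psi^2\Pi^2\prec (|\mathcal{S}|+\hat\Lambda)(\Im m_{\mu_A\boxplus\mu_B}+\hat\Lambda)/(N\eta)^2 = \big[\sqrt{(|\mathcal{S}|+\hat\Lambda)(\Im m_{\mu_A\boxplus\mu_B}+\hat\Lambda)}/(N\eta)\big]^2\le\hat\Pi^4$ by the definition~\eqref{17072960} of $\hat\Pi$. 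This closes the estimate.

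I expect the main obstacle to be the careful derivation of~\eqref{plan:derZ} with the honest factor $|\mathcal{S}|+\hat\Lambda$ rather than just $O_\prec(1)$: one must track that the $O(1)$-size contributions in $\partial_{g_{ik}}\Phi_1^c$ and $(F_A'(\omega_B)-1)\partial_{g_{ik}}\Phi_2^c$ exactly cancel at leading order, leaving only the $\mathcal{S}$-weighted combination $\partial_{g_{ik}}(\mathcal{S}\Lambda_A)$ plus genuinely higher-order-in-$\Lambda$ remainders. This is the algebraic reason $\mathcal{Z}_1$ was defined as it was, and it parallels the cancellation already used to pass from~\eqref{17021741} to~\eqref{1702175511}; the derivative version requires redoing that Taylor expansion with $g_{ik}$-derivatives of $\omega_A^c,\omega_B^c$ treated as the small increments. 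A secondary technical point is that the $\widetilde A\mathcal{G}$-piece forces us to also differentiate $\mathcal{G}$, generating terms with $\mathring{\mathbf g}_i$ replaced by vectors adapted to the $\mathcal{G}$-side decomposition; these are covered by the switched version of Lemma~\ref{lem.17021201}, but one must check uniformity of all the a priori bounds~\eqref{17020501}, \eqref{17020505}, \eqref{170726100} across both sides, which follows since Proposition~\ref{pro.17020310} is stated symmetrically.
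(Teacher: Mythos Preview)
Your proposal is correct and follows essentially the same route as the paper: compute $\partial_{g_{ik}}\mathcal{Z}_1$ via the chain rule, extract the small factor $|\mathcal{S}|+\hat\Lambda$ from the special structure of $\mathcal{Z}_1$, reduce to the tracial-derivative estimates of Lemma~\ref{lem.17021201} (averaged over $i$), and close with the bookkeeping inequality $(|\mathcal{S}|+\hat\Lambda)\Psi^2\Pi^2\le\hat\Pi^4$.

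Two minor simplifications compared with your plan. First, the paper avoids the Taylor-expansion detour you describe as the ``main obstacle'': since $F_A'(\omega_B)$ is deterministic, one differentiates $\mathcal{Z}_1=\Phi_1^c+(F_A'(\omega_B)-1)\Phi_2^c$ directly and obtains the exact identity
\[
\frac{\partial\mathcal{Z}_1}{\partial g_{ik}}=\Big[(F_A'(\omega_B)-1)(F_B'(\omega_A^c)-1)-1\Big]\frac{\partial\omega_A^c}{\partial g_{ik}}+\Big[F_A'(\omega_B^c)-F_A'(\omega_B)\Big]\frac{\partial\omega_B^c}{\partial g_{ik}},
\]
from which the coefficients $\mathcal{S}+O(|\Lambda_A|)$ and $O(|\Lambda_B|)$ are read off by regularity of $F_A,F_B$; no reference to~\eqref{17021741}--\eqref{1702175511} is needed. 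Second, your concern about a separate $\mathcal{G}$-piece is unnecessary: by cyclicity $\ntr\widetilde A\mathcal{G}=\ntr AG$, so both $\omega_A^c$ and $\omega_B^c$ are smooth functions of $\ntr G$ and $\ntr AG$ alone, and only $\partial_{g_{ik}}G$ enters. The reduction to~\eqref{17021303} (equivalently the last two estimates in~\eqref{17021202}) then goes through exactly as you outlined.
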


\begin{proof}[Proof of Lemma \ref{lem. partial Z}]
We give the proof for the first estimate in (\ref{17022420}). The third one is analogous, and the other two are just their complex conjugates. From the definitions in (\ref{072960}) and (\ref{17021710}), we~get
\begin{align*}
\frac{\partial \mathcal{Z}_1}{\partial g_{ik}}&=\frac{\partial \Phi_1^c}{\partial g_{ik}}+(F_A'(\omega_B)-1)\frac{\partial \Phi_2^c}{\partial g_{ik}}\nonumber\\
&=\Big(\big(F_A'(\omega_B)-1\big)\big(F_B'(\omega_A^c)-1\big)-1\Big)\frac{\partial \omega_A^c}{\partial g_{ik}}+\big(F_A'(\omega_B^c)-F_A'(\omega_B)\big)\frac{\partial \omega_B^c}{\partial g_{ik}}.
\end{align*}
Note that by the regularity of $F_A$ and $F_B$, we have 
\begin{align*}
\big(F_A'(\omega_B)-1\big)\big(F_B'(\omega_A^c)-1\big)-1=\mathcal{S}+O(|\Lambda_A|)\,,\qquad F_A'(\omega_B^c)-F_A'(\omega_B)=O(|\Lambda_B|)\,. 
\end{align*}
The smallness of these coefficients carry the gain.  
According to the definition of $\hat{\Pi}$ in (\ref{17072960}),  we see that 
\begin{align*}
(|\mathcal{S}|+\Lambda)\Psi^2\Pi^2 \leq  \hat{\Pi}^4 
\end{align*} if $\Lambda\leq \hat{\Lambda}$. Hence,  for the first estimate in (\ref{17022420}), 
 it suffices to show that
\begin{align}
\frac{1}{N^2}\sum_{i=1}^N \sum_k^{(i)} \tilde{d}_i \mathbf{e}_k^* X_i G\mathbf{e}_i  \frac{\partial \omega_\iota^c}{\partial g_{ik}}=O_\prec(\Psi^2\Pi^2)\,, \qquad \iota=A,B\,.  \label{17073001}
\end{align}
This follows from (\ref{17021303}), the fact that $\omega_B^c$ is a tracial quantity, and the chain rule. The other terms in (\ref{17022420}) can be estimated similarly. 
This concludes the proof of Lemma \ref{lem. partial Z}. 
\end{proof}
With the aid of Lemma \ref{lem. partial Z}, we can conclude the proof of Lemma \ref{lem.17022410}. 
\end{proof}

\section{Weak local law} \label{s.weak law}

In Sections \ref{s. Entrywise estimate}, \ref{s. rough bound} and \ref{s.optimal FL}, we established the subordination property for the Green function entries and the rough and optimal fluctuation averaging for the linear combinations of them, but all for a fixed $z\in \mathcal{D}_\tau(\eta_{\rm m},\eta_\mathrm{M})$, under the a priori input (\ref{17020501}). In this section, based on some cutoff versions of the conclusions in Sections \ref{s. Entrywise estimate} and \ref{s. rough bound} (\cf (\ref{19030501111}), (\ref{19030501})), we will establish a weak local law,  uniformly in $z\in \mathcal{D}_\tau(\eta_{\rm m},\eta_\mathrm{M})$,  by using a continuity argument. The weak local law will guarantee that the input in (\ref{17020501}) hold uniformly true on $\mathcal{D}_\tau(\eta_{\rm m},\eta_\mathrm{M})$, and thus the conclusions in Sections \ref{s. Entrywise estimate}, \ref{s. rough bound} and \ref{s.optimal FL} are also uniformly true on $\mathcal{D}_\tau(\eta_{\rm m},\eta_\mathrm{M})$. 

Our main result in this section is the following weak local law for the quantities $P_i$, $K_i$, $T_i$, $\Lambda_{{\rm d} i}^c(z)$, $\Upsilon$, $\Lambda_{\rm d}$, $\Lambda$, defined in (\ref{17011301}), (\ref{17072420}), (\ref{17072580}), (\ref{17080305}), (\ref{17011302}), (\ref{17072571}), (\ref{le gros lambda}), respectively. 

\begin{thm}[Weak local law at the regular edge] \label{thm. weak law at the edge} Suppose that Assumptions \ref{a.regularity of the measures} and \ref{a. levy distance} hold. Let $\tau>0$ be a sufficiently small constant and fix any (small) constants $\gamma>0$. 
 Then, for all $i\in \llbracket 1, N\rrbracket$, we have
\begin{align}
| P_i(z)|\prec \Psi(z)\,, \quad | K_i(z)|\prec \Psi(z)\,, \quad \Lambda_{{\rm d} i}^c(z)\prec \Psi(z)\,,\quad |T_{i}|\prec \Psi(z)\,, \quad  |\Upsilon(z) |\prec \Psi(z)\,,  \label{19031901}
\end{align}
uniformly in $z\in \mathcal{D}_\tau(\eta_{\rm m},\eta_\mathrm{M})$. In addition, we have 
\begin{align}
\Lambda_{{\rm d}}(z)\prec \frac{1}{(N\eta)^{\frac13}}\,, \qquad \Lambda(z)\prec \frac{1}{(N\eta)^{\frac13}}\,, \label{19031902}
\end{align}
uniformly in $z\in \mathcal{D}_\tau(\eta_{\rm m},\eta_\mathrm{M})$.

The same statements hold
 for the analogous quantities with tildes (see their definitions around \eqref{tildedef}), i.e.
 if we switch the roles of $A$ and $B$, and also the roles of $U$ and $U^*$.
 
 \end{thm}

In order to prove Theorem \ref{thm. weak law at the edge}, we first need the following lemma. Recall from~\eqref{le gros lambda} the definitions $\Lambda_\iota(z)=\omega_\iota(z)-\omega_\iota^c(z)$, $\iota=A,B$. Further recall the definitions of $\mathcal{T}_\iota$,  $\iota=A,B$, and $\mathcal{S}_{AB}$ from~\eqref{17080110} and that we abbreviate $\mathcal{S}=\mathcal{S}_{AB}$.

\begin{lem}  \label{lem.19031101}
Fix $z\in \mathcal{D}_\tau(\eta_{\rm m},\eta_\mathrm{M})$.   Let $\varepsilon\in (0, \frac{\gamma}{100})$. Let $\hat{\Lambda}\equiv\hat{\Lambda}(z)$ be some deterministic control parameter satisfying $\frac{N^{3\varepsilon}}{(N\eta)^{\frac13}}\leq\hat\Lambda(z)\leq  N^{-\frac{\gamma}{4}}$. Suppose that $\Lambda\leq \hat{\Lambda}$ and 
\begin{align}
&\Big|\mathcal{S}\Lambda_\iota+\mathcal{T}_\iota\Lambda_\iota^2+O(\Lambda_\iota^3)\Big|\leq   N^\varepsilon  \frac{|\mathcal{S}|+ \hat{\Lambda}}{(N\eta)^{\frac13}},   \qquad \iota=A, B.  \label{19030801}
\end{align}
hold on some event $\wt{\Omega}(z)$. Then we have, for $N$ sufficiently large,

\noindent $(i)$: If $
\sqrt{\kappa+\eta}> N^{-\varepsilon}  \hat{\Lambda} 
$, there is a sufficiently large constant $K_0>0$  independent of $z$, such that 
\begin{align}
\mathbf{1}\Big(\Lambda\leq \frac{|\mathcal{S}|}{K_0}\Big) |\Lambda_A| \leq  N^{-2\varepsilon} \hat{\Lambda}\,,\qquad\qquad\mathbf{1}\Big(\Lambda\leq \frac{|\mathcal{S}|}{K_0}\Big) |\Lambda_B| \leq  N^{-2\varepsilon} \hat{\Lambda}\, \qquad \text{on}\quad \wt{\Omega}(z)\,, \label{19031102}
\end{align}
where $\mathbf{1}$ denotes the indicator function.

\noindent $(ii)$: If $
\sqrt{\kappa+\eta}\leq  N^{-\varepsilon}  \hat{\Lambda} 
$, we have
\begin{align}
|\Lambda_A|\leq N^{-\varepsilon}  \hat{\Lambda}\,, \qquad\qquad|\Lambda_B|\leq N^{-\varepsilon}  \hat{\Lambda}\, \qquad \text{on}\quad \wt{\Omega}(z).  \label{19031103}
\end{align}
\end{lem}
 \begin{proof}[Proof of Lemma \ref{lem.19031101}] We first recall  (\ref{17080121}).  Then, from the assumptions $|\Lambda_\iota|\leq\hat{\Lambda}\leq N^{-\frac{\gamma}{4}}$ and (\ref{19030801}), we have on the event $\wt{\Omega}(z)$ that
\begin{align}
\mathcal{S}\Lambda_\iota+\mathcal{T}_\iota\Lambda_\iota^2=O  \Big( N^{\varepsilon}\frac{|\mathcal{S}|+ \hat{\Lambda}}{(N\eta)^{\frac13}}+N^{-\frac{\gamma}{4}} \hat{\Lambda}^2\Big),\qquad \iota=A, B. \label{17030302}
\end{align}  
If $
\sqrt{\kappa+\eta}> N^{-\varepsilon} \hat{\Lambda} 
$, we have for $\iota=A, B$,  and sufficiently large constant $K_0>0$,
\begin{align}
\mathbf{1}\Big(\Lambda\leq \frac{|\mathcal{S}|}{K_0}\Big) |\Lambda_\iota|\leq C |\mathcal{S}|^{-1}  \Big(N^{\varepsilon} \frac{|\mathcal{S}|+ \hat{\Lambda}}{(N\eta)^{\frac13}}+N^{-\frac{\gamma}{4}} \hat{\Lambda}^2\Big)\leq C \frac{N^\varepsilon}{(N\eta)^{\frac13}}+  C N^{\varepsilon-\frac{\gamma}{4}} \hat{\Lambda}\leq C N^{-2\varepsilon} \hat{\Lambda}\,.  \label{17030303}
\end{align}
Here we absorbed the quadratic term on the left side of (\ref{17030302}) into the linear term and used that $\mathcal{S}\sim\sqrt{\kappa+\eta}$ and $|\mathcal{T}_\iota|\lesssim1$; see Proposition~\ref{le proposition 3.1}. Hence, we proved $(i)$.
From (\ref{17030303}), we also see that if $ \sqrt{\kappa+\eta}> N^{-\varepsilon}  \hat{\Lambda} $, then
\begin{align}
\mathbf{1}\Big(\Lambda\leq \frac{|\mathcal{S}|}{K_0}\Big) |\Lambda_\iota| \leq C  N^{-\varepsilon} |\mathcal{S}|\,,\qquad \qquad \iota=A, B. \label{17071901}
\end{align}

Next, we prove $(ii)$.  If $ \sqrt{\kappa+\eta}\leq  N^{-\varepsilon}  \hat{\Lambda} $, from (\ref{17080121}) and (\ref{17080122}), we see that $\mathcal{T}_\iota\sim 1$. Hence, we solve the quadratic equation (\ref{17030302}) directly, then we get 
\begin{align*}
|\Lambda_\iota|\leq   C|\mathcal{S}|+ C\Big(N^{\varepsilon}\frac{|\mathcal{S}|+ \hat{\Lambda}}{(N\eta)^{\frac13}}+N^{-\frac{\gamma}{4}} \hat{\Lambda}^2\Big)^{\frac{1}{2}}\leq CN^{-\varepsilon}  \hat{\Lambda}\,, \qquad \iota=A, B \,,
\end{align*}
under the assumption that $ \hat{\Lambda}\geq \frac{N^{3\varepsilon}}{(N\eta)^{\frac13}}$. This concludes the proof of Lemma \ref{lem.19031101}.
\end{proof}

Recall the definitions  of $\Lambda_{\rm d}$, $\wt{\Lambda}_{\rm d}$, ${\Lambda}_T$, $ \wt{\Lambda}_T$ in~\eqref{17072571}. For any  $z\in \mathcal{D}_\tau(\eta_{\rm m},\eta_\mathrm{M})$ and any $\delta, \delta'\in [0,1]$, we  define the event 
\begin{align}
\Theta(z, \delta, \delta')\deq \Big\{ \Lambda_{\rm d}(z)\leq \delta, \; \wt{\Lambda}_{\rm d}(z) \leq \delta,\;
 \Lambda(z)\leq \delta,  \; \Lambda_T(z)\leq \delta',\; \wt{\Lambda}_T(z)\leq \delta'\Big\}.  \label{19031130}
\end{align}  
We further decompose the domain $\mathcal{D}_\tau(\eta_{\rm m},\eta_\mathrm{M})$ into the following two disjoint parts:
\begin{align}
&\mathcal{D}_{>}\deq \Big\{z\in \mathcal{D}_\tau(\eta_{\rm m},\eta_\mathrm{M}): \sqrt{\kappa+\eta}> \frac{N^{2\varepsilon}}{(N\eta)^{\frac13}} \Big\},\quad\mathcal{D}_{\leq}\deq\Big\{z\in \mathcal{D}_\tau(\eta_{\rm m},\eta_\mathrm{M}): \sqrt{\kappa+\eta}\leq \frac{N^{2\varepsilon}}{(N\eta)^{\frac13}} \Big\}\,.  \label{190312100}
\end{align}
For $z\in \mathcal{D}_{>}$, any $\delta, \delta'\in [0,1]$ and any $\varepsilon'\in [0,1]$, we define the event $\Theta_>(z, \delta, \delta', \varepsilon')\subset \Theta(z, \delta, \delta')$ as
\begin{align}
\Theta_>(z, \delta, \delta', \varepsilon')\deq \Big\{ \Lambda_{\rm d}(z)\leq \delta, \; \wt{\Lambda}_{\rm d}(z) \leq \delta,\;
 \Lambda(z)\leq \min\{\delta, N^{-\varepsilon'} |\mathcal{S}| \},  \; \Lambda_T(z)\leq \delta',\; \wt{\Lambda}_T(z)\leq \delta'\Big\}\,. 
\end{align}

\begin{lem} \label{lem.19030901} Suppose that the assumptions in Theorem \ref{thm. strong law at the edge} hold.  For any fixed $z\in \mathcal{D}_\tau(\eta_{\rm m},\eta_\mathrm{M})$,  any  $\varepsilon\in (0, \frac{\gamma}{100})$ and  any $D>0$, there exists a positive integer $N_1(D, \varepsilon)$ and an event $\Omega(z)\equiv \Omega(z, D,\varepsilon)$ with 
\begin{align}
\mathbb{P}(\Omega(z))\geq 1- N^{-D}\,,\qquad \forall N\geq N_1(D, \varepsilon) \,, \label{19031220}
\end{align} 
such that the following hold:  

(i) If $z\in \mathcal{D}_{>}$, we  have 
\begin{align}
\Theta_{>} \Big(z, {\frac{N^{3\varepsilon}}{(N\eta)^{\frac13}}}, {\frac{N^{3\varepsilon}}{\sqrt{N\eta}}}, \frac{\varepsilon}{10}\Big) \cap \Omega(z) \subset \Theta_{>} \Big(z, {\frac{N^{\frac52\varepsilon}}{(N\eta)^{\frac13}}}, {\frac{N^{\frac52\varepsilon}}{\sqrt{N\eta}}},  \frac{\varepsilon}{2}\Big).  \label{19031201}
\end{align}

(ii) If $z\in \mathcal{D}_{\leq}$, we have
\begin{align}
\Theta \Big(z, {\frac{N^{3\varepsilon}}{(N\eta)^{\frac13}}}, {\frac{N^{3\varepsilon}}{\sqrt{N\eta}}}\Big) \cap \Omega(z) \subset \Theta \Big(z, {\frac{N^{\frac52\varepsilon}}{(N\eta)^{\frac13}}}, {\frac{N^{\frac52\varepsilon}}{\sqrt{N\eta}}}\Big)\,. \label{19031202}
\end{align}
\end{lem}

\begin{proof}[Proof of Lemma \ref{lem.19030901}]   
 The proof relies on a cutoff version of Lemma \ref{lem.17021230} and  Lemma \ref{lem.17021231}, where we will introduce some smooth cutoff to $\mathfrak{m}_i$ and $\mathfrak{m}$ to guarantee that the a priori inputs needed for the estimates hold. 
The same idea has already been used in Lemma 5.5 of \cite{BES16b}, but for completeness
we repeat the arguments.
 With these cutoff versions of Lemmas \ref{lem.17021230} and  \ref{lem.17021231}, the proof of Lemma \ref{lem.19030901} is accomplished in three steps, corresponding to what we did in Sections \ref{s. Entrywise estimate}, \ref{s. rough bound} and \ref{s.optimal FL}, respectively.

 \vspace{2ex}
\noindent {\bf Step 1:}
 In this step we  establish the cutoff version of Lemma \ref{lem.17021230} and use it to prove an estimate of  for $G_{ii}$'s, $T_{i}$'s and their tilde analogues.  Let $\varphi:\mathbb{R}\to \mathbb{R}$ be a smooth cutoff function equal to $1$ on $[-\mathcal{L},\mathcal{L}]$ and vanishing on $[-2\mathcal{L},2\mathcal{L}]^c$, such that  $\sup_{x\in \mathbb{R}}|\varphi'(x)|\leq C\mathcal{L}^{-1}$
for some sufficiently large constant $\mathcal{L}>0$. Let 
\begin{align} 
\Gamma_i\equiv \Gamma_i(z)\deq |G_{ii}|^2+|\mathcal{G}_{ii}|^2+|T_{i}|^2+ |\wt{T}_i|^2+|\ntr G|^2+|\ntr \wt{B}G|^2+|\ntr \wt{B}G\wt{B}|^2\,,  \label{19030401}
\end{align}
where we denote by $\wt{T}_i$ the analogue of $T_i$, obtained via switching the roles of $A$ and $B$ and also the roles of $U$ and $U^*$ in the definition of $T_i$ (\cf (\ref{17072580})). 
For a given $i$, observe that all the a priori inputs we needed in the proof of Lemma \ref{lem.17021230} are the $O_\prec(1)$ bound for the summands on the right side of (\ref{19030401}). Recall the definitions of $\mathfrak{m}_i^{(k,\ell)}$ and $\mathfrak{n}_i^{(k,\ell)}$ from (\ref{17072350}), and set 
\begin{align}
\wt{\mathfrak{m}}_i^{(k,\ell)}:= \mathfrak{m}_i^{(k,\ell)} (\varphi(\Gamma_i))^{k+\ell}, \qquad \wt{\mathfrak{n}}_i^{(k,\ell)}:=  \mathfrak{n}_i^{(k,\ell)} (\varphi(\Gamma_i))^{k+\ell} \label{19030905}
\end{align}
In addition, for any $\varepsilon_1>0$, let $\widehat{\Omega}_1(z)\equiv\widehat{\Omega}_1(z, \varepsilon_1)$ be the event that all concentration estimates of the components or quadratic forms of 
the Gaussian vectors $\mathbf{g}_i$'s in the proof of Lemma \ref{lem.17021230} hold with precision $N^{\varepsilon_1}$. For instance, we used the $O_\prec(\frac{1}{\sqrt{N}})$ bound for $\mathbf{h}_i^* \wt{B}^{\la i\ra}\mathbf{h}_i$ in  (\ref{17021540}). Now we can bound it more quantitatively by $\frac{N^{\varepsilon_1}}{\sqrt{N}}$ on $\widehat{\Omega}_1(z)$.  
 Due to the Gaussian tail, for any $D_1>0$, there exists an $N(D_1,\varepsilon_1)$, such that if $N\geq N(D_1, \varepsilon_1)$, then
\begin{align*}
\mathbb{P}(\widehat{\Omega}_1(z))\geq 1-N^{-D_1}. 
\end{align*}
Furthermore, we claim that 
\begin{align}
\mathbb{E}[\wt{\mathfrak{m}}_i^{(p,p)}]= \mathbb{E}[\mathfrak{c}_{i1} \wt{\mathfrak{m}}_i^{(p-1,p)}]+ \mathbb{E}[\mathfrak{c}_{i2} \wt{\mathfrak{m}}_i^{(p-2,p)}]+ \mathbb{E}[\mathfrak{c}_{i3} \wt{\mathfrak{m}}_i^{(p-1,p-1)}], \label{19030501111}
\end{align}
with some random variables $\mathfrak{c}_{i1}, \mathfrak{c}_{i2}$ and $\mathfrak{c}_{i3}$, satisfying 
\begin{align*}
|\mathfrak{c}_{i1}|\leq C\frac{N^{\varepsilon_1}}{\sqrt{N\eta}}, \qquad  |\mathfrak{c}_{i2}|\leq C\frac{N^{2\varepsilon_1}}{N\eta}, \qquad |\mathfrak{c}_{i3}|\leq C\frac{N^{2\varepsilon_1}}{N\eta}, \qquad \text{on}\quad \widehat{\Omega}_1(z),
\end{align*}
for some positive constant $C$ which may depend on $\mathcal{L}$, \ie  the parameter in the definition of the cutoff~$\varphi$. Moreover, the $\mathfrak{c}_{ia}$'s  also admit the moment bound  $\mathbb{E}|\mathfrak{c}_{ia}|^k=O(1)$ for any given $k>0$. Note that (\ref{19030501111}) is the same as (\ref{17021020}) but with a cutoff, since $\widehat{\Omega}_1(z)$ holds with high probability. The proof of (\ref{19030501111}) can be done in the same way as the proof of the non-cutoff one in (\ref{17021020}). Essentially, the only modification we need to accommodate is estimating the additional terms in the integration by parts that are created by introducing $\varphi(\Gamma_i)$ into $\wt{\mathfrak{m}}_i$. But it will be clear that these additional terms can be absorbed into the first term on the right side of (\ref{19030501111}). For instance, in the analogue of the step  (\ref{17020540}), apart from replacing $\mathfrak{m}_i$ by $\wt{\mathfrak{m}}_i$, we will have an additional term
\begin{align}
\frac{1}{N} \sum_k^{(i)} \mathbb{E}\Big[ \frac{\mathbf{e}_k^*\wt{B}^{\la i\ra} G\mathbf{e}_i}{\|\mathbf{g}_i\| }  \ntr G \frac{\partial  \varphi(\Gamma_i)}{\partial g_{ik}}\wt{\mathfrak{m}}_i^{(p-1,p)}\Big]. \label{19030502}
\end{align}
According to the definition of $\varphi(\Gamma_i)$, the derivative $\frac{\partial  \varphi(\Gamma_i)}{\partial g_{ik}}$ is written as a sum of several terms. For instance, one term is 
\begin{align*}
\varphi'(\Gamma_i)\frac{\partial |G_{ii}|^2}{\partial g_{ik}}= \varphi'(\Gamma_i)\overline{G_{ii}} \frac{\partial G_{ii}}{\partial g_{ik}}+   \varphi'(\Gamma_i) G_{ii}\frac{\partial \overline{G_{ii}}}{\partial g_{ik}}\,.
\end{align*}
Applying a quantitative version of  the second estimate in (\ref{17021202}), one obtains
\begin{align*}
\frac{1}{N}\sum_{k}^{(i)}  \frac{\mathbf{e}_k^*\wt{B}^{\la i\ra} G\mathbf{e}_i}{\|\mathbf{g}_i\| }  \ntr G  \frac{\partial |G_{ii}|^2 }{\partial g_{ik}}=O\big(\frac{N^{\varepsilon_1}}{\sqrt{N\eta}}\big)\,, \qquad \text{on }\; \{\varphi'(\Gamma_i)\neq 0\}\cap \widehat{\Omega}_1(z)\,.
\end{align*}
The contribution of the other terms from  $\frac{\partial  \varphi(\Gamma_i)}{\partial g_{ik}}$ to the term (\ref{19030502}) can be bounded in the same way. We omit the details. Hence, we have (\ref{19030501111}). 

Applying Young's inequality to (\ref{19030501111}), we have 
\begin{align*}
\mathbb{E} [\wt{\mathfrak{m}}_i^{(p,p)}]\leq C_p N^{2p\varepsilon_1}\Big(\mathbb{E} |\mathfrak{c}_1|^{2p}+ \mathbb{E}|\mathfrak{c}_2|^{p}+\mathbb{E} |\mathfrak{c}_3|^p \Big)\leq C_p N^{2p\varepsilon_1} \Big( \big(\frac{N^{\varepsilon_1}}{\sqrt{N\eta}}\big)^{2p}+N^{-\frac{D_1}{2}}\Big). 
\end{align*}
Further,  by Markov's inequality, we have 
\begin{align}
\mathbb{P} \Big(|P_i\varphi(\Gamma_i)|\geq \frac{N^{\frac{\varepsilon}{4}}}{\sqrt{N\eta}}\Big)\leq C_p \Big(\frac{N^{\frac{\varepsilon}{4}}}{\sqrt{N\eta}}\Big)^{-2p} N^{2p\varepsilon_1} \Big( \big(\frac{N^{\varepsilon_1}}{\sqrt{N\eta}}\big)^{2p}+N^{-\frac{D_1}{2}}\Big). \label{19030503}
\end{align}
For the given $\varepsilon>0$ in Lemma \ref{lem.19030901}, we can first choose $\varepsilon_1=\varepsilon_1(\varepsilon)$ to be smaller than $\frac{\varepsilon}{8}$, and then choose  $p=p(\varepsilon, D)$ to be sufficiently large, then we can get 
\begin{align}
C_p \Big(\frac{N^{\frac{\varepsilon}{4}}}{\sqrt{N\eta}}\Big)^{-2p} N^{2p\varepsilon_1}  \Big(\frac{N^{\varepsilon_1}}{\sqrt{N\eta}}\Big)^{2p}\leq \frac{1}{10} N^{-D}. \label{19030504}
\end{align}
Then by choosing $D_1= D_1(\varepsilon, D)$ sufficiently large, we also have 
\begin{align}
C_p \Big(\frac{N^{\frac{\varepsilon}{4}}}{\sqrt{N\eta}}\Big)^{-2p} N^{2p\varepsilon_1} N^{-\frac{D_1}{2}}\leq \frac{1}{10} N^{-D}.  \label{19030505}
\end{align}
We can thus denote $\wt{N}_1(D, \varepsilon):= N(D_1, \varepsilon_1)$, according to our choice of $\varepsilon_1, D_1$.  Then by (\ref{19030503})-(\ref{19030505}), there exists an event $\Omega_1(z)$, such that 
\begin{align*}
\mathbb{P}(\Omega_1(z))\geq 1-\frac{1}{5} N^{-D}, \qquad N\geq \wt{N}_1(D, \varepsilon) 
\end{align*}
and 
\begin{align}
|P_i\varphi(\Gamma_i)|\leq {\frac{N^\frac\varepsilon4}{\sqrt{N\eta}}}, \qquad \text{on  }\; \Omega_1(z).  \label{19030510}
\end{align}
Observing  that $\varphi(\Gamma_i)=1 $ on $\Theta(z,{\frac{N^{3\varepsilon}}{(N\eta)^{\frac13}}},  {\frac{N^{3\varepsilon}}{\sqrt{N\eta}}})$, we further get from (\ref{19030510}) that 
\begin{align}
|P_i|\leq {\frac{N^\frac\varepsilon4}{\sqrt{N\eta}}}, \qquad \text{on  }\; \Theta\Big(z,{\frac{N^{3\varepsilon}}{(N\eta)^{\frac13}}},  {\frac{N^{3\varepsilon}}{\sqrt{N\eta}}}\Big)\cap\Omega_1(z). \label{19030511}
\end{align}
By working with $\wt{\mathfrak{n}}_i$ instead of $\wt{\mathfrak{m}}_i$, we can also get 
\begin{align}
|K_i|\leq {\frac{N^\frac\varepsilon4}{\sqrt{N\eta}}}, \qquad \text{on  }\; \Theta \Big(z,{\frac{N^{3\varepsilon}}{(N\eta)^{\frac13}}},  {\frac{N^{3\varepsilon}}{\sqrt{N\eta}}}\Big)\cap\Omega_1(z) \,, \label{19030512}
\end{align}
 where we redefine $\Omega_1(z)$ to include the ``good" events for both estimates for $P_i$ and $K_i$.
Similarly to the proof of Proposition \ref{pro.17020310}, based on the estimates in (\ref{19030511}), (\ref{19030512}) and also their analogues by switching the roles of $A$ and $B$, and also the roles of $U$ and $U^*$, we can derive the  following  quantitative estimates:
\begin{align}
\Lambda_{{\rm d}}^c (z)\leq {\frac{N^\frac\varepsilon2}{\sqrt{N\eta}}}, \quad \wt{\Lambda}_{{\rm d}}^c(z)\leq  {\frac{N^\frac\varepsilon2}{\sqrt{N\eta}}}, \quad \Lambda_{T}(z)\leq  {\frac{N^\frac\varepsilon2}{\sqrt{N\eta}}}, \quad \wt{\Lambda}_T(z)\leq  {\frac{N^\frac\varepsilon2}{\sqrt{N\eta}}},\quad |\Upsilon(z)|\leq {\frac{N^\frac\varepsilon2}{\sqrt{N\eta}}}\,, \label{17030511}
\end{align}
hold on  $\Theta(z,{\frac{N^{3\varepsilon}}{(N\eta)^{\frac13}}},  {\frac{N^{3\varepsilon}}{\sqrt{N\eta}}})\cap \Omega_1(z)$,  where  $\Omega_1(z)$ shall be redefined further to include the ``good" events for both estimates of the analogues of $P_i$ and $K_i$. 

\vspace{2ex}
\noindent{\bf Step 2:}
In this step, we  derive a cutoff version of  Lemma \ref{lem.17021231}, but with weaker bounds, and use it to estimate the linear combinations of $G_{ii}$. Analogously to (\ref{19030905}), we introduce the variant of   (\ref{17071810})
\begin{align}
\wt{\mathfrak{m}}^{(k,l)}\deq\Big(\frac{1}{N}\sum_{i=1}^N   d_i  Q_i \varphi(\Gamma_i)\varphi(\Gamma)\Big)^k\Big(\frac{1}{N}\sum_{i=1}^N   \overline{d_i}\; \overline{ Q_i}\varphi(\Gamma_i)\varphi(\Gamma)\Big)^l \,,\qquad\qquad k,l\in\N\,,
\end{align}
where $\Gamma_i$ is defined in (\ref{19030401}) and $\Gamma$ is defined as the following 
\begin{align}
\Gamma:=(c\,\Im m_{\mu_A\boxplus\mu_B}+\hat{\Lambda})^{-2}\big(|\Lambda_A|^2+|\Lambda_B|^2\big)+\Big(\frac{N^{5\varepsilon}}{(N\eta)^{\frac13}}\Big)^{-2}|\Upsilon|^2+\Big(\frac{N^{5\varepsilon}}{\sqrt{N\eta}}\Big)^{-1} \frac{1}{N}\sum_{i=1}^N (| T_{i}|^2+N^{-1})^{\frac12}, \label{19030920}
\end{align}
for some sufficiently small constant $c>0$. In the rest of the proof, we choose 
\begin{align}
\hat{\Lambda}(z)=\frac{N^{3\varepsilon}}{(N\eta)^{\frac13}}. \label{190311200}
\end{align}
The boundedness of the first term in (\ref{19030920}) is used to control $\Pi$ by $\hat{\Pi}$, see  (\ref{17072960}) for its definition. The second and the third terms in (\ref{19030920}) are used to bound the analogue of the term $\frac{1}{N}\sum_{i}T_{i}\tau_{i1}\Upsilon$ in (\ref{19030922}).  

Similarly to $\widehat{\Omega}_1(z)$,  for any $\varepsilon_1>0$, let $\widehat{\Omega}_2(z)\equiv \widehat{\Omega}_2(z, \varepsilon_1)$ be the event that all concentration estimates of the components or quadratic forms of 
the Gaussian vectors $\mathbf{g}_i$'s in the proof of Lemma  \ref{lem.17021231} hold with precision $N^{\varepsilon_1}$.   Again, due to the Gaussian tails, for any $D_1>0$, there exists $\wt{N}(D_1,\varepsilon_1)$, such that if $N\geq \widetilde{N}(D_1, \varepsilon_1)$
\begin{align*}
\mathbb{P}(\widehat{\Omega}_2(z))\geq 1-N^{-D_1}. 
\end{align*}
Analogously to (\ref{19030501111}), we now claim that  
\begin{align}
\mathbb{E}[\wt{\mathfrak{m}}^{(p,p)}]= \mathbb{E}[\mathfrak{c}_{1} \wt{\mathfrak{m}}^{(p-1,p)}]+ \mathbb{E}[\mathfrak{c}_{2} \wt{\mathfrak{m}}^{(p-2,p)}]+ \mathbb{E}[\mathfrak{c}_{3} \wt{\mathfrak{m}}^{(p-1,p-1)}]\,, \label{19030501}
\end{align}
with some random variables $\mathfrak{c}_{1}, \mathfrak{c}_{2}$ and $\mathfrak{c}_{3}$, satisfying 
\begin{align}
|\mathfrak{c}_{1}|\leq C\hat{\Pi}\,, \qquad  |\mathfrak{c}_{2}|\leq C\hat{\Pi}^2\,, \qquad |\mathfrak{c}_{3}|\leq C\hat{\Pi}^2\,, \qquad \text{on}\quad \widehat{\Omega}_2(z)\,, \label{19030950}
\end{align}
for some positive constant $C$. Moreover, the $\mathfrak{c}_{i}$'s  also admit the moment bound  $\mathbb{E}|\mathfrak{c}_{i}|^k=O(1)$, for any given $k>0$. 
Note that (\ref{19030501}) is similar to (\ref{17071833}), but with a weaker bounds for $\mathfrak{c}_i$'s. The weakness of the bounds is partially due to the weak a priori input in the cutoffs $\varphi(\Gamma_i)$ and $\varphi(\Gamma)$, and also partially due to the additional terms  involving the derivatives of the cutoffs which are generated by the integration by parts.  In Appendix \ref{appendix C}, we show more details on how to slightly modify the proof of  (\ref{17071833}) to get (\ref{19030501}).   Similarly to (\ref{19030510}), we can show from  (\ref{19030501}) that  there exists an event $\Omega_2(z)$, such that 
\begin{align*}
\mathbb{P}(\Omega_2(z))\geq 1-\frac{1}{5} N^{-D}\,, \qquad N\geq \wt{N}_2(D, \varepsilon) \,,
\end{align*}
for some sufficiently large constant $\wt{N}_2(D, \varepsilon)>0$, and 
\begin{align*}
\Big|\frac{1}{N}\sum_{i=1}^N   d_i  Q_i \varphi(\Gamma_i)\varphi(\Gamma)\Big|\leq N^{\frac{\varepsilon}{4}}\hat{\Pi}, \qquad \text{on}\quad \Omega_2(z)\,.
\end{align*}
Note that $\varphi(\Gamma_i)=\varphi(\Gamma)=1$ for all $i$ on $\Theta(z,{\frac{N^{3\varepsilon}}{(N\eta)^{\frac13}}},  {\frac{N^{3\varepsilon}}{\sqrt{N\eta}}})$. Hence, we have 
\begin{align*}
\Big|\frac{1}{N}\sum_{i=1}^N   d_i  Q_i \Big|\leq  N^{\frac{\varepsilon}{3}}  \hat{\Pi}\,, \qquad \text{on}\quad \Theta\Big(z,{\frac{N^{3\varepsilon}}{(N\eta)^{\frac13}}}\,,  {\frac{N^{3\varepsilon}}{\sqrt{N\eta}}} \Big)\cap \Omega_2(z)\,.
\end{align*}

 \vspace{2ex}
\noindent{\bf Step 3:}  In the last step, we perform an estimate of $\Lambda_A$ and $\Lambda_B$, by 
choosing  $d_i=\mathfrak{d}_{i1}$ in (\ref{17022001}) and also considering the analogues of $\frac{1}{N}\sum_{i=1}^N   \mathfrak{d}_{i1}  Q_i $. Repeating the argument of the proof of Proposition \ref{pro. 17021720} but with the cutoff versions, where the error terms due to the cutoff are estimated analogously as in Appendix  \ref{appendix C},  we can then finally show that there exists an event $\Omega_3(z)$, such that 
\begin{align*}
\mathbb{P}(\Omega_3(z))\geq 1-\frac{4}{5} N^{-D}, \qquad N\geq \wt{N}_3(D, \varepsilon) \,,
\end{align*}
for some sufficiently large constant $\wt{N}_3(D, \varepsilon)>0$, and 
\begin{align*}
\big|\mathcal{Z}_\iota\big|\leq   N^{\frac{\varepsilon}{3}}  \hat{\Pi} \leq  N^{\frac{\varepsilon}{2}}  \frac{|\mathcal{S}|+ \hat{\Lambda}}{(N\eta)^{\frac13}}, \quad \iota=1,2 \qquad \text{on}\quad \Theta\Big(z,{\frac{N^{3\varepsilon}}{(N\eta)^{\frac13}}},  {\frac{N^{3\varepsilon}}{\sqrt{N\eta}}}
\Big)\cap\Omega_3(z),
\end{align*}
where $\mathcal{Z}_\iota$'s are defined in (\ref{17021710}). Here in the second step above we used the fact $\Im m_{\mu_A\boxplus\mu_B}\lesssim |\mathcal{S}|$ (\cf (\ref{17080120}), (\ref{17080121})),  and  thus
\begin{align*}
\hat{\Pi}\leq  C\Big(\frac{|\mathcal{S}|+\hat{\Lambda}}{N\eta}\Big)^{\frac12} \leq C \Big(\frac{|\mathcal{S}|+\hat{\Lambda}}{(N\eta)^{\frac13}}+\frac{1}{(N\eta)^{\frac23}} \Big)\leq C' \frac{|\mathcal{S}|+\hat{\Lambda}}{(N\eta)^{\frac13}} 
\end{align*}
under the choice of $\hat{\Lambda}$ in (\ref{190311200}). Similarly to the proof of (\ref{17030301}), we can then get the following weaker but quantitative version of (\ref{17030301})
\begin{align*}
\Big|\mathcal{S}\Lambda_\iota+\mathcal{T}_\iota\Lambda_\iota^2+O(\Lambda_\iota^3)\Big|\leq   N^{{\varepsilon}} \frac{|\mathcal{S}|+ \hat{\Lambda}}{(N\eta)^{\frac13}},\quad \iota=1,2 \qquad \text{on}\quad \Theta(z,{\frac{N^{3\varepsilon}}{(N\eta)^{\frac13}}}\,,  {\frac{N^{3\varepsilon}}{\sqrt{N\eta}}})\cap\Omega_3(z)\,.
\end{align*}
Then, applying Lemma \ref{lem.19031101}, we have (\ref{19031102}) and (\ref{19031103}) with the choice $\wt{\Omega}(z)= \Theta(z,{\frac{N^{3\varepsilon}}{(N\eta)^{\frac13}}},  {\frac{N^{3\varepsilon}}{\sqrt{N\eta}}})\cap\Omega_3(z)$. 
In addition, from the conclusion of (\ref{19031102}) and recalling  that   $|\mathcal{S}|\sim \sqrt{\kappa+\eta}$, we note that if $\sqrt{\kappa+\eta}> N^{-\varepsilon}  \hat{\Lambda} $, then
\begin{align}
\mathbf{1}\Big(\Lambda\leq \frac{|\mathcal{S}|}{K_0}\Big) |\Lambda_\iota| \leq N^{-\varepsilon} |\mathcal{S}|,\qquad \qquad \iota=A, B \,,\label{17071901}
\end{align}
with $K_0$ chosen in Lemma \ref{lem.19031101}. 

Therefore, with the choice in (\ref{190311200}), by (\ref{19031102}), (\ref{19031103}) and (\ref{17071901}), we have  
\begin{align}
\Lambda \leq \min \Big\{\frac{N^{\frac52\varepsilon}}{(N\eta)^{\frac13}}, N^{-\varepsilon} |\mathcal{S}|\Big\}, \qquad \text{on}\quad  \Theta_{>} \Big(z, {\frac{N^{3\varepsilon}}{(N\eta)^{\frac13}}}, {\frac{N^{3\varepsilon}}{\sqrt{N\eta}}}, \frac{\varepsilon}{10}\Big) \cap \Omega_3(z) \label{19031110}
\end{align}
if $ \sqrt{\kappa+\eta}> N^{-\varepsilon}  \hat{\Lambda} $, respectively,
\begin{align}
\Lambda \leq \frac{N^{\frac52\varepsilon}}{(N\eta)^{\frac13}}, \qquad \text{on}\quad  \Theta \Big(z, {\frac{N^{3\varepsilon}}{(N\eta)^{\frac13}}}, {\frac{N^{3\varepsilon}}{\sqrt{N\eta}}}\Big) \cap \Omega_3(z) \label{19031111}
\end{align}
if $ \sqrt{\kappa+\eta}\leq N^{-\varepsilon}  \hat{\Lambda} $. Further, applying (\ref{19031110}) and (\ref{19031111}) to (\ref{17030511}), we can also conclude that 
\begin{align}
\Lambda_{{\rm d}} (z)\leq {\frac{N^{\frac52\varepsilon}}{(N\eta)^{\frac13}}}, \qquad \wt{\Lambda}_{{\rm d}}(z)\leq  {\frac{N^{\frac52\varepsilon}}{(N\eta)^{\frac13}}}, \qquad \Lambda_{T}(z)\leq  {\frac{N^\frac\varepsilon2}{\sqrt{N\eta}}}, \qquad \wt{\Lambda}_T(z)\leq  {\frac{N^\frac\varepsilon2}{\sqrt{N\eta}}} \label{19031120}
\end{align}
on $\Theta_{>} \Big(z, {\frac{N^{3\varepsilon}}{(N\eta)^{\frac13}}}, {\frac{N^{3\varepsilon}}{\sqrt{N\eta}}}, \frac{\varepsilon}{10}\Big) \cap \Omega(z)$  if 
$\sqrt{\kappa+\eta}> N^{-\varepsilon}  \hat{\Lambda} $, and on ${ \Theta} \Big(z, {\frac{N^{3\varepsilon}}{(N\eta)^{\frac13}}}, {\frac{N^{3\varepsilon}}{\sqrt{N\eta}}}\Big) \cap \Omega(z)$ if $ \sqrt{\kappa+\eta}\leq N^{-\varepsilon}  \hat{\Lambda} $, 
where 
\begin{align*}
\Omega(z):=\Omega_1(z)\cap \Omega_3(z). 
\end{align*}
Combining (\ref{19031110}), (\ref{19031111}) and (\ref{19031120}), we conclude the proof of Lemma \ref{lem.19030901}. 
\end{proof}

With Lemma \ref{lem.19030901}, we can now prove Theorem \ref{thm. weak law at the edge} by using a continuity argument.  

\begin{proof}[Proof of Theorem \ref{thm. weak law at the edge}]
We start with an entry-wise Green function subordination estimate on global scale, \ie $\eta=\eta_\mathrm{M}$ for some sufficiently large constant $\eta_\mathrm{M}>0$. Recall $Q_i$ from (\ref{17021701}). We regard $Q_i$ as a function of the random unitary matrix $U$. Then,  for $z=E+\mathrm{i}\wt{\eta}_M$ with any fixed $E$ and any  $\wt{\eta}_M\geq \eta_\mathrm{M}$, we apply the Gromov-Milman concentration inequality (\cf (6.2) in \cite{BES16b}), and get 
\begin{align}
|Q_i(E+\mathrm{i}\wt{\eta}_M)-\mathbb{E} Q_i(E+\mathrm{i} \wt{\eta}_M)|\prec\frac{1}{\sqrt{N \wt{\eta}_M^4}}\,; \label{170731101}
\end{align}
see Section 6.2 of \cite{BES16b} for similar estimates for the Green function entries of the block additive model. 

Next, using the invariance of the Haar measure, one can check the equation 
\begin{align}
\mathbb{E} (\wt{B}G\otimes G-G\otimes \wt{B}G)=0\,; \label{170731100}
\end{align}
see Proposition 3.2 of~\cite{VP}. 
Taking the $(i,i)$-th entry for the first component and the normalized trace for the second component in the tensor product, we obtain from (\ref{170731100}) that 
\begin{align}
\mathbb{E} Q_i= \mathbb{E} \big((\wt{B}G)_{ii}\ntr G-G_{ii}\ntr \wt{B}G\big)=0\,. \label{170731102}
\end{align}

We claim that, for sufficiently large $\eta_\mathrm{M}>1$, we have
\begin{align}
\sup_{z: \Im z\geq \eta_\mathrm{M}}|Q_i(z)|\prec\frac{1}{\sqrt{N}}\,,\qquad\qquad \forall i\in\llbracket 1,N\rrbracket\,,  \label{170731120}
\end{align}
where we used  (\ref{170731101}), (\ref{170731102}), the Lipschitz continuity of $Q_i$ in the regime $|z|\leq \sqrt{N}$ and the deterministic bound $|Q_i(z)|\leq\frac{C}{\sqrt{N}}$ when $|z|\geq \sqrt{N}$. 
In addition,  using that $\|H\|\leq \|A\|+\|B\|< \mathcal{K}$ and the convention $\ntr \wt{B}=\ntr B=0$ (\cf (\ref{17072620})),  we have, for $z=E+\mathrm{i}\wt{\eta}_M$ with fixed $E$ and any  $\wt{\eta}_M\geq \eta_\mathrm{M}$,  the expansions
\begin{align}
&\ntr G(z)=-\frac{1}{z}+O(\frac{1}{|z|^2})=\frac{\mathrm{i}}{\wt{\eta}_M}+O\big(\frac{1}{\wt{\eta}_M^2}\big), \qquad \quad\ntr \wt{B}G(z)=-\frac{\ntr \wt{B}}{z}+O(\frac{1}{|z|^2})=O(\frac{1}{\wt{\eta}_M^2})\,, \label{170731125}
\end{align}
where we used $\ntr B=0$ in the second equality.  Hence, by the definition of $\omega_B^c$ in (\ref{17072550}), we see  that, 
\begin{align}
\omega_B^c(z)= z+O(\frac{1}{\wt{\eta}_M}), \qquad\qquad z=E+\mathrm{i}\wt{\eta}_M. \label{170731130}
\end{align}
Using the identity $(\wt{B}G)_{ii}=1-(a_i-z)G_{ii}$, we can rewrite (\ref{170731120}) as 
\begin{align*}
(1-(a_i-\omega_B^c) G_{ii}) \ntr G=O_\prec(\frac{1}{\sqrt{N}}),\qquad \qquad z=E+\mathrm{i}\wt{\eta}_M. 
\end{align*}
 From the first line of (\ref{170731125}) and (\ref{170731130}) we get
\begin{align}
\Lambda_{{\rm d}}^c (z )\prec {\frac{1}{\sqrt N }}\,,\qquad \qquad z=E+\mathrm{i}\wt{\eta}_M\,.   \label{170731140}
\end{align}
Analogously, we also have 
\begin{align}
\wt{\Lambda}_{{\rm d}}^c(z )\prec\frac{1}{\sqrt{N}}\,,\qquad \qquad z=E+\mathrm{i}\wt{\eta}_M\,.   \label{170731141}
\end{align}
Averaging over the index $i$ in the definition of $\Lambda_{di}^c$ and $\wt{\Lambda}_{di}^c$ (\cf(\ref{17080305})), using (\ref{170731140}) and (\ref{170731141}) and using the fact $\ntr G=\ntr \mathcal{G}=m_H$ yields
\begin{align}
\sup_{z: \Im z\geq \eta_\mathrm{M}}\big|m_H(z)- m_A(\omega_B^c(z))\big|\prec \frac{1}{\sqrt{N}}\,,\qquad \qquad \sup_{z: \Im z\geq \eta_\mathrm{M}}\big|m_H(z)- m_B(\omega_A^c(z))\big|\prec \frac{1}{\sqrt{N}} \label{170731153}
\end{align}
where in the large $z$ regime these bounds  even hold deterministically, similarly to (\ref{170731120}). 
This together with (\ref{170725130}) gives us the system
\begin{align}
\sup_{z: \Im z\geq \eta_\mathrm{M}}|\Phi_1(\omega_A^c(z), \omega_B^c(z),z)|\prec \frac{1}{\sqrt{N}}\,,\qquad \sup_{z: \Im z\geq \eta_\mathrm{M}}|\Phi_2(\omega_A^c(z), \omega_B^c(z),z)|\prec \frac{1}{\sqrt{N}},   \label{170731150}
\end{align}
where $\Phi_1$ and $\Phi_2$ are defined in  (\ref{17073115}).  We regard (\ref{170731150}) as a perturbation of $\Phi_1(\omega_A(z), \omega_B(z),z)=0$, $\Phi_2(\omega_A(z), \omega_B(z),z)=0$. The stability of this system in the large $\eta$ regime is analyzed in Lemma~\ref{lem. stability for large eta}.
Choosing $(\mu_1, \mu_2)=(\mu_A, \mu_B)$, $(\wt{\omega}_1(z), \wt{\omega}_2(z))= (\omega_A^c(z), \omega_B^c(z))$ in Lemma \ref{lem. stability for large eta} below, and using  the fact that (\ref{170731150}) and (\ref{170731130}) hold for any  sufficiently large $\wt{\eta}_M$, we obtain from the stability Lemma \ref{lem. stability for large eta} that
\begin{align}
|\Lambda_\iota(z)|=|\omega_\iota^c(z)-\omega_\iota(z)|\prec \frac{1}{\sqrt{N}}\,, \qquad \qquad \iota=A,B\,, \qquad z=E+\mathrm{i}\eta_\mathrm{M}\,, \label{17080105}
\end{align}
for any sufficiently large constant $\eta_\mathrm{M}>1$, say.

Substituting (\ref{17080105}) into (\ref{170731140}) and (\ref{170731141}) gives
\begin{align}
\Lambda_{{\rm d}} (E+\mathrm{i}\eta_\mathrm{M} )\prec \frac{1}{\sqrt{N}}\,, \qquad \qquad \wt{\Lambda}_{{\rm d}}(E+\mathrm{i}\eta_\mathrm{M} )\prec \frac{1}{\sqrt{N}}\,,\label{17072101}
\end{align} 
for  any fixed $E\in \mathbb{R}$. Using the bound $\|G\|\leq \frac{1}{\eta}$ and the inequality $|\mathbf{x}^*G\mathbf{y}|\leq \|G\|\|\mathbf{x}\| \|\mathbf{y}\| $, we also~get
\begin{align}
\Lambda_T(E+\mathrm{i}\eta_\mathrm{M} )\leq \frac{1}{\eta_\mathrm{M}}\,, \qquad\qquad \wt{\Lambda}_T(E+\mathrm{i}\eta_\mathrm{M} )\leq \frac{1}{\eta_\mathrm{M}} \,,\label{17072102}
\end{align}
for any fixed $E\in \mathbb{R}$.  Since (\ref{17072101}) and (\ref{17072102}) guarantee assumption~(\ref{17020501}),   we can apply Proposition \ref{pro.17020310} to get, for any fixed $E\in \mathbb{R}$, that
\begin{align}
& \Lambda_{T}(E+\mathrm{i}\eta_\mathrm{M} )\prec \frac{1}{\sqrt{N }}\,,  \qquad \qquad \wt{\Lambda}_T(E+\mathrm{i}\eta_\mathrm{M} )\prec \frac{1}{\sqrt{N} }\,. \label{17072111}
\end{align}
 Also observe that $E+\mathrm{i}\eta_\mathrm{M} \in \mathcal{D}_{>}$, for any fixed $E$, and that $|\mathcal{S}(E+\mathrm{i}\eta_\mathrm{M} )|\gtrsim 1$. Hence $\Lambda(E+\mathrm{i}\eta_\mathrm{M} )\prec N^{-\varepsilon} |\mathcal{S}(E+\mathrm{i} \eta_\mathrm{M})|$.  From (\ref{17072101}), we can also conclude 
 \begin{align}
 \Lambda(E+\mathrm{i}\eta_\mathrm{M} ) \prec \frac{1}{\sqrt{N}}. \label{17072110}
 \end{align}
 Combining (\ref{17072101}), (\ref{17072111}), (\ref{17072110}) with the fact $\Lambda(E+\mathrm{i}\eta_\mathrm{M} )\prec N^{-\varepsilon} |\mathcal{S}(E+\mathrm{i}\eta_\mathrm{M} )|$, we see that the event   $\Theta_{>} (E+\mathrm{i}\eta_\mathrm{M} , {\frac{N^{3\varepsilon}}{N^{\frac13}}},{\frac{N^{3\varepsilon}}{\sqrt N}},  {\frac{\varepsilon}{10}})$ holds with high probability.   More quantitively, we have for any fixed $E$ that  
 \begin{align}
\mathbb{P} \Big(\Theta_{>} (E+\mathrm{i}\eta_\mathrm{M} , {\frac{N^{3\varepsilon}}{N^{\frac13}}},{\frac{N^{3\varepsilon}}{\sqrt N}},  {\frac{\varepsilon}{10}})\Big)\geq 1-N^{-D}\,,  \label{17072115}
\end{align}
for all $D>0$ and $N\geq N_2(D, \varepsilon)$ with some threshold $N_2(D, \varepsilon)$. 

Now  we take (\ref{17072115}) as the initial input, and use a continuity argument based on Lemma \ref{lem.19030901}, to control the probability of the ``good" events $\Theta_{>}$ for $z\in \mathcal{D}_{>}$ and $\Theta$  for $z\in \mathcal{D}_{\leq}$.  To this end, we first recall the event $\Omega(z)$ in Lemma \ref{lem.19030901}.  The main task is to show for any $z=E+\mathrm{i}\eta\in \mathcal{D}_{>}$, 
\begin{align}
\Theta_{>} \Big(E+\mathrm{i}\eta, {\frac{N^{\frac{5}{2}\varepsilon}}{(N\eta)^{\frac13}}},{\frac{N^{\frac{5}{2}\varepsilon}}{\sqrt{N\eta}}}, \frac{\varepsilon}{2}\Big)\cap \Omega\big(E+\mathrm{i}(\eta-N^{-5})\big)\subset  \Theta_{>} \Big(E+\mathrm{i}(\eta-N^{-5}), {\frac{N^{\frac{5}{2}\varepsilon}}{(N\eta)^{\frac13}}}, {\frac{N^{\frac{5}{2}\varepsilon}}{\sqrt{N\eta}}}, \frac{\varepsilon}{2}\Big), \label{17072123}
\end{align}
and, for any $z=E+\mathrm{i}\eta\in \mathcal{D}_{\leq}$, 
\begin{align}
\Theta \Big(E+\mathrm{i}\eta, {\frac{N^{\frac{5}{2}\varepsilon}}{(N\eta)^{\frac13}}},  {\frac{N^{\frac{5}{2}\varepsilon}}{\sqrt{N\eta}}}\Big)\cap \Omega\big(E+\mathrm{i}(\eta-N^{-5})\big)\subset  \Theta \Big(E+\mathrm{i}(\eta-N^{-5}), {\frac{N^{\frac{5}{2}\varepsilon}}{(N\eta)^{\frac13}}}, {\frac{N^{\frac{5}{2}\varepsilon}}{\sqrt{N\eta}}}\Big). \label{17072124}
\end{align}
The inclusions (\ref{17072123}) and (\ref{17072124}) are analogous to (7.20) of \cite{BES15b}. The only difference here is that we decompose the domain $\mathcal{D}_\tau(\eta_{\rm m},\eta_\mathrm{M})$ into  $\mathcal{D}_{>}$ and $\mathcal{D}_{\leq}$, and in $\mathcal{D}_{>}$ we also keep monitoring the event $\Lambda\leq N^{-\frac{\varepsilon}{2}}|\mathcal{S}|$ in order to use  Lemma 
\ref{lem.19030901}~$(i)$. As we are gradually reducing $\Im z$, once $z$ enters into the domain $\mathcal{D}_{\leq}$, we do not need to monitor $\mathcal{S}$ anymore.  

The proofs of (\ref{17072123}) and (\ref{17072124}) rely on the Lipschitz continuity of the Green function, $\|G(z)-G(z')\|\leq N^2|z-z'|$, and of the subordination functions and $\mathcal{S}$ in~\eqref{le lipschitz stuff}. Using the Lipschitz continuity of these functions, it is not difficult to see that
\begin{align}
&\Theta_{>} \Big(E+\mathrm{i}\eta, {\frac{N^{\frac{5}{2}\varepsilon}}{(N\eta)^{\frac13}}},{\frac{N^{\frac{5}{2}\varepsilon}}{\sqrt{N\eta}}}, \frac{\varepsilon}{2}\Big)\subset \Theta_{>} \Big(E+\mathrm{i}(\eta-N^{-5}), {\frac{N^{3\varepsilon}}{(N\eta)^{\frac13}}},{\frac{N^{3\varepsilon}}{\sqrt{N\eta}}}, \frac{\varepsilon}{10}\Big),\qquad  & & z=E+\mathrm{i}\eta\in \mathcal{D}_{>}\,, \label{17072130} \\
&\Theta \Big(E+\mathrm{i}\eta, {\frac{N^{\frac{5}{2}\varepsilon}}{(N\eta)^{\frac13}}},{\frac{N^{\frac{5}{2}\varepsilon}}{\sqrt{N\eta}}}\Big)\subset \Theta \Big(E+\mathrm{i}(\eta-N^{-5}), {\frac{N^{3\varepsilon}}{(N\eta)^{\frac13}}},{\frac{N^{3\varepsilon}}{\sqrt{N\eta}}}\Big), \qquad & & z=E+\mathrm{i}\eta\in \mathcal{D}_{\leq}\,. \label{17072131}
\end{align}
Then, (\ref{17072130}) together with (\ref{19031201}) implies  (\ref{17072123}). Similarly, (\ref{17072131}) together with  (\ref{19031202}) implies (\ref{17072124}). Applying (\ref{17072123}) and (\ref{17072124}) recursively and  using the simple fact that the domains $\mathcal{D}_{>}$ and $\mathcal{D}_{\leq}$ are connected, one can go from $\eta=\eta_\mathrm{M} $ to $\eta=\eta_{\rm m}$, step by step of size $N^{-5}$. 
Consequently, we obtain for any $\eta\in [\eta_{\rm m},\eta_\mathrm{M}]\cap N^{-5}\mathbb{Z}$  that, if $E+\mathrm{i}\eta\in \mathcal{D}_{>}$ then
\begin{multline}
\Theta_{>} (E+\mathrm{i}\eta_\mathrm{M} , {\frac{N^{\frac52\varepsilon}}{N^{\frac13}}},{\frac{N^{\frac52\varepsilon}}{\sqrt N}},  {\frac{\varepsilon}{2}})\cap \Omega(E+\mathrm{i}(\eta_\mathrm{M}-N^{-5}))\cap\ldots\cap \Omega(E+\mathrm{i}\eta)\\
\subset  \Theta_{>} \Big(E+\mathrm{i}\eta,{\frac{N^{\frac{5}{2}\varepsilon}}{(N\eta)^{\frac13}}}, {\frac{N^{\frac{5}{2}\varepsilon}}{\sqrt{N\eta}}}, \frac{\varepsilon}{2}\Big)\subset   \Theta \Big(E+\mathrm{i}\eta, {\frac{N^{\frac{5}{2}\varepsilon}}{(N\eta)^{\frac13}}}, {\frac{N^{\frac{5}{2}\varepsilon}}{\sqrt{N\eta}}}\Big)\,, \label{17072301}
\end{multline}
respectively, if $E+\mathrm{i}\eta\in \mathcal{D}_{\leq}$ then
\begin{align}
&\Theta_{>} (E+\mathrm{i}\eta_\mathrm{M} , {\frac{N^{\frac52\varepsilon}}{N^{\frac13}}},{\frac{N^{\frac52\varepsilon}}{\sqrt N}},  {\frac{\varepsilon}{2}})\cap \Omega(E+\mathrm{i}(\eta_\mathrm{M}-N^{-5}))\cap\ldots\cap \Omega(E+\mathrm{i}\eta)\subset   \Theta \Big(E+\mathrm{i}\eta, {\frac{N^{\frac{5}{2}\varepsilon}}{(N\eta)^{\frac13}}}, {\frac{N^{\frac{5}{2}\varepsilon}}{\sqrt{N\eta}}}\Big)\,.\label{17072302}
\end{align}

Combining (\ref{19031220}), (\ref{17072115}), (\ref{17072301}) and (\ref{17072302}), we have
\begin{align}
\mathbb{P}\Big(\Theta \Big(E+\mathrm{i}\eta, {\frac{N^{\frac{5}{2}\varepsilon}}{(N\eta)^{\frac13}}}, {\frac{N^{\frac{5}{2}\varepsilon}}{\sqrt{N\eta}}}\Big)\Big)\geq 1-N^{-D} (1+N^5(\eta_\mathrm{M}-\eta))\,,\label{17073030} 
\end{align}
uniformly for all $\eta\in [\eta_{\rm m}, \eta_\mathrm{M}]\cap N^{-5}\mathbb{Z}$, when $N\geq \max\{N_1(D, \varepsilon), N_2(D, \varepsilon)\}$.  Finally, by the Lipschitz continuity of  the Green function and also that of the subordination functions in~\eqref{le lipschitz stuff}, we can extend the bounds from $z$ in the discrete lattice to the entire domain $\mathcal{D}_\tau(\eta_{\rm m},\eta_\mathrm{M})$.

By the definition of the event $\Theta$ in (\ref{19031130}), we obtain from (\ref{17073030}) that
\begin{align}
\Lambda_{\rm d}(z)\leq  {\frac{N^{\frac{5}{2}\varepsilon}}{(N\eta)^{\frac13}}}\,, \quad \wt{\Lambda}_{\rm d}(z)\leq  {\frac{N^{\frac{5}{2}\varepsilon}}{(N\eta)^{\frac13}}}\,, \quad |\Lambda(z)|\leq {\frac{N^{\frac{5}{2}\varepsilon}}{(N\eta)^{\frac13}}} \quad 
|\Lambda_T(z)|\leq  {\frac{N^{\frac{5}{2}\varepsilon}}{\sqrt{N\eta}}}\quad |\wt{\Lambda}_T(z)|\leq  {\frac{N^{\frac{5}{2}\varepsilon}}{\sqrt{N\eta}}}\,, \label{17072320}
\end{align}
uniformly on $\mathcal{D}_\tau(\eta_{\rm m},\eta_\mathrm{M})$ with high probability.

Further, by (\ref{17072320}), we see that $\varphi(\Gamma_i)=1$ and $\varphi(\Gamma)=1$ hold uniformly on $\mathcal{D}_\tau(\eta_{\rm m},\eta_\mathrm{M})$, with high probability. Then it is easy to show that the conclusions in  (\ref{19030511})-(\ref{17030511}) also hold uniformly on $\mathcal{D}_\tau(\eta_{\rm m},\eta_\mathrm{M})$, with high probability. This concludes the proof of   Theorem \ref{thm. weak law at the edge}. 
\end{proof}

\section{Strong local law} \label{s. strong local law}
 In this section, we will prove the strong local law,~\ie Theorem \ref{thm. strong law at the edge}. 
From the weak local law in  Theorem \ref{thm. weak law at the edge}, we have the following rewriting of Proposition \ref{pro.17021715}, valid uniformly on $\mathcal{D}_\tau(\eta_{\rm m},\eta_\mathrm{M})$. 
\begin{pro}  Suppose that Assumptions \ref{a.regularity of the measures} and \ref{a. levy distance} hold. Fix any small $\gamma>0$. Suppose that $\Lambda(z)\prec \hat{\Lambda}(z)$, for some deterministic and positive function $\hat{\Lambda}(z)\prec N^{-\frac{\gamma}{4}}$, then
\begin{align}
&\Big|\mathcal{S}\Lambda_\iota+\mathcal{T}_\iota\Lambda_\iota^2+O(\Lambda_\iota^3)\Big|\prec   \frac{\sqrt{(\Im m_{\mu_A\boxplus\mu_B}+\hat{\Lambda})(|\mathcal{S}|+\hat{\Lambda})}}{N\eta}+\frac{1}{(N\eta)^2},\qquad \iota=A, B\,.  \label{19031250}
\end{align}
holds uniformly on $\mathcal{D}_\tau(\eta_{\rm m},\eta_\mathrm{M})$. 
\end{pro}
\begin{proof} The proof is the same as Proposition \ref{pro.17021715}. But now we have the weak local law Theorem \ref{thm. weak law at the edge}, which guarantees that the assumptions in Proposition \ref{pro.17020310} hold uniformly on $\mathcal{D}_\tau(\eta_{\rm m},\eta_\mathrm{M})$. Hence we do not need additional inputs in (\ref{17020501}), and the conclusion holds uniformly on $\mathcal{D}_\tau(\eta_{\rm m},\eta_\mathrm{M})$. This concludes the proof. 
\end{proof}
With the improved bound (\ref{19031250}) instead of the weaker one in (\ref{19030801}), we obtain the following improvement of Lemma \ref{lem.19031101}. 

\begin{lem} \label{19031270}  
 Let $\varepsilon\in (0, \frac{\gamma}{100})$. Let $\hat{\Lambda}=\hat{\Lambda}(z)\leq  N^{-\frac{\gamma}{4}}$ be some deterministic control parameter.  Suppose that $\Lambda\leq \hat{\Lambda}$.   Then we have the following estimates uniformly on $\mathcal{D}_\tau(\eta_{\rm m},\eta_\mathrm{M})$:

\noindent $(i)$: If $
\sqrt{\kappa+\eta}> N^{-\varepsilon}  \hat{\Lambda} 
$, there is a sufficiently large constant $K_0>0$, such that 
\begin{align}
\mathbf{1}\Big(\Lambda\leq \frac{|\mathcal{S}|}{K_0}\Big) |\Lambda_A| \prec  N^{-2\varepsilon} \hat{\Lambda}\,,\qquad\qquad\mathbf{1}\Big(\Lambda\leq \frac{|\mathcal{S}|}{K_0}\Big) |\Lambda_B| \prec  N^{-2\varepsilon} \hat{\Lambda}\,;\label{190312101}
\end{align}

\noindent $(ii)$: If $
\sqrt{\kappa+\eta}\leq  N^{-\varepsilon}  \hat{\Lambda} 
$, we have 
\begin{align}
|\Lambda_A|\prec  N^{-\varepsilon}  \hat{\Lambda}\,, \qquad\qquad|\Lambda_B|\prec N^{-\varepsilon}  \hat{\Lambda}.  \label{190312102}
\end{align}
\end{lem}
\begin{proof}  By (\ref{19031250}) and the fact $\Im m_{\mu_A\boxplus \mu_B}\lesssim |\mathcal{S}|$, we have 
\begin{align}
&\Big|\mathcal{S}\Lambda_\iota+\mathcal{T}_\iota\Lambda_\iota^2+O(\Lambda_\iota^3)\Big|\prec   \frac{|\mathcal{S}|+\hat{\Lambda}}{N\eta}+\frac{1}{(N\eta)^2},\qquad \iota=A, B\,.  \label{19031256}
\end{align}
Using (\ref{19031256}) instead of (\ref{19030801}), the remaining proof  is the same as the proof of Lemma \ref{lem.19031101}. 
\end{proof}
With the aid of Lemma \ref{19031270}, we can now prove (\ref{17072330}) and  (\ref{17011304}).

\begin{proof}[Proof of (\ref{17072330}) and  (\ref{17011304}) in Theorem \ref{thm. strong law at the edge}]  We first prove (\ref{17011304}).  Observe that by the weak local law in Theorem \ref{thm. weak law at the edge}, we have $\Lambda\prec \frac{1}{(N\eta)^{\frac13}}$. Hence, for any $z\in \mathcal{D}_\tau(\eta_{\rm m},\eta_\mathrm{M})$  we can start with choosing $\hat{\Lambda}(z)= \frac{N^{3\varepsilon}}{(N\eta)^{\frac13}}\ll N^{-\frac{\gamma}{4}}$ and apply Lemma \ref{19031270}.    Now, if $z\in \mathcal{D}_{>}$ (c.f. (\ref{190312100})), we can use (\ref{190312101}) iteratively (but for finitely many, $O(\varepsilon^{-1})$ times) to conclude that $\Lambda(z)\prec \frac{1}{N\eta}$.  For $z\in \mathcal{D}_{\leq }$, if $\sqrt{\kappa+\eta}\leq \frac{N^{2\varepsilon}}{N\eta}$, we  use (\ref{190312102}) iteratively until we get $\Lambda(z)\prec \frac{1}{N\eta}$.  If $z\in \mathcal{D}_{\leq }$ and $\sqrt{\kappa+\eta}> \frac{N^{2\varepsilon}}{N\eta}$, we shall first use (\ref{190312102}) iteratively until we get a bound $\hat{\Lambda}$ which satisfies $\sqrt{\kappa+\eta}>N^{-\varepsilon}\hat{\Lambda}$, then we use  (\ref{190312101})  for the iteration until we get $\Lambda(z)\prec \frac{1}{N\eta}$. Using (\ref{17073111}) we can get  (\ref{17011304}). 

Next, with the weak local law in Theorem \ref{thm. weak law at the edge}, it is also easy to see that Proposition \ref{lem. rough fluctuation averaging} holds uniformly on $\mathcal{D}_\tau(\eta_{\rm m},\eta_\mathrm{M})$. For any deterministic $d_1, \ldots, d_N\in \mathbb{C}$, we further write
\begin{align}
\frac{1}{N}\sum_{i=1}^N d_i \Big(G_{ii}-\frac{1}{a_i-\omega_B^c}\Big)= \frac{1}{N}\sum_{i=1}^N \frac{d_i}{\ntr G (a_i-\omega_B^c)}Q_i\,, \label{17072501}
\end{align}
which can easily be checked from the definition of $\omega_B^c$, $Q_i$ and the equation $(a_i-z)G_{ii}+(\wt{B}G)_{ii}=1$. Regarding $\frac{d_i}{\ntr G (a_i-\omega_B^c)}$ as the random coefficients $d_i$ in (\ref{170723113}), it is not difficult to check that (\ref{17022530}) holds, similarly to the last two equations in (\ref{17021202}). Hence, we have by Proposition~\ref{lem. rough fluctuation averaging} that
\begin{align}
\Big|\frac{1}{N}\sum_{i=1}^N d_i \Big(G_{ii}-\frac{1}{a_i-\omega_B^c}\Big)\Big|\prec \Psi\hat{\Pi}\,.  \label{17072321}
\end{align}
 Then combining the estimate $\Lambda(z)\prec \frac{1}{N\eta}$ with (\ref{17072321}) implies (\ref{17072330}). This concludes the proof of (\ref{17072330}) and  (\ref{17011304}) in (\ref{thm. strong law at the edge}).
\end{proof}

\section{Rigidity of the  eigenvalues} \label{s.rigidity}
In this section, we prove Theorem \ref{thm. rigidity of eigenvalues}, and also (\ref{17072847}) in Theorem \ref{thm. strong law at the edge}. We first  decompose the domain $\mathcal{D}_\tau(\eta_{\rm m},\eta_\mathrm{M})$ into the following two disjoint parts. Fix a small $\epsilon>0$ and set
\begin{align}
&\widehat{\mathcal{D}}_{>}\deq \Big\{z\in \mathcal{D}_\tau(\eta_{\rm m},\eta_\mathrm{M}): \sqrt{\kappa+\eta}> \frac{N^{2\varepsilon}}{N\eta} \Big\},\quad \widehat{\mathcal{D}}_{\leq}\deq\Big\{z\in \mathcal{D}_\tau(\eta_{\rm m},\eta_\mathrm{M}): \sqrt{\kappa+\eta}\leq \frac{N^{2\varepsilon}}{N\eta} \Big\}\,. 
\end{align}
We start by improving the estimate of $\Lambda$ defined in~\eqref{le gros lambda} in the following subdomain of $\widehat{\mathcal{D}}_>$,
\begin{align}
\wt{\mathcal{D}}_{>}\deq \{z=E+\mathrm{i}\eta\in\widehat{\mathcal{D}}_{>}: E<E_-\}\,, \label{17073110}
\end{align}
where $E_-$ is the lower endpoint of the support of the measure $\mu_\alpha\boxplus\mu_\beta$; see~\eqref{17080330}.

\begin{lem} \label{lem. away from the support} Suppose that the assumptions in Theorem~\ref{thm. strong law at the edge} hold. 
Then, we have the following uniform estimate for all $z\in\wt{\mathcal{D}}_{>}$,
\begin{align}
\Lambda(z)\prec \frac{1}{N\sqrt{(\kappa+\eta)\eta}}+\frac{1}{\sqrt{\kappa+\eta}}\frac{1}{(N\eta)^2}\,.\label{17072802}
\end{align}
\end{lem}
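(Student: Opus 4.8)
\textbf{Proof plan for Lemma~\ref{lem. away from the support}.}
The plan is to exploit the quadratic stability relation \eqref{17030301} from Proposition~\ref{pro.17021715} together with the fact that we are strictly outside the support, where $\mathcal{S}$ is \emph{large}, namely $|\mathcal{S}(z)|\sim\sqrt{\kappa+\eta}$ by \eqref{17080121} (and on $\wt{\mathcal D}_>$ we have $E<E_-$, so $\kappa=|E-E_-|$ and $\Im m_{\mu_A\boxplus\mu_B}(z)\sim \eta/\sqrt{\kappa+\eta}$ by \eqref{17080120}). First I would record that the strong law, \ie \eqref{17072320}, already gives the a priori bound $\Lambda(z)\prec (N\eta)^{-1}$ uniformly on $\mathcal{D}_\tau(\eta_{\rm m},\eta_{\rm M})$; thus we may feed $\hat\Lambda(z)=(N\eta)^{-1}$ into Proposition~\ref{pro.17021715}. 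Since on $\wt{\mathcal D}_>$ we are outside the spectrum, $|\mathcal{S}|\sim\sqrt{\kappa+\eta}$ dominates $\Im m_{\mu_A\boxplus\mu_B}\sim \eta/\sqrt{\kappa+\eta}$, so the right side of \eqref{17030301} becomes
\begin{align*}
\frac{\sqrt{(\Im m_{\mu_A\boxplus\mu_B}+\hat\Lambda)(|\mathcal{S}|+\hat\Lambda)}}{N\eta}+\frac{1}{(N\eta)^2}\lesssim \frac{\sqrt{(\kappa+\eta)^{-1/2}(\eta+\ldots)\,\sqrt{\kappa+\eta}}}{N\eta}+\frac1{(N\eta)^2}\,,
\end{align*}
which after simplification is of order $\frac{1}{N\sqrt{(\kappa+\eta)\eta}}+\frac{1}{\sqrt{\kappa+\eta}}\frac1{(N\eta)^2}$, exactly the target right side of \eqref{17072802} — up to the point that we still must pass from the quadratic expression $\mathcal{S}\Lambda_\iota+\mathcal{T}_\iota\Lambda_\iota^2+O(\Lambda_\iota^3)$ on the left to a bound on $\Lambda_\iota$ itself.

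The second step is this inversion. On $\wt{\mathcal D}_>$ the linear coefficient $\mathcal{S}$ is the dominant one: from $|\Lambda_\iota|\prec\hat\Lambda\le N^{-\gamma/4}$ and $|\mathcal{T}_\iota|\le C$ (the upper bound in \eqref{17080121}) we get $|\mathcal{T}_\iota\Lambda_\iota^2|\prec N^{-\gamma/4}|\Lambda_\iota|$, while $|\mathcal{S}\Lambda_\iota|\sim\sqrt{\kappa+\eta}\,|\Lambda_\iota|$. Since we are in $\mathcal{D}_>$ we have $\sqrt{\kappa+\eta}>N^{2\varepsilon}/(N\eta)$, in particular $\sqrt{\kappa+\eta}$ is not too small, so the quadratic and cubic terms are negligible compared to the linear term; hence $|\mathcal{S}\Lambda_\iota|$ is comparable to the left side of \eqref{17030301}, and dividing by $|\mathcal{S}|\sim\sqrt{\kappa+\eta}$ yields
\begin{align*}
|\Lambda_\iota(z)|\prec \frac{1}{\sqrt{\kappa+\eta}}\left(\frac{1}{N\sqrt{(\kappa+\eta)\eta}}\cdot\sqrt{\kappa+\eta}+\frac{1}{(N\eta)^2}\right)\lesssim \frac{1}{N\sqrt{(\kappa+\eta)\eta}}+\frac{1}{\sqrt{\kappa+\eta}}\frac{1}{(N\eta)^2}\,,
\end{align*}
for $\iota=A,B$, and summing the two gives \eqref{17072802}. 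One should double-check the bookkeeping of the factors $\Im m_{\mu_A\boxplus\mu_B}$ versus $\hat\Lambda$ inside the square root: since $\hat\Lambda=(N\eta)^{-1}\lesssim \eta/\sqrt{\kappa+\eta}\sim\Im m_{\mu_A\boxplus\mu_B}$ precisely when $\sqrt{\kappa+\eta}\lesssim N\eta^2$, one may have to treat the complementary tiny regime separately, but there $\hat\Lambda$ and $\Im m$ are comparable up to the already-present $(N\eta)^{-2}$ error, so the stated bound still holds.

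The main obstacle I anticipate is not a deep one but a careful one: verifying that the hypotheses of Proposition~\ref{pro.17021715} — in particular the a priori smallness $\Lambda\prec\hat\Lambda\le N^{-\gamma/4}$ and the validity of \eqref{17020501} on all of $\wt{\mathcal D}_>$ — are genuinely available here, which requires invoking the already-proven strong local law \eqref{17072330}, \eqref{17011304} (hence \eqref{17072320}) on the whole domain $\mathcal{D}_\tau(\eta_{\rm m},\eta_{\rm M})$, together with the $\alpha,\beta$-side estimates \eqref{17080120}, \eqref{17080121} transported to $\mu_A,\mu_B$ via Proposition~\ref{le proposition 3.1}. A secondary subtlety is making sure the simplification of the right-hand side of \eqref{17030301} (replacing $\Im m_{\mu_A\boxplus\mu_B}+\hat\Lambda$ by $O(\sqrt{\kappa+\eta}{}^{-1}\eta)$ and $|\mathcal{S}|+\hat\Lambda$ by $O(\sqrt{\kappa+\eta})$) is valid uniformly, \ie that $\hat\Lambda=(N\eta)^{-1}$ never dominates $|\mathcal{S}|\sim\sqrt{\kappa+\eta}$ on $\mathcal{D}_>$ — which is exactly guaranteed by the defining inequality $\sqrt{\kappa+\eta}>N^{2\varepsilon}/(N\eta)$ of $\mathcal{D}_>$. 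Once these are in place the computation is routine algebra with the orders of magnitude collected above.
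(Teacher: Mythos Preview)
Your overall strategy is the paper's: feed the a~priori bound $\Lambda\prec (N\eta)^{-1}$ from~\eqref{17072320} into Proposition~\ref{pro.17021715}, use that on $\wt{\mathcal D}_>$ one has $|\mathcal S|\sim\sqrt{\kappa+\eta}$ and $\Im m_{\mu_A\boxplus\mu_B}\sim\eta/\sqrt{\kappa+\eta}$, absorb the quadratic term into the linear one, and divide by $|\mathcal S|$.

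The gap is in your claim that a \emph{single} application already yields~\eqref{17072802}. Taking $\hat\Lambda=(N\eta)^{-1}$, the right side of~\eqref{17030301} is
\[
\frac{\sqrt{\bigl(\tfrac{\eta}{\sqrt{\kappa+\eta}}+\hat\Lambda\bigr)\sqrt{\kappa+\eta}}}{N\eta}+\frac{1}{(N\eta)^2}
\;\lesssim\;\frac{\sqrt{\eta}}{N\eta}+\frac{(\kappa+\eta)^{1/4}\,\hat\Lambda^{1/2}}{N\eta}+\frac{1}{(N\eta)^2}\,,
\]
and after dividing by $|\mathcal S|\sim\sqrt{\kappa+\eta}$ the middle contribution to $|\Lambda_\iota|$ is
\[
\frac{\hat\Lambda^{1/2}}{N\eta\,(\kappa+\eta)^{1/4}}=\frac{1}{(N\eta)^{3/2}(\kappa+\eta)^{1/4}}\,.
\]
This term is \emph{not} dominated by the target $\frac{1}{N\sqrt{(\kappa+\eta)\eta}}+\frac{1}{\sqrt{\kappa+\eta}(N\eta)^2}$ throughout $\wt{\mathcal D}_>$: for instance when $\kappa\sim 1$ and $\eta=\eta_{\rm m}=N^{-1+\gamma}$ it equals $N^{-3\gamma/2}$, while the target is $\sim N^{-(1+\gamma)/2}+N^{-2\gamma}$, both strictly smaller. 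Your remark that the ``complementary regime'' is tiny and handled by the $(N\eta)^{-2}$ term is incorrect: the regime $\hat\Lambda>\Im m_{\mu_A\boxplus\mu_B}$, i.e.\ $\sqrt{\kappa+\eta}>N\eta^2$, is a genuine macroscopic part of $\wt{\mathcal D}_>$.

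The fix is to \emph{iterate}: the middle term above is $\frac{\hat\Lambda^{1/2}}{N\eta(\kappa+\eta)^{1/4}}\le N^{-\varepsilon/4}\hat\Lambda$ as long as $\hat\Lambda\ge N^{\varepsilon}\bigl(\frac{1}{N\sqrt{(\kappa+\eta)\eta}}+\frac{1}{\sqrt{\kappa+\eta}(N\eta)^2}\bigr)$, so each application of Proposition~\ref{pro.17021715} improves $\hat\Lambda$ by a factor $N^{-\varepsilon/4}$ until the lower threshold is reached. This bootstrapping is precisely what the paper does; once you add it, your argument is complete.
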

\begin{proof} First, from (\ref{17072320}), we see that $\Lambda\prec\frac{1}{N\eta}$ on $\mathcal{D}_\tau(\eta_{\rm m},\eta_\mathrm{M})$. Now, suppose that $\Lambda\prec  \hat{\Lambda}$ for some deterministic $ \hat{\Lambda}\equiv  \hat{\Lambda}(z)$ that satisfies
\begin{align}
N^{\varepsilon} \Big(\frac{1}{N\sqrt{(\kappa+\eta)\eta}}+\frac{1}{\sqrt{\kappa+\eta}}\frac{1}{(N\eta)^2}\Big)\leq  \hat{\Lambda}(z)\leq \frac{N^{\varepsilon}}{N\eta}\,. \label{17072801}
\end{align}
Observe that such $\hat{\Lambda}$ always exists on $\widehat{\mathcal{D}}_>$.
 From (\ref{17030301}), (\ref{17080120}) and (\ref{17080121}),  we have  for  $\iota=A, B$, and $z\in \wt{\mathcal{D}}_{>}$, 
\begin{align}
\Big|\mathcal{S}\Lambda_\iota+\mathcal{T}_\iota\Lambda_\iota^2\Big| &\prec   \frac{\sqrt{(\frac{\eta}{\sqrt{\kappa+\eta}}+ \hat{\Lambda})(\sqrt{\kappa+\eta}+ \hat{\Lambda})}}{N\eta}+\frac{1}{(N\eta)^2}\prec \frac{\sqrt{ \hat{\Lambda}\sqrt{\kappa+\eta}}}{N\eta}+\frac{\sqrt{\eta}}{N\eta}+\frac{1}{(N\eta)^2}\,, \label{170727100}
\end{align}
where we used that $ \hat{\Lambda} \prec \frac{N^{\varepsilon}}{N\eta}\leq N^{-\varepsilon}\sqrt{\kappa+\eta}$ for all $z\in \wt{\mathcal{D}}_{>}$. 
Moreover, for $z\in \wt{\mathcal{D}}_{>}$, we see that
\begin{align*}
|\Lambda_\iota|\prec \frac{1}{N\eta}\leq N^{-2\varepsilon}\sqrt{\kappa+\eta}\sim N^{-2\varepsilon}|\mathcal{S}|\,,
\end{align*} 
 for  $\iota=A, B$. Hence, according to the fact $\mathcal{T}_\iota\leq C$ (\cf (\ref{17080121})), we can absorb the second term on the left side of (\ref{170727100}) into the first term, and thus we have for  $\iota=A, B$
\begin{align*}
|\Lambda_\iota|\prec \frac{1}{\sqrt{\kappa+\eta}}  \bigg(\frac{\sqrt{ \hat{\Lambda}\sqrt{\kappa+\eta}}}{N\eta}+\frac{\sqrt{\eta}}{N\eta}+\frac{1}{(N\eta)^2}\bigg)\leq \frac{1}{N\eta(\kappa+\eta)^{\frac14}} \hat{\Lambda}^{\frac12}+N^{-\varepsilon}  \hat{\Lambda} \leq N^{-\frac{\varepsilon}{4}} \hat{\Lambda}\,, 
\end{align*}
where in the second step we used the lower bound in (\ref{17072801}) directly, and in the last step we used the fact $(N\eta)^{-1}(\kappa+\eta)^{-\frac14}\leq N^{-\frac{\varepsilon}{2}}\hat{\Lambda}^{\frac12}$ which again follows from the lower bound in (\ref{17072801}). 

Hence, we improved the bound from $\Lambda\leq  \hat{\Lambda}$ to $\Lambda\leq N^{-\frac{\varepsilon}{4}} \hat{\Lambda}$ as long as the lower bound in (\ref{17072801}) holds. Performing the above improvement iteratively,  one finally gets (\ref{17072802}). Hence, we complete the proof.
\end{proof}

With the aid of Lemma \ref{lem. away from the support}, we can now prove  Theorem \ref{thm. rigidity of eigenvalues}.
\begin{proof}[Proof of Theorem \ref{thm. rigidity of eigenvalues}] We first show  (\ref{17072845a}) for the smallest eigenvalue $\lambda_1$, \ie
\begin{align}
|\lambda_1-\gamma_1|\prec N^{-\frac23}\,.  \label{17072820}
\end{align}
Recall $\mathcal{K}$ defined in (\ref{17072840}). For any (small) constant $\varepsilon>0$, we define the line segment. 
\begin{align}
\wt{\mathcal{D}}(\varepsilon)\deq\{z=E+\mathrm{i}\eta: E\in [-\mathcal{K}, E_--N^{-\frac23+6\varepsilon}]\,, \,\eta=N^{-\frac{2}{3}+\varepsilon}\}.
\end{align}
Then it is easy to check that $\wt{\mathcal{D}}(\varepsilon)\subset \wt{\mathcal{D}}_{>}$ (\cf (\ref{17073110})). Applying \eqref{17072802}, we  obtain $\Lambda\prec \frac{N^{-\varepsilon}}{N\eta}$ uniformly on $\wt{\mathcal{D}}(\varepsilon)$, which together with (\ref{17073111}) implies
\begin{align}
|m_H(z)-m_{\mu_A\boxplus\mu_B}(z)|\prec \frac{N^{-\varepsilon}}{N\eta}\,, \label{17073130}
\end{align}
uniformly on $\wt{\mathcal{D}}(\varepsilon)$. Moreover, by (\ref{17080120}), we have 
\begin{align}
\Im m_{\mu_A\boxplus\mu_B}(z)\sim \frac{\eta}{\sqrt{\kappa+\eta}}\leq   \frac{N^{-\varepsilon}}{N\eta}\,,\label{17073131}
\end{align}
uniformly on $\wt{\mathcal{D}}(\varepsilon)$. Combining (\ref{17073130}) with (\ref{17073131}) yields
\begin{align}
\Im m_H(z)\prec  \frac{N^{-\varepsilon}}{N\eta}\,,  \label{17080315}
\end{align}
uniformly on $\wt{\mathcal{D}}(\varepsilon)$.  Since $\|H\|< \mathcal{K}$, to see (\ref{17072820}), it suffices to show 
 that with high probability $\lambda_1$ is not in the interval $[-\mathcal{K}, E_--N^{-\frac23+6\varepsilon}]$. We prove it by contradiction. Suppose that  $\lambda_1\in [-\mathcal{K}, E_--N^{-\frac23+6\varepsilon}]$.  Then clearly  for any $\eta>0$,
 \begin{align*}
 \sup_{E\in[-\mathcal{K}, E_--N^{-\frac23+6\varepsilon}]} \Im m_H(E+\mathrm{i}\eta)= \sup_{E\in [-\mathcal{K}, E_--N^{-\frac23+6\varepsilon}]}\frac{1}{N}\sum_{i=1}^N \frac{\eta}{(\lambda_i-E)^2+\eta^2}\geq \frac{1}{N\eta}\,,
 \end{align*}
 which contradicts the fact that (\ref{17080315}) holds uniformly on $\wt{\mathcal{D}}(\varepsilon)$. Hence, we have (\ref{17072820}).

Next, from (\ref{17011304}), (\ref{mdiff1}) and (\ref{mdiff2}) and a  standard application of Helffer-Sj{\"o}strand formula (\cf Lemma 5.1 \cite{AEK15}) on  $\mathcal{D}_\tau (\eta_{\rm m}, \eta_\mathrm{M})$ yields 
\begin{align}
\sup_{x\leq E_-+c}|\mu_H((-\infty,x])-\mu_A\boxplus\mu_B((-\infty, x])|\prec \frac{1}{N} \,,\label{17073140}
\end{align}
for any sufficiently small $c=c(\tau)$. 
Then (\ref{17072820}), (\ref{17073140}), together with the rigidity (\ref{rigi2}) and the square root behavior  of the distribution $\mu_\alpha\boxplus\mu_\beta$ (\cf (\ref{17080390})) will lead to the conclusion. The same conclusion holds with $\gamma_j^*$'s replaced by $\gamma_j$'s by rigidity (\ref{rigi2}). 
\end{proof}

Finally, with the aid of Theorem \ref{thm. rigidity of eigenvalues}, we can prove  (\ref{17072847}) in Theorem \ref{thm. strong law at the edge}. 
\begin{proof}[Proof of  (\ref{17072847}) in Theorem \ref{thm. strong law at the edge}]  Let $\varepsilon>0$ be any (small) constant.   Since $\kappa=E_--E\geq N^{-\frac23+\varepsilon}$ in~\eqref{17072847}, we see that  (\ref{17072847}) follows from (\ref{17011304}) directly in the regime $\eta\geq \frac{\kappa}{4}$, say. Hence, in the sequel, we work in the regime $\eta\leq \frac{\kappa}{4}$ only.  For any $z=E+\mathrm{i}\eta \in \mathcal{D}_\tau(\eta_{\rm m}, \eta_\mathrm{M})$ with $\kappa\geq N^{-\frac23+\varepsilon}$, we introduce the contour 
\begin{align*}
\mathcal{C}\equiv \mathcal{C}(z)\deq\mathcal{C}_l\cup\mathcal{C}_r\cup \mathcal{C}_u\cup \overline{\mathcal{C}}_u\,,
\end{align*}
where 
\begin{align*}
&\mathcal{C}_l\equiv  \mathcal{C}_l(z)\deq\big\{\tilde{z}=E+\frac{\kappa}{2}+\mathrm{i}\tilde{\eta}: -\eta-\kappa\leq \tilde{\eta} \leq \eta+\kappa\big\}\,,\nonumber\\
&\mathcal{C}_r\equiv  \mathcal{C}_r(z)\deq\big\{\tilde{z}=E-\frac{\kappa}{2}+\mathrm{i}\tilde{\eta}: -\eta-\kappa\leq \tilde{\eta} \leq \eta+\kappa\big\}\,,\nonumber\\
&\mathcal{C}_u\equiv \mathcal{C}_u(z)\deq \big\{ \tilde{z}= \tilde{E}+\mathrm{i}(\eta+\kappa): E-\frac{\kappa}{2}\leq \tilde{E}\leq E+\frac{\kappa}{2}\big\}\,.
\end{align*}
We then further decompose $\mathcal{C}=\mathcal{C}_{<}\cup\mathcal{C}_{\geq}$, where 
\begin{align*}
\mathcal{C}_<\equiv \mathcal{C}_<(z)\deq \big\{\tilde{z}\in \mathcal{C}: |\Im \tilde{z}|< \eta_{\rm m}\big\}, \qquad \mathcal{C}_{\geq}\equiv\mathcal{C}_{\geq} (z)\deq\mathcal{C}\setminus \mathcal{C}_<. 
\end{align*} 
Now, we further introduce the event 
\begin{align*}
\Xi\deq\bigcap_{\tilde{z}\in \mathcal{C}>}\Big\{ \big| m_H(\tilde{z})-m_{\mu_A\boxplus\mu_B}(\tilde{z})\big|\leq \frac{N^\varepsilon}{N\Im \tilde{z}}\Big\} \bigcap \Big\{\lambda_1\geq E_--\frac{1}{4}N^{-2/3+\varepsilon} \Big\}\,.
\end{align*}
Then, on the event $\Xi$, we have 
\begin{align}
m_H(z)-m_{\mu_A\boxplus\mu_B}(z) &= \frac{1}{2\pi \ii}\oint_{\mathcal{C}} \frac{1}{\tilde{z}-z} \big(m_H(\tilde{z})-m_{\mu_A\boxplus\mu_B}(\tilde{z})\big){\rm d} \tilde{z}\nonumber\\
&= \frac{1}{2\pi \ii}\Big(\int_{\mathcal{C}_<}+ \int_{\mathcal{C}_\geq }\Big)\frac{1}{\tilde{z}-z} \big(m_H(\tilde{z})-m_{\mu_A\boxplus\mu_B}(\tilde{z})\big){\rm d} \tilde{z}. \label{17080201}
\end{align}
Note that, for $\tilde{z}\in \mathcal{C}$, we always have  $\frac{1}{|\tilde{z}-z|}\leq \frac{2}{\kappa} $. In addition, for $\tilde{z}\in\mathcal{C}_<$, we have the fact $|\mathcal{C}_<|\leq \eta_{\rm m}$, and 
\begin{align*}
|m_H(\tilde{z})|\leq \frac{C}{\kappa}\,, \qquad \qquad|m_{\mu_A\boxplus\mu_B}(\tilde{z})|\leq \frac{C}{\kappa}\,,
\end{align*}
which hold on $\Xi$.  For $\tilde{z}\in\mathcal{C}_\geq$, we have the fact $|\mathcal{C}_\geq|\leq C\kappa$  and the bound  
\begin{align*}
\big| m_H(\tilde{z})-m_{\mu_A\boxplus\mu_B}(\tilde{z})\big|\leq \frac{N^\varepsilon}{N\Im \tilde{z}}\,,
\end{align*}
which holds on $\Xi$.  Applying the above bounds to (\ref{17080201}), it is elementary to check that
\begin{align*}
|m_H(z)-m_{\mu_A\boxplus\mu_B}(z)|\leq C\big(\eta_{\rm m}+N^{-1+\varepsilon}\log N\big) \frac{1}{\kappa}
\end{align*}
on $\Xi$.  Since $\gamma$ in $\eta_{\rm m}=N^{-1+\gamma}$ and $\varepsilon$ can be arbitrary, we can conclude that
\begin{align}
|m_H(z)-m_{\mu_A\boxplus\mu_B}(z)|\prec \frac{1}{N\kappa} \label{17080210}
\end{align}
if we can show that $\Xi$ holds with high probability.  Using (\ref{17072820}),  it suffices to show that 
\begin{align*}
\big| m_H(\tilde{z})-m_{\mu_A\boxplus\mu_B}(\tilde{z})\big|\prec \frac{1}{N\Im \tilde{z}}\,,
\end{align*}
uniformly in  $\tilde{z}\in\mathcal{C}_>$. This only requires enlarging the domain $\mathcal{D}_\tau(\eta_{\rm m}, \eta_\mathrm{M})$ and also consider its complex conjugate to include $\mathcal{C}_>$ during the proof of (\ref{17011304}). Hence, we conclude the proof of (\ref{17072847}) by combining the
$\frac{1}{N\kappa}$ bound in (\ref{17080210}) with the $\frac{1}{N\eta}$ bound in (\ref{17011304}).  
\end{proof}

We conclude the main part of the paper with the proof of Corollary~\ref{c. rigidity for whole spectrum}. 
\begin{proof}[Proof of Corollary  \ref{c. rigidity for whole spectrum}] With the additional  Assumption  \ref{a. rigidity entire spectrum}, we can show analogously that the estimates (\ref{17011304})  and (\ref{17072845a}) hold as well around the upper edge. According to Assumption \ref{a. rigidity entire spectrum} $(vii)$ and the fact $\sup_{\mathbb{C}^+}|m_{\mu_\alpha\boxplus\mu_\beta}|\leq C$ (\cf (\ref{17080326})), we see that except for the two vicinities of the lower and upper edge, the remaining spectrum is within the regular bulk.  Together with the strong local law in the bulk regime, \cf Theorem 2.4 in \cite{BES16}, we have 
\begin{align}
\big| m_H(z)-m_{\mu_A\boxplus\mu_B}(z)\big|\prec \frac{1}{N\eta}. \label{17080220}
\end{align}
uniformly on the domain $
\mathcal{D}(\eta_{\rm m}, \eta_\mathrm{M})\deq\{z=E+\ii \eta\in \mathbb{C}^+:  -\mathcal{K}\leq E\leq \mathcal{K}, \quad \eta_{\rm m}\leq \eta\leq \eta_\mathrm{M}\}$.
Then, (\ref{17080220}) together with (\ref{17072845a}) and its counterpart at the upper edge implies the rigidity for all eigenvalues, \ie~\eqref{17072845} can be proved again with Helffer-Sj{\"o}strand formula. Then, from (\ref{17072845}), we conclude that (\ref{17080225}) holds. This completes the proof of Corollary~\ref{c. rigidity for whole spectrum}.
\end{proof}

\appendix 
\section{} \label{appendix A} In this appendix, we collect some basic technical results.
\subsection{Stochastic domination and large deviation properties}\label{stochastic domination section}
Recall the stochastic domination in Definition~\ref{definition of stochastic domination}. The relation $\prec$ is transitive and it satisfies the following arithmetic rules: if $X_1\prec Y_1$ and $X_2\prec Y_2$ then $X_1+X_2\prec Y_1+Y_2$ and $X_1 X_2\prec Y_1 Y_2$. Further assume that $\Phi(v)\ge N^{-C}$ is deterministic and that~$Y(v)$ is a nonnegative random variable satisfying $\E [Y(v)]^2\le N^{C'}$ for all~$v$. Then $Y(v) \prec \Phi(v)$, uniformly in $v$, implies $\E [Y(v)] \prec \Phi(v)$, uniformly in~$v$.

Gaussian vectors have well-known large deviation properties which we use in the following form:
\begin{lem} \label{lem.091720} Let $X=(x_{ij})\in M_N(\C)$ be a deterministic matrix and let $\bs{y}=(y_{i})\in\C^N$ be a deterministic complex vector. For a Gaussian random vector $\mathbf{g}=(g_1,\ldots, g_N)\in \mathcal{N}_{\mathbb{R}}(0,\sigma^2 I_N)$ or $\mathcal{N}_{\mathbb{C}}(0,\sigma^2 I_N)$, we have
 \begin{align}\label{091731}
  |\bs{y}^* \bs{g}|\prec\sigma \|\bs{y}\| \,,\qquad\qquad  |\bs{g}^* X\bs{g}-\sigma^2N \ntr X|\prec \sigma^2\| X\|_2\,.
 \end{align}
\end{lem}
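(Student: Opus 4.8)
The plan is to reduce both estimates to polynomial high-moment bounds and then invoke Markov's inequality together with Definition~\ref{definition of stochastic domination}. Since $\prec$ tolerates factors of $N^\varepsilon$, it suffices, for each fixed $p\in\N$, to bound $\E|\,\cdot\,|^{2p}$ by a constant $C_p$ (depending only on $p$) times the $2p$-th power of the claimed deterministic bound, uniformly in $N$, $X$, and $\bs y$: indeed, if $\E|Z|^{2p}\le C_p\Phi^{2p}$ for all $p$, then $\P(|Z|>N^\varepsilon\Phi)\le C_pN^{-2p\varepsilon}$ by Chebyshev, and choosing $p>D/(2\varepsilon)$ yields the bound $N^{-D}$ required by the definition of $\prec$.

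For the linear form, note that $\bs y^*\bs g=\sum_i\bar y_ig_i$ is, in both the real and the complex Gaussian case, a centered complex-valued Gaussian: its real and imaginary parts are jointly centered Gaussian, and $\E|\bs y^*\bs g|^2=\sigma^2\sum_i|y_i|^2=\sigma^2\|\bs y\|^2$, using $\E[\bar g_ig_j]=\sigma^2\delta_{ij}$ in the complex case and $\E[g_ig_j]=\sigma^2\delta_{ij}$ in the real case. Hence $\bs y^*\bs g/(\sigma\|\bs y\|)$ is a complex Gaussian of bounded variance, so all its moments are bounded by constants depending only on the order; in particular $\E|\bs y^*\bs g|^{2p}\le C_p(\sigma^2\|\bs y\|^2)^p$, and the first bound in~\eqref{091731} follows.

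For the quadratic form, write $\bs g^*X\bs g=\sum_{i,j}\bar g_iX_{ij}g_j$ and use $\E[\bar g_ig_j]=\sigma^2\delta_{ij}$ (together with $\E[g_ig_j]=0$ in the complex case, $\E[g_ig_j]=\sigma^2\delta_{ij}$ in the real case) to obtain $\E[\bs g^*X\bs g]=\sigma^2\sum_iX_{ii}=\sigma^2N\ntr X$. It then remains to bound the centered moments $\E|Z|^{2p}$ of $Z\deq\bs g^*X\bs g-\sigma^2N\ntr X$ by $C_p(\sigma^2\|X\|_2)^{2p}$. This is precisely the content of the Hanson--Wright inequality for Gaussian quadratic forms, which may be quoted directly; note that $\|X\|\le\|X\|_2$, so the operator-norm term appearing in the Hanson--Wright bound is absorbed into the Hilbert--Schmidt term. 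Alternatively, one expands $\E[Z^p\overline{Z}^p]$ by Wick's theorem (Isserlis), groups the Gaussian pairings into graphs, and checks that the surviving contributions are built from traces of the form $\ntr\big((XX^*)^k\big)$, each of which is controlled by $\|X\|_2$ to the appropriate power via $\mathrm{Tr}\big((XX^*)^k\big)\le\|X\|^{2k-2}\|X\|_2^2\le\|X\|_2^{2k}$; for non-Hermitian $X$ one may first split $X$ into its Hermitian and anti-Hermitian parts, or simply run the pairing expansion verbatim. Summing the finitely many graph contributions at fixed $p$ gives the moment bound, and Markov's inequality concludes.

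The linear-form estimate is routine. The main work lies in the quadratic form: organizing the Wick expansion so that every pairing graph is controlled by the Hilbert--Schmidt norm $\|X\|_2$ (rather than by the crude bound $N^{1/2}\|X\|$) to the correct power, and verifying that the non-leading graphs are genuinely suppressed. Since this is exactly the Hanson--Wright phenomenon, the cleanest route in the write-up is to invoke that inequality and skip the combinatorics.
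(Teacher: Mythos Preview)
Your proposal is correct. The paper does not actually give a proof of this lemma: it is stated in Appendix~\ref{appendix A} as a well-known large deviation property of Gaussian vectors, with no argument supplied. Your moment-based approach via Gaussianity of the linear form and Hanson--Wright (or Wick expansion) for the quadratic form is the standard way to establish such bounds and is entirely adequate here.
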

\subsection{Stability for large $\eta$} For any probability measures $\mu_1$ and $\mu_2$ on the real line, we define the functions $\Phi_1,\,\Phi_2: (\mathbb{C}^+)^3\to \mathbb{C}$ by setting

\begin{align}
\Phi_1(\omega_1, \omega_2,z)\deq F_{\mu_1} (\omega_2)-\omega_1-\omega_2+z\,,\qquad
\Phi_2(\omega_1,\omega_2,z)\deq F_{\mu_2} (\omega_1)-\omega_1-\omega_2+z
 \,.\label{170803110}
\end{align}
We observe that the system of subordination equations  (\ref{le definiting equations}) is equivalent to
\begin{align*}
\Phi_1(\omega_1(z), \omega_2(z),z)=0\,, \qquad \Phi_1(\omega_1(z), \omega_2(z),z)=0\,,\qquad\qquad \forall z\in \mathbb{C}^+. 
\end{align*}

We have the following linear stability for the subordination equation in the large $\eta$ regime.  
 A somewhat weaker version of this result has already been proven  
 in Lemma~4.2 of \cite{BES15} requiring   an unnecessarily stronger condition
  (compare (4.14) of \cite{BES15} with the current (\ref{170803100}) below).
 However, in our applications only a weaker assumption can be guaranteed. In fact, already 
 in \cite{BES15}  (in equation (6.56)) we tacitly relied  on the current version of this stability result.
 Thus by proving the stronger stability result below
  we  also correct this small inconsistency  in \cite{BES15}.
  
\begin{lem} \label{lem. stability for large eta} Let $\wt\eta_0>0$ be any (large) positive number and 
let  $\wt{\omega}_1, \wt{\omega}_2, \wt{r}_1, \wt{r}_2: \mathbb{C}_{\wt\eta_0}\to \mathbb{C}$ be analytic functions
where $\C_{\wt\eta_0}: = \{ z\in \C \; : \; \im z\ge \wt\eta_0\}$. Assume that there is a constant $C>0$ such 
 that the following hold  for all $z\in \mathbb{C}_{\wt\eta_0}$:
\begin{align}
& |\Im \wt{\omega}_1(z)-\Im z|\leq C\,, \qquad  & &|\Im \wt{\omega}_2(z)-\Im z|\leq C\,,\label{170803100}\\
&  |\wt{r}_1(z)|\leq C\,, \qquad & &|\wt{r}_2(z)|\leq C\,,\label{170803101}\\
& \Phi_1 (\wt{\omega}_1(z), \wt{\omega}_2(z), z)=\wt{r}_1(z)\,,\qquad & &\Phi_2 (\wt{\omega}_1(z), \wt{\omega}_2(z), z)=\wt{r}_2(z) \,.\label{170803102}
\end{align}
 
 Then there is a constant $\eta_0$ with $\eta_0 \ge \widetilde\eta_0$,  such that
 \begin{align}\label{le conclusion of large eta lemma}
 |\widetilde\omega_1(z)-\omega_1(z)|&\le 2\|\widetilde{r}(z)\|\,,\qquad\qquad|\widetilde\omega_2(z)-\omega_2(z)|\le 2\|\widetilde{r}(z)\|\,,
\end{align}
on the domain $\C_{\eta_0}\deq\{z\in\C\,:\,\im z\ge \eta_0\}$, where $\omega_1(z)$ and $\omega_2(z)$ are the subordination functions associated with~$\mu_1$ and~$\mu_2$.
\end{lem}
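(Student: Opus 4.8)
\textbf{Proof plan for Lemma~\ref{lem. stability for large eta}.}
The plan is to set up the linearization of the system $\Phi_1,\Phi_2$ around the true subordination functions $\omega_1,\omega_2$ and to exploit that, for large $\im z$, the derivatives $F_{\mu_j}'(\omega_k(z))-1$ are small, so that the linearized map is a small perturbation of $\begin{pmatrix}-1&-1\\-1&-1\end{pmatrix}$ acting on the difference $(\wt\omega_1-\omega_1,\wt\omega_2-\omega_2)$. First I would recall that the subordination functions satisfy $\omega_j(z)=z+O(1)$ and $\im\omega_j(z)\ge\im z$ for $z\in\C^+$, and, using the representation~\eqref{representation}, that there is a constant $c_0$ so that for $\im z\ge\eta_0$ large one has $|F_{\mu_1}'(\omega_2(z))-1|\le c_0/\eta_0$ and $|F_{\mu_2}'(\omega_1(z))-1|\le c_0/\eta_0$, because $\omega_2(z),\omega_1(z)$ have imaginary part at least $\eta_0$ and $\widehat\mu_j$ is a finite measure. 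The same bound holds along the segment joining $\omega_j(z)$ to $\wt\omega_j(z)$ once we know the two are close, which will be bootstrapped.

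Next I would subtract the two systems. Writing $\Omega_j\deq\wt\omega_j(z)-\omega_j(z)$, subtracting $\Phi_1(\omega_1,\omega_2,z)=0$ from $\Phi_1(\wt\omega_1,\wt\omega_2,z)=\wt r_1(z)$ and using the mean value form of $F_{\mu_1}$ gives
\begin{align*}
-\Omega_1+\big(F_{\mu_1}'(\xi_2)-1\big)\Omega_2&=\wt r_1(z)\,,\\
-\Omega_2+\big(F_{\mu_2}'(\xi_1)-1\big)\Omega_1&=\wt r_2(z)\,,
\end{align*}
where $\xi_k$ lies on the segment between $\omega_k(z)$ and $\wt\omega_k(z)$ (here one uses analyticity of $F_{\mu_j}$ away from the supports, which holds in a neighborhood of that segment provided $\im\xi_k$ stays bounded below; this is guaranteed by~\eqref{170803100} and $\im\omega_k\ge\im z$). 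Solving this $2\times2$ linear system, whose determinant is $1-(F_{\mu_1}'(\xi_2)-1)(F_{\mu_2}'(\xi_1)-1)$, and which is therefore bounded below by $1-(c_0/\eta_0)^2\ge 1/2$ for $\eta_0$ large, yields $|\Omega_1|+|\Omega_2|\le 2(|\wt r_1(z)|+|\wt r_2(z)|)\le 2\|\wt r(z)\|$, which is~\eqref{le conclusion of large eta lemma}.

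The one genuine subtlety, and the step I expect to be the main obstacle, is justifying that $\xi_k$ stays in the region where $F_{\mu_j}$ is analytic and where the bound $|F_{\mu_j}'-1|\le c_0/\eta_0$ applies — i.e.\ a continuity/bootstrap argument in $\im z$. The assumptions~\eqref{170803100}--\eqref{170803101} are weaker than directly controlling $\wt\omega_j$, so one cannot immediately place $\xi_k$ in the good region. The resolution is a standard continuity argument: for $\im z$ very large both $\wt\omega_j(z)$ and $\omega_j(z)$ are close to $z$ (from~\eqref{170803100}, \eqref{170803101}, \eqref{170803102} one extracts $\wt\omega_j(z)=z+O(1)$ as $\im z\to\infty$, matching $\omega_j(z)=z+O(1)$), so $|\Omega_j|$ is small there and $\xi_k$ is automatically in the good region; then the linear estimate above improves the bound on $|\Omega_j|$, and analyticity lets one propagate this down to $\im z=\eta_0$. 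The bound~\eqref{170803100} is precisely what keeps $\im\xi_k\gtrsim\eta_0$ throughout this descent, so the contraction constant $c_0/\eta_0$ never degrades. I would also remark, as the excerpt does, that this corrects the use of the stronger hypothesis in Lemma~4.2 of~\cite{BES15}.
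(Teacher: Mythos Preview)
Your approach is correct in spirit and arrives at the conclusion, but it differs from the paper's route. The paper does not linearize around the true subordination pair $(\omega_1,\omega_2)$. Instead, following~\cite{BES15}, it applies the analytic inverse function theorem \emph{near the approximate solution} $(\wt\omega_1,\wt\omega_2)$ to produce an exact solution $(\widehat\omega_1,\widehat\omega_2)$ of $\Phi_1=\Phi_2=0$ with $|\wt\omega_j-\widehat\omega_j|\le 2\|\wt r\|$; the new step under the weaker hypothesis~\eqref{170803100} is then to verify $\lim_{\eta\to\infty}\widehat\omega_j(\ii\eta)/(\ii\eta)=1$ and invoke the uniqueness of the subordination functions near $z=\ii\infty$ to conclude $(\widehat\omega_1,\widehat\omega_2)=(\omega_1,\omega_2)$. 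Your direct comparison $(\wt\omega_j-\omega_j)$ via the linearized system is arguably more elementary and avoids the inverse function theorem, while the paper's route has the advantage of recycling the machinery of~\cite{BES15} verbatim and isolating the role of~\eqref{170803100} in a single limit computation.

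Two remarks on your write-up. First, the ``mean value form'' is not available for holomorphic functions; you should instead write $F_{\mu_1}(\wt\omega_2)-F_{\mu_1}(\omega_2)=\big(\int_0^1 F_{\mu_1}'(\omega_2+t\Omega_2)\,\dd t\big)\Omega_2$ and bound the integrand uniformly, which yields the same $2\times 2$ system with coefficients bounded by $c_0/(\eta_0-C)^2$. Second, the continuity/bootstrap argument you flag as the main obstacle is in fact unnecessary: since $\im\omega_k(z)\ge\im z$ and~\eqref{170803100} gives $\im\wt\omega_k(z)\ge\im z-C$, the entire segment joining them has imaginary part at least $\eta_0-C$, so the bound $|F_{\mu_j}'-1|\lesssim(\eta_0-C)^{-2}$ holds along it directly, with no descent in $\eta$ required.
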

\begin{proof}  
Since most of the proof is identical to that  in \cite{BES15},
 here we only give the necessary modifications involving the weaker condition  (\ref{170803100}).
Following the proof in \cite{BES15} to the letter up to (4.23), for every $z\in \C_{\eta_0}$ 
 we have constructed functions $\widehat\omega_1(z)$, $\widehat\omega_2(z)$  such that $\Phi_{\mu_1, \mu_2}(\widehat\omega_1(z), \widehat\omega_2(z), z)=0$ with
 \begin{align}\label{170803104}
   |\wt\omega_j(z) -\widehat \omega_j(z)|\le 2\| \wt r(z)\|\,,\qquad \qquad j=1,2\,, \qquad z\in \C_{\eta_0}\,.
\end{align}
From (4.20) of \cite{BES15} we know that the Jacobian of the subordination equations 
(denoted by $\Gamma_{\mu_1, \mu_2}$ in \cite{BES15}) is close to 1 for sufficiently large $\wt\eta_0$.
Thus by analytic inverse
function theorem we obtain that $\widehat \omega_j(z)$, $j=1,2$, are also 
analytic functions for large $\eta=\im z$. From (\ref{170803100}), (\ref{170803101}) and (\ref{170803104}), we see that 
\begin{align*}
\lim_{\eta\nearrow \infty} \frac{\Im \widehat{\omega}_1(\ii \eta)}{\ii \eta} =\lim_{\eta\nearrow \infty} \frac{\Im \widehat{\omega}_2(\ii \eta)}{\ii \eta}=1\,.
\end{align*}
It is known from the proof of the uniqueness of the solution to the subordination equations near $z=\mathrm{i}\infty$ that $(\widehat{\omega}_1(z), \widehat{\omega}_2(z))$ is the unique solution  in a neighborhood of $z=\ii \infty$ and it can be analytically extended to all $z\in \mathbb{C}^+$.
Hence, $(\widehat{\omega}_1(z), \widehat{\omega}_2(z))=(\omega_1(z), \omega_2(z))$.   This together with (\ref{170803104}) concludes the proof.
\end{proof}

\section{} \label{appendix B}
In this appendix, we prove some technical lemmas. First, we estimate the small terms involving $\Delta_G$.  Specifically, we provide the bounds for the $\Delta_G$ involved terms in the  the last four estimates in Lemma~\ref{lem.17021201}. Then, we prove Lemma \ref{lem.17021201}. We summarize the estimates for $\Delta_G$ involved terms in the following lemma.
\begin{lem} \label{lem. estimate for delta terms} Fix a $z\in \mathcal{D}_\tau(\eta_{\rm m},\eta_\mathrm{M})$. Let $Q\in M_{N}(\mathbb{C})$ be arbitrary, with $\|Q\|\prec 1$. Let $X_i=I$ or $\wt{B}^{\la i\ra}$, and $X=I$ or $A$. Suppose the assumptions of Proposition \ref{pro.17020310} hold. Then, we have
\begin{align}
&\frac{1}{N} \sum_k^{(i)} \mathbf{e}_k^*X_i \Delta_G(i,k) \mathbf{e}_i=O_\prec (\Pi_i^2),\qquad & &\frac{1}{N} \sum_{k}^{(i)} \mathbf{e}_i^* X\Delta_G(i,k) \mathbf{e}_i\mathbf{e}_k^* X_iG\mathbf{e}_i  =O_\prec (\Pi_i^2),\nonumber\\
& \frac{1}{N} \sum_{k}^{(i)} \mathbf{h}_i^* \Delta_G(i,k) \mathbf{e}_i\mathbf{e}_k^* X_iG\mathbf{e}_i =O_\prec (\Pi_i^2) ,\qquad  & & \frac{1}{N} \sum_{k}^{(i)} \ntr QX \Delta_G(i,k) \mathbf{e}_k^* X_iG\mathbf{e}_i =O_\prec (\Psi^2\Pi_i^2).  \label{17030105}
\end{align}
\end{lem}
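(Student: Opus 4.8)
\textbf{Proof proposal for Lemma~\ref{lem. estimate for delta terms}.}

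The plan is to exploit the structure of $\Delta_G(i,k)$ from~\eqref{17022801}--\eqref{170729100}, namely that it is a sum of terms of the form $-G\Delta_R(i,k)\wt{B}^{\la i\ra}R_iG$ and $-GR_i\wt{B}^{\la i\ra}\Delta_R(i,k)G$, where $\Delta_R(i,k)$ carries a factor $\bar g_{ik}$ and is a rank-at-most-two operator built from $\mathbf{e}_i$ and $\mathbf{h}_i$ with prefactors $\ell_i^2/(2\|\mathbf{g}_i\|^2)$ or $\ell_i^4 g_{ii}/(2\|\mathbf{g}_i\|^3)$. The point is that $\|\mathbf{g}_i\|=1+O_\prec(N^{-1/2})$, $|\ell_i^2-1|\prec N^{-1/2}$, $|g_{ii}|\prec N^{-1/2}$ by~\eqref{17021540}, so the prefactors are $O_\prec(1)$ (and the $g_{ii}$-term is even $O_\prec(N^{-1/2})$ smaller). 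First I would expand each of the four quantities in~\eqref{17030105} by substituting~\eqref{17022801} and then~\eqref{170729100}, reducing everything to sums of the schematic form $\frac{1}{N}\sum_k^{(i)} \bar g_{ik}\,(\cdots)\,\mathbf{e}_k^*(\cdots)$, where the entries of $\mathbf{e}_i,\mathbf{h}_i$ and various resolvent matrix elements appear; the key observation is that $\sum_k^{(i)}\bar g_{ik}\mathbf{e}_k=\mathring{\mathbf{g}}_i^*$, so the inner $k$-sum collapses into a single quadratic-form-type expression involving $\mathring{\mathbf{g}}_i$ or $\mathbf{h}_i$ and a product of $G$'s, $R_i$'s and $\wt B^{\la i\ra}$'s.

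The core estimate I would isolate and prove once, then reuse, is a bound of the form $\|\mathbf{v}^* G\mathbf{w}\| \lesssim (\Im(G_{ii}+\mathcal G_{ii})/\eta)^{1/2}$ whenever $\mathbf{v},\mathbf{w}\in\{\mathbf{e}_i,\mathbf{h}_i\}$ (possibly after applying $\wt B^{\la i\ra}$, which has bounded norm), exactly as in the estimates leading to~\eqref{17021511}: using $\|G\mathbf{e}_i\|^2=\Im G_{ii}/\eta$, $\|G\mathbf{h}_i\|^2=\Im(\mathbf{h}_i^*G\mathbf{h}_i)/\eta=\Im\mathcal G_{ii}/\eta$ via~\eqref{170726140}, and Cauchy--Schwarz. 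Each of the four quantities then has one ``floating'' resolvent matrix element (the $\mathbf{e}_k^*X_iG\mathbf{e}_i$ factor, or $\ntr QX\Delta_G$), producing an extra factor $1/N$ from the normalization and one factor $(\Im(G_{ii}+\mathcal G_{ii})/\eta)^{1/2}$, while the collapsed $k$-sum via $\mathring{\mathbf{g}}_i$ produces (using Lemma~\ref{lem.091720} and $\|\mathring{\mathbf{g}}_i\|^2\prec 1$, together with Hilbert--Schmidt bounds $\|GQ\cdots\|_2\lesssim (\Im(G_{ii}+\mathcal G_{ii})/\eta)^{1/2}$ on the relevant rank-one-sandwiched products) a second factor of the same size. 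Multiplying, one gets $\frac{1}{N}\cdot\frac{\Im(G_{ii}+\mathcal G_{ii})}{\eta}=\Pi_i^2$ for the first three, while for the last one the $\ntr(QX\Delta_G)$ brings an additional $\Psi^2$ because $\Delta_G$ is itself a product of two resolvents sandwiched by rank-two operators, hence its trace is $O_\prec(\Psi^2\Pi_i^2)$ by an argument like that in~\cite{BES16b}. For the $g_{ii}$-term in $\Delta_R$ one gains an extra $N^{-1/2}$, so it is negligible.

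In practice I would organize the computation so as to handle the two pieces of $\Delta_R(i,k)$ (the $\mathbf{e}_i\mathbf{h}_i^*+\mathbf{h}_i\mathbf{e}_i^*+2\mathbf{h}_i\mathbf{h}_i^*$ part and the $g_{ii}(\mathbf{e}_i+\mathbf{h}_i)(\mathbf{e}_i+\mathbf{h}_i)^*$ part) separately, and within each, the $R_i$'s can be removed using $R_i\mathbf{e}_i=-\mathbf{h}_i$, $R_i\mathbf{h}_i=-\mathbf{e}_i$ from~\eqref{17072573} and $\mathbf{h}_i^*\wt B^{\la i\ra}R_i=-\mathbf{e}_i^*\wt B$, $\mathbf{e}_i^*\wt B^{\la i\ra}R_i=-\mathbf{h}_i^*\wt B$ from~\eqref{170726105}, reducing all resolvent sandwiches to the four canonical types $\mathbf{e}_i^*G\mathbf{e}_i$, $\mathbf{e}_i^*G\mathbf{h}_i$, $\mathbf{h}_i^*G\mathbf{e}_i$, $\mathbf{h}_i^*G\mathbf{h}_i$ (times bounded factors from $\wt B$). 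The main obstacle I anticipate is bookkeeping rather than conceptual: making sure that in \emph{every} arising term one genuinely recovers \emph{two} independent factors of $(\Im(G_{ii}+\mathcal G_{ii})/\eta)^{1/2}$ — one from the $\mathring{\mathbf{g}}_i$-collapsed sum and one from the floating matrix element — and not just one, which would only give $\Psi\cdot\Pi_i$ rather than $\Pi_i^2$; the resolution is precisely the identity $|G|^2=\eta^{-1}\Im G$, which lets one convert a norm $\|G\mathbf{v}\|^2$ into $\eta^{-1}\Im(\mathbf{v}^*G\mathbf{v})$ and hence into $\Im(G_{ii}+\mathcal G_{ii})$ after using~\eqref{170726140}. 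Once this is verified term by term, collecting the estimates and using $\Pi_i\prec\Psi$ (valid under~\eqref{17020501}) yields~\eqref{17030105}.
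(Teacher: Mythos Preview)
Your proposal is correct and follows essentially the same route as the paper's proof: decompose $\Delta_R(i,k)$ into rank-one pieces $\tilde d_i\,\bar g_{ik}\,\boldsymbol\alpha_i\boldsymbol\beta_i^*$ with $\boldsymbol\alpha_i,\boldsymbol\beta_i\in\{\mathbf e_i,\mathbf h_i\}$, collapse the $k$-sum via $\sum_k^{(i)}\bar g_{ik}\mathbf e_k^*=\mathring{\mathbf g}_i^*$, and then bound each resulting scalar factor by Cauchy--Schwarz together with $\|G\mathbf v\|^2=\eta^{-1}\Im(\mathbf v^*G\mathbf v)$ and~\eqref{170726140}; for the fourth estimate the normalized trace of the rank-one piece contributes an extra $1/N$ and the middle resolvent in $GQXG$ an extra $1/\eta$, yielding the additional $\Psi^2$. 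One small remark: you do not need Lemma~\ref{lem.091720} here, since $G$ depends on $\mathbf g_i$ and large deviation does not apply directly---the paper (and your own parenthetical ``$\|\mathring{\mathbf g}_i\|^2\prec 1$'') shows that plain Cauchy--Schwarz already gives $|\mathring{\mathbf g}_i^*X_iG\boldsymbol\alpha_i|\prec\|G\boldsymbol\alpha_i\|$, which is all that is used.
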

\begin{proof}
 The proof is similar to that of Lemma B.1 in \cite{BES16b}. But here we need finer estimates.
Recall $\Delta_R(i,k)$ and $\Delta_G(i,k)$ from (\ref{170729100}) and  (\ref{17022801}).  We note that $\Delta_R(i,k)$ is a sum of terms of the form $
\wt{d}_i \bar{g}_{ik} \boldsymbol{\alpha}_i\boldsymbol{\beta}_i^*$ 
for some $\wt{d}_i\in \mathbb{C}$ with $|\wt{d}_i|\prec 1$, where $\boldsymbol{\alpha}_i,\boldsymbol{\beta}_i=\mathbf{e}_i$ or $\mathbf{h}_i$. Hereafter, we use $\wt{d}_i$ to represent a generic number satisfying $|\wt{d}_i|\prec 1$ uniformly on $\mathcal{D}_\tau (\eta_{{\rm m}}, 1)$. Then, we see that $\Delta_G(i,k)$ is a sum of terms of the form
\begin{align}
\wt{d}_i \bar{g}_{ik}G\boldsymbol{\alpha}_i\boldsymbol{\beta}_i^*\wt{B}^{\la i\ra} R_i G,
 \qquad\qquad \wt{d}_i \bar{g}_{ik}GR_i\wt{B}^{\la i\ra}\boldsymbol{\alpha}_i\boldsymbol{\beta}_i^*G. \label{17030110}
\end{align}
Then, the left hand side of the first estimate in (\ref{17030105}) is a sum  of terms of the form 
\begin{align}
\frac{1}{N}\wt{d}_i \big(\mathring{\mathbf{g}}_i^*X_iG\boldsymbol{\alpha}_i\big) \big(\boldsymbol{\beta}_i^*\wt{B}^{\la i\ra} R_i G \mathbf{e}_i\big),\qquad \qquad \frac{1}{N} \wt{d}_i\big(\mathring{\mathbf{g}}_i^*X_i GR_i\wt{B}^{\la i\ra}\boldsymbol{\alpha}_i\big) \big(\boldsymbol{\beta}_i^*G \mathbf{e}_i\big). \label{17030104}
\end{align}
By the Cauchy-Schwarz inequality, we have
\begin{align}
&\big|\mathring{\mathbf{g}}_i^*X_iG\boldsymbol{\alpha}_i\big|\prec \|G\boldsymbol{\alpha}_i\| =\sqrt{ \frac{\Im \boldsymbol{\alpha}_i^*G\boldsymbol{\alpha}_i}{\eta}},\nonumber\\
& \big|\boldsymbol{\beta}_i^*\wt{B}^{\la i\ra} R_i G \mathbf{e}_i\big|\prec  \|G\mathbf{e}_i\| =\sqrt{ \frac{\Im G_{ii}}{\eta}},\qquad \big|\boldsymbol{\beta}_i^*G \mathbf{e}_i\big|\prec  \|G\mathbf{e}_i\| =\sqrt{ \frac{\Im G_{ii}}{\eta}},\nonumber\\
& \big|\mathring{\mathbf{g}}_i^*X_i GR_i\wt{B}^{\la i\ra}\boldsymbol{\alpha}_i\big|\prec \|GR_i\wt{B}^{\la i\ra}\boldsymbol{\alpha}_i \| =  \sqrt{  \frac{  \Im \boldsymbol{\alpha}_i^*\wt{B}^{\la i\ra}R_iGR_i\wt{B}^{\la i\ra}\boldsymbol{\alpha}_i}{\eta}}. \label{17030103}
\end{align}
Note that for $\boldsymbol{\alpha}_i=\mathbf{e}_i$, 
\begin{align}
\boldsymbol{\alpha}_i^*G\boldsymbol{\alpha}_i=G_{ii},\qquad \qquad\boldsymbol{\alpha}_i^*\wt{B}^{\la i\ra}R_iGR_i\wt{B}^{\la i\ra}\boldsymbol{\alpha}_i=b_i^2\mathbf{h}_i^*G\mathbf{h}_i=b_i^2 \mathcal{G}_{ii},  \label{17030101}
\end{align}
and for $\boldsymbol{\alpha}_i=\mathbf{h}_i$, 
\begin{align}
\boldsymbol{\alpha}_i^*G\boldsymbol{\alpha}_i=\mathcal{G}_{ii}, \qquad \qquad\boldsymbol{\alpha}_i^*\wt{B}^{\la i\ra}R_iGR_i\wt{B}^{\la i\ra}\boldsymbol{\alpha}_i=\mathbf{e}_i^* \wt{B}G\wt{B}\mathbf{e}_i=\wt{B}_{ii}-(a_i-z)+(a_i-z)G_{ii}. \label{17030102}
\end{align}
Plugging (\ref{17030101}) and (\ref{17030102}) into the bounds in (\ref{17030103}), we see that both terms in (\ref{17030104}) are of order $O_\prec(\Pi_i^2)$.  Hence, we proved the first estimate in (\ref{17030105}). 

Next, we verify the second estimate (\ref{17030105}).  Since $\Delta_G(i,k)$ is a sum of terms of the form in (\ref{17030110}), we see that the left side of the second estimate in (\ref{17030105}) is a sum of terms of the form
\begin{align}
\frac{1}{N}\wt{d}_i  \big(\mathbf{e}_i^* XG\boldsymbol{\alpha}_i \big) \big(\boldsymbol{\beta}_i^*\wt{B}^{\la i\ra} R_i G \mathbf{e}_i \big) \big(\mathring{\mathbf{g}}_i^* X_iG\mathbf{e}_i\big)\,,\qquad\qquad  \frac{1}{N}\wt{d}_i   \big(\mathbf{e}_i^* XGR_i\wt{B}^{\la i\ra}\boldsymbol{\alpha}_i \big) \big(\boldsymbol{\beta}_i^*G \mathbf{e}_i\big) \big(\mathring{\mathbf{g}}_i^* X_iG\mathbf{e}_i  \big)\,.\label{17030125}
\end{align}
Note that
\begin{align*}
\mathbf{e}_i^*\wt{B}^{\la i\ra} R_i G \mathbf{e}_i=-b_iT_i,\qquad \qquad\mathbf{h}_i^*\wt{B}^{\la i\ra} R_i G \mathbf{e}_i= -(\wt{B}G)_{ii}. 
\end{align*}
Hence, we have 
\begin{align}
|\boldsymbol{\beta}_i^*\wt{B}^{\la i\ra} R_i G \mathbf{e}_i|\prec 1,\qquad\qquad |\boldsymbol{\beta}_i^*G \mathbf{e}_i|\prec 1. \label{17030120}
\end{align}
Further, we claim that 
\begin{align}
|\mathbf{e}_i^* XG\boldsymbol{\alpha}_i|, \; |\mathbf{e}_i^* XGR_i\wt{B}^{\la i\ra}\boldsymbol{\alpha}_i| \prec \sqrt{\frac{\Im (G_{ii}+\mathcal{G}_{ii})}{\eta}}. \label{17030121}
\end{align}
The proof of the above bounds is analogous to the proof of (\ref{17030103}). We thus omit the details.  Then, using the first estimate in (\ref{17030103}), (\ref{17030120}) and (\ref{17030121}), we see that both terms in (\ref{17030125}) are of order $O_\prec(\Pi_i^2)$. 

The proof of the third estimate in (\ref{17030105}) is nearly the same as that for the second one, we thus omit~it.

To show the last estimate, we again use the fact that $\Delta_G(i,k)$ is a sum of terms of the form in (\ref{17030110}). Then it is not difficult to see that the left side of the last estimate in (\ref{17030105}) is a sum of terms of the form 
\begin{align}
  \frac{\wt{d}_i }{N^2}   \big(\boldsymbol{\beta}_i^*\wt{B}^{\la i\ra} R_i GQXG\boldsymbol{\alpha}_i  \big) \big(\mathring{\mathbf{g}}_i^* X_iG\mathbf{e}_i\big),\qquad
   \frac{\wt{d}_i}{N^2}  \big( \boldsymbol{\beta}_i^*G QX  GR_i\wt{B}^{\la i\ra}\boldsymbol{\alpha}_i \big) \big(\mathring{\mathbf{g}}_i^* X_iG\mathbf{e}_i\big). \label{17030134}
\end{align}
Note that
\begin{align}
\big| \boldsymbol{\beta}_i^*\wt{B}^{\la i\ra} R_i GQXG\boldsymbol{\alpha}_i\big| \prec \frac{1}{\eta} \|G\boldsymbol{\alpha}_i\| \leq \frac{1}{\eta} \sqrt{\frac{\Im (G_{ii}+\mathcal{G}_{ii})}{\eta}}.  \label{17030130}
\end{align}
Analogously, we have
\begin{align}
\big|\boldsymbol{\beta}_i^*G QX  GR_i\wt{B}^{\la i\ra}\boldsymbol{\alpha}_i\big| \prec \frac{1}{\eta} \sqrt{\frac{\Im (G_{ii}+\mathcal{G}_{ii})}{\eta}}.  \label{17030131}
\end{align}
Applying  (\ref{17030130}), (\ref{17030131}), and the first estimate in (\ref{17030103}), we see that both terms in (\ref{17030134}) are of order $O_\prec(\Psi^2 \Pi_i^2)$. Hence, we obtain the last estimate in (\ref{17030105}). This concludes the proof of Lemma \ref{lem. estimate for delta terms}.
\end{proof}

\begin{proof}[Proof of Lemma \ref{lem.17021201}]  The proof is similar to that for Lemma 7.4 in \cite{BES16b}. In the latter, we used $\Psi$ instead of $\Pi_i$ in the statement. However, the proof of Lemma 7.4 in \cite{BES16b} shows readily that the stronger bounds in (\ref{17021202}) hold for the counterparts of the  block additive model  (\cf  (7.77), (7.80), (7.81) and (7.87) of \cite{BES16b}). The proof for our additive model given here analogous. 

First, by (\ref{17020505}), (\ref{170726100}),  (\ref{17020531}), (\ref{17020534}), and the fact $\mathring{T}_i=T_i-h_{ii}G_{ii}$, we have $|\mathring{S}_i|\prec 1$, $|\mathring{T}_i|\prec 1$, under the assumption ((\ref{17020501}).  Then, for the first estimate in (\ref{17021202}), we have 
\begin{align*}
\frac{1}{N} \sum_k^{(i)}  \frac{\partial \|\mathbf{g}_i\| ^{-1}}{\partial g_{ik}} \mathbf{e}_k^* X_i G\mathbf{e}_i=-\frac{1}{2N}\frac{1}{\|\mathbf{g}_i\|^3} \sum_{k}^{(i)} \bar{g}_{ik} \mathbf{e}_k^* X_i\mathbf{e}_i=-\frac{1}{2N}\frac{1}{\|\mathbf{g}_i\|^2} \mathring{\mathbf{h}}_i^* X_iG\mathbf{e}_i=O_\prec(\frac{1}{N}), 
\end{align*}
where we used the fact that $\mathring{\mathbf{h}}_i^* X_iG\mathbf{e}_i=\mathring{S}_i$ or $\mathring{T}_i$ if $X_i=\wt{B}^{\la i\ra}$ or $I$, respectively. 

Next, we show the second bound in (\ref{17021202}). It is convenient to set $
I^{\la i\ra}\deq I-\mathbf{e}_i\mathbf{e}_i^*$.  
Using (\ref{17071801}), we~get 
\begin{align}
&\frac{1}{N} \sum_k^{(i)} \mathbf{e}_i^*X \frac{\partial G}{\partial g_{ik}} \mathbf{e}_i\mathbf{e}_k^* X_i G\mathbf{e}_i=  \frac{c_i}{N} \mathbf{e}_i^*XGI^{\la i\ra} X_i G\mathbf{e}_i (\mathbf{e}_i+\mathbf{h}_i^*) \wt{B}^{\la i\ra} R_i G \mathbf{e}_i\nonumber\\
&\qquad\qquad+ \frac{c_i}{N}  \mathbf{e}_i^*XGR_i\wt{B}^{\la i\ra}I^{\la i\ra} X_i G\mathbf{e}_i (\mathbf{e}_i+\mathbf{h}_i)^*G \mathbf{e}_i+\frac{1}{N} \sum_k^{(i)} \mathbf{e}_i^*X\Delta_G(i,k) \mathbf{e}_i\mathbf{e}_k^* X_i G\mathbf{e}_i. \label{1707}
\end{align}
The desired estimate of the last term was obtained in the second line of (\ref{17030105}).  Further, using (\ref{170726105}) we~get
\begin{align*}
(\mathbf{e}_i+\mathbf{h}_i^*) \wt{B}^{\la i\ra} R_i G \mathbf{e}_i=-b_iT_i-(\wt{B}G)_{ii}=O_\prec(1), \qquad\qquad (\mathbf{e}_i+\mathbf{h}_i)^*G \mathbf{e}_i=G_{ii}+T_i=O_\prec(1), 
\end{align*}
where the estimates follows from (\ref{17020505}) and (\ref{170726100}). Hence, it suffices to show that 
\begin{align}
|\mathbf{e}_i^*XGI^{\la i\ra} X_i G\mathbf{e}_i|\prec \frac{\Im (G_{ii}+\mathcal{G}_{ii})}{\eta},\qquad\qquad|\mathbf{e}_i^*XGR_i\wt{B}^{\la i\ra}I^{\la i\ra} X_i G\mathbf{e}_i|\prec  \frac{\Im (G_{ii}+\mathcal{G}_{ii})}{\eta}.  \label{170726110}
\end{align}
Note that, by the assumption  $X=I$ or $A$,  both terms in (\ref{170726110}) can be  bounded by 
\begin{align*}
C\|GX\mathbf{e}_i\|\|G\mathbf{e}_i\|=\frac{C}{\eta} \sqrt{\Im (XGX)_{ii}} \sqrt{\Im G_{ii}}\leq C'\frac{\Im G_{ii}}{\eta}. 
\end{align*}
This completes the proof of the second inequality in (\ref{17021202}). Next, we show the third estimate in (\ref{17021202}). In light of the definition of $T_i$, it suffices to show 
\begin{align}
\frac{1}{N}  \sum_k^{(i)} \frac{\partial \mathbf{h}_i^*}{ \partial g_{ik}}G\mathbf{e}_i \mathbf{e}_k^* X_i G\mathbf{e}_i= O_\prec(\frac{1}{N}), \qquad \qquad \frac{1}{N}  \sum_k^{(i)} \mathbf{h}_i^* \frac{\partial G}{ \partial g_{ik}}\mathbf{e}_i \mathbf{e}_k^* X_i G\mathbf{e}_i= O_\prec(\Pi_i^2).  \label{170726120}
\end{align} 
The first estimate in (\ref{170726120}) is proved as follows
\begin{align*}
\frac{1}{N}  \sum_k^{(i)} \frac{\partial \mathbf{h}_i^*}{ \partial g_{ik}}G\mathbf{e}_i \mathbf{e}_k^* X_i G\mathbf{e}_i &=-\frac{1}{2\|\mathbf{g}_i\|^2} \frac{1}{N}\sum_{k}^{(i)}\bar{h}_{ik} \mathbf{e}_k^* X_i G\mathbf{e}_i \mathbf{h}_i^* G\mathbf{e}_i\nonumber\\
&=-\frac{1}{2\|\mathbf{g}_i\|^2}\frac{1}{N}\mathring{\mathbf{h}}_i X_iG\mathbf{e}_i\mathbf{h}_i^*G\mathbf{e}_i=O_\prec(\frac{1}{N}),
\end{align*}
where in the last step we again use the fact $\mathring{\mathbf{h}}_i^* \wt{B}^{\la i\ra}G\mathbf{e}_i=\mathring{S}_i=O_\prec(1)$ and $\mathbf{h}_i^*G\mathbf{e}_i=T_i=O_\prec(1)$. 
The proof of the second estimate in (\ref{170726120}) is similar to that of the second inequality in (\ref{17021202}). It suffices to replace $\mathbf{e}_i^*X$ by $\mathbf{h}_i^*$ in (\ref{1707}) and estimate the resulting terms. The counterpart to the last term in (\ref{1707}) is estimated in (\ref{17030105}). The counterparts to the first two terms on the right side of (\ref{1707}) are bounded by 
\begin{align*}
C\|G\mathbf{h}_i\|\|G\mathbf{e}_i\|=\frac{C}{\eta} \sqrt{\Im \mathbf{h}_i^* G\mathbf{h}_i} \sqrt{\Im G_{ii}}= \frac{C}{\eta} \sqrt{\Im \mathcal{G}_{ii}} \sqrt{\Im G_{ii}}\leq C'\frac{\Im(G_{ii}+\mathcal{G}_{ii})}{\eta},
\end{align*}
where we have used (\ref{170726140}). 

Next, we show the fourth estimate in (\ref{17021202}).  Using (\ref{17071801}) again, we can get  
\begin{align}
&\frac{1}{N}  \sum_k^{(i)} \ntr \Big(Q X\frac{\partial G}{\partial g_{ik}}\Big) \mathbf{e}_k^* X_i G\mathbf{e}_i= \frac{c_i}{N^2} (\mathbf{e}_i+\mathbf{h}_i)^* \wt{B}^{\la i\ra} R_i GQXGI^{\la i\ra} X_iG\mathbf{e}_i\nonumber\\
&\qquad + \frac{c_i}{N^2} (\mathbf{e}_i+\mathbf{h}_i)^* GQ XG R_i\wt{B}^{\la i\ra}I^{\la i\ra} X_iG\mathbf{e}_i+\frac{1}{N}\sum_{k}^{(i)}\ntr QX\Delta_G(i,k)\mathbf{e}_k^* X_iG\mathbf{e}_i.  \label{170726153}
\end{align}
The last term above is estimated in (\ref{17030105}).  Using (\ref{170726105}) and  $\|G\|\leq \eta$, we have 
\begin{align}
&\Big|\frac{1}{N^2} (\mathbf{e}_i+\mathbf{h}_i)^* \wt{B}^{\la i\ra} R_i GQXGI^{\la i\ra} X_iG\mathbf{e}_i\Big|=\Big|\frac{1}{N^2} (b_i\mathbf{h}_i^*+\mathbf{e}_i^*\wt{B}) G QXGI^{\la i\ra} X_iG\mathbf{e}_i\Big| \nonumber\\
&\qquad \leq C\frac{1}{N^2\eta} \big(\|G\mathbf{h}_i\|+\|G\wt{B}\mathbf{e}_i\|\big)\|G\mathbf{e}_i\|\leq C\frac{1}{N^2\eta} \big(\|G\mathbf{h}_i\|^2+\|G\wt{B}\mathbf{e}_i\|^2+\|G\mathbf{e}_i\|^2\big) \nonumber\\
&\qquad = \frac{C}{N^2\eta^2} \big(\Im (\mathbf{h}_i^*G\mathbf{h}_i+(\wt{B}G\wt{B})_{ii}+G_{ii})\big)\prec \frac{\Im (G_{ii}+\mathcal{G}_{ii})}{N^2\eta^2}.  \label{170726157}
\end{align}
Here in the last step we again used (\ref{170726140}) and also fact
\begin{align}
\Im (\wt{B}G\wt{B})_{ii}=\eta+ \Im ((a_i-z)^2 G_{ii})= O_\prec (\eta+\Im G_{ii})=O_\prec(\Im G_{ii}). \label{170726150}
\end{align}
In (\ref{170726150}), we used  (\ref{17020508}), the first bound in (\ref{17020505}), and $\Im G_{ii}\gtrsim \eta$ which is easily checked by spectral decomposition. Similar to (\ref{170726157}), we get the desired estimate for the second term on the right of~(\ref{170726153}). 

Finally, the last equation in (\ref{17021202}) can be proved analogously to the fourth one. 
The only difference is, instead of the factor $\mathbf{e}_k^*X_i G\mathbf{e}_i$ in (\ref{17021303}), here we have $\mathbf{e}_k^* X_i \mathring{\mathbf{g}}_i$ which does not contain any $G$ factor, which actually makes the estimates even simpler. This completes the proof of Lemma \ref{lem.17021201}. 
\end{proof}

\section{Estimates of the cutoff errors} \label{appendix C}
In this appendix, we state more details on the estimate (\ref{19030501}). The proof can be done in the same way as the non-cutoff version (\ref{17071833}), but with the a priori inputs given by $\varphi(\Gamma_i)$'s and $\varphi(\Gamma)$. Since the proof can be done via  going through the proof of (\ref{17071833}) again, we only list the necessary modifications here.  

The first modification we need to do is the bound of the analogue of the term $O_\prec(\Psi\hat{\Upsilon})$ in  (\ref{17072903}). This error term was obtained when we bounded  the term 
$\frac{1}{N}\sum_{i=1}^N T_i\tau_{i1}\Upsilon$ in (\ref{170728100}). Here, during the proof of (\ref{19030501}), the counterpart will be $\frac{1}{N}\sum_{i=1}^N T_i\wt{\tau}_{i1} \Upsilon$, where $\wt{\tau}_{i1}$ is defined via replacing all $d_j$'s by $d_j\varphi(\Gamma_j)\varphi(\Gamma)$ in the definition of $\tau_{i1}$ in (\ref{17021305}).  According to the definition of $\Gamma$ in  (\ref{19030920}), it is easy to see that 
\begin{align*}
\Big|\frac{1}{N}\sum_{i=1}^N T_i\wt{\tau}_{i1} \Upsilon\Big|\leq  C\frac{N^{10\varepsilon}}{(N\eta)^{\frac56}}\ll \frac{1}{(N\eta)^{\frac23}}
\end{align*}
when $\varepsilon$ is sufficiently small. Hence, the term $\frac{1}{N}\sum_{i=1}^N T_i\wt{\tau}_{i1} \Upsilon$ can be absorbed into the bound for $\mathfrak{c}_1$ in (\ref{19030950}). 

The second modification we need to do is the estimate for the analogue of (\ref{17021311}). We take the case $j=1$ for example. In the step of (\ref{17021550}), we used the  estimate $\Lambda_{{\rm d}i}^c\prec \Psi$ from (\ref{17020303}) to replace $G_{ii}$ in the definition of $\varepsilon_{i1}$ in (\ref{17071805}) by $\frac{1}{a_i-\omega_B^c}$ in (\ref{17021550}), and also the bound of $T_i$ in  (\ref{17020303}) was used in (\ref{17021550}). But now, lacking the conditions in 
(\ref{17020501}), these bounds are not available. Instead, we shall need to extract similar information from the presence of the cutoff functions $\varphi(\Gamma_i)$'s and $\varphi(\Gamma)$. 
 The analogue of $\varepsilon_1$ in the proof of (\ref{19030501}) can be written as  
\begin{align*}
\wt{\varepsilon}_1=\frac{1}{N}\sum_{i=1}^N \varepsilon_{i1} \ntr G\wt{\tau}_{i1}= \frac{1}{N}\sum_{i} \mathring{\mathbf{h}}_i^*\wt{B}^{\la i\ra} \mathring{\mathbf{h}}_i G_{ii}\wt{\tau}_{i1}+\wt{\delta}_1, 
\end{align*}
with $\wt{\delta}_1$ satisfying 
\begin{align}
\mathbb{E}|\wt{\delta}_1|^k=o\Big(\frac{1}{(N\eta)^{\frac{2k}{3}}}\Big) \label{19031301}
\end{align}
for any given $k>0$. In the estimate (\ref{19031301}), we 
again used the fact $\frac{1}{N}\sum_{i=1}^N|T_i|\varphi(\Gamma)\leq C\frac{N^{2\varepsilon}}{\sqrt{N\eta}}$ to estimate the average of the second term in  $\varepsilon_{i1}$ (c.f. (\ref{17071805})). Therefore, our task is to prove the weaker but unconditional  estimate
\begin{align}
\mathbb{E} \Big[\frac{1}{N}\sum_{i} \mathring{\mathbf{h}}_i^*\wt{B}^{\la i\ra} \mathring{\mathbf{h}}_i G_{ii}\wt{\tau}_{i1} \wt{\mathfrak{m}}^{(p-1,p)}\Big]=\mathbb{E}\big[\mathfrak{c}_{\varepsilon 1}\wt{\mathfrak{m}}^{(p-1,p)}\big]+\mathbb{E}\big[\mathfrak{c}_{\varepsilon 2} \wt{\mathfrak{m}}^{(p-2,p)}\big]+\mathbb{E}\big[\mathfrak{c}_{\varepsilon 3} \wt{\mathfrak{m}}^{(p-1,p-1)}\big],   \label{19030960}
\end{align}
where 
\begin{align*}
|\mathfrak{c}_{\varepsilon 1}|\leq C\hat{\Pi}, \qquad |\mathfrak{c}_{\varepsilon 2}|\leq C\hat{\Pi}^2, \qquad |\mathfrak{c}_{\varepsilon 3}|\leq C\hat{\Pi}^2, \qquad \text{on}\quad \widehat{\Omega}_2(z). 
\end{align*}
Moreover, the $\mathfrak{c}_{\varepsilon i}$'s  also admit the moment bound  $\mathbb{E}|\mathfrak{c}_{\varepsilon i}|^k=O(1)$ for any given $k>0$. The proof of (\ref{19030960}) can be done basically in the same way as the estimate for (\ref{17021531}), we thus omit the details. 

The last and also the major modification is: the smooth cutoffs $\varphi(\Gamma_i)$ and $\varphi(\Gamma)$ bring in new terms during the integration by parts. More specifically, we will need to consider the derivative of the cutoffs. The derivatives of the cutoffs $\varphi(\Gamma_i)$'s can be treated similarly to the case in the proof of (\ref{19030501111}). In the sequel, we investigate the derivative of the term $\varphi(\Gamma)$.  For instance, in the analogue of the step (\ref{17021250}), the counterpart of the third term on the right side of (\ref{17021250}) will be 
\begin{align}
\frac{1}{N^2} \sum_{i=1}^N  \sum_k^{(i)} \mathbb{E}\Big[  \frac{1}{\|\mathbf{g}_i\|}\mathbf{e}_k^*\wt{B}^{\la i\ra} G\mathbf{e}_i \frac{\partial (\ntr G \wt{\tau}_{i1})}{\partial g_{ik}} \wt{\mathfrak{m}}^{(p-1,p)}\Big]. \label{19030971}
\end{align}
One new term in $\frac{\partial (\ntr G \wt{\tau}_{i1})}{\partial g_{ik}}$ is 
\begin{align}
d_i\ntr G\;\varphi(\Gamma_i) \varphi'(\Gamma) \frac{\partial \Gamma}{\partial g_{ik}}.  \label{19030970}
\end{align} 
In the sequel, we show the contribution of the term (\ref{19030970}) to (\ref{19030971}).  The other terms involving the derivatives of the cutoffs can be treated similarly.  To show the contribution of (\ref{19030970}), it suffices to prove the following three estimates
\begin{align}
&(c\Im m_{\mu_A\boxplus\mu_B}+\hat{\Lambda})^{-2} \frac{1}{N^2} \sum_{i=1}^N  \sum_k^{(i)} \hat{d}_i\mathbf{e}_k^*\wt{B}^{\la i\ra} G\mathbf{e}_i  \frac{\partial (|\Lambda_A|^2+|\Lambda_B|^2)}{\partial g_{ik}} \leq C\hat{\Pi},\label{190309100}\\
&\Big(\frac{N^{5\varepsilon}}{(N\eta)^{\frac13}}\Big)^{-2}  \frac{1}{N^2} \sum_{i=1}^N  \sum_k^{(i)} \hat{d}_i\mathbf{e}_k^*\wt{B}^{\la i\ra} G\mathbf{e}_i  \frac{\partial |\Upsilon|^2}{\partial g_{ik}}\leq C\hat{\Pi},\label{190309101}\\
&\Big(\frac{N^{5\varepsilon}}{\sqrt{N\eta}}\Big)^{-1} \frac{1}{N^2} \sum_{i=1}^N  \sum_k^{(i)} \hat{d}_i\mathbf{e}_k^*\wt{B}^{\la i\ra} G\mathbf{e}_i  \frac{\partial \frac{1}{N}\sum_{j=1}^N (| T_{j}|^2+N^{-1})^{\frac12}}{\partial g_{ik}}\leq C\hat{\Pi}, \label{190309102}
\end{align}
where we introduced the shorthand notation 
\begin{align*}
\hat{d}_i:= d_i\ntr G\;\varphi(\Gamma_i) \varphi'(\Gamma) \frac{1}{\|\mathbf{g}_i\|}. 
\end{align*}
To show (\ref{190309100}), we first note that $|\Lambda_\iota|^2=\Lambda_\iota\overline{\Lambda_\iota} $,  and
\begin{align}
 |\Lambda_\iota|\leq C (c\Im m_{\mu_A\boxplus\mu_B}+\hat{\Lambda}), \qquad \iota=A,B,  \label{190309110}
\end{align}
if $\hat{d}_i\neq 0$ for at least one $i$ by the definition of $\Gamma$ in (\ref{19030920})
 and $\varphi'(\Gamma) \ne 0 $ implying $\Gamma\le C$. Further, combining (\ref{190309110}) with (\ref{17073101}), we also have $|m_H-m_{\mu_A\boxplus\mu_B}|\leq C (c\Im m_{\mu_A\boxplus\mu_B}+\hat{\Lambda})$. Choosing $c$ to be sufficiently small and applying  the fact $|m_{\mu_A\boxplus\mu_B}|\gtrsim 1$, we get $|m_H|\gtrsim 1$ if $\hat{d}_i\neq 0$ for at least one $i$. 
In addition, we have 
\begin{align}
\Big|\frac{1}{N^2} \sum_{i=1}^N  \sum_k^{(i)} \hat{d}_i\mathbf{e}_k^*\wt{B}^{\la i\ra} G\mathbf{e}_i  \frac{\partial \Lambda_{\iota}}{\partial g_{ik}}\Big|\leq C\Psi^2\hat{\Pi}^2, \label{190309111}
\end{align}
which follows from the quantitative version of (\ref{17021303}).
 The same estimate holds if we replace $\Lambda_\iota$ by $\overline{\Lambda_\iota}$. 
Then by the simple fact $\partial |\Lambda_{\iota}|^2/\partial g_{ik}= \overline{\Lambda_{\iota}}\partial \Lambda_{\iota}/\partial g_{ik}+\Lambda_{\iota}\partial \overline{\Lambda_{\iota}}/\partial g_{ik}$, and the estimates (\ref{190309110}) and (\ref{190309111}), we see that the left side of (\ref{190309100}) is actually bounded by $C\Psi^4$, which is much smaller than $\hat{\Pi}$, under our choice of $\hat{\Lambda}$ in (\ref{190311200}).  The proof of (\ref{190309101}) is similar to that of (\ref{190309100}), we thus omit the details. 

At the end, we prove (\ref{190309102}). Recall from (\ref{17072590}) the fact $\mathbf{h}_j=\e{-\mathrm{i}\theta_j} \mathbf{u}_j=\e{-\mathrm{i}\theta_j}U \mathbf{e}_j$. Hence, we have 
\begin{align*}
|T_j|^2=|\mathbf{h}_j^*G\mathbf{e}_j|^2= (U^*G)_{jj}(G^*U)_{jj}=((U^{\la i\ra})^*R_iG)_{jj}(G^*R_iU^{\la i\ra})_{jj}
\end{align*}
for any $i,j$. 
Then we have 
\begin{align*}
 \frac{\partial (| T_{j}|^2+N^{-1})^{\frac12}}{\partial g_{ik}}= (| T_{j}|^2+N^{-1})^{-\frac12}\Big(\frac{\partial ((U^{\la i\ra})^*R_iG)_{jj}}{\partial g_{ik}}(G^*R_iU^{\la i\ra})_{jj}+\frac{\partial (G^*R_iU^{\la i\ra})_{jj}}{\partial g_{ik}}((U^{\la i\ra})^*R_iG)_{jj}\Big)
\end{align*}
Note that $|(G^*R_iU^{\la i\ra})_{jj}|=|T_j|$. In the sequel, we focus on the first term in the parenthesis above. The second term can be discussed similarly.  From the definition, it is elementary to derive 
\begin{align}
\frac{\partial R_i}{\partial g_{ik}}= -c_i \mathbf{e}_k(\mathbf{e}_i+\mathbf{h}_i)^*+ \Delta_R(i,k), \label{190309200}
\end{align}
where $c_i$ and  $\Delta_R(i,k)$ are  defined in (\ref{170725102}) and (\ref{170729100}), respectively. Applying  (\ref{17071801}) and (\ref{190309200}), we have 
\begin{align}
\frac{\partial ((U^{\la i\ra})^*R_iG)_{jj}}{\partial g_{ik}}=
& c_i\mathbf{e}_j^*(U^{\la i\ra})^*R_iG\mathbf{e}_k(\mathbf{e}_i+\mathbf{h}_i)^*\wt{B}^{\la i\ra}R_i G\mathbf{e}_j+c_i\mathbf{e}_j^*(U^{\la i\ra})^*R_iG R_i\wt{B}^{\la i\ra}\mathbf{e}_k(\mathbf{e}_i+\mathbf{h}_i)^* G\mathbf{e}_j\nonumber\\
&-c_i(U^{\la i\ra})^*_{jk}(\mathbf{e}_i+\mathbf{h}_i)^*G\mathbf{e}_j+\mathbf{e}_j^*(U^{\la i\ra})^* \Delta_R(i,k)G\mathbf{e}_j+ \mathbf{e}_j^*(U^{\la i\ra})^*R_i\Delta_G(i,k)\mathbf{e}_j. \label{190309205}
\end{align}
We take the first term on the right side of (\ref{190309205}) for example.  The contribution of this term to the left side of (\ref{190309102}) reads
\begin{align}
\Big(\frac{N^{4\varepsilon}}{\sqrt{N\eta}}\Big)^{-1} \frac{1}{N^3} \sum_{i,j=1}^N  \sum_k^{(i)} \hat{c}_{ij} \hat{d}_i\mathbf{e}_k^*\wt{B}^{\la i\ra} G\mathbf{e}_i  \mathbf{e}_j^*(U^{\la i\ra})^*R_iG\mathbf{e}_k(\mathbf{e}_i+\mathbf{h}_i)^*\wt{B}^{\la i\ra}R_i G\mathbf{e}_j, \label{190309210}
\end{align}
where we introduced the shorthand notation 
\begin{align*}
\hat{c}_{ij}:= c_i(| T_{j}|^2+N^{-1})^{-\frac12}(G^*R_iU^{\la i\ra})_{jj}. 
\end{align*}
Let $I^{(i)}$ be the identity matrix with $(i,i)$-th entry replaced by $0$ and let $C_i:=\text{diag}(\hat{c}_{i1}, \ldots, \hat{c}_{iN})$. We have 
\begin{align*}
\text{(\ref{190309210})}=\Big(\frac{N^{4\varepsilon}}{\sqrt{N\eta}}\Big)^{-1} \frac{1}{N^3} \sum_{i}^N  \hat{d}_i (\mathbf{e}_i+\mathbf{h}_i)^*\wt{B}^{\la i\ra}R_i GC_i(U^{\la i\ra})^*R_iGI^{(i)}\wt{B}^{\la i\ra} G\mathbf{e}_i.
\end{align*}
Similarly to (\ref{170726157}), we have 
\begin{align*}
\big|(\mathbf{e}_i+\mathbf{h}_i)^*\wt{B}^{\la i\ra}R_i GC_i(U^{\la i\ra})^*R_iGI^{(i)}\wt{B}^{\la i\ra} G\mathbf{e}_i\big|\leq C\frac{\Im G_{ii}+\Im \mathcal{G}_{ii}}{\eta^2}
\end{align*}
Therefore, we have 
\begin{align*}
\big|\text{(\ref{190309210})}\big|\leq \Big(\frac{N^{4\varepsilon}}{\sqrt{N\eta}}\Big)^{-1} \frac{\Im m_{\mu_A\boxplus\mu_B}+\hat{\Lambda}}{N^2\eta^2} \ll \hat{\Pi}. 
\end{align*}
The contributions from the other terms in (\ref{190309205}) can be estimated similarly. We thus omit the details. 

Except for the modifications listed above, the rest of the proof of (\ref{19030501}) is the same as that for (\ref{17071833}).

\end{document}